\newtheorem{theorem}{Theorem}[section]
\newtheorem*{remark}{Remark}
\newtheorem{proposition}[theorem]{Proposition}
\newtheorem{lemma}[theorem]{Lemma}
\newtheorem{claim}[theorem]{Claim}
\newtheorem{fact}[theorem]{Fact}
\newtheorem{corollary}[theorem]{Corollary}
\newcommand{\N}{\ensuremath{\mathbb{N}}}
\newcommand{\R}{\ensuremath{\mathbb{R}}}
\newcommand{\Z}{\ensuremath{\mathbb{Z}}}
\newcommand{\Tr}{{\rm Tr}}
\renewcommand{\epsilon}{\varepsilon} 
\renewcommand{\vec}[1]{\ensuremath{\boldsymbol{#1}}}
\newcommand{\basis}{\ensuremath{\mathbf{B}}}
\DeclareMathOperator*{\argmin}{argmin}
\DeclareMathOperator*{\expect}{\mathbb{E}}
\newcommand{\grad}{\nabla}
\newcommand{\M}{\mathcal{M}}
\newcommand{\V}{\mathcal{V}}
\newcommand{\mubar}{\overline{\mu}}
\newcommand{\KLCstable}{C_{\mu}(n)}
\newcommand{\lat}{\mathcal{L}}
\DeclareMathOperator{\vol}{vol}
\DeclareMathOperator{\rank}{rank}
\DeclareMathOperator{\dist}{dist}
\DeclareMathOperator{\spn}{span}
\DeclarePairedDelimiter\inner{\langle}{\rangle}
\DeclarePairedDelimiter\floor{\lfloor}{\rfloor}
\DeclarePairedDelimiter\ceil{\lceil}{\rceil}
\begin{document}

\title{\bf A Reverse Minkowski Theorem}
\author{
Oded Regev\thanks{Courant Institute of Mathematical Sciences, New York
 University.}%
~\thanks{Supported by the Simons Collaboration on Algorithms and Geometry and by the National Science Foundation (NSF) under Grant No.~CCF-1320188. Any opinions, findings, and conclusions or recommendations expressed in this material are those of the authors and do not necessarily reflect the views of the NSF.}\\
\and
Noah Stephens-Davidowitz\thanks{Cornell University.}%
~\thanks{ Much of this work was done while at New York University, supported by the National Science Foundation (NSF) under Grant No.~CCF-1320188. Part of this work was done while visiting Chris Peikert at the University of Michigan.}\\
\texttt{noahsd@gmail.com}
}
\date{}
\maketitle

\begin{abstract}
We prove a conjecture due to Dadush, showing that if $\lat \subset \R^n$ is a lattice such that $\det(\lat') \ge 1$ for all sublattices $\lat' \subseteq \lat$, then 
\[
\sum_{\vec y \in \lat} e^{-\pi t^2 \|\vec y\|^2} \le 3/2 \; ,
\]
where $t := 10(\log n + 2)$. 
From this we derive bounds on the number of short lattice vectors, which can be viewed as a partial converse to Minkowski's celebrated first theorem. We also derive a bound on the covering radius.
\end{abstract}

\section{Introduction}
\label{sec:intro}
 A lattice $\lat \subset \R^n$ is the set of integer linear combinations of linearly independent basis vectors $\basis = (\vec{b}_1,\ldots, \vec{b}_n)$. The determinant of the lattice, $\det(\lat) = |\det(\basis)|$, is a measure of its global density in the sense that
\[
\det(\lat) = \lim_{r \rightarrow \infty} \frac{\vol(r B_2^n)}{|\lat \cap r B_2^n|}
\; ,
\]
where $rB_2^n$ denotes the closed Euclidean ball of radius $r >0$, whose volume is $(\pi n)^{-1/2} (2\pi e r^2/n)^{n/2}(1+o(1))$. (Here and elsewhere, we write $o(1)$ for an arbitrary function that approaches zero as the dimension $n$ approaches infinity.)

Minkowski's celebrated first theorem shows that a lattice with small determinant must have short non-zero vectors~\cite{MinkowskiBook}. This is one of the foundational results in the study of lattices and the geometry of numbers, and it has innumerable applications. We consider the following point-counting form of this theorem due to Blichfeldt and van der Corput,\footnote{They actually showed the slightly stronger bound $|\lat \cap rB_2^n| \geq 2\floor{2^{-n} \cdot \vol(r B_2^n)}+1$ and considered arbitrary norms, not just $\ell_2$.  (See, e.g.,~\cite[Thm.~1 of Ch.~2, Sec.~7]{Lekkerkerker_book}.)} 
which says that a lattice with small determinant must have \emph{many} short points, or informally, that ``global density implies local density.''

\begin{theorem}[\cite{vanderCorput1936}]
	\label{thm:minkowski}
For any lattice $\lat \subset \R^n$ with $\det(\lat) \leq 1$ and $r > 0$,
\begin{equation*}
|\lat \cap r B_2^n| 
\geq 2^{-n} \cdot \vol(r B_2^n) =  \frac{1}{\sqrt{\pi n}} \Big(\frac{\pi e r^2}{2 n}  \Big)^{n/2} (1+o(1))
\; . 
\end{equation*}
\end{theorem}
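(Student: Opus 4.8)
The plan is to derive this from the classical averaging argument of Blichfeldt. Set $B := \tfrac{r}{2} B_2^n$, so that $\vol(B) = 2^{-n}\vol(rB_2^n)$ and, because $B$ is an origin-symmetric convex body, its difference set is $B - B = B + B = 2B = rB_2^n$. Thus it suffices to prove $|\lat \cap (B-B)| \ge \vol(B)/\det(\lat)$ and then invoke the hypothesis $\det(\lat) \le 1$; the closed-form on the right-hand side of the theorem follows by substituting $\vol(rB_2^n) = \pi^{n/2} r^n / \Gamma(n/2+1)$ and applying Stirling's approximation to $\Gamma(n/2+1)$, which I treat as a routine calculation.

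For the main inequality I would fold $B$ onto the torus $\mathbb{T} := \R^n/\lat$, whose volume is $\det(\lat)$, via the function $f\colon \mathbb{T} \to \Z_{\ge 0}$ defined by $f(x) := |(x+\lat)\cap B|$, the number of preimages of $x$ under the quotient map lying in $B$. Two elementary identities drive the argument: (i) $\int_{\mathbb{T}} f(x)\,dx = \vol(B)$, since the cosets tile $\R^n$; and (ii) $\int_{\mathbb{T}} f(x)^2\,dx = \sum_{\vec y \in \lat} \vol\bigl(B \cap (B+\vec y)\bigr)$, obtained by expanding the square and unfolding one of the two inner sums back to $\R^n$. Now $\vol(B\cap(B+\vec y)) \le \vol(B)$ always, and it vanishes unless $\vec y \in B - B = rB_2^n$, so the right-hand side of (ii) is at most $\vol(B)\cdot|\lat\cap rB_2^n|$. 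Combining with Cauchy–Schwarz, $\vol(B)^2 = \bigl(\int_{\mathbb{T}} f\bigr)^2 \le \vol(\mathbb{T})\int_{\mathbb{T}} f^2 \le \det(\lat)\cdot\vol(B)\cdot|\lat\cap rB_2^n|$; dividing through by $\vol(B)$ yields $|\lat\cap rB_2^n| \ge \vol(B)/\det(\lat) \ge \vol(B) = 2^{-n}\vol(rB_2^n)$.

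An essentially equivalent route avoids Cauchy–Schwarz: identity (i) says the average of $f$ over $\mathbb{T}$ is $\vol(B)/\det(\lat)$, so some $x^\star\in\mathbb{T}$ has $f(x^\star) \ge \lceil \vol(B)/\det(\lat)\rceil =: k$; picking lattice vectors $\vec v_1,\dots,\vec v_k$ with $x^\star + \vec v_i \in B$ and subtracting $\vec v_1$ shows that $\vec v_i - \vec v_1 \in \lat \cap (B-B)$ for all $i$, giving $k$ distinct points of $\lat$ in $rB_2^n$. This pigeonhole variant has the cosmetic advantage of producing the integer-rounded bound mentioned in the footnote above, and it isolates the role of symmetry and convexity to the single identity $B - B = rB_2^n$.

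I do not expect a genuine obstacle here; this is a textbook argument. The only points requiring care are: folding the half-radius ball $B$ rather than $rB_2^n$ itself (which is precisely where the factor $2^{-n}$ enters), verifying the unfolding identities (i)–(ii) in a way that is independent of the choice of coset representatives, correctly handling the case where $\vol(B)/\det(\lat)$ is not an integer (via the $\lceil\cdot\rceil$ in the averaging variant, or by noting that $|\lat\cap rB_2^n|$ is an integer bounding a real number in the Cauchy–Schwarz variant), and invoking symmetry together with convexity of $B$ at the one place it matters, namely the equality $B - B = 2B$.
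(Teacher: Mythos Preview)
The paper does not prove this statement: Theorem~\ref{thm:minkowski} is quoted as a classical result of Blichfeldt and van~der~Corput, with a citation and a footnote noting the slightly stronger bound $2\floor{2^{-n}\vol(rB_2^n)} + 1$, but no proof is given in the paper itself. Your proposal is a correct, standard rendition of the Blichfeldt averaging argument; in particular, the pigeonhole variant you sketch is exactly the proof that yields the integer-rounded form mentioned in the paper's footnote.
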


It is quite natural to ask whether a converse of Theorem~\ref{thm:minkowski} holds. In particular, if a lattice has sufficiently many short points, does it necessarily have small determinant? Does local density imply global density?

It is easy to see that the answer is actually no. Consider, for example, the lattice generated by the vectors $(1/t,0)$ and $(0,t^2)$ for some arbitrarily large $t$. This lattice has at least $2\floor{tr} + 1$ points of norm at most $r$, but it has arbitrarily large determinant $t$. Notice, however, that this lattice contains a \emph{sublattice} generated by $(1/t,0)$ that does have small determinant. This leads us to a more refined question: 
\begin{quote}
If a lattice has sufficiently many short points, does it necessarily have a small-determinant \emph{sublattice}? Does local density imply global density  restricted to a subspace?
\end{quote}
Equivalently, in the contrapositive, the question asks for an upper bound on the number of lattice points in a ball given that there is no sublattice of small determinant. 

Dadush conjectured a suitably precise answer to these questions~\cite{priv:Daniel}.
He later studied this conjecture in depth in joint work with the first named author~\cite{DR16}. 
Among other things, they showed a number of applications of the conjecture 
(from computational complexity of lattice problems to Brownian motion on flat tori)
and gave some evidence for it.
We refer the reader to~\cite{DR16} for a full list of their results.

Our main result is a proof of the conjecture of Dadush, which in particular implies the applications mentioned above.  

\begin{theorem}[Reverse Minkowski Theorem]
\label{thm:RM}
For any lattice $\lat \subset \R^n$ with $\det(\lat') \geq 1$ for all sublattices $\lat' \subseteq \lat$, 
\[
\rho_{1/t}(\lat) \leq \frac{3}{2}
\; ,
\]
where $t := 10(\log n + 2)$.
\end{theorem}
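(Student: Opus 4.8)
The plan is to argue by induction on the dimension $n$, with a robust inductive hypothesis: rather than the single bound for $t = 10(\log n + 2)$, I would prove an estimate of the form $\rho_{1/s}(\lat) \le 1 + h_n(s)$ for all stable $\lat$ of rank at most $n$ and all $s \le 1/(10(\log n + 2))$, where $h_n$ decays fast enough in $1/s$ that the quantities accumulated along the recursion stay below a small constant — the extra room being exactly why the target is the loose‑looking $3/2$ rather than $1 + o(1)$. The base case $n = 1$ is immediate: a stable rank‑one lattice is $c\Z$ with $c \ge 1$, so $\rho_{1/t}(c\Z) \le \rho_{1/t}(\Z) = 1 + 2\sum_{k\ge 1} e^{-t^2 k^2}$, far below $3/2$. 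I would also record at the outset the crude consequence of stability that $\lambda_1(\lat) \ge 1$ (apply the determinant hypothesis to the rank‑one sublattice spanned by a shortest vector), since nearly every later estimate starts from this.

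The main structural step is to reduce to the \emph{semistable} case by means of a densest sublattice. Given a stable $\lat$ of rank $n$, reduction theory (the Grayson--Stuhler / Harder--Narasimhan picture, or simply finiteness of the relevant infimum) produces a primitive sublattice $\lat' \subseteq \lat$ minimizing the normalized determinant $\det(\lat')^{1/\rank\lat'}$. One checks easily that both $\lat'$ and the orthogonal projection $\bar\lat := \pi_{(\spn\lat')^\perp}(\lat)$ are again stable, and that Gaussian mass is submultiplicative along this decomposition: since $\rho_s(\vec u + \lat') \le \rho_s(\lat')$ for every coset, splitting the sum over $\lat$ into cosets of $\lat'$ (indexed by $\bar\lat \cong \lat/\lat'$) and pulling out the orthogonal component gives $\rho_{1/t}(\lat) \le \rho_{1/t}(\lat') \cdot \rho_{1/t}(\bar\lat)$. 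If $\lat'$ can be chosen \emph{proper}, one recurses on the two lower‑rank stable lattices. The delicate point is that naive submultiplicativity can blow the inductive bound up to its square, which is useless; so this step must be upgraded to a nearly \emph{additive} estimate on the deficiency $\rho_{1/t}(\cdot) - 1$ (equivalently on $\log \rho_{1/t}$), quantified via a relative‑entropy / KL‑divergence argument measuring how much the within‑coset Gaussians are displaced — and crucially exploiting that the pieces have rank at most $n - 1$ while the Gaussian width $1/t$ is calibrated to the \emph{ambient} dimension $n$, so each piece is measured far below its own smoothing scale and its deficiency is tiny. One then has to show that the deficiency accumulated down the entire recursion tree stays below $\log(3/2)$.

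That leaves the \emph{semistable} case, where $\lat$ is itself the densest among all its sublattices and the decomposition above offers no proper $\lat'$; this is the heart of the matter. Here I would bound $\rho_{1/t}(\lat) - 1 = \sum_{\vec y \in \lat \setminus \{0\}} e^{-t^2 \|\vec y\|^2}$ directly, by controlling for each radius $R$ the number of lattice vectors of norm at most $R$. Since $\lambda_1(\lat) \ge 1$, only radii $R \gtrsim 1$ contribute, and for large $R$ the factor $e^{-t^2 R^2}$ is small enough that the crude packing bound $|\lat \cap R B_2^n| \le (2R+1)^n$ already suffices; the genuinely hard regime is $R = O(1)$, where packing is far too weak and one must use semistability in full — the sublattice generated by all vectors of norm at most $R$ either has small rank, and one appeals to the inductive hypothesis in lower dimension, or is forced by the no‑dense‑sublattice condition to be sparse.

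I expect pinning down this short‑vector count in the semistable case to be the main obstacle: it is essentially a self‑referential instance of the theorem, so the induction has to be set up so that the decomposition step and the semistable estimate feed each other without circularity, with the deficiencies summing to something strictly below $\log(3/2)$. It is also precisely here that the $\Theta(\log n)$ size of $t$ is forced — a bound like $\sqrt{\log n}$ would comfortably handle $\Z^n$ but not the densest semistable lattices — and where the slack hidden in the constant $3/2$ is spent.
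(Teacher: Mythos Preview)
Your outline correctly identifies the filtration/splitting step and the submultiplicativity $\rho_{1/t}(\lat)\le\rho_{1/t}(\lat')\rho_{1/t}(\lat/\lat')$, and the paper uses essentially the same recursion on the canonical filtration. But the substantive content of the theorem is precisely your ``semistable case,'' and there your plan has a genuine gap. You propose to bound $\sum_{\vec y\neq 0}e^{-t^2\|\vec y\|^2}$ by controlling $|\lat\cap R B_2^n|$ for $R=O(1)$, arguing that the sublattice generated by the short vectors ``either has small rank \ldots\ or is forced by the no-dense-sublattice condition to be sparse.'' The second alternative is exactly the statement you are trying to prove: converting ``no dense sublattice'' into ``few short vectors'' \emph{is} the Reverse Minkowski Theorem. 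You note yourself that this is ``essentially a self-referential instance,'' but you do not supply any mechanism that breaks the circularity. The paper states explicitly that bounding $\rho_s(\lat)$ directly at interior (semi)stable points ``seems to be beyond our grasp.''

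The paper's route is quite different from yours at this crucial point. It replaces $\rho_s(\lat)$ by the proxy $\gamma_s(\V(\lat))$ via the inequality $\rho_s(\lat)\,\gamma_s(\V(\lat))\le 1$, and then runs the compactness/induction argument on $\gamma_{1/t}(\V(\lat))$ over the stable lattices. When the global minimizer lies on the boundary it splits (as you do). When it lies in the interior---your unhandled semistable case---the paper shows that the Voronoi cell must be in isotropic Gaussian position, then invokes Bobkov's theorem (built on Cordero-Erausquin--Fradelizi--Maurey) to conclude this position maximizes $\gamma_s$ over all positions, and finally the $\ell\ell^*$ theorem of Figiel--Tomczak-Jaegermann, Lewis, and Pisier to produce one good position with $\gamma_{1/t}\ge 2/3$. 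These convex-geometric inputs are doing the real work, and nothing in your outline substitutes for them. (The $\Theta(\log n)$ in $t$ arises from the $\ell\ell^*$ bound, not from any lattice-counting argument; incidentally it is conjectured to be loose, with $\Z^n$ suggesting $\sqrt{\log n}$.)
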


Here, for a lattice $\lat \subset \R^n$ and $s>0$, 
\[
\rho_s(\lat) := \sum_{\vec{y} \in \lat} e^{-\pi \|\vec{y}\|^2/s^2}
\; 
\]
 is the \emph{Gaussian mass} of the lattice with \emph{parameter} $s$.
This can be seen as a smooth version of the point-counting function $r \mapsto |\lat \cap r B_2^n|$, with the parameter
$s$ playing the role of the radius $r$, and it arises naturally in a number of 
contexts (often in the form of the theta function, $\Theta_{\lat}(iy) := \rho_{1/\sqrt{y}}(\lat)$). 
In particular, Theorem~\ref{thm:RM} immediately implies that $|\lat \cap r B_2^n| \leq 3e^{\pi t^2 r^2}/2$ for any radius $r > 0$. (We note that the constant $3/2$ in this bound and the theorem statement is chosen for convenience, and a similar statement holds with any constant strictly bigger than $1$.)

One can view Theorem~\ref{thm:RM} as relating the parameters
\[
\eta^*(\lat) := \inf\{ t \ : \ \rho_{1/t}(\lat) \leq 3/2 \}
\; 
\] 
(known as the smoothing parameter of the dual lattice~\cite{MR04}) and 
\[
\eta_{\det}(\lat) := \max_{\lat' \subseteq \lat} \det(\lat')^{-1/\rank(\lat')}
\; .
\]
Specifically, we claim that 
\begin{equation}
\label{eq:RM_eta}
\frac{2}{3} \cdot \eta_{\det}(\lat) \leq \eta^*(\lat) \leq 10(\log n + 2) \cdot \eta_{\det}(\lat)
\;.
\end{equation}
Indeed, the first inequality is an immediate consequence of the Poisson Summation Formula, Eq.~\eqref{eq:PSF} (see Eq.~\eqref{eq:lowerboundonrho}). When $\eta_{\det}(\lat)=1$, the second inequality is precisely Theorem~\ref{thm:RM}; the general case follows by noting that both $\eta_{\det}(\lat)$ and $\eta^*(\lat)$ behave identically under scaling of $\lat$ (homogeneous of degree $-1$). 
Eq.~\eqref{eq:RM_eta} is not far from tight, as can be seen by noting that $\eta^*(\Z^n) = \sqrt{\log n/\pi} + o(1)$ and $\eta_{\det}(\Z^n) = 1$.

In Section~\ref{sec:count}, we extend Theorem~\ref{thm:RM} to obtain a bound on the Gaussian mass for all parameters, as follows.

 \begin{theorem}
 	\label{thm:RM_all_parameters}
 	Let $t := 10(\log n + 2)$. Then, for  any lattice $\lat \subset \R^n$ with $\det(\lat') \geq 1$ for all sublattices $\lat' \subseteq \lat$,
 	\begin{enumerate}
 		\item \label{item:RM} $\rho_{s}(\lat) \leq 1 + e^{-\pi(1/s^2 - t^2)}/2$ for any $s \leq 1/t$;
 		\item \label{item:RM_convex} $\rho_s(\lat) \leq (Cst)^{n/2}$ for any $1/t < s < t$ and some universal constant $C > 1$; and
 		\item \label{item:RM_dual} $\rho_s(\lat) \leq 2s^n$ for any $s \geq t$.
 	\end{enumerate}
\end{theorem}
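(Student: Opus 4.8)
The plan is to handle the three parameter regimes separately. Item~\ref{item:RM} ($s \le 1/t$) is elementary: applying the hypothesis to the rank-one sublattice $\Z\vec y$ shows $\|\vec y\| \ge 1$ for every nonzero $\vec y \in \lat$, so, using $1/s^2 - t^2 \ge 0$ and $\|\vec y\|^2 \ge 1$,
\[
\rho_s(\lat) - 1 = \sum_{\vec y \in \lat \setminus \{0\}} e^{-\pi \|\vec y\|^2/s^2}
\le e^{-\pi(1/s^2 - t^2)}\sum_{\vec y \in \lat \setminus \{0\}} e^{-\pi t^2 \|\vec y\|^2}
= e^{-\pi(1/s^2 - t^2)}\bigl(\rho_{1/t}(\lat) - 1\bigr)
\le \tfrac12 e^{-\pi(1/s^2-t^2)},
\]
where the last step is Theorem~\ref{thm:RM}.

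For Item~\ref{item:RM_dual} ($s \ge t$), the Poisson Summation Formula gives $\rho_s(\lat) = \frac{s^n}{\det(\lat)}\rho_{1/s}(\lat^*)$, so it suffices to bound $\rho_{1/s}(\lat^*)$. In the \emph{stable} case $\det(\lat) = 1$ this is immediate: $\lat^*$ is then stable as well, and Theorem~\ref{thm:RM} applied to $\lat^*$ together with $1/s \le 1/t$ gives $\rho_{1/s}(\lat^*) \le \rho_{1/t}(\lat^*) \le 3/2$, hence $\rho_s(\lat) \le \tfrac32 s^n$. For general $\lat$ I would pass to the canonical (Harder--Narasimhan) filtration $\{0\} = \lat_0 \subset \cdots \subset \lat_k = \lat$: the hypothesis forces every slope to be nonpositive, so each semistable quotient $N_i = \pi_{\lat_{i-1}^\perp}(\lat_i)$ has $\det(N_i) \ge 1$ with $\prod_i \det(N_i) = \det(\lat)$; rescaling each $N_i$ to a stable lattice, applying the stable case (with the smaller threshold $10(\log(\rank N_i)+2) \le t$), and recombining via the submultiplicativity $\rho_s(\lat) \le \prod_i \rho_s(N_i)$ should close the argument, the factors $\det(N_i)$ cancelling against $\det(\lat)$. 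Keeping the constant at exactly $2$ is the delicate part, since a naive per-piece use of Theorem~\ref{thm:RM} costs a factor $3/2$ each time; one instead peels off a single filtration step at a time, bounding the Gaussian mass of the projected lattice directly.

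Item~\ref{item:RM_convex} ($1/t < s < t$) is the hardest. The natural tool is that $y \mapsto \log \rho_{1/\sqrt y}(\lat)$ is convex and nonincreasing (Cauchy--Schwarz applied to $\sum_{\vec y} e^{-\pi y \|\vec y\|^2}$), but convexity between the endpoints $s = 1/t$ and $s = t$ only yields $\rho_s(\lat) \lesssim t^n$ for moderate $s$, which is far weaker than $(Cst)^{n/2}$---one genuinely needs, e.g., that a stable lattice has only $(\mathrm{polylog}\,n)^{n/2}$ vectors of length $O(1)$. Here again I would use the canonical filtration, $\rho_s(\lat) \le \prod_i \rho_s(N_i)$, reducing to the semistable pieces; the key point is that the target $(Cst)^{n/2}$ is scale-invariant, so rescaling a semistable piece of determinant $\delta \ge 1$ to a stable lattice only introduces a favorable factor $\delta^{-1/2} \le 1$ and no $(3/2)^k$ loss arises. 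This reduces everything to proving $\rho_s(M) \le (Cs t_m)^{m/2}$ for a stable lattice $M$ of rank $m$ and $s \in (1/t_m, t_m)$, which does not follow formally from the statement of Theorem~\ref{thm:RM} and would have to be extracted from its proof. The main obstacle throughout is thus the passage from stable (or semistable) lattices, where everything is clean, to general lattices---and in particular the moderate-parameter estimate for stable lattices with the stated universal constant.
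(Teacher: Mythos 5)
Item~\ref{item:RM} is correct and essentially identical to the paper's argument. For Item~\ref{item:RM_dual}, your strategy (Poisson summation and self-duality of stable lattices for the stable case; canonical filtration plus $\rho_s(\lat) \le \prod_i \rho_s(\alpha_i \cdot \lat_i/\lat_{i-1})$ for the general case) is the right one, and you are right to worry about a $(3/2)^k$ accumulation -- but the resolution is simpler than ``peeling one step at a time,'' and your sketch does not land on it. The point is \emph{not} to apply the rank-$d_i$ stable bound to each quotient separately. By Item~\ref{item:direct_sum_stable} of Proposition~\ref{prop:stable_properties}, the direct sum $\bigoplus_i \alpha_i (\lat_i/\lat_{i-1})$ is itself a \emph{single} stable lattice of rank $n$; the paper applies the rank-$n$ stable bound \emph{once} to this direct sum, getting $\rho_s(\lat) \le \rho_s\bigl(\bigoplus_i \alpha_i \lat_i/\lat_{i-1}\bigr) \le \tfrac32 s^n$ with no per-piece loss.

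The genuine gap is in Item~\ref{item:RM_convex}. You correctly observe that the elementary Cauchy--Schwarz log-convexity (convexity of $\log \rho_s(\lat)$ in $y = 1/s^2$) is useless here: at $s=1$ the interpolation weight on the $s=1/t$ endpoint is only $\approx 1/t^2$, so the bound degenerates to roughly $\rho_t(\lat) \approx t^n$, missing the target $t^{n/2}$. But you then abandon interpolation and fall back to a filtration reduction that bottoms out at an unproven moderate-parameter estimate for stable lattices, which you concede ``would have to be extracted from [Theorem~\ref{thm:RM}'s] proof.'' The missing ingredient is a \emph{different} convexity: by Theorem~\ref{thm:B_conjecture} (Cordero-Erausquin--Fradelizi--Maurey), $\sigma \mapsto \gamma_{e^\sigma}(\V(\lat))$ is log-concave -- convexity in $\log s$, not in $1/s^2$. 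Combined with the two-sided comparison $e^{-4n}/2 \le \gamma_s(\V(\lat))\,\rho_s(\lat) \le 1$ (Lemma~\ref{lem:rho_gamma_2}), this yields the approximate multiplicativity
\[
\rho_s(\lat) \le 2e^{4n}\,\rho_{t_1}(\lat)^\tau\,\rho_{t_2}(\lat)^{1-\tau}, \qquad \tau = \frac{\log(s/t_2)}{\log(t_1/t_2)}
\]
(Theorem~\ref{thm:log_convex}). With $t_1, t_2 = t^{\pm 1}$ this places weight $\tfrac12$ on each endpoint at $s=1$, and interpolating between Items~\ref{item:RM} and~\ref{item:RM_dual} directly gives $\rho_s(\lat) \le 4(e^8 st)^{n/2}$ for every lattice satisfying the hypothesis, with no detour through the stable case and the $e^{4n}$ loss absorbed into the constant $C$. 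Your diagnosis that ``the main obstacle is the passage from stable to general lattices'' is therefore off: the hard part is finding the right convexity variable, and that requires the $(B)$-conjecture machinery already set up in Section~\ref{sec:gaussianconvex}.
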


\noindent Theorem~\ref{thm:RM_all_parameters} implies the following point-counting bounds. (See Section~\ref{sec:count} for the proof.) 

\begin{corollary}
	\label{cor:counting}
	Let $t := 10(\log n + 2)$. Then, for any lattice $\lat \subset \R^n$ with $\det(\lat') \geq 1$ for all sublattices $\lat' \subseteq \lat$, 
	and every shift vector $\vec{u} \in \R^n$,
	\begin{enumerate}
		\item \label{item:RM_bound} for any $r \geq 1$, $|\lat \cap (r B_2^n + \vec{u})| \leq 3e^{\pi t^2 r^2}/2$;
		\item \label{item:convex_bound} 
		for any $\sqrt{n/(2\pi)} \cdot  t^{-1} \leq r \leq \sqrt{n/(2\pi)} \cdot t $, $|\lat \cap (r B_2^n + \vec{u})|  \leq (C tr/\sqrt{n})^{n/2}$ for some universal constant $C > 0$; and 
		\item \label{item:dual_packing_bound} for any $r \geq \sqrt{n/(2\pi)} \cdot t$, $|\lat \cap (r B_2^n + \vec{u})|  \leq 2(2\pi e r^2/n)^{n/2}$.
	\end{enumerate}
\end{corollary}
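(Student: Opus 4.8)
The plan is to obtain all three point counts as immediate consequences of Theorem~\ref{thm:RM_all_parameters}, using the elementary fact that the Gaussian mass of a (possibly shifted) lattice dominates the number of its points in a ball, and then optimizing the parameter $s$ separately in each regime of $r$.

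\emph{Step 1: reduce to the unshifted Gaussian mass.} For any $\vec u \in \R^n$ and any $s > 0$, I would first note that $\rho_s(\lat + \vec u) \le \rho_s(\lat)$. This follows from the Poisson Summation Formula (Eq.~\eqref{eq:PSF}): applying it to $\rho_s(\lat+\vec u)$ expresses this quantity as a sum, over the dual lattice, of exactly the nonnegative terms appearing in the expansion of $\rho_s(\lat)$, each weighted by a cosine factor of modulus at most $1$.

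\emph{Step 2: Gaussian mass dominates a ball count.} For any $s > 0$, every point $\vec y \in (\lat+\vec u)\cap rB_2^n$ contributes at least $e^{-\pi r^2/s^2}$ to $\rho_s(\lat+\vec u)$, so combining with Step~1,
\[
|\lat \cap (rB_2^n + \vec u)| \;=\; |(\lat+\vec u)\cap rB_2^n| \;\le\; e^{\pi r^2/s^2}\,\rho_s(\lat+\vec u) \;\le\; e^{\pi r^2/s^2}\,\rho_s(\lat)
\;.
\]
It then remains, for each range of $r$, to insert the appropriate bound on $\rho_s(\lat)$ from Theorem~\ref{thm:RM_all_parameters} and choose $s$ to (nearly) minimize the right-hand side. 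For item~\ref{item:RM_bound}, take $s=1/t$ and use $\rho_{1/t}(\lat)\le 3/2$ (item~\ref{item:RM}, equivalently Theorem~\ref{thm:RM}); this yields $\tfrac32 e^{\pi t^2 r^2}$ in fact for every $r>0$, the restriction $r\ge1$ being merely the regime of interest. For item~\ref{item:dual_packing_bound}, the function $s\mapsto \pi r^2/s^2 + n\log s$ is minimized at $s=r\sqrt{2\pi/n}$, and the hypothesis $r\ge\sqrt{n/(2\pi)}\,t$ forces $s\ge t$, so item~\ref{item:RM_dual} applies and gives $2e^{n/2}(2\pi r^2/n)^{n/2} = 2(2\pi e r^2/n)^{n/2}$ exactly. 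For item~\ref{item:convex_bound}, the same choice $s=r\sqrt{2\pi/n}$ now lies in $[1/t,t]$ precisely because $\sqrt{n/(2\pi)}\,t^{-1}\le r\le\sqrt{n/(2\pi)}\,t$, so item~\ref{item:RM_convex} gives $e^{n/2}(Cst)^{n/2} = (eC\sqrt{2\pi}\cdot tr/\sqrt n)^{n/2}$, which has the claimed form after renaming the universal constant.

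I expect no genuine obstacle here: the derivation is a direct substitution, and the one "clever" ingredient — the choice of $s$ — is simply the saddle point of the estimate in Step~2. The only points needing a little care are (i) checking, for each item, that the chosen $s$ falls in the interval of validity of the corresponding bound in Theorem~\ref{thm:RM_all_parameters}, which is exactly what the stated range of $r$ guarantees; and (ii) the two endpoints $r=\sqrt{n/(2\pi)}\,t^{\pm1}$ of item~\ref{item:convex_bound}, where $s$ would equal $t$ or $1/t$ rather than lying strictly inside $(1/t,t)$ — there one can either nudge $s$ slightly into the open interval and absorb the negligible loss into the universal constant, or simply invoke item~\ref{item:RM_dual} at $s=t$ (respectively item~\ref{item:RM} at $s=1/t$) instead, adjusting the constant so the two forms match for all $n\ge1$.
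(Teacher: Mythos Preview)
Your proposal is correct and follows essentially the same route as the paper's proof, which also derives the key inequality $|\lat \cap (rB_2^n + \vec u)| \le e^{\pi r^2/s^2}\rho_s(\lat)$ via Claim~\ref{clm:shifted_mass} and then plugs in $s=1/t$ for item~\ref{item:RM_bound} and $s=r\sqrt{2\pi/n}$ for items~\ref{item:convex_bound} and~\ref{item:dual_packing_bound}. One inconsequential slip: $|\lat \cap (rB_2^n + \vec u)| = |(\lat-\vec u)\cap rB_2^n|$, not $|(\lat+\vec u)\cap rB_2^n|$, though of course the shifted-mass bound applies to either sign of the shift so the conclusion is unaffected.
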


\noindent In Section~\ref{sec:examples}, we discuss the tightness of Theorem~\ref{thm:RM_all_parameters} and Corollary~\ref{cor:counting}.

\subsection{Approximation to the covering radius}
\label{sec:KL_intro}

The \emph{covering radius} $\mu(\lat)$ of a lattice $\lat \subset \R^n$ is the maximal distance from any point in $\R^n$ to the lattice, or equivalently, the minimum radius $r$ such that $\lat + r B_2^n = \R^n$. 
It follows from the definition that $\mu(\lat)$ must be at least the radius of a ball of volume $\det(\lat)$, which is at least $\sqrt{n/(2 \pi e)} \det(\lat)^{1/n}$. 
By considering projections, Kannan and Lov{\'a}sz~\cite{KL88} improved this lower bound, as follows. 
Let $\pi_{W^\perp}(\lat)$ be the projection of the lattice onto the space $W^\perp$ orthogonal to some \emph{lattice subspace} $W \subset \R^n$---a subspace spanned by $k < n$ linearly independent lattice vectors.\footnote{The projection $\pi_{W^\perp}(\lat)$ is a lattice if and only if $W$ is a lattice subspace.} 
Then clearly $\mu(\lat) \geq \mu(\pi_{W^\perp}(\lat))$, and the latter is at least $(\dim(W^\perp)/(2 \pi e))^{1/2} \cdot \det(\pi_{W^\perp}(\lat))^{1/\dim(W^\perp)}$.
So, we obtain the lower bound
\[
\mu(\lat) 
\ge \frac{1}{\sqrt{2\pi e}} \cdot \mu_{\det}(\lat) 
\; ,
\]
where
\begin{align*}
\mu_{\det}(\lat) &:= \max_{W \subset \R^n} \sqrt{\dim(W^\perp)} \cdot \det(\pi_{W^\perp}(\lat))^{\frac{1}{\dim(W^\perp)}} \; ,
\end{align*}
with the maximum taken over lattice subspaces $W \subset \R^n$.
To make the analogy to $\eta_{\det}$ more apparent, we apply duality and write
\begin{align*}
\mu_{\det}(\lat) = \max_{W \subset \R^n} \sqrt{\dim(W^\perp)} \cdot \det(\pi_{W^\perp}(\lat)^*)^{-\frac{1}{\dim(W^\perp)}} =  \max_{\mathcal{M} \subseteq \lat^*} \sqrt{\rank(\mathcal{M})} \cdot \det(\mathcal{M})^{-\frac{1}{\rank(\mathcal{M})}}
\; ,
\end{align*}
where $\lat^*$ denotes the dual of a lattice $\lat$, the first equality uses the fact that the determinant of the dual lattice is the reciprocal of the determinant of the lattice, and the second equality uses the fact that  $\pi_{W^\perp}(\lat)^* = W^\perp \cap \lat^*$.
Kannan and Lov{\'a}sz also observed the upper bound
\[
\mu(\lat) \leq C \sqrt{n} \cdot \mu_{\det}(\lat)
\; 
\]
(see~\cite[Theorem 11.1]{DR16} for a proof),
and asked whether a better upper bound could be found.%
\footnote{They also proved similar bounds for arbitrary norms~\cite[Corollary 3.11]{KL88}.} 
In Section~\ref{sec:KL}, we use Theorem~\ref{thm:RM} to derive the following improved bound.

\begin{theorem}[Covering-radius approximation]
\label{thm:KL}
For any lattice $\lat \subset \R^n$, 
\begin{equation}
\label{eq:covradchar}
\frac{1}{\sqrt{2\pi e}} \cdot \mu_{\det}(\lat) \leq \mu(\lat) \leq 10(\log n + 10)^{3/2} \cdot \mu_{\det}(\lat)
\; .
\end{equation}
\end{theorem}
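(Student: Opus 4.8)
The plan is to derive Theorem~\ref{thm:KL} from the Reverse Minkowski Theorem (Theorem~\ref{thm:RM}) by relating the covering radius $\mu(\lat)$ to the Gaussian mass of the \emph{dual} lattice $\lat^*$, and then relating $\eta^*$-type quantities to $\mu_{\det}$. The lower bound in~\eqref{eq:covradchar} is already established in the excerpt, so only the upper bound $\mu(\lat) \le 10(\log n + 10)^{3/2}\cdot \mu_{\det}(\lat)$ needs proof. The starting observation is the standard fact (from Banaszczyk-type transference arguments) that the covering radius is controlled by the smoothing parameter: if $\rho_{1/t}(\lat^*) \le 3/2$, say, then $\mu(\lat) \lesssim \sqrt{n}/t$ — more precisely, one uses that when the dual Gaussian mass is close to $1$, the primal Gaussian measure on any coset is close to uniform, which forces every point of $\R^n$ to be within roughly $\sqrt{n}\cdot\eta$ of $\lat$. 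So the task reduces to upper-bounding $\eta^*(\lat^*)$ (the smoothing parameter of $\lat$, viewed from the dual side) in terms of $\mu_{\det}(\lat)$.

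The key step is to apply Theorem~\ref{thm:RM} — or rather its scaled reformulation~\eqref{eq:RM_eta} — not to $\lat^*$ directly but to a suitable projection. The quantity $\mu_{\det}(\lat) = \max_{\mathcal M \subseteq \lat^*} \sqrt{\rank(\mathcal M)}\cdot \det(\mathcal M)^{-1/\rank(\mathcal M)}$ is, up to the $\sqrt{\rank}$ factor, exactly $\eta_{\det}(\lat^*)$ restricted to each rank. The obstacle is that $\eta_{\det}$ weights all sublattices equally regardless of rank, whereas $\mu_{\det}$ has the $\sqrt{\rank}$ bonus, so a naive application of~\eqref{eq:RM_eta} to $\lat^*$ loses a $\sqrt n$ factor — precisely the factor Kannan–Lovász already had. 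The fix is an inductive / peeling argument: find the densest sublattice $\mathcal M \subseteq \lat^*$ (the one achieving $\eta_{\det}$), handle the subspace $W = \spn(\mathcal M)$ using Theorem~\ref{thm:RM} on $\lat^* \cap W$ to bound the ``in-subspace'' part of the covering radius by $O(\log n)\cdot \mu_{\det}$, then pass to the projection $\pi_{W^\perp}(\lat^*)$, which has strictly smaller rank and whose own $\mu_{\det}$ is bounded by $\mu_{\det}(\lat)$, and recurse. Summing the contributions across the $O(\log n)$-or-fewer levels of recursion (the ranks drop, and one must check the geometric/orthogonal decomposition of $\mu$ across $W$ and $W^\perp$ behaves additively in squares) yields the $(\log n)^{3/2}$ bound: one factor of $\log n$ from Theorem~\ref{thm:RM} per level, and the extra $\sqrt{\log n}$ from aggregating over levels via a Cauchy–Schwarz / $\sqrt k$ accounting.

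Concretely, the steps in order: (1) record the dual/transference inequality $\mu(\lat) \le c\sqrt{n}\cdot\eta^*(\lat^*)$ for an explicit small $c$, or better, a version of it localized to a chain of projections so that each level contributes $\sqrt{k_i}$ with $k_i$ the rank handled at that level; (2) invoke Theorem~\ref{thm:RM} in the form~\eqref{eq:RM_eta} to get $\eta^*((\lat^*)\cap W_i) \le 10(\log k_i + 2)\cdot \eta_{\det}((\lat^*)\cap W_i) \le 10(\log n+2)\cdot \mu_{\det}(\lat)/\sqrt{k_i}$ by definition of $\mu_{\det}$; (3) set up the recursion on projections $\pi_{W^\perp}$, verifying that $\mu_{\det}$ does not increase under such projections and that $\mu(\lat)^2 \le \sum_i \mu(\text{level } i)^2$-type decomposition holds; (4) sum: $\mu(\lat) \le \sum_i \sqrt{k_i}\cdot c\cdot 10(\log n+2)\cdot\mu_{\det}(\lat)/\sqrt{k_i} = c\cdot 10(\log n + 2)\cdot(\#\text{levels})\cdot\mu_{\det}(\lat)$, and since the ranks halve (or the densest-sublattice structure forces at most $\log n$ levels) this gives $O((\log n)^2)$; tightening the accounting with Cauchy–Schwarz over the levels — $\sum_i \sqrt{k_i}\cdot 1/\sqrt{k_i}$ replaced by $\sqrt{\sum k_i}\cdot\sqrt{\sum 1/k_i}$ style bounds, using $\sum k_i \le n$ — shaves this to $O((\log n)^{3/2}\cdot\mu_{\det})$. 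The main obstacle I anticipate is step~(3): cleanly decomposing the covering radius along the orthogonal splitting $W \oplus W^\perp$ while keeping the constants under control, since $\mu$ is not simply additive and the projection $\pi_{W^\perp}(\lat^*)$ corresponds on the primal side to the slice $\lat \cap W^\perp$ (or a projection), and one must track which of these is relevant and that the induction hypothesis applies to it with the same $\mu_{\det}$ bound. Getting the explicit constant $10(\log n+10)^{3/2}$ rather than merely $O((\log n)^{3/2})$ will require being careful that the per-level loss is $(\log n + O(1))$ and that the number of effective levels plus the Cauchy–Schwarz slack contributes only another $\sqrt{\log n + O(1)}$.
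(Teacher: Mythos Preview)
Your high-level plan matches the paper's: bound the covering radius of each ``stable piece'' of a filtration using Reverse Minkowski plus Banaszczyk-style transference (the paper's Lemma~\ref{lem:mu_eta}), then aggregate the pieces via the Pythagorean inequality $\mu(\lat)^2 \le \sum_i \mu(\lat_i/\lat_{i-1})^2$. However, two steps in your proposal do not go through as written.

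\textbf{Step~(2) is not correct as stated.} You claim $\eta_{\det}((\lat^*)\cap W_i)\le \mu_{\det}(\lat)/\sqrt{k_i}$, but the definition of $\eta_{\det}$ maximizes over sublattices of \emph{all} ranks, and a rank-$1$ sublattice of $(\lat^*)\cap W_i$ only gives the bound $\mu_{\det}(\lat)/\sqrt{1}$, not $\mu_{\det}(\lat)/\sqrt{k_i}$. The fix is to peel using the \emph{canonical filtration} (Proposition~\ref{prop:stable_properties}): then each quotient $\lat_i/\lat_{i-1}$ is, after rescaling by $\det(\lat_i/\lat_{i-1})^{-1/d_i}$, genuinely stable, so Theorem~\ref{thm:RM} applies with $\eta_{\det}=1$ and Theorem~\ref{thm:KL_stable} gives $\mu(\lat_i/\lat_{i-1})\le C_\mu(n)\sqrt{d_i}\,\det(\lat_i/\lat_{i-1})^{1/d_i}$ directly.

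\textbf{Step~(4) is the real gap.} The assertion that ``ranks halve'' or that there are ``at most $\log n$ levels'' is false: the canonical filtration can have up to $n$ levels (take $\lat=\bigoplus_i \alpha_i\Z$ with distinct $\alpha_i$). Your Cauchy--Schwarz sketch $\sqrt{\sum k_i}\cdot\sqrt{\sum 1/k_i}$ is then useless, since $\sum 1/k_i$ can be $\Theta(n)$. After the Pythagorean step one is left with bounding $\sum_i d_i\, a_i$ where $a_i:=\det(\lat_i/\lat_{i-1})^{2/d_i}$, and the key fact one must exploit is that the $a_i$ are \emph{strictly increasing} along the canonical filtration (Item~\ref{item:increasing_slopes} of Proposition~\ref{prop:stable_properties}). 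The paper's Lemma~\ref{lem:sorted_rev_AM-GM} (a ``reverse AM--GM'' via geometric bucketing of the $a_i$) converts this into
\[
\sum_i d_i a_i \;\le\; 2e\lceil\log(2n)\rceil\cdot \max_j\, m_j\Big(\prod_{i\ge j} a_i^{d_i}\Big)^{1/m_j}
\;=\; 2e\lceil\log(2n)\rceil\cdot \max_j\, \rank(\lat/\lat_{j-1})\det(\lat/\lat_{j-1})^{2/\rank(\lat/\lat_{j-1})},
\]
and the right-hand maximum is bounded by $\mu_{\det}(\lat)^2$ by definition. This bucketing argument---not a level count---is what produces the extra $\sqrt{\log n}$ factor, and it is essentially tight (the lattice in the footnote after Theorem~\ref{thm:KL} has $n$ rank-$1$ levels and saturates it). Without this lemma or an equivalent, your aggregation step does not close.
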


We emphasize that Dadush and Regev~\cite{DR16} already proved that Theorem~\ref{thm:KL} (with slightly weaker parameters) 
would follow from a proof of Theorem~\ref{thm:RM}. 
Although our proof is shorter and achieves slightly better 
parameters, it is conceptually similar to the one in~\cite{DR16}.

We note that the specific polylogarithmic factor that we obtain is likely not optimal. 
In fact, in Theorem~\ref{thm:KL_slicing} we prove a bound similar to that in Eq.~\eqref{eq:covradchar}
that replaces the factor $10(\log n + 10)^{3/2}$ by $C \sqrt{\log n}$, assuming the celebrated Slicing Conjecture~\cite{Bourgain91-slicing,Klartag06,Chen21,KlartagLehec2022}. However,
it is not difficult to show that this factor cannot be smaller than $\sqrt{\log n/(4e)} + o(1)$.\footnote{%
	\label{foot:harmonic_cov_rad}%
	Consider the lattice $\lat$ generated by $(\vec{e}_1, \vec{e}_2/2, 2\vec{e}_3/3^{3/2}, \ldots, (n-1)^{(n-1)/2}\vec{e}_n/n^{n/2})$. It is not difficult to verify that $\mu_{\det}(\lat) = 1$, but 
	\begin{align*}
		\mu(\lat)^2 = 1/4 + \sum_{k=2}^n \frac{(k-1)^{k-1}}{4k^k} 
		= 1/4 + \sum_{k=2}^n \frac{(1-1/k)^{k}}{4(k-1)} 
		= \sum_{k=2}^n \frac{1}{4e(k-1)} + O(1) 
		= \frac{\log n}{4e} + O(1)
		\; .
	\end{align*}
	Therefore, $\mu(\lat) = \sqrt{\log n/(4e)} + o(1)$.
}

\paragraph{Covering radius of stable lattices and Minkowski's Conjecture.} 
We say that a lattice $\lat \subset \R^n$ is \emph{stable} if $\det(\lat) = 1$ and $\det(\lat') \geq 1$ for all sublattices $\lat' \subseteq \lat$. Stable lattices arise in a number of contexts~\cite{HN75,Stuhler76,Grayson84} and they play an important role in the sequel. Shapira and Weiss showed that a tight bound of $\mu(\lat) \leq \mu(\Z^n) = \sqrt{n}/2$ on the covering radius of stable lattices would imply a well-known conjecture attributed to Minkowski~\cite{SW16}. Specifically, the conjecture asserts that for every lattice $\lat \subset \R^n$ with $\det(\lat) = 1$ and vector $\vec{t} = (t_1,\ldots, t_n) \in \R^n$,
\begin{equation}
    \label{eq:minkowski_conjecture}
    \inf_{\vec{y} \in \lat} \prod_i |y_i-t_i| \leq 2^{-n}
    \; .
\end{equation}
(See~\cite{SW16} and~\cite{Solan16}.) 

We do not manage to prove a bound tight enough to imply Eq.~\eqref{eq:minkowski_conjecture}, but en route to proving Theorem~\ref{thm:KL} we do show that $\mu(\lat) \leq 4 \sqrt{n} (\log n + 10)$ for all stable lattices. (See Theorem~\ref{thm:KL_stable}.) We also observe that a very strong resolution to the Slicing Conjecture would yield the desired tight bound, when combined with a recent result due to Magazinov~\cite{Magazinov}. (See Theorem~\ref{thm:KL_stable_slicing} and the discussion afterwards.)

\subsection{An optimal bound on the Gaussian mass for ``extreme'' parameters} 

It is tempting to ask whether $\rho_s(\lat) \leq \rho_s(\Z^n)$ for any lattice $\lat \subset \R^n$ such that $\det(\lat') \geq 1$ for all sublattices $\lat' \subseteq \lat$ and any parameter $s > 0$. 
(See Section~\ref{sec:open_problems}.) The next theorem shows that indeed $\rho_s(\lat) \leq \rho_s(\Z^n)$ for such lattices, but only for ``extremely low'' or ``extremely high'' parameters $s$. (See Section~\ref{sec:all_params} for the proof.)

 \begin{theorem}
	\label{thm:extreme_parameters}
	For any lattice $\lat \subset \R^n$ such that $\det(\lat') \geq 1$ for all sublattices $\lat' \subseteq \lat$ and parameter $s >0$ such that either $s \leq \sqrt{2\pi/(n+2)}$ or $s \geq \sqrt{(n+2)/(2\pi)}$, we have $\rho_{s}(\lat) \leq \rho_{s}(\Z^n)$.
\end{theorem}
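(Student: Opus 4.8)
The plan is to treat the two ranges separately: prove the low-parameter statement ($s \le \sqrt{2\pi/(n+2)}$) by induction on $n$, and then obtain the high-parameter statement from it via Poisson summation. Throughout I would write $q := e^{-\pi/s^2}$, so that in the low range $q \le e^{-(n+2)/2}$, and I would repeatedly use two standard consequences of Poisson summation that are available here: $\rho_s(\lat + \vec{u}) \le \rho_s(\lat)$ for every shift $\vec{u}$, and $\rho_s(\Z^n) = \vartheta^n$ with $\vartheta$ increasing in $s$, which in particular gives $\rho_s(\Z^n) - 1 \ge 2nq$.

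For the inductive step I would use one of two ``slicing'' reductions, according to the geometry of $\lat$. Note first that applying the hypothesis to rank-one sublattices gives $\lambda_1(\lat) \ge 1$. If $\lat$ actually has a vector of norm exactly $1$, pick a primitive such $\vec{v}$, write $\lat$ as a union of cosets of $\Z\vec{v}$ indexed by $\pi_{\vec{v}^\perp}(\lat)$, and factor each Gaussian along $\vec{v}$ and $\vec{v}^\perp$; using $\rho_s(\Z + t) \le \rho_s(\Z)$ this yields $\rho_s(\lat) \le \rho_s(\Z)\cdot\rho_s(\pi_{\vec{v}^\perp}(\lat))$. The crucial point is that, because $\|\vec{v}\| = 1$, every sublattice $\mathcal{M} \subseteq \pi_{\vec{v}^\perp}(\lat)$ lifts to a sublattice $\mathcal{M}'\subseteq \lat$ (its preimage intersected with $\lat$, which contains $\vec{v}$) with $\det(\mathcal{M}') = \det(\mathcal{M})$, so $\pi_{\vec{v}^\perp}(\lat)$ again satisfies the hypothesis; since $s \le \sqrt{2\pi/(n+2)} \le \sqrt{2\pi/(n+1)}$, the inductive hypothesis in dimension $n-1$ gives $\rho_s(\pi_{\vec{v}^\perp}(\lat)) \le \rho_s(\Z^{n-1})$ and hence $\rho_s(\lat) \le \rho_s(\Z^n)$. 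Dually, if $\lat^*$ has a nonzero vector $\vec{u}$ with $\|\vec{u}\| \le 1$, I would stratify $\lat$ into the affine hyperplanes $\{\vec{x} : \langle \vec{x},\vec{u}\rangle = k\}$, $k \in \Z$, which are spaced $1/\|\vec{u}\| \ge 1$ apart; the same factoring gives $\rho_s(\lat) \le \rho_s(\lat \cap \vec{u}^\perp)\cdot\rho_{s\|\vec{u}\|}(\Z) \le \rho_s(\lat \cap \vec{u}^\perp)\cdot\rho_s(\Z)$, and $\lat \cap \vec{u}^\perp$ is a sublattice, so induction applies to it. Together these cover every $\lat$ except those with $\lambda_1(\lat) > 1$ \emph{and} $\lambda_1(\lat^*) > 1$.

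For such ``everywhere spread out'' lattices I would argue directly. With $\lambda := \lambda_1(\lat) > 1$, every nonzero $\vec{x}\in\lat$ has $\|\vec{x}\|^2 \ge \lambda^2$, so $\rho_s(\lat) - 1 = \sum_{\vec{x}\neq\vec 0} q^{\|\vec{x}\|^2}$; bounding $|\lat \cap rB_2^n|$ by a packing argument (balls of radius $\lambda/2$ are disjoint) and integrating against the rapidly decaying weight $q^{r^2}$ shows $\rho_s(\lat) - 1 \le C^n q^{\lambda^2}$ for an absolute constant $C$, which one compares with $\rho_s(\Z^n) - 1 \ge 2nq$ using $q \le e^{-(n+2)/2}$. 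This is comfortable once $\lambda^2$ is bounded away from $1$ by a constant depending on $C$. The delicate regime — and what I expect to be the main obstacle — is $\lambda$ only slightly above $1$, where the crude packing count is too lossy. There one must use the hypothesis more sharply: any two lattice vectors $\vec{x},\vec{y}$ of norm $\lambda$ span a rank-two sublattice of determinant $\sqrt{\lambda^4 - \langle\vec{x},\vec{y}\rangle^2} \ge 1$, forcing $|\langle\vec{x},\vec{y}\rangle| \le \sqrt{\lambda^4 - 1}$, so the short vectors become nearly orthogonal as $\lambda \to 1$; a rank/positive-semidefiniteness argument on their Gram matrix then caps their number (and, propagated to slightly longer vectors, the whole sum) by essentially $2n$, matching $\Z^n$. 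Carrying the constants through this step, and controlling its interaction with $\det(\lat)$, is the crux.

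Finally, the high-parameter case $s \ge \sqrt{(n+2)/(2\pi)}$ reduces to the low-parameter one by Poisson summation: since $\det(\lat) \ge 1$ while $(\Z^n)^* = \Z^n$ has determinant $1$,
\[
\rho_s(\lat) = \frac{s^n}{\det(\lat)}\,\rho_{1/s}(\lat^*) \le s^n\,\rho_{1/s}(\lat^*), \qquad \rho_s(\Z^n) = s^n\,\rho_{1/s}(\Z^n),
\]
so it suffices to prove $\rho_{1/s}(\lat^*) \le \rho_{1/s}(\Z^n)$, and $1/s \le \sqrt{2\pi/(n+2)}$ is precisely the low-parameter range. The one wrinkle is that $\lat^*$ need not satisfy the full hypothesis: from $\det(\lat') \ge 1$ for all $\lat'\subseteq\lat$ one gets only $\det(\mathcal{M}) \ge 1/\det(\lat)$ for all $\mathcal{M}\subseteq\lat^*$. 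To accommodate this I would run the low-parameter induction from the start for lattices all of whose sublattices have determinant at least a given $c \in (0,1]$, tracking how $c$ enters the bound; this both streamlines the projection step above (where $c$ is restored to $1$) and supplies exactly what is needed for the dual.
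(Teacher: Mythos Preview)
Your inductive shell (Cases~1 and~2) is sound, but it is doing almost no work: Case~1 fires only on the measure-zero event $\lambda_1(\lat)=1$, and Case~2 only when $\lambda_1(\lat^*)\le 1$, neither of which is forced by the hypothesis. So essentially every lattice lands in Case~3, and there your argument is only a sketch. The packing bound gives $\rho_s(\lat)-1\le C^n q^{\lambda^2}$, which compared with $\rho_s(\Z^n)-1\ge 2nq$ needs $\lambda^2-1$ bounded below by a constant depending on $C$; in the regime $\lambda=1+o(1)$ you propose to use the rank-two determinant constraint to force near-orthogonality of the shortest vectors, but this only controls vectors of length \emph{exactly}~$\lambda$, and the Gram-matrix/rank argument you allude to does not by itself cap the number of such vectors at~$2n$ (Gershgorin gives ``if $(m-1)\delta<1$ then $m\le n$'', which is not an upper bound on~$m$). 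Propagating any such bound to slightly longer vectors, where the determinant constraint weakens, is a second open-ended step. You yourself flag this as ``the crux'' --- it is, and as written it is a genuine gap, not a routine calculation.

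The paper sidesteps Case~3 entirely by a different mechanism. It first restricts to \emph{stable} lattices (determinant exactly~$1$), which form a compact set, so $\rho_s$ attains a maximum. A direct computation of the Laplacian of $A\mapsto\rho_s(e^{A/2}\lat)$ over trace-zero symmetric~$A$ shows that for $s\le\sqrt{2\pi/(n+2)}$ every summand in the Laplacian is nonnegative (and some are positive), so there are \emph{no} interior local maxima; the maximizer must lie on the boundary, where a determinant-one proper sublattice exists and one can split $\lat$ into lower-rank stable pieces and induct. The extension from stable lattices to general lattices satisfying the hypothesis goes through the canonical filtration, which also makes the high-parameter case clean: duals of stable lattices are stable, so Poisson summation applies directly without the relaxed ``$\det\ge c$'' hypothesis you were forced to introduce.
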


\noindent We hope that the proof of Theorem~\ref{thm:extreme_parameters} might provide some hints as to how to extend
it to all parameters $s$.

\subsection{Proof overview}
\label{sec:overview}

In this section, we give a high-level overview of the proof of Theorem~\ref{thm:RM}.

\paragraph{Bounding the mass of stable lattices. } 
Recall that a lattice $\lat$ is \emph{stable} if $\det(\lat) = 1$ and $\det(\lat') \geq 1$ for all sublattices $\lat' \subseteq \lat$. I.e., stable lattices are determinant-one lattices that satisfy the assumption in Theorem~\ref{thm:RM}. 
In this proof overview, we focus on bounding the Gaussian mass $\rho_s(\lat)$ of stable lattices $\lat$.
As it turns out, the general case then follows easily. 

Crucially, the stable lattices form a compact subset of the set of determinant-one lattices, so that the continuous function $\rho_s(\lat)$ must attain a global maximum over the set of stable lattices. We may therefore restrict our attention to a lattice that corresponds to this global maximum.
If this lattice is on the \emph{boundary} of the set of stable lattices, then it has a strict sublattice $\lat'$ with determinant one. We can then ``split the lattice'' at $\lat'$. Namely, we can replace the original lattice $\lat$  by the direct sum $\lat' \oplus \lat/\lat'$, where both $\lat'$ and $\lat/\lat'$ are stable. By using the fact that the Gaussian has a positive Fourier transform, it is not difficult to prove that
\[
\rho_s(\lat) \leq \rho_s(\lat' \oplus \lat/\lat') = \rho_s(\lat')\rho_s(\lat/\lat')
\; .
\]
(See Lemma~\ref{lem:direct_sum_rho}.) So, we have reduced the question to a lower-dimensional one. 
Therefore, if we could show that for any dimension, the global maximizer is on the boundary, then we could use induction to show that the global maximizer of the Gaussian mass is simply the integer lattice $\Z^n = \Z \oplus \cdots \oplus \Z$. 

Indeed, this is how we prove Theorem~\ref{thm:extreme_parameters} (in Section~\ref{sec:all_params}), which shows that $\Z^n$ has maximal Gaussian mass for certain ``extreme'' parameters $s$. 
For such parameters, by taking the second derivative, we show that a stable lattice cannot be a local maximum over the  set of determinant-one lattices.
Therefore, the global maximizer of $\rho_s(\lat)$ over the compact subset of stable lattices must be on the boundary, and we can perform the ``splitting'' procedure described above to show by induction that  $\rho_s(\lat) \leq \rho_s(\Z^n)$.

However, it was recently shown by Heimendahl et al.\ that stable local maxima do exist for some parameters $s$~\cite{HeimendahlMTVZ21}.\footnote{This was originally posed as an open  question in an earlier version of this paper.} As a potential way around this issue, we could use a natural and very elegant idea due to Shapira and Weiss~\cite{SW16}---we could try to directly bound the value of $\rho_s(\lat)$ at any local maximum. Then, either the global maximum of $\rho_s(\lat)$ over the set of stable lattices is one of these local maxima, in which case we can apply this bound; or it is on the boundary, in which case we can ``split the lattice'' as above. (Shapira and Weiss suggested using this approach to bound the \emph{covering radius} of stable lattices, which is also known to have local maxima~\cite{DSV12}; they showed that a tight bound would resolve Minkowski's Conjecture~\cite{SW16}.)

\paragraph{Enter the Voronoi cell. } Unfortunately, directly bounding the value of $\rho_s(\lat)$ at local maxima seems to be beyond our grasp. So, instead of working with $\rho_s(\lat)$ directly, we work with a proxy for it: the Gaussian mass of the Voronoi cell of the lattice
\[
\gamma_s(\V(\lat)) := \int_{\V(\lat)/s} e^{-\pi \|\vec{x}\|^2}{\rm d}\vec{x}
\; ,
\]
where the Voronoi cell is the set of all points that are at least as close to the origin as to any other lattice point,
\[
\V(\lat) := \{\vec{x} \in \R^n \ : \ \forall \vec{y} \in \lat,\ \|\vec{x}\| \leq \|\vec{y} - \vec{x}\|\}
\; .
\]
An elegant proof due to Chung, Dadush, Liu, and Peikert~\cite{CDLP12} shows that $\rho_s(\lat)$ 
is at most $1/\gamma_s(\V(\lat))$. (See Lemma~\ref{lem:rho_gamma}.)
So, in order to prove an upper bound on $\rho_s(\lat)$, it suffices to prove a lower 
bound on $\gamma_s(\V(\lat))$. 

We accomplish this via the approach described above. 
Namely, we reduce the problem to bounding the value of
$\gamma_s(\V(\lat))$ at local minima $\lat$. (We do not know whether these local minima exist.) 
For such an $\lat$, we consider two functions, both defined over the set of all determinant-one matrices $A \in \mathrm{SL}_n(\R)$: 
$g(A) = \gamma_s(\V(A\lat))$ and 
$h(A)=\gamma_s(A \V(\lat))$.
Notice that the value we wish to bound is $g(I_n)=h(I_n)$. Moreover, 
as we show (in Section~\ref{sec:gradient}),
the two functions have the same gradient at $A=I_n$ and therefore the fact that $g$ has a local minimum at $A=I_n$ implies that $h$ has a critical point there. 
Using a result due to Bobkov~\cite{bobkov11}, which itself follows from a deep theorem due to Cordero-Erausquin, Fradelizi, and Maurey~\cite{CFM04},\footnote{We 
		note in passing that one can prove Theorem~\ref{thm:RM} (at least up to constants) 
		without using this rather heavy hammer by considering local maxima of the 
		\emph{$\ell$-norm} of the Voronoi cell instead of local minima of the Gaussian 
		mass of the Voronoi cell.} 
we can show that any such critical point of $h$ must actually 
be a global \emph{maximum}. I.e., in the language of convex geometry, 
the Voronoi cell is in a position that maximizes the Gaussian mass. 
(Note the rather surprising jump from a presumed local minimum over the 
set of determinant-one lattices to a global \emph{maximum} over the set 
of positions of the Voronoi cell.) 
Finally, we complete the proof by 
applying the celebrated $\ell\ell^*$ theorem~\cite{FigielT79,Lewis79,Pisier82}, which implies that for $s=1/t$,
the global maximum of $h$ is at least $2/3$,
where $t := 10(\log n + 2)$ as in Theorem~\ref{thm:RM}.

\subsection{Related work}

Our main theorem was originally conjectured by Dadush~\cite{priv:Daniel}. 
Dadush together with the first named author described several applications of the conjecture~\cite{DR16}. 
In particular, they showed the connection between this conjecture and the Kannan-Lov{\'a}sz-style covering-radius approximation given in Theorem~\ref{thm:KL}. 
They also used a result from convex geometry (specifically the Milman-Pisier Theorem~\cite{MP87}) as evidence 
for the conjecture. That theorem is related to the $\ell \ell^*$ theorem that we use
in our proof.

The high-level outline of our proof (in which we obtain a bound on a lattice parameter by reducing the question to stable local extrema) is due to Shapira and Weiss~\cite{SW16}. They showed that an important conjecture attributed to Minkowski would follow if we could prove that $\Z^n$ has maximal covering radius amongst all stable lattices
(i.e., that the covering radius of an $n$-dimensional stable lattice is at most $\sqrt{n}/2$). 
They then observed that it would suffice to bound the covering radius of the lattices corresponding 
to local maxima of the covering radius function over the set of determinant-one lattices. 

Stable lattices were introduced (in a more general context) by Harder and Narasimhan~\cite{HN75} and by Stuhler~\cite{Stuhler76}. Our presentation more-or-less follows that of Grayson~\cite{Grayson84}.

Counting the number of lattice points in a ball is a classical question, and a summary of all that is known is far beyond the scope of this paper. (See, e.g.,~\cite{ConwaySloaneBook98}.) In particular, much research has gone into studying the relationship between the number of points in a ball of radius $r$ and the determinant of the densest \emph{one-dimensional} sublattice, written $\lambda_1(\lat)$. (I.e., $\lambda_1(\lat)$ is the length of the shortest non-zero vector in the lattice.) It is easy to see that the number of lattice points in a ball of radius $r \geq \lambda_1(\lat)$ is at most $(Cr/\lambda_1(\lat))^n$, which is essentially the best possible bound based on $\lambda_1(\lat)$.\footnote{Finding the exact best possible bounds on $|\lat \cap s\lambda_1(\lat) B_2^n|$ in various regimes is a fascinating classical problem that is still an active area of research. For example, when $s = 1$, this is known as the lattice ``kissing number'' problem, and the limit as $s \to \infty$ is the lattice sphere-packing problem. See, e.g.,~\cite{KL78,ConwaySloaneBook98,CK09}.} We consider the densest sublattice of any dimension (not just the densest one-dimensional sublattice) to obtain bounds that are much stronger in many cases. (Other authors have considered other generalizations of $\lambda_1(\lat)$ to derive incomparable bounds. E.g.,~\cite{Henk02}.)

Many authors have considered the extrema of various lattice parameters over the set of determinant-one lattices. Voronoi famously characterized the local maxima of the length of the shortest non-zero vector~\cite{Voronoi1908}, and a long line of work has gone into finding the specific global maxima in various dimensions. (See, e.g.,~\cite{ConwaySloaneBook98,CK09}.) Similarly, Montgomery~\cite{Montgomery} and Sarnak and Str{\"o}mbergsson~\cite{SS06} considered the minima of the Gaussian mass $\rho_s(\lat)$ and closely related functions.

Informally, the results mentioned above (and almost all literature on this topic since Minkowski) were concerned with the ``best'' lattices. E.g., they primarily considered lattices with the largest minimum distance, the smallest covering radius, the minimal Gaussian mass, etc. We are in some sense interested in the ``worst'' lattices. Thus, we consider \emph{maxima} of the Gaussian mass, maxima of the covering radius (as in~\cite{DSV12}), etc. (These questions only make sense over a bounded subset of the determinant-one lattices, such as the stable lattices.) 
Note that, while the ``best'' lattices tend to have fascinating properties (see, e.g.,~\cite{ConwaySloaneBook98}), in our setting the ``worst'' lattice that we know of is $\Z^n$.

We also note two follow-up works. First, Lovett and the first named author used Theorem~\ref{thm:RM} to give a counterexample to a very strong variant of the polynomial Freiman-Ruzsa conjecture over the integers~\cite{LR16}. This variant was introduced by Green (who suggested that it was likely to be false)~\cite{Green07}. Second, Dadush showed a number of applications of Theorems~\ref{thm:RM} and~\ref{thm:KL}, including an algorithm for finding dense lattice subspaces and a remarkably tight approximation to the covering radius in terms of the so-called canonical filtration~\cite{DadApproximatingCovering19} (as defined in Section~\ref{sec:stable}). 

The reader might also be interested in the lecture notes of Bost providing a broader perspective on the results of this paper~\cite{Bost20}.

\subsection{Directions for future work}
\label{sec:open_problems}

The most obvious direction for future work is to try to obtain a better value for $t$ in Theorem~\ref{thm:RM}. As far as we know, the correct value could be as small as $t = \eta^*(\Z^n) = \sqrt{\log (n)/\pi} + o(1)$.
Our proof seems to be loose in two places: (1) Theorem~\ref{thm:ell_position_mass}, which bounds the maximal Gaussian mass of convex bodies; and (2) the induction argument in the proof of Proposition~\ref{prop:RM_stable}. It seems that one would need to improve both parts of the proof to obtain a significantly better bound.

A more ambitious goal would be to prove that 
\begin{equation}
\label{eq:tight_RM}
\rho_s(\lat) \leq \rho_s(\Z^n)
\end{equation}
for all $s >0$ and all lattices $\lat \subset \R^n$ such that $\det(\lat') \geq 1$ for all sublattices $\lat' \subseteq \lat$. 
To that end, Eisenberg and the authors recently proved that this is true in an average sense over $s$, which can be stated as an inequality relating Epstein zeta functions as follows:
\[
    \sum_{\vec{y} \in \lat \setminus \{\vec0\}} \|\vec{y}\|^{-2\sigma} \leq \sum_{\vec{z} \in \Z^{n} \setminus \{\vec0\}} \|\vec{z}\|^{-2\sigma}
\]
for all such lattices $\lat \subset \R^n$ and all $\sigma > n/2$~\cite{EisenbergRS22}. They  prove this by showing that the Epstein zeta function has no local maxima over the set of determinant-one lattices (and over certain subsets of this set). One might think to try to prove something similar for $\rho_s(\lat)$.
However, local maxima are known to exist for some parameters $s$~\cite{HeimendahlMTVZ21}. 
As an alternative, one can try using the technique of ``characterizing the local extrema'' that we use to prove Theorem~\ref{thm:RM}. For this, we note that any local maximum of $\rho_s(\lat)$ must correspond to an ``isotropic'' lattice $\lat$ in the sense that 
\[
\sum_{\vec{y} \in \lat}  \vec{y}\vec{y}^T e^{-\pi \|\vec{y}\|^2/s^2} = \alpha \cdot I_n
\]
for some scalar $\alpha > 0$. So, it would suffice to show Eq.~\eqref{eq:tight_RM} for (stable) ``isotropic'' lattices. Unfortunately, we do not know how to make use of this.

Recall from Eq.~\eqref{eq:RM_eta} that Theorem~\ref{thm:RM} gives quite a tight approximation to the smoothing parameter $\eta^*(\lat)$. However, an analogous tightness result does not hold for Theorem~\ref{thm:RM_all_parameters} and Corollary~\ref{cor:counting}. 
Dadush and Regev therefore suggested a potential refinement that depends on ``the full spectrum of dense sublattices,'' $\min_{\lat' \subseteq \lat,\ \rank(\lat') = k} \det(\lat')^{1/k}$ for $k = 1,\ldots, n$, rather than just $\min_{\lat' \subseteq \lat} \det(\lat')^{1/\rank(\lat')}$~\cite[Section 9]{DR16}. This could potentially give a tight characterization of $|\lat \cap rB_2^n|$ for all radii $r$ and all lattices $\lat \subset \R^n$.

One can also consider generalizations of Theorems~\ref{thm:KL} and~\ref{thm:RM} to arbitrary norms, as discussed in~\cite{KL88} and~\cite[Section 9]{DR16} respectively. Extending Theorem~\ref{thm:KL} to arbitrary norms could potentially yield faster algorithms for Integer Programming~\cite{thesis/D12}. Unfortunately, a natural generalization of Theorem~\ref{thm:RM} actually fails. (See~\cite[Section 9]{DR16}.)

\subsection*{Acknowledgments}

We are extremely grateful to Daniel Dadush for sharing his conjecture with us and for many helpful discussions. 
We are also indebted to Barak Weiss for introducing us to stable lattices and their salient properties and for telling us about the technique of ``handling the stable local maxima separately'' from his paper with Uri Shapira~\cite{SW16}. We also thank him for his help with the proof of Proposition~\ref{prop:KL_stable_to_unstable}.
We thank Bo'az Klartag for referring us to~\cite{bobkov11} and for other
useful comments. We thank Ronen Eldan for showing us how to greatly simplify the proof of Theorem~\ref{thm:same_grad_VL}.
The first named author thanks Prof.\ Volker Kempe for his help with the two rascals.

\section{Preliminaries}
\label{sec:prelims}
We use $c, C, C'$ to denote arbitrary positive universal constants, whose value might change from one occurrence to the next. Logarithms are base $e$ unless otherwise specified. Vectors $\vec{x} \in \R^n$ are column vectors. We write $\|\vec{x}\| $ to represent the Euclidean norm of $\vec{x}$, and we write $I_n$ for the identity matrix in $n$ dimensions. For a matrix $A \in \R^{n \times n}$, we write $A^T$ for the transpose of $A$. 
We write $B_2^n := \{\vec{x} \in \R^n \ : \ \|\vec{x}\|\leq 1\}$ for the Euclidean ball in $\R^n$. We write $\pi_{S}(\vec{x})$ for the orthogonal projection of $\vec{x}$ onto $\spn(S)$ for some $S \subseteq \R^n$. (E.g., $\pi_{\{\vec{y}\}}(\vec{x}) = \inner{\vec{y}, \vec{x}}\vec{y}/\|\vec{y}\|^2$.) 
For two additive subgroups $S_1 \subseteq \R^n$ and $S_2 \subseteq \R^m$, their direct sum $S_1 \oplus S_2 \subseteq \R^{n + m}$ is $\{(\vec{x},\vec{y})\ :  \vec{x} \in S_1, \vec{y} \in S_2\}$.

A \emph{convex body} $K \subset \R^n$ is a convex compact subset of $\R^n$ with non-empty interior. It is \emph{symmetric} if $-K = K$. A \emph{position} of a convex body is simply $A K$ for a determinant-one matrix $A$.

\subsection{Lattices}
\label{sec:prelims_lattices}

A \emph{lattice} $\lat \subset \R^n$ of rank $d$ is the set of integer linear combinations of linearly independent basis vectors $\basis := (\vec{b}_1,\ldots, \vec{b}_d)$,
\[
\lat = \lat(\basis) := \Big\{ \sum_{i=1}^d a_i \vec{b}_i \ : \ a_i \in \Z \Big\}
\; .
\]
We typically treat lattices as though they are full rank (i.e., $d = n$) by implicitly identifying $\spn(\lat)$ with $\R^d$.
The \emph{dual lattice}
\[
\lat^* := \{ \vec{w} \in \spn(\lat) \ : \ \forall \vec{y} \in \lat,\ \inner{\vec{w}, \vec{y}} \in \Z\}
\]
is the set of all vectors in the span of $\lat$ that have integer inner products with all lattice vectors. 
One can check that $\lat^{**} = \lat$ and that $\lat^*$ is generated by $\basis^* := \basis (\basis^T \basis)^{-1}$.

We write
\[
\lambda_1(\lat) := \min_{\vec{y} \in \lat \setminus \{\vec0\}} \|\vec{y}\|
\]
for the length of the shortest non-zero lattice vector.
The \emph{covering radius} is 
\[
\mu(\lat) := \max_{\vec{t} \in \spn(\lat)} \min_{\vec{y} \in \lat} \| \vec{t} - \vec{y}\|
\; .
\]
It is not hard to show that $\lambda_1(\lat) \leq 2\mu(\lat)$ (e.g., by taking $\vec{t} = \vec{v}/2$, where $\vec{v} \in \lat$ has $\|\vec{v}\| = \lambda_1(\lat)$).

The \emph{determinant} of the lattice is given by $\det(\lat) := \sqrt{\det(\basis^T \basis)}$, or simply $|\det(\basis)|$ in the full-rank case. One can show that the determinant is well defined (i.e., it does not depend on the choice of basis $\basis$). It follows that if $\lat \subset \R^n$ has full rank and $A \in \R^{n\times n}$ is non-singular, then $\det(A \lat) = |\det(A)| \det(\lat)$, and that $\det(\lat^*) = 1/\det(\lat)$.

A \emph{sublattice} $\lat' \subseteq \lat$ is an additive subgroup of $\lat$. We say that $\lat'$ is \emph{primitive} if $\lat' = \lat \cap \spn(\lat')$. 
For a primitive sublattice $\lat' \subseteq \lat$, we define the quotient lattice $\lat/\lat' := \pi_{\spn(\lat')^\perp}(\lat)$ to be the projection of $\lat$ onto the space orthogonal to $\lat'$. In particular, $\lat/\lat'$ is a lattice, and we have the identities $(\lat/\lat')^* = \lat^* \cap \spn(\lat')^\perp$ and $\det(\lat/\lat') = \det(\lat)/\det(\lat')$.

For a parameter $s > 0$ and $\vec{x} \in \R^n$, we define $\rho_s(\vec{x}) = e^{-\pi \|\vec{x}\|^2/s^2}$. Then, for any discrete set $A$, we define its Gaussian mass as $\rho_s(A) = \sum_{\vec{x} \in A} \rho_s(\vec{x})$. When $s = 1$, we omit the subscript.

We recall the Poisson Summation Formula for the Gaussian mass of a lattice, which says that
\begin{equation}
\label{eq:PSF}
\rho_s(\lat) = \frac{s^n}{\det(\lat)} \cdot \rho_{1/s}(\lat^*)
\end{equation}
for any $s > 0$ and (full-rank) lattice $\lat \subset \R^n$.
As an example, it follows that for any full-rank lattice $\lat \subset \R^n$, 
\begin{equation}
\label{eq:lowerboundonrho}
\rho_s(\lat) %
> \frac{s^n}{\det(\lat)} \; .
\end{equation}

\begin{lemma}[{\cite[Lemma 1.5]{banaszczyk}}]
	\label{lem:banaszczyk}
	For any lattice $\lat \subset \R^n$, shift vector $\vec{u} \in \R^n$, and any $r \geq 1/\sqrt{2\pi}$, 
	\[
	\rho((\lat - \vec{u}) \setminus r\sqrt{n}B_2^n) \leq \big(\sqrt{2\pi e r^2} e^{-\pi r^2}\big)^n \cdot \rho(\lat)
	\; .
	\]
\end{lemma}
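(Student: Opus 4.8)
The plan is to follow Banaszczyk's classical argument~\cite{banaszczyk}, which combines a simple exponential-shift bound on the tail with two appeals to the Poisson Summation Formula. First I would fix a parameter $\tau \in [0,1)$, to be optimized at the end. For every $\vec y \in \lat - \vec u$ with $\|\vec y\| > r\sqrt n$ we have $\|\vec y\|^2 - r^2 n > 0$, and hence
\[
e^{-\pi\|\vec y\|^2} \;\le\; e^{-\pi\|\vec y\|^2}\,e^{\pi\tau(\|\vec y\|^2 - r^2 n)} \;=\; e^{-\pi\tau r^2 n}\,e^{-\pi(1-\tau)\|\vec y\|^2}\;.
\]
Summing over all such $\vec y$ and then relaxing to a sum over all of $\lat - \vec u$, with $s := (1-\tau)^{-1/2} \ge 1$ we obtain
\[
\rho\big((\lat - \vec u)\setminus r\sqrt n B_2^n\big) \;\le\; e^{-\pi\tau r^2 n}\,\rho_s(\lat - \vec u)\;.
\]

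The next step is to prove that $\rho_s(\lat - \vec u) \le s^n\rho(\lat)$ whenever $s \ge 1$. I would apply the Poisson Summation Formula to the Gaussian of parameter $s$ centered at $\vec u$, which gives $\rho_s(\lat - \vec u) = \tfrac{s^n}{\det(\lat)}\sum_{\vec w \in \lat^*} e^{-\pi s^2\|\vec w\|^2}\,e^{-2\pi i\inner{\vec w,\vec u}}$. The shift now contributes only a unit-modulus phase, so by the triangle inequality the modulus of this sum is at most $\tfrac{s^n}{\det(\lat)}\sum_{\vec w\in\lat^*} e^{-\pi s^2\|\vec w\|^2}$, and since $s \ge 1$ this is in turn at most $\tfrac{s^n}{\det(\lat)}\rho(\lat^*) = s^n\rho(\lat)$ by Eq.~\eqref{eq:PSF} with parameter $1$. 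Combining with the previous display yields
\[
\rho\big((\lat - \vec u)\setminus r\sqrt n B_2^n\big) \;\le\; \big(e^{-\pi\tau r^2}(1-\tau)^{-1/2}\big)^n\,\rho(\lat)\;.
\]

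Finally I would optimize the per-coordinate factor $g(\tau) := e^{-\pi\tau r^2}(1-\tau)^{-1/2}$ over $\tau \in [0,1)$. Solving $\tfrac{d}{d\tau}\log g(\tau) = -\pi r^2 + \tfrac{1}{2(1-\tau)} = 0$ gives $1-\tau = \tfrac{1}{2\pi r^2}$, which lies in $(0,1]$ exactly because of the hypothesis $r \ge 1/\sqrt{2\pi}$. Substituting this choice back in gives $g(\tau) = e^{1/2-\pi r^2}\sqrt{2\pi r^2} = \sqrt{2\pi e r^2}\,e^{-\pi r^2}$, which is precisely the claimed per-coordinate bound; raising it to the $n$-th power finishes the proof.

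I do not expect a genuine obstacle, as the argument is elementary once one commits to the exponential-shift trick. The one point that must be handled with care is that the right-hand side of the lemma involves only $\rho(\lat)$ at parameter $1$, so both the nuisance shift $\vec u$ and the widening of the Gaussian to parameter $s > 1$ must be absorbed on the dual side via Poisson summation: there the shift becomes a harmless phase and the widening only shrinks the sum over $\lat^*$. Verifying that the optimal $\tau$ stays in $[0,1)$ is precisely where the hypothesis $r \ge 1/\sqrt{2\pi}$ is used.
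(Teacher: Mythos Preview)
Your proof is correct and is precisely Banaszczyk's original argument. The paper does not supply its own proof of this lemma; it merely cites \cite[Lemma~1.5]{banaszczyk}, so there is nothing further to compare against.
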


The following claim was observed by Banaszczyk~\cite{banaszczyk} and is an immediate consequence of the Poisson Summation Formula and positivity.
\begin{claim}
	\label{clm:shifted_mass}
	For any lattice $\lat \subset \R^n$, shift vector $\vec{u} \in \R^n$, and parameter $s > 0$, $\rho_s(\lat - \vec{u}) \leq \rho_s(\lat)$ with equality if and only if $\vec{u} \in \lat$.
\end{claim}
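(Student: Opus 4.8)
The plan is to deduce the claim from the (shifted) Poisson Summation Formula together with the triangle inequality. By the convention that a lattice may be treated as full rank, it suffices to handle the case $\spn(\lat) = \R^n$: in general, writing $\vec u = \vec u_0 + \vec u_1$ with $\vec u_0 \in \spn(\lat)$ and $\vec u_1 \perp \spn(\lat)$, the Pythagorean identity gives $\rho_s(\lat - \vec u) = \rho_s(\vec u_1)\cdot\rho_s(\lat - \vec u_0) \le \rho_s(\lat - \vec u_0)$, with equality if and only if $\vec u_1 = \vec 0$, so the general statement follows from the full-rank one.

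Assume now that $\lat$ is full rank. Applying the Poisson Summation Formula with a shift to the Gaussian $f(\vec x) = e^{-\pi\|\vec x\|^2/s^2}$, whose Fourier transform is $\hat f(\vec w) = s^n e^{-\pi s^2\|\vec w\|^2}$, I would obtain
\[
\rho_s(\lat - \vec u) \;=\; \sum_{\vec y \in \lat} e^{-\pi\|\vec y - \vec u\|^2/s^2} \;=\; \frac{s^n}{\det(\lat)} \sum_{\vec w \in \lat^*} e^{-\pi s^2\|\vec w\|^2}\, e^{-2\pi i \inner{\vec w, \vec u}} \;,
\]
which specializes to Eq.~\eqref{eq:PSF} when $\vec u = \vec 0$. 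The series converges absolutely, and the left-hand side is a nonnegative real; since $\lvert e^{-2\pi i\inner{\vec w, \vec u}}\rvert = 1$ for every $\vec w$, the triangle inequality gives
\[
\rho_s(\lat - \vec u) \;\le\; \frac{s^n}{\det(\lat)} \sum_{\vec w \in \lat^*} e^{-\pi s^2\|\vec w\|^2} \;=\; \frac{s^n}{\det(\lat)}\,\rho_{1/s}(\lat^*) \;=\; \rho_s(\lat) \;,
\]
using Eq.~\eqref{eq:PSF} once more. This proves the inequality.

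For the equality characterization, one direction is immediate: if $\vec u \in \lat$ then $\lat - \vec u = \lat$, so $\rho_s(\lat - \vec u) = \rho_s(\lat)$. Conversely, equality in the displayed bound means the triangle inequality is tight for the absolutely convergent series with terms $a_{\vec w} := e^{-\pi s^2\|\vec w\|^2}\, e^{-2\pi i\inner{\vec w,\vec u}}$; since every $a_{\vec w}$ is nonzero, this forces all of them to share a common argument, and as $a_{\vec 0} = 1$ that argument is $0$. Hence $e^{-2\pi i\inner{\vec w,\vec u}} = 1$, i.e.\ $\inner{\vec w, \vec u} \in \Z$, for every $\vec w \in \lat^*$, which by the definition of the dual lattice and the identity $\lat^{**} = \lat$ says precisely that $\vec u \in \lat$. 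I do not expect a genuine obstacle in this argument; the only things to be careful about are that it uses the shifted form of Poisson summation (Eq.~\eqref{eq:PSF} being only its $\vec u = \vec 0$ case) rather than the bare statement recorded in the excerpt, the equality analysis through the tightness condition of the triangle inequality, and the minor reduction to the full-rank case.
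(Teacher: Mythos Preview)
Your proof is correct and is precisely the argument the paper has in mind: the paper does not spell out a proof but simply records the claim as ``an immediate consequence of the Poisson Summation Formula,'' and your use of the shifted Poisson formula together with the triangle inequality (plus the equality case via $\lat^{**}=\lat$) is exactly that immediate consequence.
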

\begin{proof}
    By applying the Poisson summation formula to the shifted Gaussian $\rho_s(\lat-\vec{u})$, we see that
    \[
        \rho_s(\lat- \vec{u}) = \frac{s^n}{\det(\lat)}\sum_{\vec{w} \in \lat^*} \rho_{1/s}(\vec{w}) \cos(2\pi \langle \vec{w}, \vec{u} \rangle) \leq \frac{s^n}{\det(\lat)}\sum_{\vec{w} \in \lat^*} \rho_{1/s}(\vec{w}) = \rho_s(\lat)
        \; ,
    \]
    as needed, where we have equality if and only if $\langle \vec{w}, \vec{u} \rangle \in \Z$ for all $\vec{w} \in \lat^*$, i.e., if and only if $\vec{u} \in \lat$.
\end{proof}

\begin{lemma}
	\label{lem:direct_sum_rho}
	For any lattice $\lat \subset \R^n$, primitive sublattice $\lat' \subset \lat$, and $s > 0$, 
	\[
	\rho_s(\lat) \leq \rho_s(\lat' \oplus \lat/\lat') = \rho_s(\lat')\rho_s(\lat/\lat')
	\; ,
	\]
	with equality if and only if $\lat = \lat' \oplus \lat/\lat'$.
\end{lemma}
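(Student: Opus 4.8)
The plan is to compare the lattice $\lat$ with the "split" lattice $\lat' \oplus \lat/\lat'$ coordinate by coordinate, exploiting the orthogonal decomposition $\R^n = \spn(\lat') \oplus \spn(\lat')^\perp$. First I would observe that since $\lat'$ is primitive, $\lat/\lat' = \pi_{\lat'^\perp}(\lat)$ is a genuine lattice, and every $\vec{y} \in \lat$ can be written uniquely as $\vec{y} = \vec{y}_\parallel + \vec{y}_\perp$ with $\vec{y}_\perp = \pi_{\lat'^\perp}(\vec{y}) \in \lat/\lat'$ and $\vec{y}_\parallel = \pi_{\spn(\lat')}(\vec{y})$. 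By Pythagoras, $\rho_s(\vec{y}) = \rho_s(\vec{y}_\parallel)\rho_s(\vec{y}_\perp)$. Now group the sum $\rho_s(\lat) = \sum_{\vec{y}\in\lat}\rho_s(\vec{y})$ according to the value $\vec{z} := \vec{y}_\perp \in \lat/\lat'$. For a fixed coset, the set of $\vec{y}_\parallel$ that arise is precisely a coset of $\lat'$ in $\spn(\lat')$: namely $\lat' - \vec{u}(\vec{z})$ for some shift $\vec{u}(\vec{z}) \in \spn(\lat')$ (any fixed preimage of $\vec{z}$ under $\pi_{\lat'^\perp}$, projected onto $\spn(\lat')$ and negated). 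Hence
\[
\rho_s(\lat) = \sum_{\vec{z} \in \lat/\lat'} \rho_s(\vec{z}) \cdot \rho_s(\lat' - \vec{u}(\vec{z}))
\; .
\]

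Next I would apply Claim~\ref{clm:shifted_mass} to each inner term: $\rho_s(\lat' - \vec{u}(\vec{z})) \leq \rho_s(\lat')$, with equality precisely when $\vec{u}(\vec{z}) \in \lat'$, i.e. when the coset of $\lat'$ sitting above $\vec{z}$ actually contains a point of $\lat'$ itself (equivalently, when $\vec{z}$ lifts to a point of $\lat$ lying in $\spn(\lat')^\perp$). Substituting the bound gives
\[
\rho_s(\lat) \leq \Big(\sum_{\vec{z} \in \lat/\lat'} \rho_s(\vec{z})\Big)\rho_s(\lat') = \rho_s(\lat')\rho_s(\lat/\lat')
\; ,
\]
and the middle equality $\rho_s(\lat' \oplus \lat/\lat') = \rho_s(\lat')\rho_s(\lat/\lat')$ is immediate from the definition of direct sum together with Pythagoras, since $\spn(\lat')$ and $\spn(\lat/\lat') = \spn(\lat')^\perp$ are orthogonal.

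For the equality characterization: if $\lat = \lat' \oplus \lat/\lat'$ then the splitting is trivial and equality holds throughout. Conversely, if equality holds then $\rho_s(\lat' - \vec{u}(\vec{z})) = \rho_s(\lat')$ for every $\vec{z} \in \lat/\lat'$, so by Claim~\ref{clm:shifted_mass} each $\vec{u}(\vec{z}) \in \lat'$, which means every coset of $\lat'$ above every $\vec{z}$ contains a point of $\lat \cap \spn(\lat')^\perp$ mapping to $\vec{z}$; running over all $\vec{z}$ this shows $\lat \cap \spn(\lat')^\perp$ surjects onto $\lat/\lat'$, and combined with $\lat' \subseteq \lat$ and the orthogonality of the two spans one concludes $\lat = \lat' \oplus (\lat \cap \spn(\lat')^\perp) = \lat' \oplus \lat/\lat'$. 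I expect the only mildly delicate point to be checking that the shift $\vec{u}(\vec{z})$ is well defined up to an element of $\lat'$ (so that Claim~\ref{clm:shifted_mass} applies unambiguously) and tracking the equality condition through the coset bookkeeping; the analytic content is entirely contained in Pythagoras and Claim~\ref{clm:shifted_mass}.
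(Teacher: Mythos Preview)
Your proof is correct and follows essentially the same approach as the paper's: decompose $\lat$ into cosets of $\lat'$, use Pythagoras to factor $\rho_s$ along the orthogonal splitting $\spn(\lat')\oplus\spn(\lat')^\perp$, and apply Claim~\ref{clm:shifted_mass} to each shifted copy of $\lat'$. Your treatment of the equality case is in fact more explicit than the paper's.
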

\begin{proof}
	Let $T \subset \lat$ be any complete set of coset representatives of $\lat$ modulo $\lat'$. (In other words, for every $\vec{y} \in \lat$ there exists a unique $\vec{t} \in T$ such that $\vec{y} \equiv \vec{t} \pmod{\lat'}$.) Let $\pi := \pi_{\spn(\lat')}$, and $\pi^\perp := \pi_{\spn(\lat')^\perp}$. Then,
	\begin{align*}
	\sum_{\vec{y} \in \lat} \rho_s(\vec{y}) &= \sum_{\vec{t} \in T, \vec{y}' \in \lat'} \rho_s(\vec{t} + \vec{y}') \\
	&= 
    \sum_{\vec{t} \in T, \vec{y}' \in \lat'} \rho_s(\vec{y}' + \pi(\vec{t})) \rho_s(\pi^\perp(\vec{t}))\\
    &=  \sum_{\vec{t} \in T} \rho_s(\lat' + \pi( \vec{t})) \rho_s(\pi^\perp(\vec{t}))\\
    &\leq \rho_s(\lat') \sum_{\vec{t} \in T} \rho_s(\pi^{\perp}(\vec{t})) \\
    &= \rho_s(\lat') \rho_s(\lat/\lat')
		\; ,
	\end{align*}
	where the inequality is Claim~\ref{clm:shifted_mass} and the last equality uses the primitivity of $\lat'$.
\end{proof}

\subsection{Linear algebra}

We write $\mathrm{SL}_n(\R)$ for the group of all $n \times n$ determinant-one real matrices. 
A matrix $U \in \mathrm{SL}_n(\R)$ is \emph{orthogonal} if $U^T U = I_n$. Equivalently, a matrix is orthogonal if its associated linear transformation is an isometry. (I.e., $\|U \vec{x}\| = \|\vec{x}\|$ for all $\vec{x} \in \R^n$.) 
We write $\|A\| := \sup_{\vec{x} \in \R^n \setminus \{\vec0\}} \|A\vec{x}\|/\|\vec{x}\|$ for the \emph{operator norm} of $A$.

We recall the definition of the \emph{matrix exponential},
\[
e^A := I_n + A + A^2/2 + A^3/6 + \cdots
\; ,
\]
for any matrix $A \in \R^{n \times n}$, and the identity $\det(e^A) = e^{\Tr(A)}$.
Every positive-definite matrix $A$ has a matrix logarithm $M$ such that $e^M = A$.

\subsection{A note on the topology of the space of determinant-one lattices}

Throughout this paper, we consider various topological notions over the space of determinant-one (full-rank) lattices in $\R^n$ (e.g., local maxima of functions over this space, compact sets, etc.). 
Formally, the space of determinant-one lattices is $\mathrm{SO}_n(\R) \backslash \mathrm{SL}_n(\R)/\mathrm{SL}_n(\Z)$, i.e., the set of determinant-one real matrices modulo the orthogonal matrices $\mathrm{SO}_n(\R)$ (i.e., isometries) and the unimodular matrices $\mathrm{SL}_n(\Z)$, which are transformations between bases of the same lattice. The topology is the quotient topology. (See~\cite[Section 1.4]{TerrasBook}.)
However, the reader may prefer to think of the space of determinant-one lattice \emph{bases}, which is simply $\mathrm{SL}_n(\R)$ with its standard topology. 

\subsection{Stability}
\label{sec:stable}

We say that a lattice $\lat \subset \R^n$ is \emph{stable} if $\det(\lat) = 1$ and $\det(\lat') \geq 1$ for all sublattices $\lat' \subseteq \lat$. (Some authors call such lattices ``semistable.'') Note the obvious relationship between this notion and Theorem~\ref{thm:RM}. Here, we describe the properties of stable lattices that we will need in the sequel, and include proofs for completeness. 
This theory was developed by~\cite{HN75,Stuhler76,Grayson84}. See, e.g.,~\cite{Grayson84,Casselman04} for a more thorough treatment.

\begin{figure}
	\begin{center}
		\includegraphics{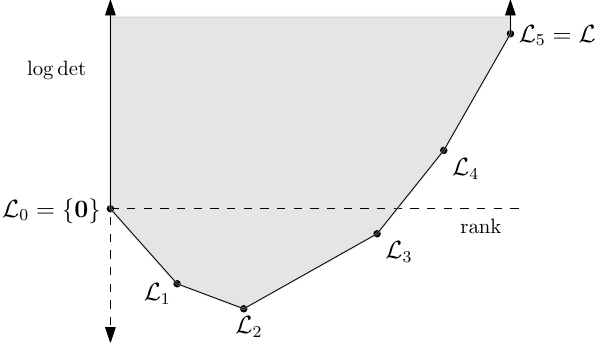}
		\caption{\label{fig:canonical_plot} 
			The canonical polygon of a (hypothetical) lattice $\lat$.}
	\end{center}
\end{figure}

We can in some sense ``decompose'' any lattice into stable lattices. To see this, we consider the two-dimensional scatter plot with points
\[
\{ (\rank(\lat'),\ \log \det(\lat')) \ : \ \lat' \subseteq \lat\}
\; ,
\] 
for some lattice $\lat \subset \R^n$,
where we explicitly include the trivial sublattice $\{\vec0\}$ and define $\log\det(\{\vec0 \}) := 0$. We call this the \emph{canonical plot} of $\lat$. Note that these points are bounded from below and  that for each rank $0 \leq k \leq n$, there is a (not necessarily unique) sublattice of rank $k$ that has minimal determinant among all sublattices with rank $k$. The convex hull of these points is therefore a degenerate polygon (bounded from below, but unbounded from above), called the \emph{canonical polygon} of $\lat$. See Figure~\ref{fig:canonical_plot}.

We are interested in the extremal points of this polygon. (E.g., $\lat_0,\ldots, \lat_5$ in Figure~\ref{fig:canonical_plot}.) Notice that any sublattice corresponding to an extremal point must necessarily be densest among all sublattices of that rank, but this necessary condition is not sufficient.  In fact, it is known that each extremal point corresponds to a \emph{unique} sublattice. Moreover, if two sublattices $\lat_1$ and $\lat_2$ both correspond to extremal points, then one is contained in the other. Therefore, the extremal points define a \emph{canonical filtration} of $\lat$, 
\[
\{ \vec0\} = \lat_0 \subset \lat_1 \subset \cdots \subset \lat_k = \lat
\; .
\]
(Note that we only include in the canonical filtration lattices that correspond to \emph{extremal points}, not any lattice on the boundary. E.g., the canonical filtration of $\Z^n$ is trivial: $\{\vec0\} = \lat_0 \subset \lat_1 = \Z^n$.)
It is also known that all of the quotients $\lat_i/\lat_{i-1}$ of adjacent sublattices in the canonical filtration are scalings of stable lattices. This is what we mean when we say that we can ``decompose'' a lattice into a sequence of stable lattices. 

Following~\cite{Grayson84,Casselman04}, 
we make the above (and other) facts precise in Proposition~\ref{prop:stable_properties}, which lists basic properties of the canonical filtration and stable lattices. We first need the following lemma, due to Stuhler~\cite{Stuhler76}. 
 
 \begin{lemma}
 	\label{lem:uncrossing}
 	For any $\lat \subset \R^n$ and any two primitive sublattices $\lat_1, \lat_2 \subseteq \lat$,
 	\[
 	\rank(\lat_1) + \rank(\lat_2) = \rank(\lat_1 \cap \lat_2) + \rank(\lat_1 + \lat_2)
 	\; ,
 	\]
 	and
 	\[
 	\det(\lat_1 \cap \lat_2)\det(\lat_1 + \lat_2) \leq \det(\lat_1)\det(\lat_2)
 	\; ,
 	\]
 	where we define $\det(\{\vec0\}) = 1$.
\end{lemma}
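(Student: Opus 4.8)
The plan is to prove the two claims separately: the rank identity is the subspace dimension formula once we identify the relevant spans, and the determinant inequality is reduced — by projecting modulo $\spn(\lat_1 \cap \lat_2)$ — to a statement about two lattices with trivially intersecting spans, where it becomes a ``base times height'' volume inequality.

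\emph{Span bookkeeping and the rank identity.} First I would record the standard fact that the span of a primitive sublattice of $\lat$ is a \emph{rational} subspace (cut out by a homogeneous linear system with integer coefficients in any fixed basis of $\lat$), that an intersection of rational subspaces is rational, and that a rational subspace $V$ equals $\spn(\lat \cap V)$. Since $\lat_1,\lat_2$ are primitive, $z \in \lat \cap \spn(\lat_1)\cap\spn(\lat_2)$ forces $z \in \lat_1 \cap \lat_2$, so $\lat_1\cap\lat_2 = \lat\cap\spn(\lat_1)\cap\spn(\lat_2)$, and then $\spn(\lat_1\cap\lat_2) = \spn(\lat_1)\cap\spn(\lat_2)$ and $\spn(\lat_1+\lat_2) = \spn(\lat_1)+\spn(\lat_2)$. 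In particular $\lat_1\cap\lat_2 = \lat\cap\spn(\lat_1\cap\lat_2)$ is primitive in $\lat$, hence primitive in each of $\lat_1$, $\lat_2$, and $\lat_1+\lat_2$ (for the last, use $\lat_1+\lat_2\subseteq\lat$). The rank identity is now just $\dim V_1 + \dim V_2 = \dim(V_1\cap V_2) + \dim(V_1+V_2)$ for $V_i := \spn(\lat_i)$.

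\emph{The determinant inequality.} Let $W := \spn(\lat_1\cap\lat_2)$ and $\pi := \pi_{W^\perp}$, so that $\pi(\lat_i) = \lat_i/(\lat_1\cap\lat_2)$ and $\pi(\lat_1+\lat_2) = (\lat_1+\lat_2)/(\lat_1\cap\lat_2) = \pi(\lat_1)+\pi(\lat_2)$ are lattices. By primitivity of $\lat_1\cap\lat_2$ in $\lat_1$, in $\lat_2$, and in $\lat_1+\lat_2$, the identity $\det(\lat/\lat') = \det(\lat)/\det(\lat')$ from Section~\ref{sec:prelims_lattices} gives
\[
\det(\lat_i) = \det(\lat_1\cap\lat_2)\cdot\det(\pi(\lat_i))\ \ (i=1,2), \qquad \det(\lat_1+\lat_2) = \det(\lat_1\cap\lat_2)\cdot\det(\pi(\lat_1)+\pi(\lat_2)),
\]
where the convention $\det(\{\vec0\})=1$ covers the case $\lat_1\cap\lat_2=\{\vec0\}$. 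Hence it suffices to show $\det(\pi(\lat_1)+\pi(\lat_2)) \le \det(\pi(\lat_1))\det(\pi(\lat_2))$. The rank identity applied to $\pi(\lat_1),\pi(\lat_2),\pi(\lat_1)+\pi(\lat_2)$ gives $\dim\spn(\pi(\lat_1)) + \dim\spn(\pi(\lat_2)) = \dim\spn(\pi(\lat_1)+\pi(\lat_2))$, so $\spn(\pi(\lat_1))$ and $\spn(\pi(\lat_2))$ intersect trivially; concatenating a basis of $\pi(\lat_1)$ with a basis of $\pi(\lat_2)$ therefore yields a basis of $\pi(\lat_1)+\pi(\lat_2)$. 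Writing the determinant of this sublattice as the volume of the corresponding parallelepiped and expanding it as base times height — equivalently, applying Fischer's inequality for the determinant of a positive-definite block matrix to the Gram matrix of the concatenated basis, or using that $\pi(\lat_1)$ is primitive in $\pi(\lat_1)+\pi(\lat_2)$ and that an orthogonal projection does not increase a lattice determinant — finishes the proof.

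\emph{Main obstacle.} The only genuine care needed is the primitivity/span bookkeeping of the first step: one must verify that $\lat_1\cap\lat_2$ really is a primitive sublattice of all three lattices $\lat_1$, $\lat_2$, $\lat_1+\lat_2$ (so the quotient-determinant identity applies verbatim) and that the spans of the projected lattices intersect trivially. After that, the determinant inequality is a routine volume computation, and the degenerate cases ($\lat_1\cap\lat_2 = \{\vec0\}$, or $\lat_1\subseteq\lat_2$) are exactly what the $\det(\{\vec0\})=1$ convention is designed to absorb.
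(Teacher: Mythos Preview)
Your proposal is correct and follows essentially the same approach as the paper: both reduce to the quotient by $\lat_1\cap\lat_2$ and then use the ``base times height'' inequality $\det(\mathcal{M}_1+\mathcal{M}_2)=\det(\mathcal{M}_1)\cdot\det(\pi_{\spn(\mathcal{M}_1)^\perp}(\mathcal{M}_2))\le\det(\mathcal{M}_1)\det(\mathcal{M}_2)$ on the projected lattices. Your version is somewhat more explicit about the primitivity and span bookkeeping (e.g., that $\spn(\lat_1\cap\lat_2)=\spn(\lat_1)\cap\spn(\lat_2)$ and that the projected spans meet trivially), which the paper leaves implicit.
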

\begin{proof}
	The equality of ranks follows by considering the dimensions of the subspaces spanned by $\lat_1$, $\lat_2$, $\lat_1 \cap \lat_2$, and $\lat_1 + \lat_2$. For the inequality, suppose that $\mathcal{M}_1, \mathcal{M}_2 \subseteq \mathcal{M}$ are sublattices such that $\mathcal{M}_1 \cap \mathcal{M}_2 = \{\vec0\}$ and $\mathcal{M}_1 + \mathcal{M}_2 = \mathcal{M}$. Then, we have
	\[
	\det(\mathcal{M}) = \det(\mathcal{M}_1) \cdot \det(\pi_{\spn(\mathcal{M}_1)^\perp}(\mathcal{M}_2)) \leq \det(\mathcal{M}_1) \det(\mathcal{M}_2)
	\; ,
	\]
	where we have used the fact that $\pi_{\spn(\mathcal{M}_1)^\perp}$ is a contraction that preserves the rank of $\M_2$.
	Plugging in $\mathcal{M} := (\lat_1 + \lat_2)/(\lat_1 \cap \lat_2)$, $\mathcal{M}_1 := \lat_1/(\lat_1 \cap \lat_2)$ and $\mathcal{M}_2 := \lat_2/(\lat_1 \cap \lat_2)$ gives
	\begin{align*}
	\det(\lat_1 + \lat_2)/\det(\lat_1 \cap \lat_2) &= \det( (\lat_1 + \lat_2)/(\lat_1 \cap \lat_2)) \\
	&\leq \det(\lat_1/(\lat_1 \cap \lat_2)) \det(\lat_2/(\lat_1 \cap \lat_2)) \\
	&= \det(\lat_1)\det(\lat_2)/\det(\lat_1 \cap \lat_2)^2
	\; .
	\end{align*}
	The result follows by rearranging.
\end{proof}

\begin{figure}
	\begin{center}
		\includegraphics{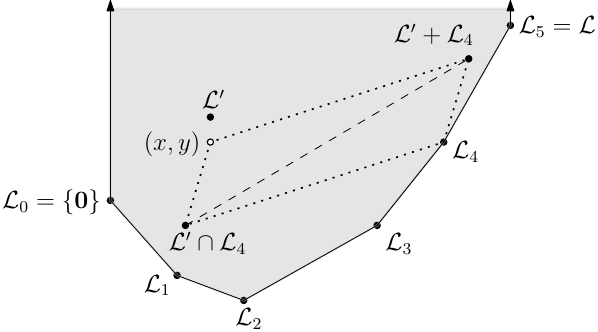}
		\caption{\label{fig:filtration_proof} 
		An illustration of the proof of Item~\ref{item:filtration} of Proposition~\ref{prop:stable_properties} (similar to~\cite[Figure 1.17]{Grayson84}). In particular, if $\lat' \not \subseteq \lat_4$, then $(x,y)$ must lie strictly above the dashed line, and therefore cannot be an extremal point (or even a boundary point) of the canonical polygon.
			}
	\end{center}
\end{figure}

\begin{proposition}
	\label{prop:stable_properties}
For any lattice $\lat \subset \R^n$, let $\{\vec0 \} = \lat_0, \lat_1, \ldots, \lat_k = \lat$ be all sublattices corresponding to extremal points of the canonical polytope, ordered by their rank. (See Figure~\ref{fig:canonical_plot}.) Then, 
\begin{enumerate}
	\item \label{item:filtration} the $\lat_i$ define a filtration $\lat_0 \subset \lat_1 \subset \cdots \subset \lat_k$ (in particular, the $\lat_i$ have distinct ranks); 
	\item \label{item:stable_quotient} the quotient lattice $\lat_i/\lat_{i-1}$ is a scaling of a stable lattice for $1 \leq i \leq k$ (i.e., $\alpha_i \cdot \lat_i/\lat_{i-1}$ is stable,  where $\alpha_i := \det(\lat_i/\lat_{i-1})^{-1/\rank(\lat_i/\lat_{i-1})}$); and
	\item \label{item:increasing_slopes} for all $1 \leq i \leq k-1$, $\det(\lat_i/\lat_{i-1})^{1/\rank(\lat_i/\lat_{i-1})} < \det(\lat_{i+1}/\lat_{i})^{1/\rank(\lat_{i+1}/\lat_{i})}$.
\end{enumerate}
Furthermore, 
\begin{enumerate}[label=(\roman*)]
	\item  \label{item:dual_stable} the dual of a stable lattice is stable;
	\item \label{item:compact} the set of all stable lattices is compact; 
	\item \label{item:direct_sum_stable} the direct sum of stable lattices is stable; and
	\item \label{item:stable_boundary} a lattice $\lat \subset \R^n$ is on the boundary of the set of stable lattices if and only if $\lat$ is stable and there is a primitive sublattice $\lat' \subset \lat$ with $0 < \rank(\lat') < n$ such that $\lat'$ and $\lat/\lat'$ are both stable.
\end{enumerate}
\end{proposition}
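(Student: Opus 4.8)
The plan is to prove the three claims about the canonical filtration (the filtration property, stability of the adjacent quotients, and strictly increasing slopes) together, since they all flow from Lemma~\ref{lem:uncrossing} and the convexity of the canonical polygon (Figure~\ref{fig:canonical_plot}), and then to dispatch the four structural facts \ref{item:dual_stable}--\ref{item:stable_boundary} one at a time. For the filtration claims I would first record that each vertex of the canonical polygon corresponds to a \emph{unique} sublattice, which is moreover \emph{primitive}: primitivity is automatic, since passing to the primitive closure keeps the rank and does not increase the determinant; and if $\lat_1',\lat_2'$ were two distinct primitive minimizers of $\log\det$ at a rank $r$ where the polygon has a vertex, then $\spn(\lat_1')\neq\spn(\lat_2')$ (otherwise one contains the other, contradicting primitivity), so $\lat_1'\cap\lat_2'$ and $\lat_1'+\lat_2'$ have ranks straddling $r$ and summing to $2r$, and the determinant inequality of Lemma~\ref{lem:uncrossing} would place $(r,\log\det(\lat_1'))$ on or above the chord joining the corresponding two points of the canonical plot, impossible for an extreme point. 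The same mechanism proves Part~\ref{item:filtration}: if $\lat_i,\lat_j$ are vertex-lattices with $\rank(\lat_i)<\rank(\lat_j)$ and $\lat_i\not\subseteq\lat_j$, then $\rank(\lat_i\cap\lat_j)<\rank(\lat_i)$ (again by primitivity of $\lat_i$), and Lemma~\ref{lem:uncrossing} combined with convexity of the polygon exhibits $(\rank(\lat_i),\log\det(\lat_i))$ as a nontrivial convex combination of two other plot points, contradicting extremality.

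Given the nesting, Part~\ref{item:increasing_slopes} is immediate: since $\lat_{i-1}$ is primitive in $\lat_i$, the identities of Section~\ref{sec:prelims_lattices} give $\det(\lat_i/\lat_{i-1})=\det(\lat_i)/\det(\lat_{i-1})$ and $\rank(\lat_i/\lat_{i-1})=\rank(\lat_i)-\rank(\lat_{i-1})$, so $\log\det(\lat_i/\lat_{i-1})^{1/\rank(\lat_i/\lat_{i-1})}$ is exactly the slope of the $i$-th edge of the polygon, and consecutive edge-slopes strictly increase because the $\lat_i$ are genuine vertices. For Part~\ref{item:stable_quotient}, take a sublattice $\mathcal M\subseteq\lat_i/\lat_{i-1}$, which we may assume primitive; its preimage $\lat'$ under $\pi_{\spn(\lat_{i-1})^\perp}$ inside $\lat_i$ is a sublattice of $\lat$ containing $\lat_{i-1}$, with $\rank(\lat')=\rank(\lat_{i-1})+\rank(\mathcal M)\in[\rank(\lat_{i-1}),\rank(\lat_i)]$ and $\det(\lat')=\det(\lat_{i-1})\det(\mathcal M)$. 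Since $(\rank(\lat'),\log\det(\lat'))$ lies on or above the $i$-th edge, rearranging yields $\det(\mathcal M)\geq\det(\lat_i/\lat_{i-1})^{\rank(\mathcal M)/\rank(\lat_i/\lat_{i-1})}$, which is exactly stability of the normalized quotient $\det(\lat_i/\lat_{i-1})^{-1/\rank(\lat_i/\lat_{i-1})}\cdot(\lat_i/\lat_{i-1})$.

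For \ref{item:dual_stable}, a primitive sublattice $\mathcal M\subseteq\lat^*$ with span $W$ equals $(\lat/\lat')^*$ for the primitive sublattice $\lat':=\lat\cap W^\perp$ of $\lat$, by the duality identities of Section~\ref{sec:prelims_lattices}; hence $\det(\mathcal M)=\det(\lat')/\det(\lat)\geq 1$ by stability of $\lat$, and since $\det(\lat^*)=1$ and a non-primitive sublattice has determinant at least that of its primitive closure, $\lat^*$ is stable. For \ref{item:direct_sum_stable}, let $\lat=\lat_1\oplus\lat_2$ with $\lat_1\subset\R^{n_1}$, $\lat_2\subset\R^{n_2}$ stable, and take any $\lat'\subseteq\lat$. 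Then $K:=\lat'\cap(\{\vec 0\}\oplus\R^{n_2})$ is primitive in $\lat'$ and a sublattice of $\{\vec 0\}\oplus\lat_2$, so $\det(\lat')=\det(K)\det(\pi_{\spn(K)^\perp}(\lat'))$; and since the orthogonal projection $\pi_1$ onto the first factor carries $\pi_{\spn(K)^\perp}(\lat')$ bijectively onto the sublattice $\pi_1(\lat')\subseteq\lat_1$ without increasing the determinant (a Gram-matrix comparison), we get $\det(\lat')\geq\det(K)\det(\pi_1(\lat'))\geq 1$. As $\det(\lat)=\det(\lat_1)\det(\lat_2)=1$, $\lat$ is stable.

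For \ref{item:compact}, every nonzero $\vec v$ in a stable $\lat$ spans a rank-one sublattice of determinant $\|\vec v\|\geq 1$, so $\lambda_1(\lat)\geq 1$; together with $\det(\lat)=1$, Mahler's compactness criterion makes the stable lattices precompact in $\mathrm{SL}_n(\R)/\mathrm{SL}_n(\Z)$, and since stability is cut out by the closed conditions $\det(\lat)=1$ and $\min_{\rank(\lat')=k}\det(\lat')\geq 1$ for each $k$ (a continuous function of $\lat$), the set is also closed, hence compact. For \ref{item:stable_boundary}, write $S$ for the set of stable lattices. If the right-hand side holds, rescale $\lat$ by $e^{-(n-k)\epsilon}$ on $\spn(\lat')$ and by $e^{k\epsilon}$ on its orthogonal complement ($k=\rank(\lat')$): this gives determinant-one lattices converging to $\lat$ as $\epsilon\to 0^+$, each containing $\lat'$ rescaled to determinant $e^{-k(n-k)\epsilon}\det(\lat')=e^{-k(n-k)\epsilon}<1$ (here $\det(\lat')=1$ since $\lat'$ is stable), hence each non-stable, so $\lat\in\partial S$. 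Conversely, if $\lat\in\partial S$ then $\lat\in S$, and there are non-stable $\lat^{(m)}\to\lat$ with destabilizing primitive sublattices $\lat'^{(m)}$ of determinant $<1$; since $\lambda_1(\lat^{(m)})\to\lambda_1(\lat)\geq 1$, along a subsequence the $\lat'^{(m)}$ have a fixed rank $k\in\{1,\dots,n-1\}$ and lie in a precompact family, so a further subsequence converges to a sublattice $\lat'\subseteq\lat$ with $\det(\lat')\leq 1$, hence $=1$ by stability of $\lat$; one then checks directly, using $\det(\lat')=1$ and stability of $\lat$, that $\lat'$ is primitive and that $\lat'$ and $\lat/\lat'$ are both stable.

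I expect the main obstacle to be Parts~\ref{item:filtration}--\ref{item:increasing_slopes}, and within them the nesting of Part~\ref{item:filtration}: the point is to exploit that the $\lat_i$ are \emph{vertices} (extreme points) of the canonical polygon, not merely points on its lower boundary, in order to turn the inequality of Lemma~\ref{lem:uncrossing} into a contradiction, while correctly tracking the ranks of $\lat_i\cap\lat_j$ and $\lat_i+\lat_j$. The facts \ref{item:dual_stable}--\ref{item:stable_boundary} are comparatively routine given the identities of Section~\ref{sec:prelims_lattices} and Mahler's criterion, the only delicate point being the extraction of the limiting sublattice in the forward direction of \ref{item:stable_boundary}.
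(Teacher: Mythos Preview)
Your proposal is correct and follows essentially the same approach as the paper for the core Items~\ref{item:filtration}--\ref{item:increasing_slopes} and~\ref{item:dual_stable}: the uncrossing Lemma~\ref{lem:uncrossing} plus extremality of the vertices drives the filtration, the lift-and-compare-to-the-edge argument gives stability of quotients, and duality identities handle~\ref{item:dual_stable}. Your added remark on uniqueness and primitivity of vertex lattices is a nice explicit touch the paper leaves implicit.

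For the remaining items you take slightly different but equally valid routes. For~\ref{item:compact}, the paper uses $\lambda_1(\lat^*)\geq 1$ together with a bounded-basis result from~\cite{LLS90}, whereas you invoke Mahler's compactness criterion directly from $\lambda_1(\lat)\geq 1$; your route is arguably the more natural one. For~\ref{item:direct_sum_stable}, the paper reuses Lemma~\ref{lem:uncrossing} (applied to $\mathcal{M}'$ and $\mathcal{M}_1$), while you decompose via $K=\lat'\cap(\{0\}\oplus\R^{n_2})$ and project; both are short. For~\ref{item:stable_boundary}, the paper simply asserts that the boundary condition is equivalent to the existence of a proper primitive sublattice of determinant one and then checks stability of the two pieces, whereas you actually spell out the topological argument in both directions (the explicit perturbation and the subsequential limit of destabilizing sublattices); your version is more complete here.
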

\begin{proof}
	To prove Item~\ref{item:filtration}, fix an index $i$ and
	let $\lat' \subset \lat$ be a sublattice with $\rank(\lat') \leq \rank(\lat_i)$ but $\lat' \not \subseteq \lat_i$. Below we will show that $\lat'$ must lie in the interior of the canonical polygon. In particular, this would imply that each $\lat_j$ with $j \leq i$ must have $\lat_j \subseteq \lat_i$, as desired.
	
	First, notice that we must have $\rank(\lat' + \lat_i) > \rank(\lat_i)$, since otherwise $\lat' + \lat_i$ would be a strict superlattice of $\lat_i$ with the same rank, contradicting the assumption that $\lat_i$ is an extremal point of the canonical polygon. Notice that, by Lemma~\ref{lem:uncrossing},  this also implies that 
	$\rank(\lat' \cap \lat_i) < \rank(\lat') \le \rank(\lat_i)$.
	Now, consider the point $(x,y)$ in the plane with 
	\[
	    x := \rank(\lat' \cap \lat_i) + \rank(\lat' + \lat_i) - \rank(\lat_i)
	\]
	and
	\[
	    y := \log \det (\lat' \cap \lat_i) + \log \det(\lat' + \lat_i) - \log \det(\lat_i)
	    \; .
	\]
	See Figure~\ref{fig:filtration_proof}. By Lemma~\ref{lem:uncrossing}, we have $\rank(\lat') = x$ and $\log \det(\lat') \geq y$, i.e., that the point $(\rank(\lat'), \log \det(\lat'))$ lies on or above the point $(x,y)$ in the plane. It therefore suffices to show that $(x,y)$ is in the interior of the canonical polygon, which we do by showing that $(x,y)$ lies strictly above the line segment between the points $(\rank(\lat' \cap \lat_i), \log \det(\lat' \cap \lat_i))$ and $(\rank(\lat' + \lat_i), \log \det(\lat' + \lat_i))$. (See the dashed line in Figure~\ref{fig:filtration_proof}.) Equivalently, it is enough to show that $(\rank(\lat_i), \log \det(\lat_i))$ lies strictly below this line segment (since it is the reflection of $(x,y)$ through the midpoint of the line segment). 
	This holds because $\lat_i$ corresponds to an extremal point, and, because $\rank(\lat' \cap \lat_i) < \rank(\lat_i) < \rank(\lat' + \lat_i)$, it is distinct from the two endpoints of the line segment. 
	
	To prove Item~\ref{item:stable_quotient}, let $\lat' \subseteq \lat_i/\lat_{i-1}$ be a sublattice. Let $\widehat{\lat} \subseteq \lat_i$ be the sublattice satisfying $\lat_{i-1} \subseteq \widehat{\lat}$ and $\lat' = \widehat{\lat}/\lat_{i-1}$. Since $\lat_{i-1}$ and $\lat_i$ are consecutive extremal points of the canonical polygon, the point $(\rank(\widehat{\lat}),\log \det(\widehat{\lat}))$ must lie on or above the line between $(\rank(\lat_{i-1}), \log \det(\lat_{i-1}))$ and $(\rank(\lat_{i}), \log \det(\lat_{i}))$. 
	This statement is equivalent to the inequality in the following:
	\begin{align*}
	\det(\lat') &= \det(\widehat{\lat})/\det(\lat_{i-1}) \\
				&\geq \Big( \frac{\det(\lat_i)}{\det(\lat_{i-1})} \Big)^{\frac{\rank(\widehat \lat) - \rank(\lat_{i-1})}{\rank(\lat_i) - \rank(\lat_{i-1})}} \\
				&=	\det(\lat_i/\lat_{i-1})^{\frac{\rank(\lat')}{\rank(\lat_i/\lat_{i-1})}}
		\; .
	\end{align*}
	I.e., if we set $\alpha_i := \det(\lat_i/\lat_{i-1})^{-1/\rank(\lat_i/\lat_{i-1})}$, then $\det(\alpha_i \lat') \geq 1$. It follows that $\alpha_i \lat_i/\lat_{i-1}$ is stable, as claimed.
	
	Item~\ref{item:increasing_slopes} simply says that the slopes of the lines between extremal points on the canonical polytope are strictly increasing. This follows immediately from the definition of the canonical polytope. (See Figure~\ref{fig:canonical_plot}.)
	
		To prove Item~\ref{item:dual_stable}, let $\mathcal{M}  \subset \R^n$ be a stable lattice and let $\mathcal{M}' \subseteq \mathcal{M}^*$ be a primitive sublattice of the dual. We have
		\[
		\det(\mathcal{M}') = \frac{1}{\det(\mathcal{M}^*/\mathcal{M}')} = \det((\mathcal{M}^*/\mathcal{M}')^*) = \det(\mathcal{M} \cap \spn(\mathcal{M}')^\perp) \geq 1
		\; .
		\]
		Therefore, $\mathcal{M}^*$ is stable.
	
	Item~\ref{item:compact} follows immediately from Mahler's compactness theorem~\cite{Mahler46} together with the observation that $\lambda_1(\mathcal{M}) \geq 1$ for any stable lattice $\mathcal{M}$.

	To prove Item~\ref{item:direct_sum_stable}, let $\mathcal{M}_1, \mathcal{M}_2$ be two stable lattices, and let $\mathcal{M}' \subset \mathcal{M}_1 \oplus \mathcal{M}_2$ be a primitive sublattice. Then, applying Lemma~\ref{lem:uncrossing}, we have
	\[
	\det(\mathcal{M}') \geq \frac{\det(\mathcal{M}' \cap \mathcal{M}_1) \det(\mathcal{M}' + \mathcal{M}_1)}{\det(\mathcal{M}_1)} = \det(\mathcal{M}' \cap \mathcal{M}_1) \det(\mathcal{M}' + \mathcal{M}_1)
	\; .
	\]
	Note that $\mathcal{M}' \cap \mathcal{M}_1$ is a sublattice of $\mathcal{M}_1$, so that $\det(\mathcal{M}' \cap \mathcal{M}_1) \geq 1$. 
	And $\mathcal{M}' + \mathcal{M}_1 = \mathcal{M}_1 \oplus \pi_{\spn(\mathcal{M}_2)}(\mathcal{M}')$ is the direct sum of $\mathcal{M}_1$ with a sublattice of $\mathcal{M}_2$, so that $\det(\mathcal{M}' + \mathcal{M}_1) = \det(\pi_{\spn(\mathcal{M}_2)}(\mathcal{M}')) \geq 1$ as well. The result follows.

	Finally, Item~\ref{item:stable_boundary} follows by first noting that a stable lattice $\mathcal{M}$ is on the boundary if and only if there is some strict primitive non-zero sublattice $\mathcal{M}' \subset \mathcal{M}$ with $\det(\mathcal{M}') = 1$. Clearly, $\mathcal{M}'$ is stable, since it has determinant one and all of its sublattices are also sublattices of $\mathcal{M}$, so that they must have determinant at least one. 
	To see that $\mathcal{M}/\mathcal{M}'$ is stable, first notice that it has determinant one. Next, let $\lat' \subseteq \mathcal{M}/\mathcal{M}'$ be an arbitrary sublattice. Let $\widehat{\lat} \subseteq \mathcal{M}$ be the sublattice satisfying $\mathcal{M}' \subseteq \widehat{\lat}$ and $\lat' = \widehat{\lat}/\mathcal{M}'$. Then $\det \lat' = \det \widehat{\lat}/ \det \mathcal{M}' = \det \widehat{\lat} \ge 1$, where the inequality uses that $\mathcal{M}$ is stable. 
\end{proof}

\subsection{The Voronoi cell and fundamental bodies}

The \emph{Voronoi cell} of a lattice $\lat \subset \R^n$,
\[
\V(\lat) := \{\vec{x} \in \R^n \ : \ \inner{\vec{x}, \vec{y}} \leq \|\vec{y}\|^2/2,\ \forall \vec{y} \in \lat \}
\; ,
\]
is the set of vectors in $\R^n$ that are at least as close to $\vec0$ than to any other lattice vector. 
In fact, it is a symmetric polytope.

A \emph{fundamental body} of a lattice $\lat \subset \R^n$ is any convex body $K \subset \R^n$ such that $K + \lat = \R^n$  and
$\mathrm{Int}(K) \cap (K + \vec{y}) = \emptyset$ for any non-zero lattice point $\vec{y} \in \lat \setminus \{\vec0\}$. 
Equivalently, $\vol(K) = \det(\lat)$ and $\mathrm{Int}(K) \cap (K + \vec{y}) = \emptyset$ for any non-zero lattice point $\vec{y} \in \lat \setminus \{\vec0\}$.
In particular, the Voronoi cell is a fundamental body.

\begin{claim}
	\label{clm:fundamental_product}
	For any lattice $\lat \subset \R^n$, primitive sublattice $\lat' \subset \R^n$, fundamental body $K_1 \subset \spn(\lat')$ of $\lat'$, and fundamental body $K_2 \subset \spn(\lat')^\perp$ of $\lat/\lat'$,
	the Minkowski sum $K:= K_1 + K_2$ is a fundamental body of $\lat$. In particular, if $\{\vec0\} = \lat_0 \subset \lat_1 \subset \cdots \subset \lat_k $ is a filtration of primitive sublattices, then
	\[
	\V\Big(\bigoplus_i \lat_i/\lat_{i-1} \Big) = \V(\lat_1/\lat_0) + \cdots + \V(\lat_k/\lat_{k-1})
	\]
	is a fundamental body of $\lat$.
\end{claim}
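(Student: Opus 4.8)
The plan is to verify directly the two conditions in the ``equivalently'' characterization of a fundamental body stated just before the claim: $\vol(K) = \det(\lat)$ and $\mathrm{Int}(K) \cap (K + \vec{y}) = \emptyset$ for every $\vec{y} \in \lat \setminus \{\vec 0\}$. Write $W := \spn(\lat')$, use the orthogonal splitting $\R^n = W \oplus W^\perp$, and recall that $\lat' \subseteq W$ (so $K_1 \subseteq W$) while $\lat/\lat' = \pi_{W^\perp}(\lat) \subseteq W^\perp$ (so $K_2 \subseteq W^\perp$); by $K_1 \times K_2$ we mean $\{\vec x + \vec x' : \vec x \in K_1,\ \vec x' \in K_2\} \subseteq \R^n$. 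The volume identity is immediate: $\vol(K) = \vol(K_1)\vol(K_2) = \det(\lat')\det(\lat/\lat') = \det(\lat)$, using that $K_1, K_2$ are fundamental bodies and the preliminary identity $\det(\lat/\lat') = \det(\lat)/\det(\lat')$.

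For the interior condition, I would argue by projection. Suppose some $\vec p$ lies in $\mathrm{Int}(K) \cap (K + \vec y)$ with $\vec y \in \lat \setminus \{\vec 0\}$. Since $\mathrm{Int}(K_1 \times K_2) = \mathrm{Int}(K_1) \times \mathrm{Int}(K_2)$ and $\pi_{W^\perp}(K_1 \times K_2) = K_2$, applying $\pi_{W^\perp}$ gives $\pi_{W^\perp}(\vec p) \in \mathrm{Int}(K_2) \cap (K_2 + \pi_{W^\perp}(\vec y))$. As $\pi_{W^\perp}(\vec y) \in \pi_{W^\perp}(\lat) = \lat/\lat'$ and $K_2$ is a fundamental body of $\lat/\lat'$, this forces $\pi_{W^\perp}(\vec y) = \vec 0$, i.e.\ $\vec y \in W$. \emph{Here is the one step that genuinely uses primitivity of $\lat'$:} since $\vec y \in \lat \cap \spn(\lat') = \lat'$. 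Now $\vec y \in W$, so applying $\pi_W$ gives $\pi_W(\vec p) \in \mathrm{Int}(K_1) \cap (K_1 + \vec y)$; as $K_1$ is a fundamental body of $\lat'$ and $\vec y \in \lat'$, we get $\vec y = \vec 0$, a contradiction. This establishes that $K$ is a fundamental body of $\lat$. (One could alternatively check $K + \lat = \R^n$ by hand—cover the $W^\perp$-component of a given point using $K_2$ and a lattice vector of $\lat$, then correct the $W$-component using a vector of $\lat' \subseteq W$, which does not disturb the $W^\perp$-component—but this is not needed.)

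For the ``in particular'' statement I would first record the elementary fact that the Voronoi cell of a direct sum is the product of the Voronoi cells: for lattices $\mathcal A, \mathcal B$ in orthogonal subspaces and $\vec y = (\vec y_1, \vec y_2) \in \mathcal A \oplus \mathcal B$, the defining inequality $\inner{\vec x, \vec y} \leq \|\vec y\|^2/2$ for $\V(\mathcal A \oplus \mathcal B)$ decouples (specialize to $\vec y_2 = \vec 0$, resp.\ $\vec y_1 = \vec 0$, for one inclusion; add the two scalar inequalities for the reverse), so $\V(\mathcal A \oplus \mathcal B) = \V(\mathcal A) \times \V(\mathcal B)$, and iterating yields $\V(\bigoplus_i \lat_i/\lat_{i-1}) = \prod_i \V(\lat_i/\lat_{i-1})$. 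That this product is a fundamental body of $\lat = \lat_k$ then follows by induction on $k$: the case $k = 1$ is just the fact that $\V(\lat)$ is a fundamental body of $\lat$; for the step, the inductive hypothesis applied to the filtration $\{\vec 0\} = \lat_0 \subset \cdots \subset \lat_{k-1}$ shows $\prod_{i=1}^{k-1} \V(\lat_i/\lat_{i-1})$ is a fundamental body of $\lat_{k-1}$, while $\V(\lat_k/\lat_{k-1})$ is a fundamental body of $\lat/\lat_{k-1}$, so the first part of the claim (applied to the primitive sublattice $\lat_{k-1} \subset \lat$) gives that $\prod_{i=1}^{k} \V(\lat_i/\lat_{i-1})$ is a fundamental body of $\lat$.

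The argument is essentially bookkeeping with the orthogonal splitting, so there is no real obstacle; the only point requiring genuine care is the use of primitivity of $\lat'$ to promote $\vec y \in \spn(\lat')$ to $\vec y \in \lat'$—indeed, without primitivity $\lat/\lat'$ need not be a lattice and the statement would be false.
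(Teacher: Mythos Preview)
Your proof is correct and follows essentially the same approach as the paper: verify the volume identity, then use the orthogonal projections onto $W^\perp$ and $W$ in turn to force $\vec y = \vec 0$. Your treatment is in fact slightly more explicit than the paper's---you isolate exactly where primitivity is invoked (the paper jumps directly from $\pi_{W^\perp}(\vec y)=\vec 0$ to $\vec y \in \lat'$), and you spell out both the Voronoi-of-direct-sum identity and the induction for the ``in particular'' part, which the paper leaves implicit.
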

\begin{proof}
	Notice that
	\[
	\vol(K) = \vol(K_1) \cdot \vol(K_2) = \det(\lat') \cdot \det(\lat/\lat') = \det(\lat) 
	\; .
	\]
	It therefore suffices to show that $\mathrm{Int}(K) \cap (K + \vec{y}) = \emptyset$ for any $\vec{y} \in \lat \setminus \{\vec0\}$.  So suppose there exists $\vec{y} \in \lat$ such that $\mathrm{Int}(K) \cap (K + \vec{y}) \neq \emptyset$. 
	Then, by projecting orthogonally to $\lat'$, we see that $\mathrm{Int}(K_2) \cap (K_2 + \pi_{\spn(\lat')^\perp}(\vec{y})) \neq \emptyset$. Since $K_2$ is a fundamental body of $\lat/\lat'$ and $\pi_{\spn(\lat')^\perp}(\vec{y}) \in \lat/\lat'$, it follows that $\pi_{\spn(\lat')^\perp}(\vec{y}) = \vec0$, i.e., $\vec{y} \in \lat'$. 
	Intersecting with $\spn(\lat')$, this implies that $\mathrm{Int}(K_1) \cap (K_1 + \vec{y}) \neq \emptyset$. Since $\vec{y} \in \lat'$ and $K_1$ is a fundamental body of $\lat'$, we obtain that $\vec{y} = \vec0$. The result follows.
\end{proof}

We will also need the following claim, which follows immediately from the definition of a fundamental body.

\begin{claim}
	\label{clm:transform_fund_body}
	For any lattice $\lat \subset \R^n$, fundamental body $K$ of $\lat$, and non-singular matrix $A \in \R^{n \times n}$, the body $A K $ is a fundamental body of $A \lat$. In particular, $A \V(\lat)$ is a fundamental body of $A \lat$.
\end{claim}
\begin{proof}
	It suffices to notice that $AK + A\lat = A(K + \lat) = \R^n$ and 
	$\mathrm{Int}(AK) \cap (A K + A\vec{y}) = A(\mathrm{Int}(K) \cap (K + \vec{y})) = \emptyset$
	for $\vec{y} \in \lat \setminus \{\vec0\}$.
\end{proof}

The next lemma and its corollary show that the Voronoi cell is in some sense the ``optimal fundamental body.'' They are very similar to some results due to Dadush~\cite[Lemma 6.3.6, Corollary 6.3.7]{thesis/D12}.

\begin{lemma}
	\label{lem:Voronoi_wins}
	For any lattice $\lat \subset \R^n$, there is a map $\psi_\lat : \R^n \to \V(\lat)$ such that $\|\psi_\lat(\vec{x})\| \leq \|\vec{x}\|$, and for every fundamental body $K$ of $\lat$, $\psi_\lat$ restricted to $\mathrm{Int}(K)$ is injective and volume-preserving.
\end{lemma}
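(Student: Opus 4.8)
The plan is to take $\psi_\lat$ to be the natural ``reduce modulo $\lat$ into the Voronoi cell'' map. Given $\vec x \in \R^n$, let $\vec y(\vec x) \in \lat$ be a lattice vector closest to $\vec x$, with ties broken by a fixed rule (e.g.\ take the minimizer whose coordinate vector in a fixed basis of $\lat$ is lexicographically smallest, so that $\vec y(\vec x)$ is single-valued), and set $\psi_\lat(\vec x) := \vec x - \vec y(\vec x)$. Two observations are then immediate. First, $\psi_\lat(\vec x) \in \V(\lat)$: for every $\vec y \in \lat$ we have $\|\psi_\lat(\vec x) - \vec y\| = \|\vec x - (\vec y(\vec x) + \vec y)\| \geq \|\vec x - \vec y(\vec x)\| = \|\psi_\lat(\vec x)\|$ by minimality of $\vec y(\vec x)$, which is exactly the defining inequality of $\V(\lat)$. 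Second, since $\vec 0 \in \lat$, we get $\|\psi_\lat(\vec x)\| = \dist(\vec x, \lat) \leq \|\vec x - \vec 0\| = \|\vec x\|$, which is the desired norm bound.

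For injectivity on $\mathrm{Int}(K)$: suppose $\vec x_1, \vec x_2 \in \mathrm{Int}(K)$ with $\psi_\lat(\vec x_1) = \psi_\lat(\vec x_2)$. Then $\vec x_1 - \vec x_2 = \vec y(\vec x_1) - \vec y(\vec x_2) \in \lat$, and if this were a nonzero lattice vector $\vec z$ we would have $\vec x_1 \in \mathrm{Int}(K) \cap (\mathrm{Int}(K) + \vec z) \subseteq \mathrm{Int}(K) \cap (K + \vec z)$, contradicting the disjointness property in the definition of a fundamental body. Hence $\vec x_1 = \vec x_2$. (This uses only the disjointness half of the definition, not the volume equality.)

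For volume preservation on $\mathrm{Int}(K)$: the map $\psi_\lat$ is piecewise a translation, namely $\psi_\lat(\vec x) = \vec x - \vec y$ on $R_{\vec y} := \{\vec x : \vec y(\vec x) = \vec y\}$. Each $R_{\vec y}$ agrees with the translate $\vec y + \V(\lat)$ except on the measure-zero boundary where the tie-breaking rule operates, so the $R_{\vec y}$ are Lebesgue measurable and partition $\R^n$. Given a measurable $S \subseteq \mathrm{Int}(K)$, write $S = \bigsqcup_{\vec y \in \lat} (S \cap R_{\vec y})$; the translation $\psi_\lat$ preserves the measure of each piece, and by the injectivity just proved the images $\psi_\lat(S \cap R_{\vec y})$ are pairwise disjoint, so $\vol(\psi_\lat(S)) = \sum_{\vec y \in \lat} \vol(S \cap R_{\vec y}) = \vol(S)$.

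The only genuinely delicate point is the bookkeeping at $\partial \V(\lat)$: the statement requires a map defined on all of $\R^n$, so one must commit to a tie-breaking rule making $\psi_\lat$ single-valued, and then check that this choice affects only a Lebesgue-null set — which is harmless both for the claim $\psi_\lat(\vec x) \in \V(\lat)$ (the value still lies in the closed cell regardless of how ties are broken) and for the volume computation above. Everything else is a routine consequence of the fact that both $\V(\lat)$ and $K$ are fundamental domains for $\lat$.
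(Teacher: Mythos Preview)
Your proof is correct and follows essentially the same approach as the paper: both define $\psi_\lat(\vec{x}) = \vec{x} - \vec{y}(\vec{x})$ for $\vec{y}(\vec{x})$ a closest lattice vector, and both deduce the three properties in the same way. You are slightly more careful than the paper in spelling out a tie-breaking rule and the measure-theoretic details of the volume-preservation step, but the argument is otherwise identical.
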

\begin{proof}
	The function $\psi_\lat$ just maps $\vec{x}$ to a representative of $\vec{x} \bmod \lat$ that is in the Voronoi cell. Specifically, let $\mathsf{CVP}_\lat(\vec{x}) := \argmin_{\vec{y} \in \lat} \|\vec{y} - \vec{x}\|$ be a closest lattice vector to $\vec{x}$ (breaking ties arbitrarily), and let $\psi_\lat(\vec{x}) := \vec{x} - \mathsf{CVP}_\lat(\vec{x})$. By the definition of $\mathsf{CVP}$, it is immediate that $\|\psi_{\lat}(\vec{x})\| = \min_{\vec{y} \in \lat} \|\vec{y}- \vec{x}\| \leq \|\vec{x}\|$.
	
	Suppose $\psi_\lat(\vec{x}) = \psi_\lat(\vec{x}')$ for some $\vec{x}, \vec{x}' \in \mathrm{Int}(K)$. I.e., $\vec{x} - \mathsf{CVP}_\lat(\vec{x}) = \vec{x}' - \mathsf{CVP}_\lat(\vec{x}')$. Rearranging, we see that $\vec{y} := \vec{x} - \vec{x}' = \mathsf{CVP}_\lat(\vec{x}) - \mathsf{CVP}_\lat(\vec{x}')$ is a lattice point. But, $\vec{x} \in \mathrm{Int}(K) \cap (K + \vec{y})$.  Since $K$ is a fundamental body, it follows that $\vec{y} = \vec0$. I.e., $\vec{x} = \vec{x}'$, and $\psi_\lat$ is injective over $\mathrm{Int}(K)$.
	
	The fact that $\psi_\lat$ is volume-preserving over $\mathrm{Int}(K)$ follows from the fact that it is injective and preserves volume locally.
\end{proof}

\begin{corollary}
	\label{cor:monotone_Voronoi_wins}
	For any non-decreasing measurable function $f : \R_{\ge 0} \to \R$, lattice $\lat \subset \R^n$, and fundamental body $K$ of $\lat$, 
	\[
	\int_{\V(\lat)} f(\|\vec{x}\|) {\rm d} \vec{x}\leq \int_{K} f(\|\vec{x}\|) {\rm d} \vec{x}
	\; .
	\]
\end{corollary}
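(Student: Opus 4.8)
The plan is to deduce the corollary directly from Lemma~\ref{lem:Voronoi_wins} via a change of variables. First I would reduce to the case $f \geq 0$ on $[0,\infty)$: since $\|\vec{x}\| \geq 0$ and $f$ is non-decreasing, we have $f(\|\vec{x}\|) \geq f(0)$ for every $\vec{x}$, so replacing $f$ by $f - f(0)$ simply subtracts the same constant $f(0)\cdot\det(\lat)$ from both sides of the desired inequality, using that $\vol(\V(\lat)) = \vol(K) = \det(\lat)$ because both are fundamental bodies. This reduction also sidesteps any integrability subtleties.

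Now let $\psi := \psi_\lat$ be the map furnished by Lemma~\ref{lem:Voronoi_wins}. Since $f$ is non-decreasing and $\|\psi(\vec{x})\| \leq \|\vec{x}\|$, and since $\partial K$ has measure zero (as $K$ is convex with nonempty interior),
\[
\int_{K} f(\|\vec{x}\|)\,{\rm d}\vec{x} = \int_{\mathrm{Int}(K)} f(\|\vec{x}\|)\,{\rm d}\vec{x} \geq \int_{\mathrm{Int}(K)} f(\|\psi(\vec{x})\|)\,{\rm d}\vec{x} \; .
\]
By Lemma~\ref{lem:Voronoi_wins}, $\psi$ is injective and volume-preserving on $\mathrm{Int}(K)$, so the change-of-variables formula yields
\[
\int_{\mathrm{Int}(K)} f(\|\psi(\vec{x})\|)\,{\rm d}\vec{x} = \int_{\psi(\mathrm{Int}(K))} f(\|\vec{y}\|)\,{\rm d}\vec{y} \; .
\]
Finally I would observe that $\psi(\mathrm{Int}(K))$ and $\V(\lat)$ agree up to a null set: we have $\psi(\mathrm{Int}(K)) \subseteq \V(\lat)$ by the definition of $\psi$, while $\vol(\psi(\mathrm{Int}(K))) = \vol(\mathrm{Int}(K)) = \det(\lat) = \vol(\V(\lat))$, so the inclusion is an equality modulo a set of measure zero. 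As $f(\|\cdot\|) \geq 0$, the integral over $\psi(\mathrm{Int}(K))$ equals the integral over $\V(\lat)$, and chaining the displays gives $\int_{\V(\lat)} f(\|\vec{x}\|)\,{\rm d}\vec{x} \leq \int_{K} f(\|\vec{x}\|)\,{\rm d}\vec{x}$, as desired.

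The only mild subtlety — and the closest thing to an obstacle — is justifying the change-of-variables step and the full-measure-image claim cleanly, since $\psi$ is merely a piecewise translation rather than a smooth diffeomorphism. But Lemma~\ref{lem:Voronoi_wins} already packages exactly the two properties (injectivity and volume-preservation on $\mathrm{Int}(K)$) needed for this, so no further work is required; the rest is bookkeeping with null sets.
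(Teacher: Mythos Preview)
Your proof is correct and follows essentially the same route as the paper: apply Lemma~\ref{lem:Voronoi_wins}, use $\|\psi_\lat(\vec{x})\|\le\|\vec{x}\|$ together with monotonicity of $f$ for the inequality, then use injectivity and volume preservation to change variables and conclude via the full-measure inclusion $\psi_\lat(\mathrm{Int}(K))\subseteq\V(\lat)$. The only difference is your preliminary reduction to $f\ge 0$, which the paper omits; this is a harmless (and arguably tidier) extra step.
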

\begin{proof}
	\[
	\int_{K} f(\|\vec{x}\|) {\rm d} \vec{x} = 
	\int_{\mathrm{Int}(K)} f(\|\vec{x}\|) {\rm d} \vec{x} \geq 
	\int_{\mathrm{Int}(K)} f(\|\psi_\lat(\vec{x})\|) {\rm d} \vec{x} = \int_{\psi_\lat(\mathrm{Int}(K))} f(\|\vec{x}\|) {\rm d} \vec{x} = \int_{\V(\lat)} f(\|\vec{x}\|) {\rm d} \vec{x}
	\; ,
	\]
	where the last equality follows from the fact that $\psi_\lat$ preserves volume and $\vol(\mathrm{Int}(K)) = \vol(\V(\lat))$, so it must be the case that $\psi_\lat(\mathrm{Int}(K)) \subset \V(\lat)$ differs from $\V(\lat)$ on a set of measure zero.
\end{proof}

\subsection{Matrix calculus}
\label{sec:matrix_calc}

We say that a function $g : \R^{n \times n} \to \R$ is \emph{differentiable} at $Q \in \R^{n\times n}$ 
if there exists a $B \in \R^{n \times n}$ such that
\begin{equation*}
\lim_{M \to 0} \frac{g(Q+  M) - g(Q) - \Tr(B^TM)}{\|M\|} = 0
\; ,
\end{equation*}
and we call $B$ the \emph{gradient} of $g$ at $Q$, 
\begin{equation}
\label{eq:gradient}
\grad_A g(A)|_{A = Q} := B
\; .
\end{equation}
(Some authors prefer to define $\grad_A g(A)|_{A = Q}$ as $B^T$.)

\section{Gradients over lattices and over positions of the Voronoi cell}
\label{sec:gradient}

The purpose of this section is to prove the following theorem.

\begin{theorem}
	\label{thm:same_grad_VL}
	For any continuously differentiable function $f : \R_{\geq 0} \to \R$ and lattice $\lat \subset \R^n$, let
\[
	g(A) := \frac{1}{|\det(A)|} \cdot \int_{\V(A \lat)} f(\|\vec{x}\|^2){\rm d}\vec{x} \text{, \ and \  } h(A) := \frac{1}{|\det(A)|} \cdot \int_{ A\V(\lat)} f(\|\vec{x}\|^2){\rm d}\vec{x}
	\; ,
\]
	where $A \in \R^{n \times n}$ is a non-singular matrix.
	Then, $g$ and $h$ are differentiable at $A = I_n$, with
	\[
	\grad_A g(A)|_{A = I_n} = \grad_A h(A)|_{A = I_n} = 2\int_{\V(\lat)} f'(\|\vec{x}\|^2) \vec{x}\vec{x}^T{\rm d}\vec{x}
	\; ,
	\]
	where $f'(x) := \frac{{\rm d}}{{\rm d}x}f(x)$.
\end{theorem}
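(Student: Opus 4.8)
The plan is to establish both gradient formulas by computing the first-order effect of a small perturbation $A = I_n + \varepsilon M$ on each integral, and then observing that the two computations yield the same answer. The common ``target'' gradient is $B := 2\int_{\V(\lat)} f'(\|\vec{x}\|^2)\vec{x}\vec{x}^T{\rm d}\vec{x}$, so the goal is to verify
\[
\lim_{M \to 0} \frac{g(I_n + M) - g(I_n) - \Tr(B^TM)}{\|M\|} = 0
\]
and the analogous statement for $h$. It is convenient to handle $h$ first, since $A\V(\lat)$ is a fixed body transformed linearly, whereas $g$ involves the Voronoi cell of a \emph{perturbed} lattice $A^T\lat$, which itself varies with $A$.

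For $h$: writing $h(A) = |\det A|^{-1}\int_{A\V(\lat)} f(\|\vec x\|^2)\,{\rm d}\vec x$ and substituting $\vec x = A\vec u$ with $\vec u \in \V(\lat)$, the Jacobian $|\det A|$ cancels, giving $h(A) = \int_{\V(\lat)} f(\|A\vec u\|^2)\,{\rm d}\vec u$. Now $\|(I_n+M)\vec u\|^2 = \|\vec u\|^2 + 2\vec u^T M\vec u + \|M\vec u\|^2$, and since $f$ is continuously differentiable, a first-order Taylor expansion of $f$ inside the integral (uniform over the compact set $\V(\lat)$, using continuity of $f'$) yields $h(I_n+M) = h(I_n) + \int_{\V(\lat)} f'(\|\vec u\|^2)\cdot 2\vec u^T M\vec u\,{\rm d}\vec u + o(\|M\|)$. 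Since $\vec u^T M \vec u = \Tr(M\vec u\vec u^T)$, the linear term is $\Tr\!\big(M\cdot 2\int_{\V(\lat)} f'(\|\vec u\|^2)\vec u\vec u^T{\rm d}\vec u\big) = \Tr(B^T M)$ (using symmetry of $B$), which is exactly what is needed. The only care required here is to justify passing the Taylor remainder estimate through the integral, which is routine because $\V(\lat)$ is bounded and $f, f'$ are continuous hence bounded on the relevant range.

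For $g$: this is where the main obstacle lies. The Voronoi cell $\V(A^T\lat)$ depends on $A$, and its combinatorial facet structure can change under perturbation, so one cannot simply differentiate ``under the integral sign'' as naively as for $h$. The key idea is to reduce $g$ to essentially the same computation as $h$ by using that $\V(\lat)$ is a \emph{fundamental body} and transforming it linearly. Concretely, I expect one wants to relate $\V(A^T\lat)$ to $A^{-T}\V(\lat)$: the latter is a fundamental body of $A^T\lat$ (image of a fundamental body of $\lat$ under the map taking $\lat$ to $A^T\lat$), though not itself a Voronoi cell. Using Corollary~\ref{cor:monotone_Voronoi_wins} one gets inequalities between $\int_{\V(A^T\lat)}$ and $\int_{A^{-T}\V(\lat)}$ of the right monotone type, but for the derivative one needs a two-sided first-order match. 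The cleanest route is probably to argue that the facets of $\V(\lat)$ that are ``active'' (i.e., the relevant lattice vectors $\vec y \in \lat$ with $\|\vec y\| \le 2\mu(\lat)$ or so) remain the defining ones for small perturbations, express $\V(A^T\lat)$ via the finitely many inequalities $\inner{\vec x, A^T\vec y} \le \|A^T\vec y\|^2/2$, and show that the symmetric difference $\V(A^T\lat) \symdiff A^{-T}\V(\lat)$ has volume $O(\|M\|^2)$ — or more precisely that the contribution of the boundary movement to $\int f(\|\vec x\|^2)$ is captured, to first order, by the $|\det A|^{-1}$ factor together with the reparametrization, exactly mirroring the $h$ computation. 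Then a change of variables $\vec x = A^{-T}\vec u$ on $A^{-T}\V(\lat)$, combined with the $O(\|M\|^2)$ control on the discrepancy, gives $g(I_n+M) = \int_{\V(\lat)} f(\|A^{-T}\vec u\|^2)\,{\rm d}\vec u + o(\|M\|)$; expanding $\|A^{-T}\vec u\|^2 = \|(I_n - M^T + O(\|M\|^2))\vec u\|^2 = \|\vec u\|^2 - 2\vec u^T M^T \vec u + o(\|M\|) = \|\vec u\|^2 - 2\vec u^T M \vec u + o(\|M\|)$ (the last step again using symmetry, which is permissible since only the symmetric part of $M$ affects $\det$-one perturbations — or one carries the asymmetric term and notes it contributes $-\Tr(B^TM)$ with the sign absorbed), one recovers $g(I_n + M) = g(I_n) + \Tr(B^T M) + o(\|M\|)$.

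\textbf{Main obstacle.} The delicate point is rigorously controlling how $\V(A^T\lat)$ moves under perturbation — establishing that the boundary displacement is first-order governed by the linear reparametrization and that facet-combinatorics changes contribute only $o(\|M\|)$ to the integral. The honest way to do this, rather than finessing it through the fundamental-body inequalities, is to work directly with the defining halfspaces: fix a large enough finite set of lattice vectors so that for all $A$ near $I_n$ the Voronoi cell $\V(A^T\lat)$ is cut out by $\{\vec x : \inner{\vec x, A^T\vec y} \le \|A^T\vec y\|^2/2\}$ over that finite set, write the integral as a sum over a fixed simplicial subdivision whose vertices move smoothly with $A$, and apply the divergence theorem / a direct vertex-perturbation computation. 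This is the ``elementary but somewhat lengthy'' part the authors refer to; I would expect the bulk of the writeup to be the bookkeeping that the first-order boundary terms telescope to reproduce precisely $\Tr(B^TM)$, with the $|\det A|^{-1}$ normalization cancelling the volume-expansion term as in the $h$ case. Everything else — the expansions of $f$, the matrix exponential/inverse expansions, the symmetry of $B$ — is routine.
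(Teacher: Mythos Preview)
Your treatment of $h$ is correct and matches the paper's Claim~\ref{clm:grad_h}. The gap is in your handling of $g$.

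First, a concrete slip: $A^{-T}\V(\lat)$ is a fundamental body of $A^{-T}\lat$, \emph{not} of $A^T\lat$; the volumes already disagree by a factor $|\det A|^2$, so $\vol\big(\V(A^T\lat)\symdiff A^{-T}\V(\lat)\big)$ is of order $\|M\|$, not $\|M\|^2$. The correct linear proxy is $A^T\V(\lat)$ (or, as the paper sets it up, one compares $\V(A^T\lat)$ directly with $A\V(\lat)$ and proves $g(A)-h(A)=O(\|A-I_n\|^2)$).

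The more serious omission is that your sketch never identifies \emph{why} the first-order boundary terms match. If you carry out the moving-boundary computation you describe, the normal velocity of the facet of $\V(A^T\lat)$ associated to $\vec y\in\lat$ at a point $\vec x_0$ on that facet is $\vec y^T M(\vec y-\vec x_0)/\|\vec y\|$, whereas the normal velocity of the corresponding facet of $A\V(\lat)$ is $\vec y^T M\vec x_0/\|\vec y\|$. These differ pointwise; their integrals against $f(\|\vec x_0\|^2)$ over the facet agree only because each Voronoi facet is symmetric under the reflection $\phi_{\vec y}:\vec x\mapsto 2\pi_{\vec y}(\vec x)-\vec x$, which on the facet reads $\vec x_0\leftrightarrow \vec y-\vec x_0$ and preserves $\|\vec x_0\|$. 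This is the paper's \emph{normally symmetric facets} property (Lemma~\ref{lem:central_symmetry}), and it is the geometric reason $\nabla g=\nabla h$; for a generic polytope-valued map $A\mapsto K(\psi(A^TY))$ lacking this symmetry the gradients would \emph{not} coincide, so your proposed ``vertex-perturbation bookkeeping'' could not close without it. The paper makes this explicit by proving a more general statement (Theorem~\ref{thm:same_grad_K}) for any polytope with normally symmetric facets, using a ``protected cones'' construction (Lemma~\ref{lem:protected_cones}, Corollary~\ref{cor:protected_cones}) to confine the $O(\varepsilon^2)$ errors to neighborhoods of facet intersections.

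Finally, you discard the fundamental-body inequality route too quickly. Conditional on differentiability of $g$, Corollary~\ref{cor:monotone_Voronoi_wins} gives (for monotone $f$) $g(A)\le h(A)$ for all $A$ with equality at $I_n$, whence $\nabla g|_{I_n}=\nabla h|_{I_n}$ in one line; this is exactly the paper's Appendix~\ref{app:monotone_gradient}. The lengthy unconditional argument is needed precisely to \emph{establish} differentiability of $g$, and that is where the normal-symmetry input is indispensable.
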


We first compute the gradient of $h$, which is straightforward.

\begin{claim}
	\label{clm:grad_h}
	For any continuously differentiable function $f : \R_{\geq 0} \to \R$ and bounded measurable set $U \subset \R^n$, let
	\[
	h(A) := \frac{1}{|\det(A)|} \cdot \int_{A U} f(\|\vec{x}\|^2){\rm d}\vec{x}
	\; ,
	\]
	where $A \in \R^{n \times n}$ is a non-singular matrix. Then, $h$ is differentiable with
	\[
	\grad_A h(A) |_{A = I_n}= 2\int_U  f'(\|\vec{x}\|^2)\vec{x} \vec{x}^T {\rm d}\vec{x}
	\; ,
	\]
	where $f'(x) := \frac{{\rm d}}{{\rm d}x}f(x)$.
\end{claim}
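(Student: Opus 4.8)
The plan is to change variables to move the matrix $A$ out of the domain of integration and onto the integrand, and then differentiate under the integral sign. Concretely, for $A$ near $I_n$, substitute $\vec{x} = A\vec{u}$ with $\vec{u} \in U$, so that ${\rm d}\vec{x} = |\det(A)|\,{\rm d}\vec{u}$ and
\[
h(A) = \frac{1}{|\det(A)|} \int_{AU} f(\|\vec{x}\|^2)\,{\rm d}\vec{x} = \int_U f(\|A\vec{u}\|^2)\,{\rm d}\vec{u} = \int_U f(\vec{u}^T A^T A \vec{u})\,{\rm d}\vec{u}
\; .
\]
This representation already makes the factor $1/|\det(A)|$ disappear, which is the reason the gradient has such a clean form. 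Now I would compute the first-order expansion of the integrand in $M := A - I_n$: since $\|(I_n + M)\vec{u}\|^2 = \|\vec{u}\|^2 + 2\vec{u}^T M \vec{u} + \|M\vec{u}\|^2 = \|\vec{u}\|^2 + 2\Tr(M \vec{u}\vec{u}^T) + O(\|M\|^2\|\vec{u}\|^2)$, a Taylor expansion of $f$ gives
\[
f(\|(I_n+M)\vec{u}\|^2) = f(\|\vec{u}\|^2) + 2 f'(\|\vec{u}\|^2)\,\Tr(M\vec{u}\vec{u}^T) + (\text{error})
\; .
\]
Integrating over $U$ and matching with the definition of the gradient in Section~\ref{sec:matrix_calc} (recalling $\Tr(B^T M)$ with $B = 2\int_U f'(\|\vec{u}\|^2)\vec{u}\vec{u}^T\,{\rm d}\vec{u}$ and using that $B$ is symmetric), this yields exactly $\grad_A h(A)|_{A=I_n} = 2\int_U f'(\|\vec{x}\|^2)\vec{x}\vec{x}^T\,{\rm d}\vec{x}$.

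The one genuine technical point—and the main obstacle—is justifying that the error term is $o(\|M\|)$ \emph{after integration over $U$}, i.e., interchanging the limit defining the derivative with the integral. Since $U$ need not be bounded (e.g., $U = \V(\lat)$ has bounded inradius but we still want a clean argument, and in the application $f$ is a Gaussian so everything decays, but the claim is stated more generally), I would control this using the hypothesis that $h(I_n)$ and $\int_U f'(\|\vec{x}\|^2)\vec{x}\vec{x}^T\,{\rm d}\vec{x}$ are finite, together with continuity of $f'$. The cleanest route is to write the exact remainder from Taylor's theorem with integral form, $f(b) - f(a) - f'(a)(b-a) = \int_a^b (f'(\tau) - f'(a))\,{\rm d}\tau$, bound it for $b = \|(I_n+M)\vec{u}\|^2$ and $a = \|\vec{u}\|^2$ using local uniform continuity of $f'$ and the quadratic-in-$\vec{u}$ control on $b - a$, and invoke dominated convergence with the finiteness hypotheses to push the limit inside. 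One should also note that for $\|M\|$ small enough the substitution $\vec{x} = A\vec{u}$ is a valid bijection $U \to AU$ and $|\det(A)| > 0$, so the rewriting of $h(A)$ above is legitimate in a neighborhood of $I_n$.

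I would present this as a short self-contained argument, since Section~\ref{sec:matrix_calc} has already fixed the convention for the matrix gradient and the symmetry of $B$ makes the transpose in $\Tr(B^T M)$ irrelevant. The only place to be slightly careful in the write-up is to state the domination hypothesis precisely enough to cover the eventual applications (where $U = \V(\lat)$ or a related fundamental body and $f(x) = e^{-\pi x/s^2}$, so that $f'$ decays exponentially and all integrals converge), which is exactly what the proviso "provided that this integral and $h(I_n)$ are well-defined and finite" in the statement is there to supply.
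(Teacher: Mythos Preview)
Your proposal is correct and follows essentially the same approach as the paper: change variables to write $h(A) = \int_U f(\|A\vec{x}\|^2)\,{\rm d}\vec{x}$, then differentiate under the integral sign and apply the chain rule. The paper's write-up is more terse, simply citing a Leibniz-type integral rule from \cite{Kammar16} to justify the interchange of gradient and integral, whereas you spell out the Taylor expansion and dominated-convergence argument; the content is the same.
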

\begin{proof}
	By a change of variables, we have
	\[
	h(A) = \int_{U} f(\|A\vec{x}\|^2){\rm d}\vec{x} 
	\; .
	\]
	Next, by the chain rule, $\grad_A f(\|A\vec{x}\|^2) = 2f'(\|A\vec{x}\|^2) A\vec{x} \vec{x}^T$, which is bounded as $x$ ranges over $U$ and $A$ ranges over any bounded set. Therefore, we may use the bounded convergence theorem to swap the gradient and the integral and write 
	\begin{align*}
	\grad_A h(A) 
	&= \int_U \big( \grad_A f(\|A\vec{x}\|^2) \big) {\rm d}\vec{x}\\
	&= 2\int_U  f'(\|A\vec{x}\|^2) A\vec{x} \vec{x}^T  {\rm d}\vec{x} %
	\; . &&\qedhere
	\end{align*}
\end{proof}

Now, we prove Theorem~\ref{thm:same_grad_VL}. 
(We thank Ronen Eldan for showing us this proof. An earlier version of this work had a much longer proof.)

\begin{proof}[Proof of Theorem~\ref{thm:same_grad_VL}]
Fix a continuously differentiable function $f : \R_{\ge 0} \to \R$ and a lattice $\lat \subset \R^n$. By Claim~\ref{clm:grad_h}, it suffices to show that the gradient of $g$ at $I_n$ exists and is equal to the gradient of $h$ at $I_n$. 
Recalling the definition of the gradient in Eq.~\eqref{eq:gradient}, we see that this is equivalent to proving that
\begin{equation}
	\label{eq:same_grad_no_det}
\lim_{M \to 0} \frac{h(I_n + M) - g(I_n + M)}{\|M\|} = 0
\; .
\end{equation}
In fact, setting $\nu = \nu(\lat) > 1$ as in Claim~\ref{clm:eps_voronoi} below, we will show that for any $M \in \R^{n \times n}$ with operator norm $\|M\| < 1/(n \nu)$,
\begin{equation}
\label{eq:same_grad}
|\det(I_n + M)| \cdot \big|h(I_n + M) -g(I_n + M)\big| \leq C^* \|M\|^2
\; ,
\end{equation}
where $C^* := C^*(\lat, f) > 0$ is independent of $M$. This implies Eq.~\eqref{eq:same_grad_no_det} because the determinant is bounded away from zero in a neighborhood around $I_n$, so that $\lim_{M \to 0} \|M\|^2/( \|M\| \cdot  |\det(I_n + M)|) = 0$.

Let $A := I_n + M$. By the definition of the Voronoi cell, $\|\vec{x}\| = \dist(\vec{x}, A \lat)$ if $\vec{x} \in \V(A \lat)$. Therefore, 
\begin{align}
|\det(A)| \cdot g(A) 
&=  \int_{\V(A \lat)} f(\|\vec{x}\|^2){\rm d}\vec{x} \nonumber \\
&=\int_{\V(A \lat)} f(\dist(\vec{x}, A\lat)^2){\rm d}\vec{x} \nonumber  \\
&=  \int_{A \V( \lat)} f(\dist(\vec{x}, A \lat)^2){\rm d}\vec{x}
\; , \label{eq:detgasintegral}
\end{align}
where the last equality uses the facts that (1) $A \V(\lat)$ is a fundamental domain of $A \lat$ (i.e., Claim~\ref{clm:transform_fund_body}), and (2) the distance function $\vec{x} \mapsto \dist(\vec{x}, A \lat)$ is periodic over $A \lat$ so that the integral is the same over any fundamental domain.

Now, let $\widehat{\V} := (A \V(\lat)) \setminus \V(A \lat)$ be the set of points in $A \V(\lat)$ such that $\|\vec{x}\| \neq \dist(\vec{x}, A \lat)$. Using Eq.~\eqref{eq:detgasintegral}, we see that the left-hand side of Eq.~\eqref{eq:same_grad} is
\begin{align*}
\Big| \int_{A\V( \lat)} \big( f(\|\vec{x}\|^2) - f(\dist(\vec{x}, A\lat)^2) \big) {\rm d}\vec{x} \Big| 
&= \Big| \int_{\widehat{\V}} \big( f(\|\vec{x}\|^2) - f(\dist(\vec{x}, A \lat)^2) \big) {\rm d}\vec{x} \Big| \\
&\leq \vol(\widehat{\V}) \cdot \max_{\vec{x} \in \widehat{\V}} \big|f(\|\vec{x}\|^2) - f(\dist(\vec{x}, A\lat)^2) \big|
\; .
\end{align*}
We complete the proof by arguing that $\vol(\widehat{\V} ) \leq C_0^* \|M\|$ and $\max_{\vec{x} \in \widehat{\V}} \big|f(\|\vec{x}\|^2) - f(\dist(\vec{x}, A \lat)^2) \big| \leq C_1^* \|M\|$, where $C_0^* := C_0^*(\lat) > 0$ and $C_1^* := C_1^*(\lat, f) >  0$ are independent of $M$. 

By Claim~\ref{clm:eps_voronoi},  
\[
\widehat{\V} = (A\V(\lat)) \setminus \V(A \lat) \subset \big((1+\nu \|M\|)\cdot \V(\lat)\big)\setminus \big((1- \nu \|M\|) \cdot \V(\lat)\big)
\; ,
\]
and therefore,
\begin{align*}
\vol(\widehat{\V} ) 
&\le 
\big((1+ \nu \|M\|)^n - (1- \nu \|M\|)^n \big) \cdot  \vol(\V(\lat)) \\
& \le
100n \nu \vol(\V(\lat)) \cdot \|M\| \; ,
\end{align*}
as desired, where we have used that $\|M\| < 1/(n \nu)$.

Next, notice that for $\vec{x} \in \widehat{\V} \subset A\V(\lat)$, we have, say, $\|\vec{x}\| \leq \|A\| \mu(\lat) \leq 2 \mu(\lat)$ and $\dist(\vec{x}, A\lat) \leq \|\vec{x}\| \leq 2\mu(\lat)$. Therefore, 
\begin{align}
	\big|f(\|\vec{x}\|^2) - f(\dist(\vec{x}, A \lat)^2) \big| 
		&\leq \big|\|\vec{x}\| - \dist(\vec{x}, A \lat)\big| \cdot \max_{0 \leq r \leq 2\mu(\lat)} \Big|\frac{\rm d}{{\rm d} r} f(r^2) \Big| \nonumber \\
		&= 2 \big|\|\vec{x}\| - \dist(\vec{x}, A \lat)\big| \cdot \max_{0 \leq r \leq 2\mu(\lat)} r|f'(r^2)|
	\; ,\label{eq:boundc1}
\end{align}
where the inequality follows, e.g., from the fundamental theorem of calculus. Since $f'$ is continuous by assumption, the maximum in Eq.~\eqref{eq:boundc1} is well defined and finite. 
To complete the proof, let 
$\vec{x}' := (1-\nu \|M\|)^2\vec{x}$, 
which satisfies 
$\vec{x}'  \in \V(A \lat)$ because
\[
(1-\nu \|M\|)^2 A \V(\lat) \subset
(1+\nu \|M\|) (1-\nu\|M\|) \V(A \lat) \subset
\V(A \lat)
\]
where we used the right containment in Eq.~\eqref{eq:Voronoi_claimed_inclusion_1}
and the left containment in Eq.~\eqref{eq:Voronoi_claimed_inclusion_2}
of Claim~\ref{clm:eps_voronoi} below.
 Then,
by the triangle inequality,
\begin{align*}
\big| \|\vec{x}\| - \dist(\vec{x}, A \lat)\big| 
	&\leq 2\big\|\vec{x} - \vec{x}'\big\| + \big| \|\vec{x}'\| - \dist(\vec{x}', A \lat)\big|\\
	&= 2\big\|\vec{x} - \vec{x}'\big\|\\
	&\leq 4 \nu \|\vec{x}\| \|M\| \\
	&\leq 8 \nu \mu(\lat) \|M\|\; . 
\end{align*}
Combining this with Eq.~\eqref{eq:boundc1} shows that there exists a $C_1^*$ of the desired form.
\end{proof}

\begin{claim}
	\label{clm:eps_voronoi}
	For any lattice $\lat \subset \R^n$ there exists $\nu = \nu(\lat) > 1$ such that for all $M \in \R^{n \times n}$ with $\|M\| < 1/\nu$,
	we have
	\begin{equation}
	\label{eq:Voronoi_claimed_inclusion_1}
	(1- \nu\|M\|)\cdot \V(\lat)  \subset A \V(\lat) \subset (1+ \nu \|M\|) \cdot \V(\lat) 
	\; ,
	\end{equation}
	and similarly,
	\begin{equation}
	\label{eq:Voronoi_claimed_inclusion_2}
	(1- \nu\|M\|) \cdot \V(\lat) \subset  \V(A \lat) \subset (1+ \nu \|M\|) \cdot \V(\lat) 
	\; ,
	\end{equation} 
	where $A := I_n+M$.
\end{claim}
\begin{proof}
	We take $\nu := 20\mu(\lat)/\lambda_1(\lat)$
	and notice that $\nu \geq 10$. Then,
	for any $\vec{x}\in \V(\lat) \subset \mu(\lat) B_2^n$, we have $\| A \vec{x} - \vec{x}\| = \|M \vec{x}\| \leq \mu(\lat) \|M\| $. Since $(\lambda_1(\lat)/2) \cdot B_2^n \subset \V(\lat)$, we have
	\begin{equation*}
	A \V(\lat) \subset \V(\lat) +  \mu(\lat)\|M\| B_2^n  \subset (1+ \nu\|M\|) \cdot  \V(\lat)
	\; .
	\end{equation*}
	Similarly, 
	\[
	\V(\lat) \subset A\V(\lat) + \mu(\lat) \|M\| B_2^n \subset (1-\nu \|M\|)^{-1}\cdot A\V(\lat)
	\; ,
	\]
	where the second inclusion uses the fact that, say, $(\lambda_1(\lat)/4) \cdot B_2^n \subset A\V(\lat)$ since $\|A^{-1}\| \leq 2$. This establishes Eq.~\eqref{eq:Voronoi_claimed_inclusion_1}.
	
	Next, let $\vec{x} \in \V(\lat)$. Let $\alpha := 2\max_{\vec{y}\in \lat \setminus \{\vec0\}} \inner{A \vec{y}, \vec{x}}/\|A\vec{y}\|^2$. Equivalently, $\alpha > 0$ is minimal such that $\vec{x}/\alpha \in  \V(A \lat)$. 
We claim that $\alpha \leq 1+ \nu \|M\|$, which implies that $ (1 - \nu \|M\|) \cdot  \V(\lat) \subset \V(A\lat) $.
	Indeed, for non-zero $\vec{y} \in \lat$, we have
	\begin{align*}
	\frac{\inner{A \vec{y}, \vec{x}}}{\|A\vec{y}\|^2} 
	& \leq \frac{\inner{\vec{y}, \vec{x}} + \inner{M \vec{y}, \vec{x}}}{(1-\|M\|)^2 \|\vec{y}\|^2} \\
	& \leq 
	\frac{|\inner{\vec{y}, \vec{x}}|}{(1-\|M\|)^2 \|\vec{y}\|^2} + \frac{\|M\|}{(1-\|M\|)^2} \cdot \frac{\|\vec{x}\|}{\|\vec{y}\|}  \\ 
	& \leq \frac{|\inner{\vec{y}, \vec{x}}|}{(1-\|M\|)^2 \|\vec{y}\|^2} + \frac{\|M\|}{(1-\|M\|)^2} \cdot \frac{\|\vec{x}\|}{\lambda_1(\lat)}
	\; .
	\end{align*}
	Since $\vec{x} \in \V(\lat)$, $|\inner{\vec{y}, \vec{x}}|/\|\vec{y}\|^2 \leq 1/2$ and $\|\vec{x}\| \leq \mu(\lat)$. Therefore, 
	\[
	\alpha \leq \frac{1}{(1-\|M\|)^2} + \frac{2\|M\| }{(1-\|M\|)^2} \cdot \frac{\mu(\lat)}{\lambda_1(\lat)} \leq 1 + \nu \|M\| 
	\; ,
	\]
	as claimed.
	
	Finally, we use a similar argument to prove that $\V(A \lat)  \subset(1+ \nu \|M\|) \cdot \V(\lat)$. In particular, for any $\vec{x} \in \V(A\lat)$ and non-zero $\vec{y} \in \lat$, we have 
	\begin{align*}
	    \frac{\inner{\vec{y}, \vec{x}}}{\|\vec{y}\|^2} 
	        &\leq \frac{\inner{A \vec{y}, \vec{x}}}{\|\vec{y}\|^2} + \|M\|\cdot\frac{\|\vec{x}\|}{\|\vec{y}\|} \\
	        &\leq   (1+\|M\|)^2 \cdot  \frac{\inner{A\vec{y}, \vec{x}}}{\|A\vec{y}\|^2} + \|M\| \cdot \frac{\mu(A\lat)}{\lambda_1(\lat)}\\
	        &\leq \frac{(1+\|M\|)^2}{2} + \|M\| \cdot \frac{\mu(A\lat)}{\lambda_1(\lat)}\\
	        &\leq \frac{1 + \nu \|M\|}{2}
	    \; ,
	\end{align*}
	which implies the result.
	For this last inequality, we use the fact that, e.g., $\mu(A\lat) \leq 2\mu(\lat)$ since 
	\[
		\mu(A\lat) = \max_{\vec{x} \in \R^n} \min_{\vec{y}\in\lat} \|A\vec{x} - A \vec{y} \| \leq \|A\| \cdot \max_{\vec{x} \in \R^n} \min_{\vec{y}\in\lat} \|\vec{x} - \vec{y} \| \leq 2 \mu(\lat)
		\; .
		\qedhere
	\]
\end{proof}

\section{Proof of the Reverse Minkowski Theorem}
\label{sec:voronoi_mass}

In this section, we prove our main theorem, Theorem~\ref{thm:RM}. 
Recall that the Voronoi cell $\V(\lat)$ of a lattice $\lat \subset \R^n$ is the symmetric polytope of all vectors in $\R^n$ that are closer to $\vec0$ than to any other lattice vector,
\[
\V(\lat) := \{ \vec{x} \in \R^n \ : \ \forall \vec{y} \in \lat ,\ \inner{\vec{y}, \vec{x}} \leq \|\vec{y}\|^2/2 \}
\; .
\]
Also recall that for parameter $s > 0$, $\gamma_s(\cdot)$ is the Gaussian measure on $\R^n$ given by
\[
\gamma_s(S) := \int_{S/s} e^{-\pi \|\vec{x}\|^2} {\rm d} \vec{x}
\] 
for any measurable set $S \subseteq \R^n$. (Some authors prefer to parametrize $\gamma$ in terms of the standard deviation $\sigma := s/\sqrt{2\pi}$.)
We omit the subscript when $s=1$. 
We are interested in the Gaussian mass $\gamma_s(\V(\lat))$ of the Voronoi cell because, as the following lemma due to Chung, Dadush, Liu, and Peikert shows, this can be used to obtain an upper bound on the mass $\rho_s(\lat)$ of the lattice itself~\cite{CDLP12}. We include a proof for completeness. 

\begin{lemma}[{\cite[Lemma 3.4]{CDLP12}}]
	\label{lem:rho_gamma}
	For any lattice $\lat \subset \R^n$ and $s > 0$, 
	\[
	\rho_s(\lat) \cdot \gamma_s(\V(\lat)) \leq 1
	\; .
	\]
\end{lemma}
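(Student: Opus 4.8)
The plan is to exploit the single fact that makes the Voronoi cell special for this purpose: it is a fundamental body of $\lat$, so its $\lat$-translates tile $\R^n$. First I would reduce to $s=1$ by rescaling, replacing $\lat$ with $\lat/s$; this sends $\V(\lat)$ to $\V(\lat)/s$, turns $\rho_s(\lat)$ into $\rho(\lat/s)$, and turns $\gamma_s(\V(\lat))$ into $\gamma(\V(\lat/s))$, so it suffices to prove $\rho(\lat)\cdot\gamma(\V(\lat)) \le 1$ for all lattices $\lat$, where $\rho := \rho_1$ and $\gamma := \gamma_1$. Since the translates $\V(\lat) + \vec{y}$, $\vec{y}\in\lat$, cover $\R^n$ with pairwise disjoint interiors, and since $e^{-\pi\|\vec{x}\|^2}$ is a probability density on $\R^n$, integrating over the tiling and swapping the (nonnegative) sum with the integral by Tonelli gives
\[
1 = \int_{\R^n} e^{-\pi\|\vec{x}\|^2}\,{\rm d}\vec{x} = \sum_{\vec{y}\in\lat}\int_{\V(\lat)} e^{-\pi\|\vec{x}+\vec{y}\|^2}\,{\rm d}\vec{x} = \int_{\V(\lat)} \rho(\lat + \vec{x})\,{\rm d}\vec{x}
\; .
\]

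The second ingredient is a pointwise lower bound, $\rho(\lat + \vec{x}) \ge e^{-\pi\|\vec{x}\|^2}\,\rho(\lat)$, valid for every $\vec{x}\in\R^n$ (a complement to the upper bound of Claim~\ref{clm:shifted_mass}). To get it I would expand $\|\vec{x}+\vec{y}\|^2 = \|\vec{x}\|^2 + 2\inner{\vec{x},\vec{y}} + \|\vec{y}\|^2$, pull out the factor $e^{-\pi\|\vec{x}\|^2}$, and then use the central symmetry $\lat = -\lat$ to symmetrize the remaining sum:
\[
\rho(\lat + \vec{x}) = e^{-\pi\|\vec{x}\|^2}\sum_{\vec{y}\in\lat} e^{-\pi\|\vec{y}\|^2} e^{-2\pi\inner{\vec{x},\vec{y}}} = e^{-\pi\|\vec{x}\|^2}\sum_{\vec{y}\in\lat} e^{-\pi\|\vec{y}\|^2}\cosh\!\big(2\pi\inner{\vec{x},\vec{y}}\big) \ge e^{-\pi\|\vec{x}\|^2}\,\rho(\lat)
\; ,
\]
the last step because $\cosh \ge 1$. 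Substituting this bound into the tiling identity and pulling the constant $\rho(\lat)$ out of the integral yields
\[
1 = \int_{\V(\lat)}\rho(\lat + \vec{x})\,{\rm d}\vec{x} \ge \rho(\lat)\int_{\V(\lat)} e^{-\pi\|\vec{x}\|^2}\,{\rm d}\vec{x} = \rho(\lat)\cdot\gamma(\V(\lat))
\; ,
\]
which is exactly the claim (for $s=1$, hence for all $s>0$ after undoing the rescaling).

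I do not expect a real obstacle: the argument is short and elementary. The one delicate point is the symmetrization step — one must invoke $\lat=-\lat$ to pair the term of $\vec{y}$ with that of $-\vec{y}$ and thereby replace the lopsided factor $e^{-2\pi\inner{\vec{x},\vec{y}}}$ by $\cosh(2\pi\inner{\vec{x},\vec{y}}) \ge 1$; without this the naive bound fails. The only other things to check are routine: that $\V(\lat)$ tiles (immediate from it being a fundamental body), and that the interchange of summation and integration is legitimate (all summands are nonnegative). Note also that this proof shows nothing about the Voronoi cell is used beyond its tiling property, so the same bound in fact holds with $\V(\lat)$ replaced by any fundamental body of $\lat$; the Voronoi cell is the relevant choice elsewhere because it additionally minimizes $\|\vec{x}\|$ over each coset (Lemma~\ref{lem:Voronoi_wins}).
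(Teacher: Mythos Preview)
Your proof is correct and essentially identical to the paper's: both reduce to $s=1$, split the Gaussian integral along the Voronoi tiling, and use a $\cosh\ge 1$ step to pull out $\rho(\lat)$. The only cosmetic difference is where the symmetrization happens: the paper uses the symmetry of the Voronoi cell $\V(\lat)=-\V(\lat)$ to symmetrize the \emph{integral} in $\vec{t}$, whereas you use the lattice symmetry $\lat=-\lat$ to symmetrize the \emph{sum} in $\vec{y}$; as you observed, your version therefore yields the inequality for any fundamental body, not just symmetric ones.
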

\begin{proof}
	By scaling appropriately, we may assume without loss of generality that $s = 1$. Note that the Voronoi cell tiles space with respect to $\lat$. I.e., $\bigcup_{\vec{y} \in \lat} (\V(\lat) + \vec{y}) = \R^n$, where the union is disjoint except on a measure-zero set.  So,
	\begin{align*}
	1 &= \int_{\R^n} e^{-\pi \|\vec{x}\|^2} {\rm d}\vec{x} \\
	&= \sum_{\vec{y} \in \lat} \int_{\V(\lat)} e^{-\pi \|\vec{y} + \vec{t}\|^2} {\rm d}\vec{t} \\
	&= \sum_{\vec{y} \in \lat} e^{-\pi \|\vec{y}\|^2} \int_{\V(\lat)}  e^{-\pi \|\vec{t}\|^2} e^{2\pi \inner{\vec{y}, \vec{t}}} {\rm d}\vec{t} \\
	&= \sum_{\vec{y} \in \lat} \rho(\vec{y}) \int_{\V(\lat)} e^{-\pi \|\vec{t}\|^2} \cosh(2\pi \inner{\vec{y}, \vec{t}}){\rm d}\vec{t} \\
	&\geq \sum_{\vec{y} \in \lat} \rho(\vec{y}) \int_{\V(\lat)} e^{-\pi \|\vec{t}\|^2}{\rm d}\vec{t} \\
	&= \rho(\lat)\gamma(\V(\lat)) 
	\;,
	\end{align*}
	where the fourth line follows from the fact that the Voronoi cell is symmetric.
\end{proof}

Therefore, in order to prove Theorem~\ref{thm:RM}, it suffices to show that $\gamma_{1/t}(\V(\lat)) \geq 2/3$ for every lattice $\lat \subset \R^n$ with $\det(\lat') \geq 1$ for all sublattices $\lat' \subseteq \lat$, where $t:=10(\log n + 2)$. 
As we explained in the introduction, we will reduce this to studying local minima of the function $\lat \mapsto \gamma_{1/t}(\V(\lat))$ over the set of determinant-one lattices. (We do not know whether such local minima actually exist.)

In Section~\ref{sec:gaussianconvex}, we collect some (mostly known) facts about the Gaussian mass of convex bodies. The statements of Theorem~\ref{thm:isotropic_mass} and Lemma~\ref{lem:gamma_concentration} are the only 
parts of Section~\ref{sec:gaussianconvex} that will be used later on. In particular, in Section~\ref{sec:actualproofofmain}, we apply these two facts to prove Theorem~\ref{thm:RM}.

\subsection{Gaussian mass of convex bodies}
\label{sec:gaussianconvex}

We say that a measurable set $U \subset \R^n$ is in \emph{isotropic Gaussian position for parameter $s$} if
\[
\int_{U/s} e^{-\pi \|\vec{x}\|^2} \vec{x}\vec{x}^T {\rm d}\vec{x} = \alpha \cdot I_n 
\]
for some scalar $\alpha > 0$. 
If $s = 1$, we simply say that $U$ is in \emph{isotropic Gaussian position}. Such a position has been considered elsewhere (e.g.,~\cite{bobkov11}), but as far as we know, it did not previously have a name.

The main goal of this section is to prove the following theorem. We will also
include a standard fact in Lemma~\ref{lem:gamma_concentration} towards the end
of this section.

\begin{theorem}
	\label{thm:isotropic_mass}
	For any symmetric convex body $K \subset \R^n$ with $\vol(K) \geq 1$, if $K$ is in isotropic Gaussian position for some 
	parameter $0 < s \leq 1/t$, then $\gamma_{s}(K) \geq 2/3$ where 
	$t := 10(\log n + 2)$.
\end{theorem}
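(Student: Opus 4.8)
The plan is to invoke the $\ell\ell^*$-type estimate (referred to in the overview as Theorem~\ref{thm:ell_position_mass}) which guarantees that \emph{some} position of a symmetric convex body of volume at least one has large Gaussian mass, and then to argue that the isotropic Gaussian position is in fact an \emph{optimal} position for the Gaussian mass, so that the bound for the best position transfers to it. Concretely, fix $K$ with $\vol(K)\geq 1$ in isotropic Gaussian position for parameter $s\leq 1/t$, and consider the function $A\mapsto \gamma_s(AK)/|\det A|$ over $A\in\mathrm{GL}_n(\R)$; since $|\det A|=1$ contributes nothing, this is just $A\mapsto\gamma_s(AK)$ over $\mathrm{SL}_n(\R)$. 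First I would compute the gradient of $A\mapsto \gamma_s(AK)$ at $A=I_n$ — this is exactly the computation in Claim~\ref{clm:grad_h} with $f(x)=e^{-\pi x}$ (up to the scaling by $1/s$), giving a gradient proportional to $\int_{K/s}e^{-\pi\|\vec x\|^2}\vec x\vec x^T\,\mathrm{d}\vec x$. The isotropic Gaussian position hypothesis says precisely that this matrix is a scalar multiple of $I_n$; hence the gradient of $A\mapsto \gamma_s(AK)$, restricted to the volume-one constraint surface $\mathrm{SL}_n(\R)$, vanishes at $A=I_n$, i.e. $I_n$ is a critical point.

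Next I would upgrade "critical point" to "global maximum." This is the step where I expect to lean on the convexity result attributed to Bobkov~\cite{bobkov11} (itself derived from Cordero-Erausquin--Fradelizi--Maurey~\cite{CFM04}), as flagged in the proof overview: the relevant statement is that the function $A\mapsto\gamma_s(AK)$ (or an appropriate reparametrization, e.g. $M\mapsto\gamma_s(e^M K)$ over symmetric $M$) is log-concave, or more precisely has the property that any critical point is a global maximum. Granting that, $\gamma_s(K)=\gamma_s(I_n K)\geq \gamma_s(AK)$ for every $A\in\mathrm{SL}_n(\R)$. In particular $\gamma_s(K)$ is at least the Gaussian mass of the \emph{best} position of $K$.

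Finally I would apply Theorem~\ref{thm:ell_position_mass} (the $\ell\ell^*$ consequence stated in the overview): every symmetric convex body $K$ with $\vol(K)=1$ admits a position $A\in\mathrm{SL}_n(\R)$ with $\gamma_{1/t}(AK)\geq 2/3$ for $t=10(\log n+2)$. Since $\vol(K)\geq 1$ (not necessarily equal to one), I would first note monotonicity: scaling $K$ down to volume exactly one only decreases $\gamma_s$, and shrinking $s$ from the threshold $1/t$ down to the actual $s\leq 1/t$ again only decreases $\gamma_s(K)$ (the Gaussian measure of a fixed body $K/s$ grows as $s$ shrinks — more care is needed here since $\gamma_s(K)=\int_{K/s}e^{-\pi\|x\|^2}$ and $K/s$ \emph{grows} as $s$ shrinks, so this direction is fine, but I should double-check the inequality is oriented so that the worst case is $s=1/t$ and $\vol(K)=1$). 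Combining, $\gamma_s(K)\geq \gamma_{1/t}(K')$ for the volume-one body $K'$, and $\gamma_{1/t}(K')\geq \gamma_{1/t}(AK')\geq 2/3$ by the previous paragraph applied with parameter $1/t$ — wait, here I must be careful that $K$ is in isotropic position for parameter $s$, not necessarily for $1/t$; so the cleanest route is to run the "critical point $\Rightarrow$ global max" argument directly at parameter $s$ and compare $\gamma_s(K)$ to $\gamma_s(AK)$ for the $A$ coming from the $\ell\ell^*$ theorem applied at parameter $s$, then separately bound $\max_A\gamma_s(AK)\geq 2/3$ using $s\leq 1/t$ and $\vol(K)\geq1$.

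\textbf{Main obstacle.} The crux is the passage from "critical point of $A\mapsto\gamma_s(AK)$ over $\mathrm{SL}_n$" to "global maximum," which requires the log-concavity / CFM-type input; getting the parametrization right (working with $e^M$ for symmetric $M$, checking that the isotropic condition really is the vanishing of the derivative in \emph{all} admissible directions, and that second-order behavior is controlled by the global concavity statement rather than needing a separate Hessian computation) is the delicate part. The rest — the gradient formula via Claim~\ref{clm:grad_h}, the monotonicity in $s$ and in $\vol(K)$, and the citation of the $\ell\ell^*$ bound — is routine.
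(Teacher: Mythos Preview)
Your proposal is correct and matches the paper's approach: combine Bobkov's Proposition~\ref{prop:maximum_mass} (the isotropic Gaussian position maximizes $\gamma_s$ over all positions, via the Cordero-Erausquin--Fradelizi--Maurey log-concavity) with the $\ell\ell^*$ bound Theorem~\ref{thm:ell_position_mass}, plus the obvious monotonicity in $s$ and in $\vol(K)$. The one step you flagged as delicate---upgrading ``critical point'' to ``global maximum''---is handled in the paper exactly as you anticipate, by passing to the SVD $A=UDV$ so that it suffices to work over diagonal $D$, where CFM's theorem gives log-concavity of $D\mapsto\gamma(e^D K')$ directly.
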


Our proof of Theorem~\ref{thm:isotropic_mass} proceeds in two parts. 
The first part is a result due to Bobkov~\cite{bobkov11} (Proposition~\ref{prop:maximum_mass} below), showing 
that an isotropic Gaussian position of a convex body has maximal Gaussian mass.
We include a proof for completeness. 
In the second part  (Theorem~\ref{thm:ell_position_mass} below), we show that any volume-one convex body $K \subset \R^n$ has a position such that $\gamma_{s}(K) \geq 2/3$.

\begin{proposition}[{\cite[Proposition 3.1]{bobkov11}}]
\label{prop:maximum_mass}
For any symmetric convex body $K \subset \R^n$, if $K$ is in isotropic Gaussian position for some parameter $s > 0$, then 
$\gamma_s(K) \geq \gamma_s(A K)$ for any determinant-one matrix $A \in \mathrm{SL}_n(\R)$.
\end{proposition}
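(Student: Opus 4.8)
The plan is to show that $I_n$ is a critical point of the map $A \mapsto \gamma_s(AK)$ restricted to $\mathrm{SL}_n(\R)$ when $K$ is in isotropic Gaussian position, and then to upgrade this to a global maximum by exploiting a suitable log-concavity/convexity property. First I would reparametrize: for a symmetric matrix $M$ with $\Tr(M) = 0$, consider the curve $A(r) := e^{rM} \in \mathrm{SL}_n(\R)$ and study $r \mapsto \gamma_s(e^{rM} K)$. (Restricting to symmetric $M$ is harmless: every element of $\mathrm{SL}_n(\R)$ near $I_n$ factors as an orthogonal matrix times $e^{M}$ with $M$ symmetric traceless, and $\gamma_s$ is invariant under orthogonal transformations since the Gaussian density and $K$ are both rotation-symmetric in the relevant sense—more precisely $\gamma_s(UK)=\gamma_s(K)$ for orthogonal $U$ is not true in general, but the decomposition still suffices because we only need to check criticality in the symmetric directions; the orthogonal directions contribute nothing because $\gamma_s$ is a function on positions and the tangent space splits.) Writing $\gamma_s(e^{rM}K) = \int_{K/s} e^{-\pi \|e^{rM}\vec{x}\|^2} \, {\rm d}\vec{x}$ after the change of variables $\vec{x} \mapsto e^{-rM}\vec{x}$ (whose Jacobian is $1$), differentiating at $r = 0$ pulls down a factor $-2\pi \vec{x}^T M \vec{x}$, so
\[
\frac{{\rm d}}{{\rm d}r}\Big|_{r=0} \gamma_s(e^{rM}K) = -2\pi \int_{K/s} e^{-\pi\|\vec{x}\|^2}\, \vec{x}^T M \vec{x}\, {\rm d}\vec{x} = -2\pi \Tr\!\Big(M \int_{K/s} e^{-\pi\|\vec{x}\|^2} \vec{x}\vec{x}^T {\rm d}\vec{x}\Big) = -2\pi \alpha \Tr(M) = 0,
\]
using the isotropic Gaussian position hypothesis and $\Tr(M)=0$. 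Hence $I_n$ is a critical point along every such curve.

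The main step—and the expected main obstacle—is showing this critical point is a global maximum, not merely a local extremum or saddle. Here I would invoke concavity of $r \mapsto \gamma_s(e^{rM}K)^{}$ in an appropriate sense along these one-parameter subgroups. The natural tool is a Brunn–Minkowski / Prékopa–Leindler-type statement for the Gaussian measure of a fixed convex body under linear deformation: one wants that $r \mapsto \log \gamma_s(e^{rM}K)$ is concave, or at least that $r \mapsto \gamma_s(e^{rM}K)$ has no interior local minimum. Since $e^{rM}K$ at parameters $r$ and $-r$ are related and $K$ is symmetric, and since $\gamma_s$ of the "average position" dominates, one gets $\gamma_s(K) = \gamma_s(e^{0\cdot M}K) \ge \min(\ldots)$—but more usefully, combining criticality with concavity along the geodesic forces the critical value to be the maximum along that geodesic. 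Because $\mathrm{SL}_n(\R)/\mathrm{SO}(n)$ is covered by such geodesics through $I_n$ (it is a nonpositively curved symmetric space, and every point is $e^{M} \cdot$ base for some symmetric traceless $M$), a function that is concave along all of them with a critical point at $I_n$ attains its global maximum at $I_n$. I would cite the relevant concavity fact; this is exactly the place where Bobkov's argument uses the Cordero-Erausquin–Fradelizi–Maurey theorem (as the excerpt's footnote and surrounding text indicate), which provides precisely the (B)-type concavity of $t \mapsto \gamma(e^{t}K \text{ in suitable coordinates})$ for symmetric convex $K$.

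To assemble the final argument: fix an arbitrary $A \in \mathrm{SL}_n(\R)$. By the polar/Cartan decomposition write $A = U_1 e^{D} U_2$ with $U_1, U_2$ orthogonal and $D$ diagonal traceless; reduce to the case $A = e^{M}$ with $M$ symmetric traceless by absorbing orthogonal factors (handling the orthogonal factors requires that $K$ in isotropic position stays "balanced", which the isotropy hypothesis delivers up to the rotation-invariance of the Gaussian weight). Then apply the concavity-plus-criticality conclusion along the geodesic $r \mapsto e^{rM}$, $r \in [0,1]$: the value at $r=0$, namely $\gamma_s(K)$, is at least the value at $r=1$, namely $\gamma_s(AK)$. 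This gives $\gamma_s(K) \ge \gamma_s(AK)$ for all $A \in \mathrm{SL}_n(\R)$, as claimed. The delicate bookkeeping is entirely in justifying the reduction to symmetric $M$ and in quoting the correct form of the concavity theorem; the differentiation computation above is routine.
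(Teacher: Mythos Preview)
Your approach is essentially the same as the paper's, but you have introduced an unnecessary and incorrect hedge that makes the argument look more delicate than it is. You write that ``$\gamma_s(UK)=\gamma_s(K)$ for orthogonal $U$ is not true in general'' --- but it \emph{is} true: the Gaussian density $e^{-\pi\|\vec{x}\|^2}$ is rotation-invariant, so a change of variables $\vec{x}\mapsto U\vec{x}$ gives $\gamma_s(UK)=\gamma_s(K)$ immediately, for any measurable $K$. This is exactly the identity that lets you absorb the orthogonal factors cleanly.

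With that corrected, your argument and the paper's coincide. The paper uses the singular value decomposition $A=UDV$: the left factor $U$ disappears because $\gamma(U\,\cdot\,)=\gamma(\cdot)$, and the right factor $V$ is absorbed into $K':=VK$, which is still symmetric convex and still in isotropic Gaussian position (isotropy is rotation-invariant). One is then left with showing $\gamma(DK')\le\gamma(K')$ for diagonal $D\in\mathrm{SL}_n(\R)$. The isotropy computation you wrote shows $D\mapsto\gamma(e^D K')$ has a critical point at $0$ on the space of trace-zero diagonal matrices, and the Cordero-Erausquin--Fradelizi--Maurey theorem (stated in the paper only for \emph{diagonal} matrices) gives log-concavity on that space, forcing the critical point to be a global maximum. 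Your version, phrased for general symmetric traceless $M$, reduces to this diagonal case after conjugating by an orthogonal matrix, so nothing stronger than the diagonal CFM statement is needed.
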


We start by observing that isotropic Gaussian positions correspond to critical points of the Gaussian mass function over positions.
\begin{fact}
	\label{fact:gaussian_gradient}
	For any measurable set $U \subset \R^n$, let
	\[
	h(A) := \frac{\gamma(AU)}{|\det(A)|}
	\; ,
	\]
where $A \in \R^{n \times n}$ is a non-singular matrix.
	Then,
	\[
	\grad_A h(A)|_{A = I_n} = -2\pi \int_{U} e^{-\pi \|\vec{x}\|^2} \vec{x} \vec{x}^T{\rm d}\vec{x}
	\; .
	\]
	In particular, $A \mapsto \gamma(AU)$ has a critical point at $I_n$ when restricted to determinant-one matrices if and only if $U$ is in isotropic Gaussian position.
\end{fact}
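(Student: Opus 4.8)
The plan is to reduce the gradient computation to Claim~\ref{clm:grad_h}. Applying the change of variables $\vec{x} \mapsto A\vec{x}$ to the defining integral $\gamma(AU) = \int_{AU} e^{-\pi \|\vec{x}\|^2}{\rm d}\vec{x}$ yields $\gamma(AU) = |\det(A)| \int_U e^{-\pi\|A\vec{x}\|^2}{\rm d}\vec{x}$, so that $h(A) = \int_U e^{-\pi \|A\vec{x}\|^2}{\rm d}\vec{x}$. This is precisely the function treated in Claim~\ref{clm:grad_h} with $f(u) := e^{-\pi u}$, for which $f'(u) = -\pi e^{-\pi u}$; since the Gaussian weight is integrable, the hypotheses of that claim (finiteness of $h(I_n)$ and of the target integral) are satisfied. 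The claim then gives $\grad_A h(A)|_{A = I_n} = 2\int_U f'(\|\vec{x}\|^2)\vec{x}\vec{x}^T {\rm d}\vec{x} = -2\pi \int_U e^{-\pi\|\vec{x}\|^2}\vec{x}\vec{x}^T{\rm d}\vec{x}$, which is the claimed formula.

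For the ``in particular'' part, I would first note that $\mathrm{SL}_n(\R)$ is a submanifold of $\mathrm{GL}_n(\R)$ whose tangent space at $I_n$ is the space of trace-zero matrices: expanding $\det(I_n + \eps M) = 1 + \eps \Tr(M) + O(\eps^2)$ shows that a curve $I_n + \eps M + O(\eps^2)$ stays, to first order, in $\mathrm{SL}_n(\R)$ exactly when $\Tr(M) = 0$. Since $h(A) = \gamma(AU)$ for every $A \in \mathrm{SL}_n(\R)$ (because $|\det(A)| = 1$ there), the two functions have identical directional derivatives along curves through $I_n$ inside $\mathrm{SL}_n(\R)$; by differentiability of $h$, this common directional derivative in a trace-zero direction $M$ equals $\Tr(B^T M)$ with $B := \grad_A h(A)|_{A = I_n}$. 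Hence $I_n$ is a critical point of $A \mapsto \gamma(AU)$ restricted to $\mathrm{SL}_n(\R)$ if and only if $\Tr(B^T M) = 0$ for all trace-zero $M$, i.e. if and only if $B \in \R I_n$ (the orthogonal complement of the trace-zero matrices under $\langle X, Y\rangle := \Tr(X^T Y)$ being spanned by $I_n$). Finally, $B = -2\pi \int_U e^{-\pi\|\vec{x}\|^2}\vec{x}\vec{x}^T {\rm d}\vec{x}$ is a scalar multiple of $I_n$ precisely when $\int_U e^{-\pi\|\vec{x}\|^2}\vec{x}\vec{x}^T {\rm d}\vec{x} = \alpha I_n$, i.e. when $U$ is in isotropic Gaussian position; note that $\alpha$ is automatically nonnegative since the matrix is positive semidefinite, and strictly positive as soon as $U$ has positive measure.

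The computation is essentially routine, so there is no serious obstacle. The only step that deserves a moment's care is the transition from the gradient of $h$, which is defined over the full group $\mathrm{GL}_n(\R)$, to a statement about critical points over the submanifold $\mathrm{SL}_n(\R)$; this is exactly what the tangent-space identification, together with the elementary fact that $\Tr(B^T M) = 0$ for every trace-zero $M$ if and only if $B \in \R I_n$, takes care of.
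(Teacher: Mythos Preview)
Your proof is correct and follows essentially the same approach as the paper: apply Claim~\ref{clm:grad_h} with $f(u)=e^{-\pi u}$ to obtain the gradient, and then use the fact that a critical point on $\mathrm{SL}_n(\R)$ corresponds to the gradient being a scalar multiple of $I_n$. You simply fill in more detail (the change of variables, the tangent-space identification, and the positivity of $\alpha$) than the paper does.
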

\begin{proof}
	Simply apply Claim~\ref{clm:grad_h} with $f(x) = e^{-\pi x}$ and notice that a differentiable function $h: \R^{n \times n} \to \R$ has a critical point at $I_n$ when restricted to the set of determinant-one matrices if and only if its gradient is proportional to the identity. This follows from the fact that the tangent space to the set of determinant-one matrices at $I_n$ is the space orthogonal to the identity. 
\end{proof}

We will also need the following result due to Cordero-Erausquin, Fradelizi, and Maurey~\cite{CFM04}, 
which is related to the so-called (B) conjecture due to Banaszczyk (see~\cite{Latala}). 

\begin{theorem}[\cite{CFM04}]
\label{thm:B_conjecture}
For any symmetric convex body $K \subset \R^n$, the function $\gamma(e^{M} K)$, where $M \in \R^{n\times n}$ ranges over all diagonal matrices, is log-concave.
\end{theorem}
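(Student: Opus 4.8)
The plan is to prove log-concavity by differentiating along lines and reducing to a sharp second-order variance inequality for the Gaussian conditioned on $K$. Since a function on the linear space of diagonal matrices is log-concave if and only if its logarithm is concave along every line, it suffices to fix diagonal matrices $D_0$ and $D_1 = \mathrm{diag}(a_1,\dots,a_n)$ and show that $r\mapsto\log\gamma(e^{D_0+rD_1}K)$ is concave on $\R$. As diagonal matrices commute, $e^{D_0+rD_1}=e^{D_0}e^{rD_1}$ with $e^{D_0}K$ again a symmetric convex body, and the function $\Psi_K(r):=\log\gamma(\mathrm{diag}(e^{ra_1},\dots,e^{ra_n})\,K)$ is smooth in $r$ (by dominated convergence, since $\gamma(\cdot)>0$); moreover $\Psi_K''(r_0)=\Psi_{K'}''(0)$ for $K':=\mathrm{diag}(e^{r_0a_i})K$. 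Hence it is enough to prove $\Psi_K''(0)\le 0$ for every symmetric convex body $K$ and all reals $a_1,\dots,a_n$.

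First I would make the dilation explicit. The substitution $x_i=e^{ra_i}y_i$ gives
\[
\gamma(\mathrm{diag}(e^{ra_i})\,K)=e^{r\sum_i a_i}\int_K e^{-\pi\sum_i e^{2ra_i}y_i^2}\,{\rm d}\vec{y}\;,
\]
so, discarding the term linear in $r$, we need $r\mapsto\log\Phi(r)$ concave for $\Phi(r):=\int_K e^{-\pi\sum_i e^{2ra_i}y_i^2}\,{\rm d}\vec{y}$. Expanding $e^{2ra_i}=1+2ra_i+2r^2a_i^2+O(r^3)$ in the exponent and computing $\Phi(0),\Phi'(0),\Phi''(0)$, a routine calculation yields
\[
\Psi_K''(0)=-4\pi\,\mathbb{E}_{\mu_K}\Bigl[\sum_i a_i^2 y_i^2\Bigr]+4\pi^2\,\mathrm{Var}_{\mu_K}\Bigl(\sum_i a_i y_i^2\Bigr)\;,
\]
where $\mu_K$ is the probability measure with density proportional to $e^{-\pi\|\vec{y}\|^2}\mathbf{1}_K(\vec{y})$. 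Setting $g(\vec{y}):=\sum_i a_i y_i^2$, so that $\|\grad g(\vec{y})\|^2=4\sum_i a_i^2 y_i^2$, the theorem reduces to the single inequality
\[
\mathrm{Var}_{\mu_K}(g)\;\le\;\frac{1}{4\pi}\,\mathbb{E}_{\mu_K}\bigl[\|\grad g\|^2\bigr]\;.
\]

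This inequality is the heart of the matter, and it is exactly where the symmetry of $K$ is used. Since the potential $\pi\|\vec{y}\|^2$ of $\mu_K$ (extended by $+\infty$ outside $K$) has Hessian $2\pi I_n$ in the interior of $K$, the Brascamp--Lieb inequality (with the natural Neumann condition on $\partial K$) gives only $\mathrm{Var}_{\mu_K}(g)\le\frac{1}{2\pi}\mathbb{E}_{\mu_K}\|\grad g\|^2$, which is weaker than what we need by a factor of $2$. To recover that factor I would exploit that both $\mu_K$ and the test function $g$ are \emph{even}: the diffusion operator of $\mu_K$ commutes with $\vec{y}\mapsto-\vec{y}$, so it splits $L^2(\mu_K)$ into even and odd sectors, and the point is that the even sector has spectral gap at least $4\pi$ for \emph{every} symmetric convex $K$ (for $K=\R^n$ this is the jump from the odd Hermite mode, eigenvalue $2\pi$, to the even quadratic modes, eigenvalue $4\pi$). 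Concretely, one would prove the ``even Brascamp--Lieb'' bound $\mathrm{Var}_{\mu_K}(u)\le\frac{1}{4\pi}\mathbb{E}_{\mu_K}\|\grad u\|^2$ for even $u$ by running the usual $L^2$/semigroup proof of Brascamp--Lieb while keeping the solution of $\mathcal{L}u=g-\mathbb{E}_{\mu_K}g$ even: convexity of $K$ makes the boundary term of the resulting integration-by-parts identity have the right sign, and evenness of the solution supplies the extra factor. I expect this even refinement of Brascamp--Lieb to be the only genuinely hard step; the two reductions above are formal and the second-derivative computation is routine. (An essentially equivalent route, closer to the original argument of Cordero-Erausquin, Fradelizi, and Maurey, instead represents $\gamma(e^D K)$ via the Ornstein--Uhlenbeck semigroup applied to $\mathbf{1}_K$ and invokes a concavity-preservation property of the associated flow on even data --- isolating the same symmetry input.)
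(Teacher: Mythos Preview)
The paper does not prove this theorem; it is quoted as a black box from~\cite{CFM04} and used as input to Proposition~\ref{prop:maximum_mass}. So there is no ``paper's own proof'' to compare against.

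That said, your outline is essentially the original Cordero-Erausquin--Fradelizi--Maurey argument: reduce to concavity along a line, differentiate twice (your computation of $\Psi_K''(0)$ is correct), and arrive at the variance inequality $\mathrm{Var}_{\mu_K}(g)\le\frac{1}{4\pi}\,\mathbb{E}_{\mu_K}\|\nabla g\|^2$ for the even function $g(\vec{y})=\sum_i a_i y_i^2$ under the Gaussian restricted to the symmetric body $K$. You are also right that plain Brascamp--Lieb only gives constant $\frac{1}{2\pi}$ and that the missing factor of $2$ is precisely where the symmetry of $K$ enters. This is exactly the crux of~\cite{CFM04}, and it is a genuine theorem, not a routine step: their proof of the improved Poincar\'e constant on the even sector goes through an $L^2$ decomposition and a careful use of convexity of the boundary term, and is the substance of that paper. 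Your proposal correctly isolates this as the only hard step but does not actually carry it out; what you have written is an accurate roadmap of~\cite{CFM04} rather than an independent proof.
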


\begin{proof}[Proof of Proposition~\ref{prop:maximum_mass}]
By scaling $K$, we may assume that $s = 1$. Let $A = UDV$ be the singular-value decomposition of $A$. (I.e., $D$ is a diagonal matrix with non-negative entries along the diagonal and $U$ and $V$ are orthogonal matrices.)  Since $D$ has determinant one, we may write $D  = e^{D'}$ for a diagonal matrix $D'$ with trace zero. 

Note that the Gaussian measure is invariant under orthogonal transformations, so that $\gamma(AK) = \gamma(UDVK) = \gamma(DVK)$. 
Let $K' := VK$, and note that $\gamma(K') = \gamma(K)$ and that
$K'$ is in isotropic Gaussian position,
since $V$ is an orthogonal transformation.

Let $\widehat{h}(M) := \gamma(e^M K')/|\det(e^M)|$. By Fact~\ref{fact:gaussian_gradient} and the chain rule, we have
\[
\grad_M \widehat{h}(M) |_{M = 0} = -2\pi \int_{K'} e^{-\pi \|\vec{x}\|^2}\vec{x} \vec{x}^T {\rm d}\vec{x} = -\alpha \cdot I_n
\; 
\]
for some scalar $\alpha \in \R$, where the second equality is simply the fact that $K'$ is in isotropic Gaussian position.
Let $X \subset \R^{n \times n}$ be the set of trace-zero diagonal matrices. Then, the function $\widehat{h}_X$ obtained by restricting $\widehat{h}$ to $X$ has a critical point at zero, since $\Tr( I_n M) = 0$ for any $M \in X$. By Theorem~\ref{thm:B_conjecture}, $\widehat{h}_X$ is log-concave, so that this critical point must be a global maximum. Therefore,
\[
\gamma(AK) = \gamma(DK') = \gamma(e^{D'} K') \leq \gamma(K') = \gamma(K)
\; ,
\] as needed.
\end{proof}

We now proceed to the second part of the proof of Theorem~\ref{thm:isotropic_mass}. Namely, we prove the following.

\begin{theorem}
	\label{thm:ell_position_mass}
	For any symmetric convex body $K \subset \R^n$ with volume one, there is a determinant-one matrix $A \in \mathrm{SL}_n(\R)$ such that
	$\gamma_{1/t}(A K) \geq 2/3$,
	where $t := 2\sqrt{3e}(\log_2 n + 2) < 10 (\log n + 2)$.
\end{theorem}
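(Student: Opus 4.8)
The plan is to invoke the $\ell\ell^*$ theorem from the local theory of Banach spaces. Recall that for a symmetric convex body $K \subset \R^n$ with associated norm $\|\cdot\|_K$, one defines $\ell(K) := \expect_{\vec g}[\|\vec g\|_K]$, where $\vec g$ is a standard Gaussian vector, and $\ell^*(K) := \ell(K^\circ)$, where $K^\circ$ is the polar body. The $\ell\ell^*$ theorem (see~\cite{FigielT79,Lewis79,Pisier82}) states that there is a position $A \in \mathrm{SL}_n(\R)$ such that $\ell(AK)\cdot\ell^*(AK) \leq C n \log n$; with the sharper constants from Pisier's work one can take the bound to be essentially $(\log n)$ times something dimension-independent up to lower-order terms, and for our purposes it suffices to track that $\ell(AK)\cdot \ell^*((AK)) / n \le c(\log_2 n + 2)^2$ for a suitable explicit $c$. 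So first I would fix such a position and rename $AK$ to $K$, so that from now on $K$ is in $\ell$-position with $\vol(K) = 1$.

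Next I would convert the bound on $\ell(K)\ell^*(K)$ into a lower bound on the Gaussian mass $\gamma_{1/t}(K)$. The key point is a two-sided estimate relating $\ell(K)$ to the "Gaussian radius" of $K$: by Markov's inequality, $\gamma(rK) \geq 1 - \ell(K^\circ)/r$ since $\|\vec x\|_{rK} = \|\vec x\|_K / r$ and $\expect[\|\vec x\|_K] = \ell(K^\circ) = \ell^*(K)$... wait, I need to be careful about which norm appears. The clean route is: $\gamma(K/s) = \Pr[\,\|\vec g\|_K \le 1/s\,]$, and since $\vol(K) = 1$ one can lower bound $\Pr[\vec g \in K/s]$ using the fact that $K$ in $\ell$-position satisfies $\ell(K^\circ) \le$ (something like) $\sqrt n$ up to the $\ell\ell^*$ slack divided by $\ell(K)$, combined with the volume normalization which (via the Urysohn-type inequality $\ell(K^\circ) \gtrsim \sqrt n \, \vol(K)^{1/n} = \sqrt n$) pins $\ell(K^\circ)$ near $\sqrt n (\log n)$. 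Concretely: $\vol(K) = 1$ forces $\ell(K^\circ) \ge c_0 \sqrt n$ (Urysohn / Santaló-type), and then the $\ell\ell^*$ bound forces $\ell(K) \le \ell(K)\ell^*(K)/\ell^*(K) \le C n \log n / (c_0 \sqrt n) = C' \sqrt n \log n$. Thus $\expect[\|\vec g\|_K] \le C' \sqrt n \log n$, so by Markov $\Pr[\|\vec g\|_K \le 3 C' \sqrt n \log n] \ge 2/3$, i.e. $\gamma_{1/s}(K) \ge 2/3$ for $s = 1/(3C'\sqrt n \log n)$. Tracking the constants carefully (using the sharpest known form of the $\ell\ell^*$ theorem and Urysohn's inequality with the best constant, and being careful that Markov's $2/3$ threshold multiplies the bound by $3$) should land at $t := 2\sqrt{3e}(\log_2 n + 2)$, which is the claimed value; the factor $\sqrt{3e}$ presumably arises from combining the Markov constant $3$ with the Gaussian normalization $\sqrt{2\pi e}$-type constants in Urysohn.

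The main obstacle is bookkeeping the constants rather than any conceptual difficulty: one must use an effective, quantitative version of the $\ell\ell^*$ theorem (the implied constant in $\ell(K)\ell^*(K) \le C n \log n$ matters, and whether it is $\log n$ or $\log_2 n$ and whether there are additive $+2$ slacks), and pair it with the exact form of Urysohn's inequality $\ell^*(K) = \ell(K^\circ) \ge \sqrt{2\pi}\cdot n \cdot \kappa_n^{-1/n} \cdot \vol(K)^{1/n}$ (where $\kappa_n = \vol(B_2^n)$, so $n\kappa_n^{-1/n} \sim \sqrt{n/(2\pi e)}$), to extract precisely the stated $t$. I would also double-check the Markov step: one wants $\gamma_{1/t}(K) = \Pr[\|\vec g\|_K \le t]$... actually $\gamma_{1/t}(K) = \int_{tK} e^{-\pi\|\vec x\|^2} d\vec x = \Pr[\vec g' \in tK]$ where $\vec g'$ has covariance $\tfrac{1}{2\pi} I_n$, i.e. $= \Pr[\|\vec g'\|_K \le t]$; rescaling to a standard Gaussian $\vec g = \sqrt{2\pi}\,\vec g'$ gives $\Pr[\|\vec g\|_K \le \sqrt{2\pi}\, t]$, and then Markov with threshold chosen so the failure probability is $\le 1/3$ yields $\sqrt{2\pi}\, t \ge 3\,\ell(K^\circ)$, hence $t \ge \tfrac{3}{\sqrt{2\pi}}\ell(K^\circ)$. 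Plugging the upper bound on $\ell(K^\circ)$ coming from $\ell\ell^* \le C' n(\log_2 n + 2)^2$ divided by the Urysohn lower bound on $\ell(K)$ should then close the argument with the advertised constant $2\sqrt{3e}$. Once the constants check out, the proof is complete, and the final "$< 10(\log n + 2)$" comparison is an elementary inequality since $2\sqrt{3e} \approx 5.72 < 10$ and $\log_2 n < \log n / \log 2 \cdot$ — actually $\log_2 n = \log n/\log 2 > \log n$, so this comparison itself needs the slightly more careful bound $2\sqrt{3e}(\log_2 n + 2) \le 10(\log n + 2)$, which holds because $2\sqrt{3e}/\log 2 \approx 8.25 < 10$.
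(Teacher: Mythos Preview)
Your approach is essentially the paper's: apply the $\ell\ell^*$ theorem, lower-bound $\ell(K^\circ)$ via Urysohn using $\vol(K)=1$, divide to get an upper bound on $\ell(K)$, and finish with Markov. Two points need correction, however.

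First, your Urysohn step has the wrong order of magnitude. For a standard Gaussian $\vec g$ and $\vol(K)=1$, one has $\ell(K^\circ)=\expect\|\vec g\|_{K^\circ}=\expect\|\vec g\|\cdot \expect_{\theta\in S^{n-1}}h_K(\theta)\gtrsim \sqrt{n}\cdot r_n \asymp n$, not $\sqrt n$. (In the paper's normalization, with $d\gamma(\vec x)=e^{-\pi\|\vec x\|^2}d\vec x$ and the $L^2$ definition $\ell_K(A)^2=\int\|A\vec x\|_K^2\,d\gamma$, the precise statement is $\ell_{K^\circ}(I_n)>n/(2\pi\sqrt e)$.) Dividing the $\ell\ell^*$ bound $n(\log_2 n+2)/\pi$ by this gives $\ell_K(A)\le 2\sqrt e\,(\log_2 n+2)$, which is $O(\log n)$, not $O(\sqrt n\log n)$. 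Your rough computation yields $t\sim\sqrt n\log n$ and you then assert that ``tracking constants'' will bring this down to $2\sqrt{3e}(\log_2 n+2)$; but the missing $\sqrt n$ is not a constant---it is exactly the error in the Urysohn estimate.

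Second, to land on $2\sqrt{3e}$ you must apply Markov to the \emph{second} moment (i.e., use the $L^2$ definition of $\ell$), so that $\Pr[\|A\vec x\|_K>t]\le \ell_K(A)^2/t^2\le 1/3$ forces $t=\sqrt3\cdot 2\sqrt e(\log_2 n+2)$. Your $L^1$ Markov would give a factor of $3$ rather than $\sqrt3$. With these two fixes your argument is exactly the paper's proof.
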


The proof is based on an important theorem that follows from the work of
Figiel and Tomczak-Jaegermann~\cite{FigielT79}, Lewis~\cite{Lewis79}, and Pisier~\cite{Pisier82}. 
We first need some definitions. 
Recall that any symmetric convex body $K \subset \R^n$ defines a norm $\|\cdot \|_K$ given by
\[
\|\vec{x}\|_K := \inf \{ s \ge 0 \ : \ \vec{x} \in sK \}
\; .
\]
We then define the $\ell$-norm on $\R^{n \times n}$ by
\[
\ell_K(A) := \Big(\int_{\R^n}  \|A\vec{x}\|_K^2 {\rm d} \gamma(\vec{x})\Big)^{1/2}
\; ,
\]
where ${\rm d}\gamma(\vec{x}) := e^{-\pi \|\vec{x}\|^2} {\rm d}\vec{x}$.
Finally, we recall that the \emph{polar body} of a symmetric convex body $K$ is given by
\[
K^\circ := \{ \vec{x} \in \R^n \ : \ \forall \vec{y} \in K,\ \inner{\vec{y}, \vec{x}} \leq 1 \}
\; ,
\]
which is itself a convex body. 

\begin{theorem}[{\cite{FigielT79,Lewis79,Pisier82}; see \cite[Theorem 4.4.3]{thesis/D12}}]
	\label{thm:ell_ellstar}
	For any symmetric convex body $K \subset \R^n$, there exists a determinant-one matrix $A \in \mathrm{SL}_n(\R)$ such that
	\[
	\ell_K(A) \ell_{K^\circ}((A^{T})^{-1}) \leq n(\log_2n + 2)/\pi 
	\; .
	\]
\end{theorem}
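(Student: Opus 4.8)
The plan is to run the classical argument of Figiel--Tomczak-Jaegermann, Lewis, and Pisier, which factors the estimate into a ``soft'' part producing the leading factor $n$ and a ``hard'' part --- Pisier's bound on the $K$-convexity constant --- producing the logarithmic factor. First I would regard $\ell_K$ as an honest norm on the operator space $\R^{n\times n}$ and introduce its trace-dual norm $\ell_K^*(B):=\sup\{\Tr(B^TA)\ :\ \ell_K(A)\le 1\}$. Two facts about the trace pairing will be used. One: for every $A\in\mathrm{GL}_n(\R)$, taking $B=A^{-T}$ gives the universal lower bound $\ell_K^*(A^{-T})\,\ell_K(A)\ge\Tr(A^{-1}A)=n$. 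Two: since $K$ is bounded, $\|\cdot\|_K\ge c\|\cdot\|$ for some $c>0$, so $A\mapsto\ell_K(A)^2=\int_{\R^n}\|A\vec{x}\|_K^2\,{\rm d}\gamma(\vec{x})$ is coercive on $\mathrm{SL}_n(\R)$ --- it tends to $\infty$ whenever the largest singular value of $A$ does, because $\ell_K(A)^2\ge c^2\int\|A\vec{x}\|^2{\rm d}\gamma(\vec{x})$ is proportional to $\|A\|_{\mathrm{HS}}^2$ --- and therefore attains its minimum at some $A_0\in\mathrm{SL}_n(\R)$. This $A_0$ will be the position witnessing the theorem.

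Next I would extract Lewis's identity at $A_0$. Writing the first-order optimality condition for the perturbation $A_0\mapsto A_0(I_n+tH)$ at $t=0$, with $H$ ranging over trace-free matrices, and using the standard facts that $h(\vec{y}):=\grad\big(\tfrac12\|\vec{y}\|_K^2\big)$ is $1$-homogeneous with $\langle h(\vec{y}),\vec{y}\rangle=\|\vec{y}\|_K^2$ and $\|h(\vec{y})\|_{K^\circ}=\|\vec{y}\|_K$ (true a.e.), the stationarity condition becomes $\int\vec{x}\,h(A_0\vec{x})^T{\rm d}\gamma(\vec{x})=\tfrac1n\ell_K(A_0)^2\cdot A_0^{-1}$, where the scalar is pinned down by taking traces. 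Consequently $\Tr(A_0^{-1}A)=\tfrac{n}{\ell_K(A_0)^2}\int\langle h(A_0\vec{x}),A\vec{x}\rangle{\rm d}\gamma(\vec{x})$, and bounding $\langle h(A_0\vec{x}),A\vec{x}\rangle\le\|A_0\vec{x}\|_K\|A\vec{x}\|_K$ and applying Cauchy--Schwarz in $L^2(\gamma)$ yields $\ell_K^*(A_0^{-T})\le n/\ell_K(A_0)$. Together with the universal lower bound this is Lewis's identity $\ell_K(A_0)\,\ell_K^*(A_0^{-T})=n$.

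Finally I would convert the trace-dual norm back to $\ell_{K^\circ}$. The needed inequality, $\ell_{K^\circ}(B)\le\tfrac1\pi(\log_2 n+2)\cdot\ell_K^*(B)$ for all $B$, is the quantitative form of $K$-convexity of the space $X_K:=(\R^n,\|\cdot\|_K)$: thinking of $B$ as an operator out of Gaussian space and applying the Gaussian first-chaos (Rademacher) projection reduces the constant to the $K$-convexity constant $K(X_K)$ (the Figiel--Tomczak-Jaegermann step), and Pisier's theorem bounds $K(X)$ for every $n$-dimensional normed space by $O(\log n)$; tracking constants --- including the factors of $\pi$ forced by the normalization ${\rm d}\gamma(\vec{x})=e^{-\pi\|\vec{x}\|^2}{\rm d}\vec{x}$ --- gives the stated numerical bound. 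Applying this with $B=A_0^{-T}$ and combining with Lewis's identity gives $\ell_K(A_0)\,\ell_{K^\circ}(A_0^{-T})\le\tfrac1\pi(\log_2 n+2)\cdot n$, so $A:=A_0$ works. The soft pieces (a Lagrange-multiplier computation and a Cauchy--Schwarz manipulation) are routine; the main obstacle is Pisier's bound on the $K$-convexity constant, which is a genuinely analytic theorem --- proved via hypercontractivity of the Ornstein--Uhlenbeck semigroup together with a careful dyadic decomposition --- and which I would invoke as a black box, with the secondary (purely bookkeeping) task of carrying its constants through the $e^{-\pi\|\vec{x}\|^2}$ normalization to land exactly on $n(\log_2 n+2)/\pi$.
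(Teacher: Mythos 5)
The paper does not prove this statement at all---it is imported verbatim from the literature (the citation to \cite[Theorem 4.4.3]{thesis/D12} is where the specific constant $n(\log_2 n+2)/\pi$ is actually derived), so there is no ``paper proof'' to compare against. Your sketch is the standard proof from exactly those references: minimize $\ell_K$ over $\mathrm{SL}_n(\R)$ (coercivity via $\ell_K(A)^2 \geq c^2\|A\|_{\mathrm{HS}}^2/(2\pi)$ is right), extract Lewis's identity $\ell_K(A_0)\,\ell_K^*(A_0^{-T})=n$ from the first-order condition (your stationarity computation, the Euler-identity normalization of the multiplier, and the $\|h(\vec{y})\|_{K^\circ}=\|\vec{y}\|_K$ plus Cauchy--Schwarz step are all correct, with a.e.\ differentiability of $\|\cdot\|_K^2$ handled by Rademacher against the absolutely continuous Gaussian), then pass from $\ell_K^*$ to $\ell_{K^\circ}$ via the $K$-convexity constant and Pisier's $O(\log n)$ bound. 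This is a faithful and correct reconstruction; the one place where you are genuinely relying on an external reference rather than on your own argument is the same place the paper is---Pisier's theorem with the tracked constant $(\log_2 n+2)/\pi$ under the $e^{-\pi\|\vec{x}\|^2}$ normalization---and for that specific numerical form you would indeed have to follow the bookkeeping in \cite[Theorem 4.4.3]{thesis/D12} rather than a generic $C\log n$ statement.
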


\begin{lemma}
	For any symmetric convex body $K \subset \R^n$ with volume one and any determinant-one matrix $A \in \mathrm{SL}_n(\R)$, we have 
	\[
	\ell_{K^\circ}((A^{T})^{-1}) \geq \sqrt{n/(2\pi)} \cdot r_n > n/(2\pi \sqrt{e})
	\; .
	\] 
	where $r_n := \vol(B_2^n)^{-1/n} > \sqrt{n/(2\pi e)}$ is the radius such that $\vol(r_n B_2^n) = 1$.
\end{lemma}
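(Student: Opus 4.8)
The plan is to rewrite the left-hand side as an $L^2$ Gaussian mean width of a volume-one body, and then split the resulting integral into a radial part (which produces the factor $n/(2\pi)$) and a spherical part over $S^{n-1}$ (which, via a power-mean inequality, produces the factor $r_n^2 = \vol(B_2^n)^{-2/n}$).

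First I would observe that $(A^{-1}K)^\circ = A^T K^\circ$, so that $\|A^{-T}\vec{x}\|_{K^\circ} = \|\vec{x}\|_{L^\circ}$ where $L := A^{-1}K$. Since $\det(A) = 1$ and $\vol(K) = 1$, the body $L$ is a symmetric convex body of volume one (with the origin in its interior, so all quantities below are finite), and
\[
\ell_{K^\circ}(A^{-T})^2 = \int_{\R^n}\|A^{-T}\vec{x}\|_{K^\circ}^2\,{\rm d}\gamma(\vec{x}) = \int_{\R^n}\|\vec{x}\|_{L^\circ}^2\,{\rm d}\gamma(\vec{x})
\; ,
\]
where ${\rm d}\gamma(\vec{x}) = e^{-\pi\|\vec{x}\|^2}{\rm d}\vec{x}$.

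Next I would pass to polar coordinates $\vec{x} = r\theta$ with $r \ge 0$, $\theta \in S^{n-1}$, writing
\[
\int_{\R^n}\|\vec{x}\|_{L^\circ}^2\,{\rm d}\gamma(\vec{x}) = \Big(\int_0^\infty r^{n+1}e^{-\pi r^2}\,{\rm d}r\Big)\int_{S^{n-1}}\|\theta\|_{L^\circ}^2\,{\rm d}\theta
\; ,
\]
and, by the same computation with $L^\circ$ replaced by $B_2^n$, $\int_{\R^n}\|\vec{x}\|^2\,{\rm d}\gamma(\vec{x}) = \big(\int_0^\infty r^{n+1}e^{-\pi r^2}\,{\rm d}r\big)\,\omega_{n-1}$, where $\omega_{n-1}$ is the surface area of $S^{n-1}$. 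Since $\int_{\R^n}\|\vec{x}\|^2\,{\rm d}\gamma(\vec{x}) = n/(2\pi)$, the radial factor equals $\frac{n}{2\pi}\omega_{n-1}^{-1}$, so it suffices to prove $\int_{S^{n-1}}\|\theta\|_{L^\circ}^2\,{\rm d}\theta \ge \omega_{n-1}\,r_n^2$. For this I would use the elementary pointwise bound $\|\theta\|_L\,\|\theta\|_{L^\circ} \ge \langle\theta,\theta\rangle = 1$ (the case $\vec u = \vec v = \theta$ of $\langle\vec u,\vec v\rangle \le \|\vec u\|_L\|\vec v\|_{L^\circ}$), so that $\|\theta\|_{L^\circ}^2 \ge \|\theta\|_L^{-2}$, and then apply the power-mean inequality over the uniform probability measure on $S^{n-1}$ with exponents $2 \ge -n$:
\[
\frac{1}{\omega_{n-1}}\int_{S^{n-1}}\|\theta\|_L^{-2}\,{\rm d}\theta \ge \Big(\frac{1}{\omega_{n-1}}\int_{S^{n-1}}\|\theta\|_L^{-n}\,{\rm d}\theta\Big)^{2/n}
\; .
\]
Since $\frac1n\int_{S^{n-1}}\|\theta\|_L^{-n}\,{\rm d}\theta = \vol(L) = 1$ and $\omega_{n-1} = n\,\vol(B_2^n)$, the right-hand side is $(n/\omega_{n-1})^{2/n} = \vol(B_2^n)^{-2/n} = r_n^2$, giving the spherical bound, hence $\ell_{K^\circ}(A^{-T})^2 \ge \frac{n}{2\pi}r_n^2$.

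Finally, the strict inequality $\sqrt{n/(2\pi)}\,r_n > n/(2\pi\sqrt e)$ is equivalent to $r_n^2 > n/(2\pi e)$, i.e.\ (using $\vol(B_2^n) = \pi^{n/2}/\Gamma(n/2+1)$) to $\Gamma(n/2+1) > (n/(2e))^{n/2}$, which follows from the standard lower bound $\Gamma(m+1) \ge \sqrt{2\pi m}\,(m/e)^m > (m/e)^m$ with $m = n/2 \ge 1/2$. I do not expect a real obstacle here: the whole argument is a short chain of classical facts, and the only genuine choice is how to lower-bound the spherical integral. One could instead apply the power-mean inequality directly to $\|\theta\|_{L^\circ}$ and invoke the Blaschke--Santal\'o inequality $\vol(L)\vol(L^\circ) \le \vol(B_2^n)^2$ to control $\vol(L^\circ)$; but routing through the elementary bound $\|\theta\|_L\|\theta\|_{L^\circ} \ge 1$ avoids Santal\'o entirely and is already tight for the Euclidean ball.
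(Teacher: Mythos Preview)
Your chain of inequalities breaks at the power-mean step. Writing $X:=\|\theta\|_L^{-1}$ and $\mathbb{E}$ for the uniform average on $S^{n-1}$, your displayed inequality reads
\[
\mathbb{E}[X^2]\ \ge\ \big(\mathbb{E}[X^n]\big)^{2/n},
\]
i.e., $\|X\|_{L^2}\ge\|X\|_{L^n}$. For $n\ge 2$ the power-mean (H\"older) inequality gives exactly the reverse, $\|X\|_{L^2}\le\|X\|_{L^n}$, with equality only when $X$ is constant (i.e., $L$ is a ball). In fact, since $\mathbb{E}[X^n]=\vol(L)/\vol(B_2^n)=r_n^n$ is fixed, one always has $\mathbb{E}[\|\theta\|_L^{-2}]\le r_n^2$, strictly for any non-ball $L$ in dimension $n\ge 3$. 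So the pointwise bound $\|\theta\|_{L^\circ}^2\ge\|\theta\|_L^{-2}$, while true, throws away too much: its integral is never large enough to reach $r_n^2$.

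The fix is exactly the alternative you mention and then set aside. Applying the power-mean inequality \emph{to $\|\theta\|_{L^\circ}$ itself} with exponents $2\ge -n$ gives
\[
\mathbb{E}[\|\theta\|_{L^\circ}^2]\ \ge\ \big(\mathbb{E}[\|\theta\|_{L^\circ}^{-n}]\big)^{-2/n}
=\Big(\frac{\vol(L^\circ)}{\vol(B_2^n)}\Big)^{-2/n},
\]
and then Blaschke--Santal\'o ($\vol(L)\vol(L^\circ)\le\vol(B_2^n)^2$) yields the bound $r_n^2$. The paper takes an equivalent but slightly different route: Jensen gives $\mathbb{E}[\|\theta\|_{L^\circ}^2]\ge(\mathbb{E}[\|\theta\|_{L^\circ}])^2$, and $\mathbb{E}[\|\theta\|_{L^\circ}]=\mathbb{E}[h_L(\theta)]$ is the half mean width of $L$, which Urysohn's inequality bounds below by $r_n$ for a volume-one body. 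Either named inequality (Santal\'o or Urysohn) is needed here; the elementary bound $\|\theta\|_L\|\theta\|_{L^\circ}\ge 1$ alone does not suffice.
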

\begin{proof}
	Since $\ell_{K^\circ}((A^{T})^{-1}) = \ell_{A^T K^\circ}(I_n) = \ell_{(A^{-1} K)^\circ}(I_n)$, and using that $A^{-1} K$ is also a symmetric convex body of volume one, it suffices to prove the case $A=I_n$ in the statement. 
Unpacking the definitions, we see that 
	\begin{align}
	\ell_{K^\circ}(I_n)^2 &= \int_{\R^n} \|\vec{x}\|_{K^\circ}^2 {\rm d}\gamma(\vec{x}) \nonumber \\
	&= \int_{\R^n} \sup_{\vec{y} \in K}\inner{\vec{y}, \vec{x}}^2 {\rm d}\gamma(\vec{x}) \nonumber \\
	&= \Big(\int_{\R^n} \|\vec{x}\|^2 {\rm d}\gamma(\vec{x}) \Big) \cdot \Big( \int_{\R^n} \sup_{\vec{y} \in K}\frac{\inner{\vec{y}, \vec{x}}^2}{\|\vec{x}\|^2} {\rm d} \gamma(\vec{x}) \Big) \nonumber \\
	&= \frac{n}{2\pi} \cdot \int_{\R^n} \sup_{\vec{y} \in K}\frac{\inner{\vec{y}, \vec{x}}^2}{\|\vec{x}\|^2} {\rm d} \gamma(\vec{x})
	\; , \label{eq:lem_ell_step_1}
	\end{align}
	where
	in the third equality we use integration in polar coordinates.
	By Jensen's inequality, we have
	\begin{align}
	\int_{\R^n} \sup_{\vec{y} \in K}\frac{\inner{\vec{y}, \vec{x}}^2}{\|\vec{x}\|^2} {\rm d} \gamma(\vec{x}) \geq \Big(\int_{\R^n} \sup_{\vec{y} \in K} \frac{\inner{\vec{y}, \vec{x}}}{\|\vec{x}\|} {\rm d} \gamma(\vec{x})\Big)^2
	\; , \label{eq:lem_ell_step_2}
	\end{align}
	and by Urysohn's inequality (see~\cite[Theorem 1.5.11]{AGMBook}), we have
	\begin{align}
	\int_{\R^n} \sup_{\vec{y} \in K}\frac{\inner{\vec{y}, \vec{x}}}{\|\vec{x}\|} {\rm d} \gamma(\vec{x})  \geq r_n
	\; . \label{eq:lem_ell_step_3}
	\end{align}
	The result follows by combining  Eqs.~\eqref{eq:lem_ell_step_1}, \eqref{eq:lem_ell_step_2}, and~\eqref{eq:lem_ell_step_3}.
\end{proof}

The following corollary is an immediate consequence of the previous two results.

\begin{corollary}
	\label{cor:ell_position}
	For any symmetric convex body $K \subset \R^n$ with volume one, there exists a determinant-one matrix $A \in \mathrm{SL}_n(\R)$ such that $\ell_K(A) \leq  2\sqrt{e}(\log_2 n + 2)$.
\end{corollary}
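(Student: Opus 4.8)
The plan is to simply combine the two immediately preceding results. First I would invoke Theorem~\ref{thm:ell_ellstar} to obtain a determinant-one matrix $A \in \mathrm{SL}_n(\R)$ satisfying $\ell_K(A)\,\ell_{K^\circ}(A^{-T}) \leq n(\log_2 n + 2)/\pi$. This is the matrix I will use; no further optimization is needed.

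Next I would observe that $\ell_{K^\circ}(A^{-T}) > 0$ (since $K^\circ$ is a genuine symmetric convex body, $\|\cdot\|_{K^\circ}$ is a norm and the defining Gaussian integral is strictly positive for the non-singular matrix $A^{-T}$), so the product bound can be rearranged into $\ell_K(A) \leq n(\log_2 n + 2)/\bigl(\pi\,\ell_{K^\circ}(A^{-T})\bigr)$. Then I would apply the preceding Lemma, which — using crucially that $\vol(K) = 1$ — gives the lower bound $\ell_{K^\circ}(A^{-T}) \geq \sqrt{n/(2\pi)}\cdot r_n > n/(2\pi\sqrt{e})$ for this (indeed any) determinant-one $A$. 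Substituting this lower bound into the denominator yields
\[
\ell_K(A) \;\leq\; \frac{n(\log_2 n + 2)/\pi}{n/(2\pi\sqrt{e})} \;=\; 2\sqrt{e}\,(\log_2 n + 2)\;,
\]
which is exactly the claimed estimate.

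There is essentially no obstacle here: the corollary is a one-line arithmetic combination of Theorem~\ref{thm:ell_ellstar} (the $\ell\ell^*$ theorem, whose proof is cited, not reproduced) and the volume-normalized lower bound on $\ell_{K^\circ}$. The only point requiring a word of care is the positivity of $\ell_{K^\circ}(A^{-T})$, which justifies the division; everything else is direct substitution. I would present this as a two-sentence proof.
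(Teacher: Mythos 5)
Your proposal is correct and is exactly the paper's argument: the paper states the corollary as an ``immediate consequence of the previous two results,'' namely dividing the $\ell\ell^*$ bound $\ell_K(A)\,\ell_{K^\circ}(A^{-T}) \leq n(\log_2 n+2)/\pi$ by the lower bound $\ell_{K^\circ}(A^{-T}) > n/(2\pi\sqrt{e})$, and your arithmetic checks out.
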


\begin{proof}[Proof of Theorem~\ref{thm:ell_position_mass}]
	By Corollary~\ref{cor:ell_position}, there exists an $A \in \mathrm{SL}_n(\R)$ such that 
	\[
	\ell_K(A)^2 = \int_{\R^n}  \|A\vec{x}\|_K^2 {\rm d} \gamma(\vec{x}) \leq (2\sqrt{e}(\log_2 n + 2))^2
	\; .
	\]
	We will use $A^{-1}$ as our matrix. Note that 
	\[
	\gamma_{1/t}(A^{-1}K) = \int_{\R^n} 1_{\|\vec{x}\|_{A^{-1} K} \leq t} {\rm d}\gamma(\vec{x})  = \int_{\R^n} 1_{\|A\vec{x}\|_{K} \leq t} {\rm d}\gamma(\vec{x}) = 1 - \int_{\R^n} 1_{\|A\vec{x}\|_{K} > t} {\rm d}\gamma(\vec{x}) 
	\; .
	\] 
	The result then follows by Markov's inequality, which tells us that 
	\[
	 \int_{\R^n} 1_{\|A\vec{x}\|_{K} > t} {\rm d}\gamma(\vec{x})  \leq  \frac{1}{t^2} \cdot \int_{\R^n}  \|A\vec{x}\|_K^2 {\rm d} \gamma(\vec{x})  \leq  \frac{1}{t^2} \cdot (2 \sqrt{e}(\log_2 n + 2))^2 = \frac{1}{3}
	\; .
	\qedhere
	\]
\end{proof}

We now obtain Theorem~\ref{thm:isotropic_mass} as an immediate corollary of Proposition~\ref{prop:maximum_mass} and Theorem~\ref{thm:ell_position_mass}. 

\begin{proof}[Proof of Theorem~\ref{thm:isotropic_mass}]
	By Theorem~\ref{thm:ell_position_mass} and the monotonicity of $\gamma_s(AK)$ in $s$, there is some $A \in \mathrm{SL}_n(\R)$ such that $\gamma_s(A K)  \geq 2/3$. By Proposition~\ref{prop:maximum_mass}, $\gamma_s(K) \geq \gamma_s(A K) \geq 2/3$, as needed.
\end{proof}

\paragraph{Concentration of measure. } We will also need a standard lemma about the concentration of Gaussian measure. We first recall the Gaussian isoperimetric inequality~\cite{SC74,Borell75} (see also~\cite[Theroem 3.1.9]{AGMBook}).

\begin{theorem}[Gaussian isoperimetric inequality]
	\label{thm:Gaussian_isoperimetric}
	For any measurable set $U \subset \R^n$ and any $\tau > 0$,
	\[
	\gamma(U + \tau B_2^n) \geq \int_{-\infty}^{\sigma + \tau} e^{-\pi x^2}{\rm d} x
	\; ,
	\]
	where $\sigma \in [-\infty, \infty]$ is such that $\int_{-\infty}^\sigma e^{-\pi x^2}{\rm d}x = \gamma(U)$.
\end{theorem}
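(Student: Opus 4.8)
The Gaussian isoperimetric inequality is classical; the route I would take is the one underlying the cited works of Sudakov--Tsirel'son and Borell: transfer the isoperimetric inequality on the sphere to $\R^n$ via Poincar\'e's limit. The plan has three steps. \textbf{(1)} Invoke L\'evy's spherical isoperimetric inequality: on the sphere $\rho\,S^{N-1} \subset \R^N$ of radius $\rho = \rho(N)$, equipped with the normalized rotation-invariant surface measure $\sigma_N$, among all measurable sets of a given $\sigma_N$-measure the geodesic caps minimize the $\sigma_N$-measure of the geodesic $\delta$-neighborhood, for every $\delta > 0$; this may be cited, or proved by two-point (Baernstein--Taylor) symmetrization. \textbf{(2)} Recall Poincar\'e's observation: if $\Theta^{(N)}$ is uniform on $\rho(N)\,S^{N-1}$ with $\rho(N) := \sqrt{N/(2\pi)}$ and $\pi_n\colon \R^N \to \R^n$ projects onto the first $n$ coordinates, then $\pi_n(\Theta^{(N)})$ converges weakly, as $N \to \infty$, to the Gaussian measure $\gamma$ on $\R^n$ with density $e^{-\pi\|\vec x\|^2}$; this convergence is strong enough that $\sigma_N(\pi_n^{-1}(S)) \to \gamma(S)$ whenever $\partial S$ is $\gamma$-null, and likewise for enlargements. \textbf{(3)} Transfer: given measurable $U \subset \R^n$, set $\widetilde U := \pi_n^{-1}(U) \cap \rho(N)\,S^{N-1}$; then $\sigma_N(\widetilde U) \to \gamma(U) = \int_{-\infty}^{\sigma} e^{-\pi x^2}\,dx$, and the pullback of the half-space $\{x_1 \le \sigma\}$ has the same limiting measure, so the spherical competitor for $\widetilde U$ is a cap $C$ of $\sigma_N$-measure $\gamma(U)$. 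Choosing the geodesic radius $\delta = \delta(N,\tau)$ so that the geodesic $\delta$-neighborhood of $\widetilde U$ projects, in the limit, onto $U + \tau B_2^n$ (and the $\delta$-neighborhood of $C$ onto $\{x_1 \le \sigma + \tau\}$), L\'evy's inequality gives $\sigma_N\big((\widetilde U)_\delta\big) \ge \sigma_N\big(C_\delta\big)$, and letting $N \to \infty$ yields $\gamma(U + \tau B_2^n) \ge \int_{-\infty}^{\sigma+\tau} e^{-\pi x^2}\,dx$.

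The main obstacle is the analysis in step \textbf{(3)}: one must control, uniformly in $N$ and over the relevant region of the sphere, the correspondence between Euclidean distances in $\R^n$ and geodesic distances on $\rho(N)\,S^{N-1}$ under $\pi_n$ --- near the equator (where the first $n$ coordinates are small) the minimal geodesic distance needed to move the $\pi_n$-image by a vector of length $\tau$ tends to $\tau$ as $N \to \infty$, but this must be made quantitative and uniform --- and one must verify that $\tau$-enlargement commutes with the weak limit (no escape of mass to infinity, boundaries remain $\gamma$-null).

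If one prefers a route that avoids the sphere, a self-contained alternative is to establish Bobkov's functional inequality $\mathcal I\big(\textstyle\int f\,d\gamma\big) \le \int \sqrt{\mathcal I(f)^2 + (2\pi)^{-1}\|\nabla f\|^2}\,d\gamma$ for the Gaussian isoperimetric profile $\mathcal I = \varphi\circ\Phi^{-1}$ --- first on the real line via a two-point inequality, then in $\R^n$ by tensorization --- and apply it to a smooth approximation of $\mathbf 1_U$; this gives the isoperimetric inequality in infinitesimal form, which integrates along the $\tau$-enlargement to the stated bound. Yet another option is to quote Ehrhard's inequality in the Borell--Lata\l{}a generality for arbitrary Borel sets, and specialize the Gaussian Brunn--Minkowski combination to $U$ and a Euclidean ball.
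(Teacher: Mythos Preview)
The paper does not prove this theorem: it is stated with citations to \cite{SC74,Borell75} (and a textbook reference to \cite{AGMBook}) and used as a black box to derive Lemma~\ref{lem:gamma_concentration}. So there is no ``paper's own proof'' to compare against.

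Your sketch is a reasonable outline of the classical Sudakov--Tsirel'son/Borell route (spherical isoperimetry plus Poincar\'e's limit), and you correctly flag the delicate step, namely the quantitative control of the geodesic/Euclidean distance correspondence under projection. Your alternative via Bobkov's functional inequality or Ehrhard's inequality would also work. Since the paper simply cites the result, any of these approaches---or, for that matter, just citing the same references---would be acceptable here.
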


Recall that the \emph{inradius} of a convex body $K$ is defined as $\max \{ r \geq 0 \ : \ rB_2^n \subseteq K\}$,
i.e., the radius of the largest ball contained in the body.

\begin{lemma}
\label{lem:gamma_concentration}
If $K \subset \R^n$ is a convex body with $\gamma_{1/t}(K) \geq 2/3$ for some $t > 0$, then
\[
\gamma_{1/(t+\tau)}(K) \geq 1-e^{-\pi r^2\tau^2}/3
\; ,
\]
for any $\tau \geq 0$, where $r \geq 0$ is the inradius of $K$.
\end{lemma}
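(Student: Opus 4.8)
The plan is to reduce the claim to a one-dimensional Gaussian tail estimate via the Gaussian isoperimetric inequality (Theorem~\ref{thm:Gaussian_isoperimetric}). It is convenient to rescale first: for $s > 0$ I would write $\gamma_{1/s}(K) = \int_{sK} e^{-\pi\|\vec{x}\|^2}{\rm d}\vec{x} = \gamma(sK)$, where $\gamma$ denotes the standard Gaussian measure with density $e^{-\pi\|\vec{x}\|^2}$. Then the hypothesis reads $\gamma(tK) \geq 2/3$ and the goal is $\gamma((t+\tau)K) \geq 1 - e^{-\pi r^2\tau^2}/3$. One may assume $r > 0$, since if $r = 0$ the bound is just $\gamma((t+\tau)K) \geq 2/3$, which follows from $tK \subseteq (t+\tau)K$ (using $\vec0 \in K$).

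The one geometric input is that $rB_2^n \subseteq K$ by definition of the inradius, so by convexity $(t+\tau)K = tK + \tau K \supseteq tK + \tau r B_2^n$, and hence $\gamma((t+\tau)K) \geq \gamma(tK + \tau r B_2^n)$. Applying Theorem~\ref{thm:Gaussian_isoperimetric} to the set $U := tK$ with the role of its enlargement parameter played by $\tau r$, I would get
\[
\gamma((t+\tau)K) \geq \gamma(tK + \tau r B_2^n) \geq \int_{-\infty}^{\sigma + \tau r} e^{-\pi x^2}{\rm d}x \; ,
\]
where $\sigma$ is chosen so that $\int_{-\infty}^\sigma e^{-\pi x^2}{\rm d}x = \gamma(tK) \geq 2/3$. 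Since $2/3 > 1/2$, this forces $\sigma \geq 0$.

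It then remains to bound the tail $\int_{\sigma+\tau r}^\infty e^{-\pi x^2}{\rm d}x$. Substituting $x = y + \tau r$ and using that $\sigma \geq 0$ and $\tau r \geq 0$, so that $e^{-2\pi\tau r y} \leq 1$ throughout the range $y \geq \sigma$,
\[
\int_{\sigma+\tau r}^\infty e^{-\pi x^2}{\rm d}x = e^{-\pi\tau^2 r^2}\int_\sigma^\infty e^{-\pi y^2}e^{-2\pi\tau r y}{\rm d}y \leq e^{-\pi\tau^2 r^2}\int_\sigma^\infty e^{-\pi y^2}{\rm d}y \leq \frac{1}{3}\,e^{-\pi\tau^2 r^2} \; ,
\]
where the last inequality uses $\int_\sigma^\infty e^{-\pi y^2}{\rm d}y = 1 - \gamma(tK) \leq 1/3$. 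Combining the two displays gives $\gamma_{1/(t+\tau)}(K) = \gamma((t+\tau)K) \geq 1 - e^{-\pi r^2\tau^2}/3$, as desired.

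This argument is essentially routine, so I do not anticipate a genuine obstacle; the only points demanding a little care are checking that the threshold $\sigma$ is nonnegative (which is exactly what lets one discard the factor $e^{-2\pi\tau r y}$ in the tail estimate) and dispatching the degenerate case $r = 0$.
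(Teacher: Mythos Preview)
Your proof is correct and follows essentially the same route as the paper: rescale to $\gamma((t+\tau)K)$, use $rB_2^n \subseteq K$ and convexity to get $(t+\tau)K \supseteq tK + \tau r B_2^n$, apply the Gaussian isoperimetric inequality, and then bound the one-dimensional tail using $\sigma \geq 0$. The only cosmetic differences are that the paper phrases the tail estimate via $(x+r\tau)^2 \geq x^2 + r^2\tau^2$ rather than your substitution, and does not single out the $r=0$ case.
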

\begin{proof}
	Note that
	\[
	\gamma_{1/(t + \tau)}(K) = \gamma((t+ \tau)K) \geq \gamma(tK + r \tau B_2^n)
	\; ,
	\]
	since by definition $r B_2^n \subseteq K$. Applying Theorem~\ref{thm:Gaussian_isoperimetric}, we have
	\[
	\gamma_{1/(t + \tau)}(K)  \geq \int_{-\infty}^{\sigma + r\tau} e^{-\pi x^2}{\rm d}x 
	\; ,
	\]
	where $\sigma > 0$ is such that $\int_{-\infty}^{\sigma}e^{-\pi x^2}{\rm d}x  = 2/3$. Finally, we note that
	\[
	\int_{-\infty}^{\sigma + r\tau} e^{-\pi r^2}{\rm d}x = 1 - \int_{\sigma + r \tau}^{\infty }e^{-\pi x^2}{\rm d}x \geq 1 -e^{-\pi r^2 \tau^2} \int_{\sigma}^{\infty }e^{-\pi x^2}{\rm d}x = 1-e^{-\pi r^2 \tau^2}/3
	\; ,
	\]
	where the inequality follows from the fact that $\sigma$, $\tau$, and $r$ are non-negative, so that  
	$(x + r \tau)^2 \geq x^2 + r^2 \tau^2$ for all $x \ge \sigma$.
	The result follows.
\end{proof}

\subsection{Proof of Theorem~\ref{thm:RM}}
\label{sec:actualproofofmain}

We now use Theorem~\ref{thm:same_grad_VL} and Theorem~\ref{thm:isotropic_mass} to characterize local minima of $\gamma_s(\V(\lat))$.

\begin{theorem}
\label{thm:minima_have_mass}
If $\lat \subset \R^n$ corresponds to a local minimum (or maximum) of $\gamma_{1/t}(\V(\lat))$ over the set of determinant-one lattices, 
then $\V(\lat)$ is in isotropic Gaussian position with parameter $1/t$, and
\[
\gamma_{1/t}(\V(\lat)) \geq 2/3
\; ,
\]
where $t := 10(\log n + 2)$.
\end{theorem}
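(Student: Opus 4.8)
The plan is to combine the gradient equivalence of Section~\ref{sec:gradient} (Theorem~\ref{thm:same_grad_VL}) with the convex-geometric bound of Theorem~\ref{thm:isotropic_mass}. Set $s := 1/t$ and let $f(u) := e^{-\pi u/s^2} = e^{-\pi t^2 u}$, which is continuously differentiable on $\R^{\ge 0}$. Define, as in Theorem~\ref{thm:same_grad_VL}, $g(A) := \frac{1}{|\det A|}\int_{\V(A^T\lat)} f(\|\vec{x}\|^2)\,{\rm d}\vec{x}$ for $A \in \mathrm{GL}_n(\R)$. For $A \in \mathrm{SL}_n(\R)$ one has $g(A) = \int_{\V(A^T\lat)} e^{-\pi\|\vec{x}\|^2/s^2}\,{\rm d}\vec{x} = s^n \gamma_s(\V(A^T\lat))$, so $g$ restricted to $\mathrm{SL}_n(\R)$ is a fixed positive multiple of the function $A \mapsto \gamma_{1/t}(\V(A^T\lat))$ whose extremum at $A = I_n$ we are assuming. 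Since $A^T$ ranges over a neighborhood of $I_n$ in $\mathrm{SL}_n(\R)$ as $A$ does (and the quotient map to the space of determinant-one lattices is open), and since $g$ is differentiable at $I_n$ by Theorem~\ref{thm:same_grad_VL}, the hypothesis that $\lat$ corresponds to a local minimum (or maximum) forces $I_n$ to be a critical point of $g|_{\mathrm{SL}_n(\R)}$.

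Next I would use the standard fact that the tangent space to $\mathrm{SL}_n(\R)$ at $I_n$ is the space of trace-zero matrices, and that $\Tr(B^T M) = 0$ for every trace-zero $M$ exactly when $B$ is a scalar multiple of $I_n$. Hence $\grad_A g(A)|_{A = I_n} = \beta I_n$ for some $\beta \in \R$. By Theorem~\ref{thm:same_grad_VL}, this gradient also equals $2\int_{\V(\lat)} f'(\|\vec{x}\|^2)\vec{x}\vec{x}^T\,{\rm d}\vec{x} = -\frac{2\pi}{s^2}\int_{\V(\lat)} e^{-\pi\|\vec{x}\|^2/s^2}\vec{x}\vec{x}^T\,{\rm d}\vec{x}$. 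Equating the two expressions and applying the change of variables $\vec{x} = s\vec{z}$ yields $\int_{\V(\lat)/s} e^{-\pi\|\vec{z}\|^2}\vec{z}\vec{z}^T\,{\rm d}\vec{z} = \alpha I_n$ for some scalar $\alpha$; here $\alpha > 0$ since any diagonal entry of this matrix is the integral of a nonnegative function that is strictly positive on the interior of $\V(\lat)/s$. By definition, this says precisely that $\V(\lat)$ is in isotropic Gaussian position for parameter $s = 1/t$, which is the first assertion of the theorem.

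Finally I would invoke Theorem~\ref{thm:isotropic_mass}. The Voronoi cell $\V(\lat)$ is a symmetric convex body, and since $\lat$ is a determinant-one lattice, it is a fundamental body, so $\vol(\V(\lat)) = \det(\lat) = 1 \ge 1$. As $\V(\lat)$ is in isotropic Gaussian position for the parameter $s = 1/t$ (and trivially $0 < 1/t \le 1/t$), Theorem~\ref{thm:isotropic_mass} gives $\gamma_{1/t}(\V(\lat)) \ge 2/3$, completing the proof. Note that the stability-type hypothesis ``$\det(\lat') \ge 1$ for all sublattices $\lat' \subseteq \lat$'' plays no role in this theorem; it enters only later, when this statement is combined with the compactness of the set of stable lattices and the ``splitting'' argument.

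The main obstacle is really only the first step: carefully justifying that a local extremum of $\gamma_{1/t}(\V(\cdot))$ over the quotient space $\mathrm{SL}_n(\R)/\mathrm{SL}_n(\Z)$ lifts to a genuine critical point of the differentiable function $g$ on $\mathrm{SL}_n(\R)$ at $I_n$. Once differentiability (supplied by Theorem~\ref{thm:same_grad_VL}) is in hand this is routine, and the remainder is a one-line computation identifying the vanishing of the trace-free part of the gradient with the isotropic Gaussian position condition, followed by a direct appeal to Theorem~\ref{thm:isotropic_mass}.
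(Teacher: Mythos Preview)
Your proposal is correct and follows essentially the same approach as the paper: apply Theorem~\ref{thm:same_grad_VL} with $f(u)=e^{-\pi t^2 u}$ (the paper uses the harmless scalar multiple $t^n e^{-\pi t^2 u}$), conclude that the gradient at $I_n$ is proportional to $I_n$ because $I_n$ is a critical point on $\mathrm{SL}_n(\R)$, identify this with the isotropic Gaussian position condition, and then invoke Theorem~\ref{thm:isotropic_mass}. Your write-up is in fact slightly more careful than the paper's in verifying $\alpha>0$ and that $\vol(\V(\lat))=1$.
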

\begin{proof}
By Theorem~\ref{thm:same_grad_VL} with $f(x) = t^{n} \cdot e^{-\pi t^2 x}$, we have 
\begin{align*}
\grad_A \big( \gamma_{1/t}(\V(A \lat))/|\det(A)| \big) \big|_{A = I_n} 
&= 2\int_{\V(\lat)} f'(\|\vec{x}\|^2) \vec{x}\vec{x}^T{\rm d}\vec{x} \\
&= - 2\pi t^{n+2} \cdot \int_{\V(\lat)} e^{-\pi t^2 \|\vec{x}\|^2}\vec{x}\vec{x}^T{\rm d}\vec{x}
\; .
\end{align*}
Recall that $I_n$ corresponds to a local extremum of a differentiable function $g(A)$ restricted to the manifold of determinant-one matrices only if $\grad_A g(A) |_{A = I_n}$ is a scalar multiple of the identity. So, the above expression must be a multiple of the identity. I.e., $\V(\lat)$ is in isotropic Gaussian position. The result then follows from Theorem~\ref{thm:isotropic_mass}.
\end{proof}

Before moving to the proof of our main theorem, we need the following claim.

\begin{claim}
\label{clm:log_bound}
For any $x > 1$,
\[
e^{-2 \log^2 x} + e^{-2 \log^2(x/(x-1))} < 1 
\; .
\]
\end{claim}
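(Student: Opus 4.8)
The plan is to reduce the two-variable-looking inequality to a clean single-variable one and then dispatch it with elementary exponential estimates. First I would exploit the fact that $x \mapsto x/(x-1)$ is an involution of $(1,\infty)$: it fixes $2$, sends $(1,2)$ onto $(2,\infty)$, and since $\frac{x/(x-1)}{x/(x-1)-1} = x$ it simply interchanges the two summands $e^{-2\log^2 x}$ and $e^{-2\log^2(x/(x-1))}$. So the left-hand side is invariant under it, and it is enough to prove the bound for $x \ge 2$. For such $x$ I would view $e^{-2\log^2(x/(x-1))}$ as the dominant term (it tends to $1$ as $x \to \infty$) and show the ``small'' term $e^{-2\log^2 x}$ leaves room to spare.

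To control the dominant term I would bound $\log\frac{x}{x-1}$ from below using convexity of $t \mapsto 1/t$ and the midpoint rule: $\log\frac{x}{x-1} = \int_{x-1}^x \frac{{\rm d}t}{t} \ge \frac{1}{x-1/2} = \frac{2}{2x-1}$, so that $e^{-2\log^2(x/(x-1))} \le e^{-8/(2x-1)^2}$. Feeding this into the elementary inequality $1 - e^{-u} \ge \frac{u}{1+u}$ (equivalently $e^u \ge 1+u$) with $u = 8/(2x-1)^2$ gives $e^{-2\log^2(x/(x-1))} \le 1 - \frac{8}{4x^2-4x+9}$. At this point the claim reduces to the purely one-variable statement $e^{-2\log^2 x} < \frac{8}{4x^2-4x+9}$ for all $x \ge 2$, i.e. $2\log^2 x > \log(4x^2-4x+9) - \log 8$.

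To finish I would note that for $x \ge 2$ one has $4x^2-4x+9 \le \frac92 x^2$ (the difference $\frac12 x^2 + 4x - 9$ equals $1$ at $x=2$ and is increasing), so $\log(4x^2-4x+9) - \log 8 \le 2\log x - \log\frac{16}{9}$; it then suffices that the quadratic $2w^2 - 2w + \log\frac{16}{9}$ in $w := \log x$ be positive for all real $w$, which holds because its discriminant $4 - 8\log\frac{16}{9}$ is negative (indeed $\log\frac{16}{9} = \frac12\log\frac{256}{81} > \frac12\log 3 > \frac12$). Chaining the two bounds then gives the left-hand side $< \frac{8}{4x^2-4x+9} + \bigl(1 - \frac{8}{4x^2-4x+9}\bigr) = 1$.

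The delicate point, and the thing I would have to get right, is calibrating the estimates in the middle step. The inequality is genuinely tight as $x \to \infty$ (hence, via the involution, as $x \to 1^+$), so the bound on the dominant term has to be asymptotically lossless there; simultaneously, near $x = 2$ the chain of inequalities must not burn more than the available slack (the left-hand side is only $2e^{-2\log^2 2} \approx 0.77$ at $x=2$). The cruder bound $\log\frac{x}{x-1} \ge \frac1x$ turns out to fail this test near $x = 2$, which is why the sharper midpoint bound $\frac{2}{2x-1}$ is needed; once that balance is struck the rest is routine. A more hands-on alternative would be to prove directly that $x \mapsto e^{-2\log^2 x} + e^{-2\log^2(x/(x-1))}$ is increasing on $[2,\infty)$ via the sign of its derivative, but the estimate route above is shorter and avoids that computation.
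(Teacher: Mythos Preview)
Your proof is correct and takes a genuinely different route from the paper's. Both arguments begin with the same involution reduction to $x \ge 2$, but they diverge after that. The paper uses the cruder bound $\log\frac{x}{x-1} > \frac{1}{x}$, which (together with $e^{-2\log^2 x} \le x^{-2}$ for $x \ge e$) yields the inequality only for $x \ge e$; it then covers the remaining interval $[2,e]$ by two ad hoc numerical checks, bounding the sum on $[2,2.5]$ by $e^{-2\log^2 2}+e^{-2\log^2(5/3)}$ and similarly on $[2.5,e]$. You instead sharpen the logarithm estimate to the midpoint bound $\log\frac{x}{x-1} \ge \frac{2}{2x-1}$ and combine it with $e^{-u} \le \frac{1}{1+u}$, reducing everything to the positivity of the quadratic $2w^2 - 2w + \log\frac{16}{9}$ in $w=\log x$, whose discriminant is negative. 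This gives a single, case-free analytic argument valid on all of $[2,\infty)$; the paper's approach is quicker to write down but relies on plugging in numbers for the range where its cruder estimate breaks. Your diagnosis that the $1/x$ bound ``fails near $x=2$'' is exactly right and explains why the paper needs those extra cases.
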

\begin{proof}
	By symmetry, we may assume that $x \geq 2$. (Otherwise, we can replace $x$ with $x/(x-1)$.) 
	If $2 \leq x \leq 2.5$, then
	\[
	e^{-2 \log^2 x} + e^{-2 \log^2(x/(x-1))} < e^{-2 \log^2 2}  + e^{-2 \log^2(5/3)} < 1
	\; .
	\]
	A similar computation works if $2.5 \leq x \leq e$.
	Finally, using 
	the fact that $\log(x/(x-1)) = - \log(1 - 1/x) > 1/x$ for $x > 1$, we have 
	for any $x \ge e$ that
	\[
	e^{-2 \log^2 x} + e^{-2 \log^2(x/(x-1))} < \frac{1}{x^{2}} + e^{-2/x^2} < \frac{1}{x^{2}} + 1 - \frac{1}{x^2} = 1
	\; . \qedhere
	\]
\end{proof}

We now prove our main theorem in the special case when $\lat$ is a stable lattice. The full result will follow as a relatively straightforward corollary.

\begin{proposition}
\label{prop:RM_stable}
For any stable lattice $\lat \subset \R^n$, 
$
\rho_{1/t}(\lat) \leq \frac{3}{2}
$,
where $t := 10(\log n + 2)$.
\end{proposition}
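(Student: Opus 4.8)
The plan is to prove, by strong induction on the rank $m$, the (formally stronger) statement that $\gamma_{1/t_m}(\V(\lat)) \geq 2/3$ for every stable lattice $\lat$ of rank $m$, where $t_m := 10(\log m + 2)$; the proposition is then the case $m = n$, since $\rho_{1/t_n}(\lat)\gamma_{1/t_n}(\V(\lat)) \leq 1$ by Lemma~\ref{lem:rho_gamma}. For the base case $m = 1$, the only stable lattice of rank one is $\Z \subset \R$, with Voronoi cell $[-1/2, 1/2]$, and $\gamma_{1/t_1}([-1/2,1/2]) = \int_{-t_1/2}^{t_1/2} e^{-\pi x^2}\,{\rm d}x$ is within $o(1)$ of $1 > 2/3$. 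For the inductive step, fix $n \geq 2$ and assume the statement for all ranks $1,\dots,n-1$. Since the set of stable lattices is compact (Proposition~\ref{prop:stable_properties}, Item~\ref{item:compact}) and $\lat \mapsto \gamma_{1/t_n}(\V(\lat))$ is continuous, this set contains a rank-$n$ lattice $\lat_0$ minimizing $\gamma_{1/t_n}(\V(\cdot))$; it suffices to show $\gamma_{1/t_n}(\V(\lat_0)) \geq 2/3$, and I would split on whether $\lat_0$ lies on the boundary of the set of stable lattices.

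If $\lat_0$ is in the interior, then a neighborhood of $\lat_0$ in the space of determinant-one lattices consists of stable lattices, over all of which $\lat_0$ is minimizing, so $\lat_0$ is a local minimum of $\gamma_{1/t_n}(\V(\cdot))$ over all determinant-one lattices; Theorem~\ref{thm:minima_have_mass} then gives $\gamma_{1/t_n}(\V(\lat_0)) \geq 2/3$ directly. If instead $\lat_0$ is on the boundary, then by Proposition~\ref{prop:stable_properties}, Item~\ref{item:stable_boundary}, there is a primitive sublattice $\lat' \subset \lat_0$ of rank $k$ with $1 \leq k \leq n-1$ such that both $\lat'$ and $\lat_0/\lat'$ are stable. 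By Claim~\ref{clm:fundamental_product}, $\V(\lat') \times \V(\lat_0/\lat')$ is a fundamental body of $\lat_0$, so applying Corollary~\ref{cor:monotone_Voronoi_wins} to the nondecreasing function $r \mapsto C - e^{-\pi r/s^2}$ of $r = \|\vec{x}\|^2$ for large $C$ and cancelling the (equal) volume terms yields, for every $s>0$,
\[
\gamma_{s}(\V(\lat_0)) \;\geq\; \gamma_{s}\bigl(\V(\lat') \times \V(\lat_0/\lat')\bigr) \;=\; \gamma_{s}(\V(\lat'))\,\gamma_{s}(\V(\lat_0/\lat'))
\; ,
\]
and I apply this with $s = 1/t_n$.

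Now I would feed in the inductive hypothesis via concentration of measure. By hypothesis $\gamma_{1/t_k}(\V(\lat')) \geq 2/3$, and the inradius of $\V(\lat')$ is $\lambda_1(\lat')/2 \geq 1/2$ since $\lat'$ is stable (every nonzero vector spans a sublattice of determinant $\geq 1$). Hence Lemma~\ref{lem:gamma_concentration} with $\tau = t_n - t_k = 10\log(n/k)$ gives
\[
\gamma_{1/t_n}(\V(\lat')) \;\geq\; 1 - \tfrac{1}{3} e^{-\pi (1/2)^2 (10\log(n/k))^2} \;=\; 1 - \tfrac{1}{3} e^{-25\pi \log^2(n/k)}
\; ,
\]
and symmetrically $\gamma_{1/t_n}(\V(\lat_0/\lat')) \geq 1 - \tfrac{1}{3} e^{-25\pi \log^2(n/(n-k))}$. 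Multiplying and using $(1-a)(1-b) \geq 1 - a - b$, it remains to verify $e^{-25\pi\log^2(n/k)} + e^{-25\pi\log^2(n/(n-k))} \leq 1$. Since $25\pi > 2$, this follows from Claim~\ref{clm:log_bound} applied at $x = n/k > 1$, noting that $x/(x-1) = n/(n-k)$. Thus $\gamma_{1/t_n}(\V(\lat_0)) \geq 2/3$ in both cases, so $\gamma_{1/t_n}(\V(\lat)) \geq \gamma_{1/t_n}(\V(\lat_0)) \geq 2/3$ for every stable rank-$n$ lattice $\lat$, which closes the induction.

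The step I expect to be the main obstacle is precisely this last estimate: a naive induction that just plugs $\gamma_{1/t_k}(\V(\lat')) \geq 2/3$ and $\gamma_{1/t_{n-k}}(\V(\lat_0/\lat')) \geq 2/3$ into the product bound loses a constant factor at every split and fails badly. The key point is to recognize that a lower-rank stable factor, when its Gaussian mass is measured at the \emph{larger} parameter $t_n$ rather than at its own $t_k$, already has mass $1 - (\text{exponentially small})$ in its own Voronoi cell — a gain made quantitative by combining the Gaussian-isoperimetric concentration bound (Lemma~\ref{lem:gamma_concentration}, using the stability lower bound on the inradius) with the elementary inequality of Claim~\ref{clm:log_bound}, so that the two error terms from the two factors still sum to at most $1/3$.
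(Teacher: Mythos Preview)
Your proof is correct and follows essentially the same approach as the paper's own proof: both reduce to bounding $\gamma_{1/t}(\V(\lat))$ via Lemma~\ref{lem:rho_gamma}, argue by induction on rank using compactness of stable lattices, handle the interior/local-minimum case via Theorem~\ref{thm:minima_have_mass}, and in the boundary case split into $\lat'$ and $\lat/\lat'$, bound each factor via Lemma~\ref{lem:gamma_concentration} (using the inradius bound from stability), and finish with Claim~\ref{clm:log_bound}. The only differences are cosmetic---you phrase the dichotomy as interior versus boundary rather than local-minimum versus not, and you keep the explicit constant $25\pi$ in the exponent where the paper relaxes it to $2$.
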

\begin{proof}
By Lemma~\ref{lem:rho_gamma}, it suffices to show that $\gamma_{1/t}(\V(\lat)) \ge 2/3$. 
We assume for induction that
$\gamma_{1/(10(\log d + 2))}(\V(\lat')) \geq 2/3$ for any stable lattice $\lat'$ of rank $d < n$. (A quick check shows that this is true for $d = 1$.)
Since the set of stable lattices is compact by Item~\ref{item:compact} of Proposition~\ref{prop:stable_properties} 
and the function $\gamma_{1/t}(\V(\lat))$ is continuous, we may assume without loss of generality that $\lat$ corresponds to a global minimum of $\gamma_{1/t}(\V(\lat))$ over the set of stable lattices. If this global minimum is also a \emph{local} minimum over the set of determinant-one lattices, then by Theorem~\ref{thm:minima_have_mass}, $\gamma_{1/t}(\V(\lat)) \geq 2/3$, and we are done. 

Otherwise, $\lat$ lies on the boundary of the set of stable lattices. I.e., there is some primitive sublattice $\lat' \subset \lat$ of rank $d < n$  such that $\lat'$ and $\lat/\lat'$ are stable. (See Item~\ref{item:stable_boundary} of Proposition~\ref{prop:stable_properties}.) By 
Corollary~\ref{cor:monotone_Voronoi_wins} (applied to the non-decreasing function $x \mapsto -e^{-\pi (tx)^2}$) together with Claim~\ref{clm:fundamental_product}, we have 
\begin{align}
\gamma_{1/t}(\V(\lat)) \geq \gamma_{1/t}(\V(\lat/\lat' \oplus \lat')) = \gamma_{1/t}(\V(\lat/\lat')) \cdot \gamma_{1/t}(\V(\lat'))
\; .\label{eq:gammadirectsum}
\end{align}
Let $t_1 := 10(\log d+2)$ and $t_2 := 10(\log(n-d) + 2)$. By the induction hypothesis, we see that $\gamma_{1/t_1}(\V(\lat')) \geq 2/3$ and $\gamma_{1/t_2}(\V(\lat/\lat')) \geq 2/3$. By Lemma~\ref{lem:gamma_concentration}, we therefore have 
\[
\gamma_{1/t}(\V(\lat')) \geq 1-\frac{1}{3} \cdot e^{-2 \log^2(n/d)} \text{, and } 
\gamma_{1/t}(\V(\lat/\lat')) \geq 1-\frac{1}{3} \cdot e^{-2 \log^2(n/(n-d))}
\; ,
\]
where we have used the fact that the inradius of the Voronoi cell, which is equal to half the length of the shortest nonzero vector, is at least $1/2$ for stable lattices
(and the constant in the exponent is very loose).
Therefore, using~\eqref{eq:gammadirectsum},
\begin{align*}
\gamma_{1/t}(\V(\lat)) &\geq 
\Big(  1-\frac{1}{3} \cdot e^{-2 \log^2(n/d)}  \Big) \cdot \Big(1-\frac{1}{3} \cdot e^{-2 \log^2(n/(n-d))}\Big) \\
&> 1- \frac{1}{3} \cdot \big( e^{-2 \log^2(n/d)} + e^{-2 \log^2(n/(n-d))} \big) \\
&> \frac{2}{3}
\; ,
\end{align*}
where the last inequality follows from Claim~\ref{clm:log_bound} with $x := n/d$. 

So, for every stable lattice $\lat$, we have $\gamma_{1/t}(\V(\lat)) \geq 2/3$, and the result then follows from Lemma~\ref{lem:rho_gamma}.
\end{proof}

We now derive our main theorem as a corollary. 

\begin{proof}[Proof of Theorem~\ref{thm:RM}]
Let $\{\vec0\} = \lat_0 \subset \cdots \subset \lat_k = \lat$ be the canonical filtration of $\lat$. Recall from Item~\ref{item:stable_quotient} of Proposition~\ref{prop:stable_properties} that for $1 \le i \le k$, $\alpha_i \cdot (\lat_i/\lat_{i-1})$ is a stable lattice, where $\alpha_i := \det(\lat_i/\lat_{i-1})^{-1/\rank(\lat_i/\lat_{i-1})}$. 
Moreover, $\alpha_1 = \det(\lat_1)^{-1/\rank(\lat_1)} \le 1$ by our assumption on $\lat$. 
By Item~\ref{item:increasing_slopes} of Proposition~\ref{prop:stable_properties},
$\alpha_i$ is non-increasing with $i$, implying that 
$\alpha_i \le 1$ for $1 \le i \le k$. 
Therefore, by Lemma~\ref{lem:direct_sum_rho}, 
\[
\rho_{1/t}(\lat) \leq \rho_{1/t}\Big(\bigoplus_{i=1}^k \lat_i/\lat_{i-1} \Big) \leq \rho_{1/t}\Big(\bigoplus_{i=1}^k \alpha_i\cdot (\lat_i/\lat_{i-1}) \Big)
\; .
\]
By Item~\ref{item:direct_sum_stable} of Proposition~\ref{prop:stable_properties}, this direct sum of stable lattices is itself a stable lattice. The result then follows from Proposition~\ref{prop:RM_stable}.
\end{proof}

\section{Bounds on \texorpdfstring{$\rho_s(\lat)$}{rho} for all parameters and point-counting bounds}
\label{sec:count}

We first give the proof of Corollary~\ref{cor:counting}, which follows immediately from Theorem~\ref{thm:RM_all_parameters}.

\begin{proof}[Proof of Corollary~\ref{cor:counting}]
	For any $r > 0$
		\[
		|\lat \cap (rB_2^n + \vec{u})| \leq e^{\pi r^2/s^2} \rho_s(\lat - \vec{u}) \leq e^{\pi r^2/s^2} \rho_s(\lat)
		\; ,
		\]
		where the last inequality is Claim~\ref{clm:shifted_mass}.
		Item~\ref{item:RM_bound} then follows by plugging in $s = 1/t$ and applying Item~\ref{item:RM} of Theorem~\ref{thm:RM_all_parameters}.
		Item~\ref{item:convex_bound} follows by taking $s = r\sqrt{2\pi/n}$ and applying Item~\ref{item:RM_convex} of Theorem~\ref{thm:RM_all_parameters}. Finally, Item~\ref{item:dual_packing_bound} follows by taking $s = r\sqrt{2\pi/n}$ and applying Item~\ref{item:RM_dual} of Theorem~\ref{thm:RM_all_parameters}.
	\end{proof}

We now prove Theorem~\ref{thm:RM_all_parameters}, which gives bounds on the Gaussian mass for all parameters.
We start with Item~\ref{item:RM}, addressing parameters $s \leq 1/t$. 

\begin{theorem}[Slight strengthening of Item~\ref{item:RM} of Theorem~\ref{thm:RM_all_parameters}]
	For any lattice $\lat \subset \R^n$ with $\det(\lat') \geq 1$ for all sublattices $\lat' \subseteq \lat$, 
	\[
	\rho_{s}(\lat) \leq 1 + e^{-\pi \lambda_1(\lat)^2(1/s^2 - t^2)}/2 \leq 1 + e^{-\pi(1/s^2 - t^2)}/2
	\] for any $s \leq 1/t$, where $t := 10(\log n + 2)$.
\end{theorem}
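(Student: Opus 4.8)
The plan is to leverage Theorem~\ref{thm:RM} (already proved via Proposition~\ref{prop:RM_stable}) together with the Poisson Summation Formula to control the contribution of the long vectors separately. First I would assume, by scaling, that the shortest nonzero vector has length exactly $\lambda_1(\lat) =: \lambda$; note that the hypothesis $\det(\lat') \geq 1$ for all sublattices forces $\lambda \geq 1$ (the one-dimensional sublattice generated by a shortest vector has determinant $\lambda$). Split the sum as
\[
\rho_s(\lat) = 1 + \sum_{\vec{y} \in \lat \setminus \{\vec0\}} e^{-\pi \|\vec{y}\|^2/s^2}
\; ,
\]
and the goal is to bound the tail sum by $e^{-\pi \lambda^2(1/s^2 - t^2)}/2$.

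The key trick is to factor out the decay at the minimal distance: for every nonzero $\vec{y} \in \lat$ we have $\|\vec{y}\| \geq \lambda$, so writing $1/s^2 = 1/s^2 - 1/u^2 + 1/u^2$ for an auxiliary parameter $u := 1/t \geq s$, we get
\[
e^{-\pi \|\vec{y}\|^2/s^2} = e^{-\pi \|\vec{y}\|^2(1/s^2 - t^2)} \cdot e^{-\pi t^2 \|\vec{y}\|^2} \leq e^{-\pi \lambda^2(1/s^2 - t^2)} \cdot e^{-\pi t^2 \|\vec{y}\|^2}
\; ,
\]
where the inequality uses $1/s^2 - t^2 \geq 0$ (since $s \leq 1/t$) and $\|\vec{y}\|^2 \geq \lambda^2$. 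Summing over $\vec{y} \in \lat \setminus \{\vec0\}$ gives
\[
\rho_s(\lat) - 1 \leq e^{-\pi \lambda^2(1/s^2 - t^2)} \cdot \big(\rho_{1/t}(\lat) - 1\big) \leq e^{-\pi \lambda^2(1/s^2 - t^2)} \cdot \frac{1}{2}
\; ,
\]
where the last step is exactly Theorem~\ref{thm:RM}, which says $\rho_{1/t}(\lat) \leq 3/2$. Finally, since $\lambda \geq 1$ and $1/s^2 - t^2 \geq 0$, we have $e^{-\pi \lambda^2(1/s^2 - t^2)} \leq e^{-\pi(1/s^2 - t^2)}$, giving the second, weaker bound.

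I do not expect any real obstacle here: the statement is essentially a corollary of Theorem~\ref{thm:RM} obtained by pulling the Gaussian decay at scale $t$ out uniformly. The only points requiring a line of care are (i) justifying $\lambda_1(\lat) \geq 1$ from the sublattice-determinant hypothesis, and (ii) checking the elementary inequality $\|\vec{y}\|^2 (1/s^2 - t^2) \geq \lambda_1(\lat)^2(1/s^2 - t^2)$ holds in the correct direction, which needs the sign condition $s \leq 1/t$. Both are immediate. (One could alternatively phrase the same argument by noting $\rho_s(\lat) - 1 = \rho_s(\lat \setminus \{\vec0\})$ and applying a pointwise domination $\rho_s(\vec{y}) \leq e^{-\pi\lambda^2(1/s^2-t^2)}\rho_{1/t}(\vec{y})$ term by term, which is what I have done above.)
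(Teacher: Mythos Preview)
Your proposal is correct and takes essentially the same approach as the paper: pointwise domination $\rho_s(\vec{y}) = e^{-\pi\|\vec{y}\|^2(1/s^2-t^2)}\rho_{1/t}(\vec{y}) \leq e^{-\pi\lambda_1(\lat)^2(1/s^2-t^2)}\rho_{1/t}(\vec{y})$ for nonzero $\vec{y}$, then sum and apply Theorem~\ref{thm:RM}. (Minor remark: despite your opening sentence, you never actually use the Poisson Summation Formula, nor do you need any scaling assumption---but neither does harm.)
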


\begin{proof}
	Note that for any $\vec{y} \in \lat \setminus \{\vec0\}$,
	\[
	\rho_s(\vec{y}) = \rho_{1/t}(\vec{y}) \cdot e^{-\pi \|\vec{y}\|^2(1/s^2 - t^2)} \leq  \rho_{1/t}(\vec{y})e^{-\pi \lambda_1(\lat)^2 (1/s^2 - t^2)} 
	\; .
	\]
	The result
	follows by summing over all $\vec{y}\in \lat \setminus \{\vec0\}$ and applying Theorem~\ref{thm:RM}.
	The second inequality uses the fact that $\lambda_1(\lat) \geq 1$.
\end{proof}

We now prove the ``high-parameter analogue'' of Theorem~\ref{thm:RM}. The proof uses Theorem~\ref{thm:RM} and duality. 

\begin{theorem}[Item~\ref{item:RM_dual} of Theorem~\ref{thm:RM_all_parameters}]
	\label{thm:dual_RM}
	For any lattice $\lat \subset \R^n$ with $\det(\lat') \geq 1$ for all sublattices $\lat' \subseteq \lat$ and any parameter $s \geq t$,
	$
	\rho_s(\lat) \leq 2s^n
	$
	where $t := 10(\log n + 2)$.
\end{theorem}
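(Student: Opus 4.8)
The plan is to reduce to the case of a stable lattice, exactly as in the derivation of Theorem~\ref{thm:RM} from Proposition~\ref{prop:RM_stable}, and then to finish via the Poisson Summation Formula together with Theorem~\ref{thm:RM} applied to the \emph{dual} lattice.

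First I would carry out the reduction to stable lattices. Let $\{\vec0\} = \lat_0 \subset \cdots \subset \lat_k = \lat$ be the canonical filtration of $\lat$, and set $\alpha_i := \det(\lat_i/\lat_{i-1})^{-1/\rank(\lat_i/\lat_{i-1})}$. The hypothesis on $\lat$ forces $\alpha_i \le 1$ for all $i$: the slopes of the canonical polygon are strictly increasing by Item~\ref{item:increasing_slopes} of Proposition~\ref{prop:stable_properties}, and the smallest of them, $\det(\lat_1)^{1/\rank(\lat_1)}$, is at least $1$ because $\lat_1 \subseteq \lat$. By Item~\ref{item:stable_quotient} of Proposition~\ref{prop:stable_properties} each $\mathcal{M}_i := \alpha_i \cdot (\lat_i/\lat_{i-1})$ is stable, and hence $\bigoplus_{i=1}^k \mathcal{M}_i$ is a stable lattice of rank $n$ by Item~\ref{item:direct_sum_stable}. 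Iterating Lemma~\ref{lem:direct_sum_rho}, and using that scaling a lattice down by a factor in $(0,1]$ does not decrease its Gaussian mass (compare the two sums term by term), I obtain
\[
\rho_s(\lat) \ \le\ \rho_s\Big( \bigoplus_{i=1}^k \lat_i/\lat_{i-1} \Big) \ \le\ \rho_s\Big( \bigoplus_{i=1}^k \mathcal{M}_i \Big)
\; ,
\]
so it suffices to prove the bound when $\lat$ is itself stable. (This is the same reduction used in the proof of Theorem~\ref{thm:RM}, simply carried out for a general parameter $s$ rather than $s = 1/t$.)

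Next I would treat the stable case by passing to the dual. If $\lat$ is stable then $\det(\lat) = 1$, and $\lat^*$ is stable as well by Item~\ref{item:dual_stable} of Proposition~\ref{prop:stable_properties}; in particular every sublattice of $\lat^*$ has determinant at least $1$, so Theorem~\ref{thm:RM} applies to $\lat^*$. Since $s \ge t$ gives $1/s \le 1/t$, and $\rho_\sigma(\cdot)$ is non-decreasing in $\sigma$, the Poisson Summation Formula (Eq.~\eqref{eq:PSF}) yields
\[
\rho_s(\lat) \ =\ s^n \, \rho_{1/s}(\lat^*) \ \le\ s^n \, \rho_{1/t}(\lat^*) \ \le\ \frac{3}{2} s^n \ <\ 2 s^n
\; ,
\]
where the penultimate inequality is Theorem~\ref{thm:RM} applied to $\lat^*$. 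This completes the plan.

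The one thing to be careful about---more a pitfall than a genuine difficulty---is the order of operations. One must reduce to a stable lattice \emph{before} invoking duality: applying Poisson summation to $\lat$ directly and bounding $s^n/\det(\lat) \le s^n$ loses far too much when $\det(\lat)$ is large, and $\rho_{1/s}(\lat^*)$ is not $O(1)$ for a general lattice $\lat$. Likewise, the quotients $\mathcal{M}_i$ should be merged into the single stable lattice $\bigoplus_i \mathcal{M}_i$ before the bound of Theorem~\ref{thm:RM} is applied; estimating each $\rho_s(\mathcal{M}_i)$ separately and multiplying would introduce a spurious factor of $(3/2)^k$.
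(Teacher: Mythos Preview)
Your proposal is correct and follows essentially the same argument as the paper: reduce to the stable case via the canonical filtration and Lemma~\ref{lem:direct_sum_rho}, then apply Poisson summation and Theorem~\ref{thm:RM} to the (stable) dual. The paper presents the stable case first and the reduction second, but the content is identical; your remark that one must pass to a stable lattice before dualizing, and must bound the direct sum as a whole rather than factor-by-factor, is exactly the point.
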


\begin{proof}
	Recall the Poisson Summation Formula applied to the Gaussian mass (Eq.~\eqref{eq:PSF}):
	\[
	\rho_s(\lat) = \frac{s^n}{\det(\lat)} \cdot \rho_{1/s}(\lat^*) 
	\; .
	\]
	Assume first that $\lat$ is stable. Then, by Theorem~\ref{thm:RM} and the fact that the dual of a stable lattice
	is stable (Item~\ref{item:dual_stable} of Proposition~\ref{prop:stable_properties}),
	\[
	\rho_s(\lat)  = s^n \cdot \rho_{1/s}(\lat^*) \leq s^n \cdot \rho_{1/t}(\lat^*)  \leq 2s^n
	\; .
	\]
	For a general lattice $\lat \subset \R^n$, let $\{\vec0\} = \lat_0 \subset \lat_1 \subset \cdots \subset \lat_k = \lat$ be the canonical filtration of $\lat$. Recall from Item~\ref{item:stable_quotient} that $\alpha_i \cdot (\lat_i/\lat_{i-1})$ is stable for some $\alpha_i$. Furthermore, by Item~\ref{item:increasing_slopes} of Proposition~\ref{prop:stable_properties}, $\alpha_i \leq 1$. Then, by Lemma~\ref{lem:direct_sum_rho},
	\[
	\rho_s(\lat) \leq \rho_s\Big(\bigoplus \lat_i/\lat_{i-1}\Big) \leq \rho_s\Big(\bigoplus \alpha_i \cdot \lat_i/\lat_{i-1}\Big) \leq 2s^n
	\; ,
	\]
	where the last inequality follows from the fact that the direct sum of stable lattices is stable 
	together with the bound proven above for stable lattices. (See Item~\ref{item:direct_sum_stable} of Proposition~\ref{prop:stable_properties}.)
\end{proof}

The rest of this section is dedicated to the proof of Item~\ref{item:RM_convex} of Theorem~\ref{thm:RM_all_parameters}. 
Note that we already have a bound on $\rho_{s}(\lat)$ for $s \leq 1/t$ and for $s \geq t$, but we currently have no non-trivial bound for intermediate parameters $1/t < s < t$. To remedy this, we show in Theorem~\ref{thm:log_convex} below that $\rho_{e^\sigma}(\lat)$ is ``approximately log-convex,'' which allows us to interpolate between these two bounds. 
In the proof of Theorem~\ref{thm:log_convex}, we are unable to work with $\rho_{e^\sigma}(\lat)$ directly,
so we instead show 
that it can be approximated by $\gamma_{e^\sigma}(\V(\lat))$ (Lemma~\ref{lem:rho_gamma_2}).
We then notice that the latter function is log-concave by Theorem~\ref{thm:B_conjecture}.

\begin{claim}
	\label{clm:shifted_Voronoi}
	For any lattice $\lat \subset \R^n$, $\vec{y} \in \lat$, and $s > 0$, 
	\[
	\rho_s(\vec{y}) \gamma_s(\V(\lat)) \leq \gamma_s(\V(\lat) + \vec{y}) \leq \gamma_s(\V(\lat))
	\]
\end{claim}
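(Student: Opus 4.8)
The plan is to scale away $s$ and then handle the two inequalities by separate elementary arguments. Since the Voronoi cell scales linearly ($\V(\lat)/s = \V(\lat/s)$), one checks directly that $\rho_s(\vec y) = \rho_1(\vec y/s)$, $\gamma_s(\V(\lat)) = \gamma_1(\V(\lat/s))$, and $\gamma_s(\V(\lat)+\vec y) = \gamma_1(\V(\lat/s)+\vec y/s)$, where $\vec y/s\in\lat/s$. So it suffices to treat $s = 1$, and I abbreviate $\V := \V(\lat)$.

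For the lower bound $\rho(\vec y)\gamma(\V) \le \gamma(\V+\vec y)$, I would write $\gamma(\V+\vec y) = \int_{\V} e^{-\pi\|\vec x+\vec y\|^2}\,{\rm d}\vec x$ and symmetrize the integrand using the central symmetry $-\V = \V$: averaging $e^{-\pi\|\vec x+\vec y\|^2}$ with $e^{-\pi\|{-\vec x}+\vec y\|^2}$ extracts a factor $\cosh(2\pi\inner{\vec x,\vec y}) \ge 1$, giving
\[
\gamma(\V+\vec y) = e^{-\pi\|\vec y\|^2}\int_{\V} e^{-\pi\|\vec x\|^2}\cosh(2\pi\inner{\vec x,\vec y})\,{\rm d}\vec x \ge e^{-\pi\|\vec y\|^2}\gamma(\V) = \rho(\vec y)\gamma(\V).
\]
This is exactly the computation already used in the proof of Lemma~\ref{lem:rho_gamma}, and it uses only the symmetry of $\V$, not that $\vec y \in \lat$.

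For the upper bound $\gamma(\V+\vec y) \le \gamma(\V)$, the point is that $\V+\vec y$ is again a \emph{fundamental body} of $\lat$: it is a convex body, $\vol(\V+\vec y) = \det(\lat)$, and since $\vec y \in \lat$ the tiling translates to itself, $(\V+\vec y)+\lat = \V+\lat$. Hence Corollary~\ref{cor:monotone_Voronoi_wins}, applied with $K := \V+\vec y$ and the non-decreasing radial weight $f(t) := -e^{-\pi t^2}$ (for $t \ge 0$; set $f(t):=f(0)$ for $t<0$ so that $f$ is non-decreasing on all of $\R$), gives $\int_{\V} f(\|\vec x\|)\,{\rm d}\vec x \le \int_{\V+\vec y} f(\|\vec x\|)\,{\rm d}\vec x$, i.e.\ $\gamma(\V+\vec y) \le \gamma(\V)$. (Alternatively one can argue directly: cut $\V+\vec y$ into the pieces $S_{\vec z} := (\V+\vec y)\cap(\V+\vec z)$, $\vec z\in\lat$; every point of $S_{\vec z}\subseteq \V+\vec z$ is at least as close to $\vec z$ as to $\vec 0$, so $\gamma(S_{\vec z}) \le \gamma(S_{\vec z}-\vec z)$, and the translated pieces $S_{\vec z}-\vec z \subseteq \V$ tile $\V$, so summing yields the bound.)

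There is no serious obstacle here; the only subtlety is keeping the directions straight — $\cosh \ge 1$ produces the \emph{lower} bound, whereas the optimality of the Voronoi cell among fundamental bodies (Corollary~\ref{cor:monotone_Voronoi_wins}, which is phrased for \emph{non-decreasing} radial weights) must be invoked with the \emph{decreasing} Gaussian weight in order to produce the \emph{upper} bound. Everything else is the routine scaling reduction and the standard symmetrization identity.
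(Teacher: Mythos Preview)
Your proof is correct. The lower bound via symmetrization and $\cosh \ge 1$ is exactly the paper's argument. For the upper bound, however, the paper takes a more direct route: staying with the same $\cosh$ identity, it observes that for $\vec x \in \V(\lat)$ and $\vec y \in \lat$ one has $\inner{\vec y, \vec x} \le \|\vec y\|^2/2$ by the very definition of the Voronoi cell, whence $\cosh(2\pi\inner{\vec y,\vec x}) \le \cosh(\pi\|\vec y\|^2) \le e^{\pi\|\vec y\|^2} = 1/\rho(\vec y)$, so the upper bound drops out of the same displayed line. Your route via Corollary~\ref{cor:monotone_Voronoi_wins} (or the explicit tiling-and-translating argument) is more conceptual --- it invokes the general optimality of the Voronoi cell among fundamental bodies and would apply to any radially decreasing density --- whereas the paper's is a one-line pointwise estimate on the integrand that needs no external lemma. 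Both are valid; the paper's is slightly more self-contained, yours slightly more general.
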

\begin{proof}
	By scaling appropriately, we may assume that $s = 1$.
	We have 
	\begin{align*}
	\gamma(\V(\lat) + \vec{y}) &= \int_{\V(\lat)} e^{-\pi \|\vec{x} + \vec{y}\|^2}{\rm d}\vec{x}\\
	&= \rho(\vec{y})\int_{\V(\lat)} \rho(\vec{x}) e^{-2 \pi \inner{\vec{y}, \vec{x}}}{\rm d}\vec{x}\\
	&= \rho(\vec{y})\int_{\V(\lat)} \rho(\vec{x}) \cosh(2\pi \inner{\vec{y}, \vec{x}}){\rm d}\vec{x}
	\; ,
	\end{align*}
	where we have used the symmetry of the Voronoi cell in the last line. The lower bound now follows from noting that $\cosh(2\pi \inner{\vec{x}, \vec{y}}) \geq 1$. For the upper bound, we recall that, by definition, any vector in the Voronoi cell $\vec{x} \in \V(\lat)$ satisfies $\inner{\vec{y}, \vec{x}} \leq \|\vec{y}\|^2/2$ for any lattice vector $\vec{y} \in \lat$. Therefore, $\cosh(2\pi \inner{\vec{y},\vec{x}}) \leq \cosh(\pi\|\vec{y}\|^2) \leq 1/\rho(\vec{y})$, as needed.
\end{proof}

\begin{lemma}
	\label{lem:rho_gamma_2}
	For any lattice $\lat \subset \R^n$ and any $s > 0$, 
	\[
	e^{-4n}/2 \leq \gamma_s(\V(\lat)) \rho_s(\lat) \leq 1
	\; .
	\]
\end{lemma}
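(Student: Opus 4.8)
The upper bound $\gamma_s(\V(\lat))\rho_s(\lat)\le 1$ is precisely Lemma~\ref{lem:rho_gamma}, so only the lower bound needs an argument, and rescaling $\lat\mapsto\lat/s$ reduces us to the case $s=1$. The plan is to bound $\gamma(\V(\lat))$ and $\rho(\lat)$ separately from below by products of one-dimensional Gaussian quantities along the Gram--Schmidt directions of a reduced basis, and then combine these via a one-variable inequality. Fix a Hermite--Korkine--Zolotarev (HKZ) reduced basis $\vec b_1,\dots,\vec b_n$ of $\lat$, with Gram--Schmidt vectors $\gs{\vec b}_1,\dots,\gs{\vec b}_n$ and Gram--Schmidt lengths $\ell_i:=\|\gs{\vec b}_i\|$, so $\prod_i\ell_i=\det(\lat)$. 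Write $g(\ell):=\int_{-\ell/2}^{\ell/2}e^{-\pi u^2}\,\mathrm{d}u$ and $q(\ell):=\sum_{k\in\Z}e^{-\pi(k+1/2)^2\ell^2}$.

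For $\gamma(\V(\lat))$ I would use that the orthotope $\mathcal B:=\{\sum_i c_i\gs{\vec b}_i:\ |c_i|\le 1/2\}$ is a fundamental body of $\lat$: it has volume $\prod_i\ell_i=\det(\lat)$, and it tiles $\R^n$ because one reduces any $\vec x\in\R^n$ modulo $\lat$ by recursively rounding coordinates in the Gram--Schmidt frame (round the $\gs{\vec b}_n$-coordinate, subtract the corresponding integer multiple of $\vec b_n$, recurse inside $\spn(\vec b_1,\dots,\vec b_{n-1})$). Since $t\mapsto -e^{-\pi t^2}$ is non-decreasing on $\R^{\ge 0}$, Corollary~\ref{cor:monotone_Voronoi_wins} gives $\gamma(\V(\lat))\ge\gamma(\mathcal B)$, and since the $\gs{\vec b}_i$ are pairwise orthogonal the integral factors coordinate-wise, so $\gamma(\V(\lat))\ge\prod_i g(\ell_i)$.

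For $\rho(\lat)$ I would peel off basis vectors. With $\lat_1:=\Z\vec b_1$ (a primitive rank-one sublattice, as $\vec b_1$ is shortest), grouping the sum defining $\rho(\lat)$ by cosets of $\lat_1$ and choosing for each $\vec z\in\lat/\lat_1$ a lift whose $\vec b_1$-component lies in $\V(\lat_1)$ gives $\rho(\lat)=\sum_{\vec z\in\lat/\lat_1}\rho(\vec z)\,\rho(\lat_1+\tau_{\vec z}\vec b_1)$ with $\tau_{\vec z}\in[-1/2,1/2]$; since $\rho(\lat_1+\tau\vec b_1)=\sum_k e^{-\pi(k+\tau)^2\ell_1^2}\ge q(\ell_1)$ for all such $\tau$, and since $\lat/\lat_1$ inherits $\ell_2,\dots,\ell_n$ as the Gram--Schmidt lengths of its own HKZ basis, induction on $n$ (one checks $\sum_k e^{-\pi k^2\ell^2}\ge q(\ell)$ for the one-dimensional base case) gives $\rho(\lat)\ge\prod_i q(\ell_i)$. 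Combining, $\rho(\lat)\gamma(\V(\lat))\ge\prod_i q(\ell_i)g(\ell_i)$. A short one-variable estimate shows $q(\ell)g(\ell)\ge e^{-1}$ whenever $\ell\le C$ for an absolute constant $C$ (for small $\ell$ one has $g(\ell)\approx\ell$, $q(\ell)\approx 1/\ell$, so the product is $\approx 1$; for $\ell$ bounded away from $0$ one bounds each factor below directly), so if every $\ell_i\le C$ we get $\rho(\lat)\gamma(\V(\lat))\ge e^{-n}\ge e^{-4n}/2$.

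The step I expect to be the main obstacle is exactly this last point: $q(\ell)\to 0$ like $2e^{-\pi\ell^2/4}$ as $\ell\to\infty$, so the crude peeling bound $\rho(\lat)\ge\prod_i q(\ell_i)$ degrades severely when many $\ell_i$ are large, even though $\rho(\lat)$ itself is not correspondingly small then (for an orthogonal direct sum it equals $\prod_i h(\ell_i)$ exactly, because there the coset shifts $\tau_{\vec z}$ all vanish). To close the gap one must, at each peeling step where $\ell_i$ is large, replace the worst-case estimate $\rho(\lat_i+\tau\vec b_i)\ge q(\ell_i)$ by one that exploits that the relevant shifts $\tau_{\vec z}$ are in fact small in that regime, or equivalently split off the ``long'' Gram--Schmidt coordinates and bound the corresponding projected sublattice --- which then has no short vectors --- using $\rho\ge 1$ together with the orthotope bound $\gamma(\V)\ge\prod g(\ell_i)$, applying the peeling argument only to the remaining ``short'' coordinates; this casework on lattices that are sparse in many directions is the delicate part of the proof.
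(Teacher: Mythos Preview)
Your attempt is genuinely different from the paper's, and the obstacle you flag is real: the peeling bound $\rho(\lat)\ge\prod_i q(\ell_i)$ collapses when some $\ell_i$ are large, and your suggested repairs do not obviously close the gap. The claim that ``the relevant shifts $\tau_{\vec z}$ are in fact small when $\ell_i$ is large'' is false in general (take $\vec b_1=(N,0)$, $\vec b_2=(N/2,N)$: both Gram--Schmidt lengths equal $N$, yet half the shifts are $1/2$). Your alternative---split at some index $k$ and use $\rho\ge 1$ on the ``long'' tail---only works cleanly when the short $\ell_i$ precede the long ones; HKZ does impose inequalities among the $\ell_i$ that might ultimately let one organise such a split, but you have not carried this out, and it is not a one-line observation.

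The paper avoids all of this. Its proof is a three-step covering argument that never looks at a basis:
\begin{enumerate}
\item By Markov, $\gamma\big(\sqrt{n/\pi}\,B_2^n\big)\ge 1/2$.
\item The translates $\V(\lat)+\vec y$ with $\vec y\in Y:=\{\vec y\in\lat:(\V(\lat)+\vec y)\cap\sqrt{n/\pi}\,B_2^n\neq\emptyset\}$ cover this ball, and each has mass at most $\gamma(\V(\lat))$ by Claim~\ref{clm:shifted_Voronoi}. Hence $1/2\le |Y|\cdot\gamma(\V(\lat))$.
\item Every $\vec y\in Y$ lies in $2\sqrt{n/\pi}\,B_2^n$, so $|Y|\le e^{\pi(2\sqrt{n/\pi})^2}\rho(\lat)=e^{4n}\rho(\lat)$.
\end{enumerate}
Combining gives $\rho(\lat)\gamma(\V(\lat))\ge e^{-4n}/2$ directly. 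The moral: rather than lower-bounding $\rho$ and $\gamma$ separately (which forces you to match their behaviour across regimes), relate them through the tiling $\R^n=\bigcup_{\vec y}(\V(\lat)+\vec y)$.
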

\begin{proof}
	The upper bound is Lemma~\ref{lem:rho_gamma}, repeated for comparison.
	By scaling appropriately, we may assume that $s = 1$.
	Recall that $\int_{\R^n} \|\vec{x}\|^2 e^{-\pi \|\vec{x}\|^2}{\rm d}\vec{x} = n/(2\pi)$. It follows from Markov's inequality that $\int_{\sqrt{n/\pi} B_2^n} e^{-\pi \|\vec{x}\|^2} {\rm d} \vec{x} \geq 1/2$. Let
	\[
	Y := \{ \vec{y} \in \lat \ : \ (\V(\lat) + \vec{y} ) \cap \sqrt{n/\pi} B_2^n \neq \emptyset \}
	\; .
	\]
	I.e., $Y$ is the set of vectors $\vec{y} \in \lat$ such that there exists some $\vec{x} \in \sqrt{n/\pi} B_2^n$ with $\|\vec{y} - \vec{x}\| \leq \|\vec{y}' - \vec{x}\|$ for every $\vec{y}' \in \lat$. By taking $\vec{y}' = \vec0$, we immediately see that $Y \subseteq \lat \cap 2\sqrt{n/\pi} B_2^n$. Recalling that the Voronoi cell tiles space, we have
	\begin{align*}
	1/2
	&\leq \int_{\sqrt{n/\pi} B_2^n} e^{-\pi \|\vec{x}\|^2} {\rm d}\vec{x}\\
	&\leq \sum_{\vec{y} \in Y} \gamma(\V(\lat) + \vec{y}) \\
	&\leq |Y| \cdot \gamma(\V(\lat)) &\text{(Claim~\ref{clm:shifted_Voronoi})} \\
	&\leq |\lat \cap 2\sqrt{n/\pi } B_2^n| \cdot \gamma(\V(\lat))\\
	&\leq e^{4n} \rho(\lat) \gamma(\V(\lat))
	\; ,
	\end{align*}
	as needed.
\end{proof}

We now prove the ``approximate log-convexity'' of $\rho_{e^\sigma}(\lat)$.

\begin{theorem} 
	\label{thm:log_convex}
	For any lattice $\lat \subset \R^n$ and any $t_1 > s > t_2 > 0$,
	\[
	\rho_s(\lat) \leq 2e^{4n} \rho_{t_1}(\lat)^\tau \rho_{t_2}(\lat)^{1-\tau}
	\; ,
	\]
	where $\tau := \log(s/t_2)/\log(t_1/t_2)$. 
\end{theorem}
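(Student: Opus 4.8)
The plan is to pass from $\rho_s(\lat)$ to the Gaussian mass of the Voronoi cell, exploit the fact that the latter is genuinely log-concave as a function of the logarithmic scaling parameter (via Theorem~\ref{thm:B_conjecture}), interpolate there, and then translate back to $\rho$ using the two-sided bound of Lemma~\ref{lem:rho_gamma_2}. Concretely, define
\[
F(\sigma) := \gamma_{e^\sigma}(\V(\lat)) = \int_{e^{-\sigma}\V(\lat)} e^{-\pi\|\vec{x}\|^2}\,{\rm d}\vec{x} = \gamma\big(e^{-\sigma I_n}\V(\lat)\big)
\; ,
\]
using that $e^{-\sigma I_n} = e^{-\sigma} I_n$ and that $\V(\lat)$ is a symmetric convex body. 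Since $\{-\sigma I_n : \sigma \in \R\}$ is a line inside the space of diagonal matrices, Theorem~\ref{thm:B_conjecture} restricted to this line tells us that $\sigma \mapsto F(\sigma)$ is log-concave.

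Next I would set $\sigma := \log s$, $\sigma_1 := \log t_1$, $\sigma_2 := \log t_2$, so that $\sigma_1 > \sigma > \sigma_2$ and $\sigma = \tau\sigma_1 + (1-\tau)\sigma_2$ with $\tau = (\sigma - \sigma_2)/(\sigma_1 - \sigma_2) = \log(s/t_2)/\log(t_1/t_2)$, exactly as in the statement. Log-concavity of $F$ then yields
\[
\gamma_s(\V(\lat)) = F(\sigma) \geq F(\sigma_1)^\tau F(\sigma_2)^{1-\tau} = \gamma_{t_1}(\V(\lat))^\tau \, \gamma_{t_2}(\V(\lat))^{1-\tau}
\; .
\]

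Finally I would sandwich. The upper bound in Lemma~\ref{lem:rho_gamma_2} (i.e.\ Lemma~\ref{lem:rho_gamma}) gives $\rho_s(\lat) \leq 1/\gamma_s(\V(\lat))$, while the lower bound gives $\gamma_{t_i}(\V(\lat)) \geq e^{-4n}/(2\rho_{t_i}(\lat))$ for $i = 1,2$. Combining these with the displayed interpolation inequality,
\[
\rho_s(\lat) \leq \frac{1}{\gamma_s(\V(\lat))} \leq \frac{1}{\gamma_{t_1}(\V(\lat))^\tau \gamma_{t_2}(\V(\lat))^{1-\tau}} \leq \frac{2\rho_{t_1}(\lat)^\tau}{e^{-4n\tau}} \cdot \frac{2^{1-\tau}\rho_{t_2}(\lat)^{1-\tau}}{e^{-4n(1-\tau)}} \cdot \frac{1}{2^\tau}
\; ,
\]
and since $(e^{-4n}/2)^\tau (e^{-4n}/2)^{1-\tau} = e^{-4n}/2$, the right-hand side is exactly $2e^{4n}\rho_{t_1}(\lat)^\tau \rho_{t_2}(\lat)^{1-\tau}$, as claimed.

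\textbf{Main obstacle.} Given Theorem~\ref{thm:B_conjecture} and Lemma~\ref{lem:rho_gamma_2}, this argument is essentially bookkeeping; the only genuinely substantive point is the observation that the heavy ``(B)-conjecture''-type input applies here at all, namely that scalar dilations of $\V(\lat)$ form a one-parameter subgroup of the diagonal matrices and hence inherit log-concavity of the Gaussian mass. One should also double-check the orientation of the inequalities (log-\emph{concavity} of $F$ but an \emph{upper} bound on $\rho_s$), and note that the crude multiplicative loss $2e^{4n}$ is harmless because it will ultimately be absorbed into the $(Cst)^{n/2}$ bound of Item~\ref{item:RM_convex} after interpolating between the $s \le 1/t$ and $s \ge t$ regimes.
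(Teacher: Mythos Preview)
Your proposal is correct and follows essentially the same three-step argument as the paper: bound $\rho_s(\lat)$ by $1/\gamma_s(\V(\lat))$ via Lemma~\ref{lem:rho_gamma}, use the log-concavity of $\sigma \mapsto \gamma_{e^\sigma}(\V(\lat))$ from Theorem~\ref{thm:B_conjecture} to interpolate, and then convert back with the lower bound of Lemma~\ref{lem:rho_gamma_2}. The displayed intermediate expression with the factors $2,\ 2^{1-\tau},\ 1/2^{\tau}$ is garbled, but your verbal justification via $(e^{-4n}/2)^\tau (e^{-4n}/2)^{1-\tau}=e^{-4n}/2$ gives the correct constant, so the proof goes through as stated.
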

\begin{proof}
	We have
	\begin{align*}
		\rho_s(\lat) &\leq \frac{1}{\gamma_s(\V(\lat))} &\text{(Lemma~\ref{lem:rho_gamma})}\\
		&\leq \frac{1}{\gamma_{t_1}(\V(\lat))^\tau\gamma_{t_2}(\V(\lat))^{1-\tau}} &\text{(Theorem~\ref{thm:B_conjecture})}\\
		&\leq 2e^{4n} \rho_{t_1}(\lat)^\tau \rho_{t_2}(\lat)^{1-\tau} &\text{(Lemma~\ref{lem:rho_gamma_2})}
		\; ,
	\end{align*}
	as needed.
\end{proof}

\begin{corollary}[Item~\ref{item:RM_convex} of Theorem~\ref{thm:RM_all_parameters}]
		\label{cor:intermediate_parameters}
	For any lattice $\lat \subset \R^n$ with $\det(\lat') \geq 1$ for all $\lat' \subseteq \lat$ and any parameter $1/t < s < t$, we have
	\[
		\rho_s(\lat) \leq 4  (e^8 st)^{n/2}
	\; ,
	\]
	where $t := 10(\log n + 2)$.
\end{corollary}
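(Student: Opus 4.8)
The plan is to interpolate between the two endpoint bounds we already have using the approximate log-convexity established in Theorem~\ref{thm:log_convex}. First I would apply Theorem~\ref{thm:log_convex} with the specific choice $t_1 := t$ and $t_2 := 1/t$, which is legitimate since the hypothesis $1/t < s < t$ gives exactly $t_1 > s > t_2 > 0$. This yields
\[
\rho_s(\lat) \leq 2 e^{4n} \rho_t(\lat)^\tau \rho_{1/t}(\lat)^{1-\tau}
\; ,
\qquad \tau = \frac{\log(st)}{\log(t^2)} = \frac{\log(st)}{2\log t}
\; .
\]

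Next I would bound the two endpoint masses. For $\rho_{1/t}(\lat)$, Theorem~\ref{thm:RM} gives $\rho_{1/t}(\lat) \le 3/2 \le 2$. For $\rho_t(\lat)$, I would invoke Theorem~\ref{thm:dual_RM} (Item~\ref{item:RM_dual} of Theorem~\ref{thm:RM_all_parameters}) with parameter $s = t$, which gives $\rho_t(\lat) \le 2 t^n$. Plugging these in,
\[
\rho_s(\lat) \leq 2 e^{4n} \cdot (2 t^n)^\tau \cdot 2^{1-\tau} = 4 e^{4n} (t^n)^\tau = 4 e^{4n} t^{n\tau}
\; .
\]

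It then remains to simplify $t^{n\tau}$. Since $\tau = \log(st)/(2\log t)$, we have $t^{n\tau} = \exp(n\tau \log t) = \exp\bigl(\tfrac{n}{2}\log(st)\bigr) = (st)^{n/2}$. Therefore
\[
\rho_s(\lat) \leq 4 e^{4n} (st)^{n/2} = 4 (e^8 st)^{n/2}
\; ,
\]
which is exactly the claimed bound. There is no real obstacle here: the only care needed is the algebraic bookkeeping of the exponent $\tau$ and checking that the $e^{4n}$ factor from the lossy comparison between $\rho_s(\lat)$ and $1/\gamma_s(\V(\lat))$ in Lemma~\ref{lem:rho_gamma_2} folds cleanly into the constant $e^8$ inside the $n/2$ power (indeed $e^{4n} = (e^8)^{n/2}$), which is why the statement is phrased with $C = e^8$.
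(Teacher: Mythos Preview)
Your proof is correct and essentially identical to the paper's own argument: both apply Theorem~\ref{thm:log_convex} with endpoints $t$ and $1/t$, bound $\rho_{1/t}(\lat)\le 2$ via Theorem~\ref{thm:RM} and $\rho_t(\lat)\le 2t^n$ via Theorem~\ref{thm:dual_RM}, and then simplify $t^{n\tau}=(st)^{n/2}$. The only cosmetic difference is that the paper labels the interpolation exponent as $1-\tau$ where you write $\tau$.
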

\begin{proof}
	Let $\tau := (1-\log s/\log t)/2$. Then,
	\begin{align*}
		\rho_s(\lat)
		&\leq 2 e^{4n} \rho_{1/t}(\lat)^{\tau} \cdot \rho_{t}(\lat)^{1-\tau} &\text{(Theorem~\ref{thm:log_convex})}\\
		&\leq 2^{1+\tau} e^{4n} \rho_{t}(\lat)^{1-\tau} &\text{(Theorem~\ref{thm:RM})}\\
		&\leq 4 e^{4n} t^{(1-\tau)n} &\text{(Corollary~\ref{thm:dual_RM})}\\
		&= 4 (e^8st)^{n/2}
		\; ,
	\end{align*}
	as needed.
\end{proof}

\section{Proof of the covering radius approximation}
\label{sec:KL}

We will need the following lemma, which is implicit in~\cite{banaszczyk}. 

\begin{lemma}
	\label{lem:mu_eta} 
	For any lattice $\lat \subset \R^n$ and $t > 0$ such that $\rho_{1/t}(\lat^*) \leq 3/2$,
	\[
	\mu(\lat) < \Big(\sqrt{\frac{n}{2\pi}} + 1\Big) \cdot  t
	\; .
	\]
	\end{lemma}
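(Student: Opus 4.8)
The plan is to bound the covering radius by a standard "smoothing implies covering" argument due to Banaszczyk. The key fact is that if the Gaussian mass of the dual lattice at parameter $1/t$ is small (here, $\leq 3/2$), then Gaussians on the primal lattice centered at \emph{any} point $\vec{u} \in \R^n$ have mass close to the mass of $\lat$ itself (by Poisson summation the "defect" is controlled by $\rho_{1/t}(\lat^*) - 1 \leq 1/2$). Concretely, I would use the inequality
\[
\rho_{1/t}(\lat - \vec{u}) \geq (2 - \rho_{1/t}(\lat^*)) \cdot \rho_{1/t}(\lat) \geq \tfrac{1}{2} \rho_{1/t}(\lat)
\]
for all $\vec{u}$, which follows from writing $\rho_{1/t}(\lat - \vec{u}) = \frac{t^n}{\det(\lat^*) } \sum_{\vec{w} \in \lat^*} e^{-\pi \|\vec{w}\|^2/t^2} e^{2\pi i \inner{\vec{w},\vec{u}}}$ via Poisson summation (Eq.~\eqref{eq:PSF}, in its shifted form), bounding $|e^{2\pi i \inner{\vec{w},\vec{u}}} - 1|$ by $2$ for $\vec{w} \neq \vec0$, and using $\frac{t^n}{\det(\lat^*)} = \frac{t^n}{\det(\lat^*)} \rho_{1/t}(\lat^*)\big/ \rho_{1/t}(\lat^*) = \rho_{1/t}(\lat)/\rho_{1/t}(\lat^*)$... — more cleanly, $\rho_{1/t}(\lat - \vec u) \ge \frac{t^n}{\det \lat^*}(2 - \rho_{1/t}(\lat^*))$ and $\rho_{1/t}(\lat) = \frac{t^n}{\det \lat^*}\rho_{1/t}(\lat^*)$, so the ratio is $(2 - \rho_{1/t}(\lat^*))/\rho_{1/t}(\lat^*) \ge 1/3$ given $\rho_{1/t}(\lat^*) \le 3/2$. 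So in particular $\rho_{1/t}(\lat - \vec u) > 0$, hence $\lat - \vec u$ contains a point, but we need a \emph{short} one.

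The second ingredient is Banaszczyk's tail bound, Lemma~\ref{lem:banaszczyk} in the excerpt, which (after rescaling by $t$, i.e., applied to $t\lat$) says that the Gaussian mass of the part of $\lat - \vec u$ outside a ball of radius $r\sqrt{n}/t$ is at most $(\sqrt{2\pi e r^2}\, e^{-\pi r^2})^n \rho_{1/t}(\lat)$ for $r \geq 1/\sqrt{2\pi}$. Choosing $r$ slightly larger than $1/\sqrt{2\pi}$ — say $r$ such that $\sqrt{2\pi e r^2}\, e^{-\pi r^2} < 1/3$, which holds for all $r \ge c$ for a small absolute constant, and in fact one can check $r = 1/\sqrt{2\pi} + \delta$ for suitable small $\delta$, or simply note that the function $r \mapsto \sqrt{2\pi e r^2} e^{-\pi r^2}$ is maximized at $r = 1/\sqrt{2\pi}$ where it equals $1$, and decreases; we want it below $(2 - \rho_{1/t}(\lat^*))/\rho_{1/t}(\lat^*) \ge 1/3$ — the mass outside this ball is strictly less than $\rho_{1/t}(\lat - \vec u)$. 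Therefore $\lat - \vec u$ must contain a point inside the ball of radius $r\sqrt{n}/t \cdot t = \ldots$; being careful with the scaling, the conclusion is $\dist(\vec u, \lat) < r\sqrt{n}\cdot t/\sqrt{\text{(appropriate)}}$. Tracking constants, to land on the stated bound $(\sqrt{n/(2\pi)} + 1)t$ one wants $r$ essentially equal to $1/\sqrt{2\pi}$ up to the $+1$ slack: the "$+1$" in $\sqrt{n/(2\pi)}+1$ absorbs the fact that we must take $r$ a hair above $1/\sqrt{2\pi}$ so that $\sqrt{2\pi e r^2}e^{-\pi r^2}$ drops below $1/3$. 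So I would set $r := 1/\sqrt{2\pi}$ and handle the boundary case by a continuity/limiting argument, or pick $r = 1/\sqrt{2\pi} + 1/\sqrt{n}$ type choice so that $r\sqrt{n} \le \sqrt{n/(2\pi)} + 1$ exactly, and verify $\sqrt{2\pi e r^2}e^{-\pi r^2} < 1/3$ for this $r$ when $n$ is large, checking small $n$ separately.

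\textbf{The main obstacle} is purely bookkeeping of constants: verifying that for the chosen $r$ with $r\sqrt{n} \le \sqrt{n/(2\pi)}+1$ we indeed get $(\sqrt{2\pi e r^2}\,e^{-\pi r^2})^n < (2-\rho_{1/t}(\lat^*))/\rho_{1/t}(\lat^*)$, uniformly over the allowed range of $\rho_{1/t}(\lat^*) \in (0, 3/2]$ — the worst case being $\rho_{1/t}(\lat^*) = 3/2$, giving threshold $1/3$ — and over all $n$, including small $n$ where $1+1/\sqrt n$-type corrections to $r$ are large relative to $1/\sqrt{2\pi}$. Since $\sqrt{2\pi e r^2}e^{-\pi r^2}\big|_{r = 1/\sqrt{2\pi}} = 1$ and its derivative there is negative, a Taylor expansion shows $(\sqrt{2\pi e r^2}e^{-\pi r^2})^n \to 0$ as $n \to \infty$ for $r = 1/\sqrt{2\pi} + c/\sqrt n$, so the bound is comfortable for large $n$; the finitely many small $n$ are a direct check. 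There is no conceptual difficulty — everything reduces to Poisson summation (Eq.~\eqref{eq:PSF}) plus Lemma~\ref{lem:banaszczyk} — so the write-up is short.
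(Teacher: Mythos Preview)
Your proposal is correct and follows essentially the same approach as the paper: Poisson summation gives $\rho_{1/t}(\lat - \vec u) \geq \frac{2-\rho_{1/t}(\lat^*)}{\rho_{1/t}(\lat^*)}\,\rho_{1/t}(\lat) \geq \rho_{1/t}(\lat)/3$, Banaszczyk's tail bound (Lemma~\ref{lem:banaszczyk}) shows the mass outside radius $r\sqrt{n}\cdot t$ is strictly less than $\rho_{1/t}(\lat)/3$ for a suitable $r$ just above $1/\sqrt{2\pi}$, and hence there is a lattice point within that radius. The paper disposes of the constant bookkeeping you flag by taking the explicit choice $r := ((1+4/\sqrt{n})/(2\pi))^{1/2}$, for which $(\sqrt{2\pi e r^2}\,e^{-\pi r^2})^n = e^{-2\sqrt{n}}(1+4/\sqrt{n})^{n/2} < 1/3$ for all $n \geq 1$ and $r\sqrt{n} < \sqrt{n/(2\pi)} + 1$, so no separate small-$n$ check is needed.
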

	\begin{proof}
	By scaling the lattice, we may assume without loss of generality that $t = 1$. Let $r := ((1+4/\sqrt{n})/(2\pi))^{1/2}$. 
	By Lemma~\ref{lem:banaszczyk}, for any $\vec{t} \in \R^n$, we have
	\[
	\rho((\lat - \vec{t}) \setminus r\sqrt{n}B_2^n)  \leq \big(\sqrt{2\pi e r^2} e^{-\pi r^2}\big)^n \cdot \rho(\lat) = e^{-2\sqrt{n}} (1+4/\sqrt{n})^{n/2} \cdot \rho(\lat) < \rho(\lat)/3
	\; ,
	\]
	where the last inequality follows by noting that it holds for $n = 1$ and that $e^{-2x} \cdot (1+4/x)^{x^2/2}$ is a decreasing function in $x$ for $x > 0$ (a fact that can be proven using a tedious but straightforward computation). 
	On the other hand, it is an easy consequence of the Poisson Summation Formula (see, e.g.,~\cite{MR04}) that for any $\vec{t} \in \R^n$,
		\begin{equation*}
		\rho(\lat - \vec{t}) \geq \frac{2-\rho(\lat^*)}{\rho(\lat^*)} \cdot \rho(\lat) \geq \rho(\lat)/3
		\;.
		\end{equation*}
	Therefore, $(\lat -\vec{t}) \cap r\sqrt{n}B_2^n$ is nonempty, and in particular, $\dist(\vec{t},\lat) \leq r\sqrt{n} < \sqrt{n/(2\pi)} + 1$. Since this holds for arbitrary $\vec{t}$, we have $\mu(\lat) < \sqrt{n/(2\pi)} + 1$, as needed.
	\end{proof}

We now note that Theorem~\ref{thm:RM} (together with Lemma~\ref{lem:mu_eta}) immediately implies a bound on the covering radius of stable lattices.

\begin{theorem}
\label{thm:KL_stable}
For any stable lattice $\lat \subset \R^n$, 
\[
\mu(\lat) \leq 4\sqrt{n}(\log n + 10) 
\; .
\]
\end{theorem}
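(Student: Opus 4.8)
The plan is to obtain this bound directly from Theorem~\ref{thm:RM} applied to the \emph{dual} lattice $\lat^*$, combined with Lemma~\ref{lem:mu_eta}. The point is that Lemma~\ref{lem:mu_eta} controls the covering radius of $\lat$ in terms of the Gaussian mass of $\lat^*$, and that stability is preserved under duality, so Theorem~\ref{thm:RM} is available for $\lat^*$.

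First I would recall, from Item~\ref{item:dual_stable} of Proposition~\ref{prop:stable_properties}, that the dual $\lat^*$ of a stable lattice is again stable; in particular $\det((\lat^*)') \geq 1$ for every sublattice $(\lat^*)' \subseteq \lat^*$, so $\lat^*$ satisfies the hypothesis of Theorem~\ref{thm:RM}. Applying Theorem~\ref{thm:RM} to $\lat^*$ gives $\rho_{1/t}(\lat^*) \leq 3/2$ with $t := 10(\log n + 2)$. Feeding this into Lemma~\ref{lem:mu_eta} with the same $t$ yields
\[
\mu(\lat) < \Big(\sqrt{n/(2\pi)} + 1\Big) \cdot 10(\log n + 2)
\; .
\]

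It then remains only to verify the elementary inequality $\big(\sqrt{n/(2\pi)} + 1\big)\cdot 10(\log n + 2) \leq 4\sqrt{n}(\log n + 10)$ for all $n \geq 1$. Expanding the difference of the two sides and using $10/\sqrt{2\pi} < 4$, the coefficient of $\log n$ is nonnegative, and the remaining terms are bounded below by $32\sqrt{n} - 10\log n - 20$, which is positive on $[1,\infty)$: its derivative $16/\sqrt{n} - 10/n$ is positive there (since $16\sqrt{n} > 10$) and its value at $n = 1$ equals $12$. Since all the substantive work is done by Theorem~\ref{thm:RM} and Lemma~\ref{lem:mu_eta}, I do not expect any real obstacle; the only care required is this last bit of numerical bookkeeping, and the additive constant $10$ in $\log n + 10$ is there precisely to absorb the contribution of the ``$+1$'' term at small $n$.
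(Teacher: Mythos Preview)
Your proposal is correct and follows essentially the same approach as the paper: apply Theorem~\ref{thm:RM} to the dual $\lat^*$ (which is stable by Item~\ref{item:dual_stable} of Proposition~\ref{prop:stable_properties}), then invoke Lemma~\ref{lem:mu_eta}, and finally verify the resulting numerical inequality. The paper's proof is identical in structure, merely compressing the final numerical check into a single displayed inequality rather than spelling out the derivative argument.
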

\begin{proof}
Let $t := 10(\log n + 2)$. 
Since $\lat^*$ is also stable (by Item~\ref{item:dual_stable} of Proposition~\ref{prop:stable_properties}), 
Theorem~\ref{thm:RM} implies that $\rho_{1/t}(\lat^*) \leq 3/2$. 
Applying Lemma~\ref{lem:mu_eta}, we have 
\[
\mu(\lat) \leq (\sqrt{n/(2\pi)} + 1)  \cdot t 
< 4\sqrt{n} (\log n + 10)
\; ,
\] 
as needed.
\end{proof}

Next, we show (Proposition~\ref{prop:KL_stable_to_unstable}) how to reduce the case of general lattices to the stable case.
We will need the following technical lemma, which is a slight modification of \cite[Lemma 4.9]{DR16} (with an essentially identical proof).

\begin{lemma}[Reverse AM-GM]
\label{lem:sorted_rev_AM-GM}
Let $0 < a_1 < \cdots < a_k$ and $d_1,\dots,d_k \in \N$,
and for $j=1,\ldots,k$, define $m_j := \sum_{i \geq j} d_i$. Then,
\begin{align*}
\sum_{i=1}^k d_i a_i 
&\leq 2e \cdot \ceil{\log(2m_1)} \cdot \max_j m_j \Big(\prod_{i \geq j}
a_i^{d_i} \Big)^{1/m_{j}}  
\; .
\end{align*}
\end{lemma}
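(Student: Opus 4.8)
The plan is to reduce the weighted sum $\sum_i d_i a_i$ to a maximum of geometric means by splitting the index set into $O(\log m_1)$ ranges on which the $a_i$ are comparable. First I would handle the case $k=1$ trivially (the bound reads $d_1 a_1 \le 2e\lceil\log(2d_1)\rceil d_1 a_1$, which holds). For the general case, the key observation is that for \emph{any} contiguous suffix $\{j, j+1, \ldots, \ell\}$ of indices on which the largest term $a_\ell$ is at most (say) twice the geometric mean of $a_j^{d_j} \cdots a_\ell^{d_\ell}$ taken to the power $1/m_j$... but that is not automatic, so instead I would build the decomposition greedily from the top. Set $\ell_0 := k$, and having chosen $\ell_{p}$, let $\ell_{p+1}$ be the largest index $j$ such that $a_{\ell_p} \ge 2 a_j$ (or stop if no such index exists, or if we reach index $1$). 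On each resulting block $B_p := \{\ell_{p+1}+1, \ldots, \ell_p\}$ the ratio $a_{\ell_p}/a_i$ is less than $2$ for all $i \in B_p$, so $\sum_{i\in B_p} d_i a_i < 2 a_{\ell_p}\sum_{i\in B_p} d_i \le 2 a_{\ell_p} m_{\ell_{p+1}+1}$.

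Next I would bound the number of blocks. Between consecutive cut points the value $a_{\ell_p}$ drops by a factor of at least $2$, and the value $m_{\ell_p}$ (the tail count) is non-increasing along the sequence; combining the fact that $a_{\ell_p}$ halves each step with a weighting by the $d_i$'s shows the number of blocks is at most $\lceil \log(2 m_1)\rceil$. (This is the step where one must be careful: the naive bound ``$a$ halves $\log(a_k/a_1)$ times'' is not what we want since we have no control on $a_k/a_1$; the correct count comes from noting that each block ``absorbs'' at least one unit of $m$-weight in an appropriate sense, or more simply from a direct charging argument against $m_1$. I would likely mirror the bookkeeping of \cite[Lemma 4.9]{DR16} here.) Then, for each block $B_p$, the term $a_{\ell_p} m_{\ell_{p+1}+1}$ is bounded above by $m_{j}(\prod_{i\ge j} a_i^{d_i})^{1/m_j}$ with $j := \ell_{p+1}+1$: indeed $a_{\ell_p} \le a_i$ for all $i \le \ell_p$ but we need a \emph{lower} bound on $a_{\ell_p}$ in terms of the geometric mean, which holds because $a_{\ell_p}$ is the largest of the $a_i$ with $i \ge \ell_p$... wait, we need it to dominate the geometric mean over the \emph{whole} tail $i \ge j$, not just $i \ge \ell_p$; since $a$ is increasing, $a_{\ell_p} \le a_i$ for $i \ge \ell_p$, so $a_{\ell_p}$ does \emph{not} obviously dominate that geometric mean. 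The fix is to instead compare against $j := \ell_{p+1}+1$ but use $a_{\ell_p}\cdot e \ge \big(\prod_{i \ge j}a_i^{d_i}\big)^{1/m_j}$, which follows from the fact that within the block all $a_i$ are within a factor $2$ of $a_{\ell_p}$ while the tail beyond the block only \emph{increases} the geometric mean and is itself controlled by the previously-chosen maximum; the factor $e$ comes from $\sum (1/2)^p$-type telescoping across blocks, which is exactly where the $2e$ in the statement originates.

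Summing over blocks, $\sum_i d_i a_i < 2\sum_p a_{\ell_p} m_{\ell_{p+1}+1} \le 2e \cdot (\text{number of blocks}) \cdot \max_j m_j \big(\prod_{i\ge j} a_i^{d_i}\big)^{1/m_j} \le 2e\lceil\log(2m_1)\rceil \max_j m_j\big(\prod_{i\ge j}a_i^{d_i}\big)^{1/m_j}$, as claimed. The main obstacle, and the part I expect to require the most care, is making the two estimates interlock cleanly: bounding the number of blocks by $\lceil\log(2m_1)\rceil$ and simultaneously controlling each block's contribution by a \emph{single} geometric mean $m_j(\prod_{i\ge j}a_i^{d_i})^{1/m_j}$ with the uniform constant $2e$ — in particular ensuring the telescoping that produces $e$ is compatible with the block decomposition that produces $\lceil\log(2m_1)\rceil$. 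Since the statement advertises itself as a slight modification of \cite[Lemma 4.9]{DR16} with an essentially identical proof, I would follow that argument's structure and only adjust the constants and the handling of the multiplicities $d_i$.
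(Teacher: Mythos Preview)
Your proposal has a genuine gap at exactly the point you flag as delicate: the claim that the number of blocks is at most $\lceil\log(2m_1)\rceil$. This is false for your greedy decomposition. The number of blocks you produce is governed by $\log_2(a_k/a_1)$, not by $m_1$; take for instance $d_i=1$ and $a_i=2^i$ for $i=1,\ldots,k$, where your procedure yields $k$ singleton blocks while $\lceil\log(2m_1)\rceil=\lceil\log(2k)\rceil$. Your suggested fixes do not rescue this: ``each block absorbs at least one unit of $m$-weight'' only bounds the number of blocks by $m_1$, not by $\log m_1$, and there is no charging argument against $m_1$ that converts a factor-two drop in $a$ into a logarithmic block count. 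Since you then \emph{sum} over all blocks, the final bound you obtain is off by a factor of order $\log(a_k/a_1)/\log m_1$, which can be arbitrarily large.

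The paper's argument avoids summing over blocks altogether. It uses fixed $e$-adic buckets $S_\ell=\{i : e^{-\ell}a_k < a_i \le e^{1-\ell}a_k\}$ and makes two observations: (i) each nonempty bucket satisfies $\sum_{i\in S_\ell} d_i a_i \le e\, m_{j_\ell}\big(\prod_{i\ge j_\ell} a_i^{d_i}\big)^{1/m_{j_\ell}}$ with $j_\ell=\min S_\ell$, simply because $a_{j_\ell}$ is the \emph{smallest} of the $a_i$ with $i\ge j_\ell$ and hence is bounded above by their weighted geometric mean (this is the source of the factor $e$, and there is no direction-of-inequality issue here); and (ii) the buckets with $\ell>\ell^*:=\lceil\log(2m_1)\rceil$ together contribute at most $m_1\cdot a_k/(2m_1)=a_k/2\le \tfrac12\sum_i d_i a_i$. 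So the first $\ell^*$ buckets carry at least half the sum, and by pigeonhole a single one of them carries at least a $1/(2\ell^*)$ fraction. The factor $2$ comes from this truncation/pigeonhole step, not from any telescoping across blocks.
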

\begin{proof}
For $\ell=1,2,\ldots$, let 
$S_\ell := \{j \ : \ e^{-\ell} a_k < a_{j} \leq e^{1-\ell} a_k \}$, and let $j_{\ell} := \min \{j \in S_\ell\}$. For non-empty $S_\ell$, we have
\[
\sum_{i \in S_\ell} d_i a_i \leq m_{j_\ell} e^{1-\ell} a_k \le em_{j_\ell} a_{j_\ell} \leq e m_{j_\ell} \cdot \Big(\prod_{i \geq j_\ell}
a_i^{d_i}\Big)^{1/m_{j_\ell} }
\; .
\]
Let $\ell^* := \ceil{\log(2m_1)}$. By the above inequality, it suffices to argue that there exists an $\ell$ such that $2\ell^* \cdot \sum_{S_\ell} d_i a_i  \geq  \sum_{i=1}^k d_i a_i$. Indeed,
\[
\sum_{\ell =1}^{\ell^*} \sum_{i \in S_\ell} d_i a_i = \sum_{i =1}^k d_i a_i - \sum_{\ell > \ell^*} \sum_{i \in S_\ell} d_i a_i >  \sum_{i =1}^k d_i a_i - m_1 \cdot \frac{a_k}{2m_1} \geq \frac{1}{2} \cdot \sum_{i =1}^k d_i a_i
\; ,
\]
where in the last inequality we have used that $d_k \ge 1$.
Therefore, there exists an $\ell$ such that
\[
\sum_{i \in S_\ell} d_i a_i \geq \frac{1}{2\ell^*} \cdot \sum_{i =1}^k d_i a_i
\; ,
\]
as needed.
\end{proof}

Recall that
\[
\mu_{\det}(\lat) := \max_{W \subset \R^n} \sqrt{\dim(W^\perp)} \cdot \det(\pi_{W^\perp}(\lat))^{\frac{1}{\dim(W^\perp)}}
\; ,
\]
where the maximum is over lattice subspaces $W \subset \R^n$ of $\lat$ (i.e., subspaces $W$ spanned by up to $n-1$ lattice vectors).

\begin{proposition}
	\label{prop:KL_stable_to_unstable}
	Let
	\[
	\KLCstable := \max_{d \leq n} \sup  \mu(\lat)/\sqrt{d}
	\; ,
	\]
	where the supremum is over stable lattices $\lat \subset \R^d$. Then, for any lattice $\lat \subset \R^n$, 
\[
\mu(\lat) \leq \sqrt{2e \ceil{\log(2n)}} \cdot \KLCstable \cdot \mu_{\det}(\lat)
\; .
\]
\end{proposition}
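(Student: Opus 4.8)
The plan is to decompose the lattice $\lat$ using its canonical filtration $\{\vec 0\} = \lat_0 \subset \lat_1 \subset \cdots \subset \lat_k = \lat$ from Proposition~\ref{prop:stable_properties}, bound the covering radius of $\lat$ by the covering radii of the (scalings of) stable quotients $\lat_i/\lat_{i-1}$, and then use the Reverse AM-GM inequality (Lemma~\ref{lem:sorted_rev_AM-GM}) to relate the resulting sum to $\mu_{\det}(\lat)$.

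First I would record the standard fact that $\mu(\lat)^2 \leq \sum_{i=1}^k \mu(\lat_i/\lat_{i-1})^2$: given any target point $\vec t$, one greedily rounds off its components in the successive orthogonal quotient spaces (i.e., lift a closest-vector solution in $\lat_k/\lat_{k-1} = \pi_{\spn(\lat_{k-1})^\perp}(\lat)$, subtract it, and recurse inside $\spn(\lat_{k-1})$), and the errors are orthogonal across the different quotients so their squared norms add. Next, set $d_i := \rank(\lat_i/\lat_{i-1})$ and $\alpha_i := \det(\lat_i/\lat_{i-1})^{-1/d_i}$, so that $\alpha_i \lat_i/\lat_{i-1}$ is stable by Item~\ref{item:stable_quotient} of Proposition~\ref{prop:stable_properties}. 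By definition of $\KLCstable$, $\mu(\alpha_i \lat_i/\lat_{i-1}) \leq \KLCstable \sqrt{d_i}$, hence $\mu(\lat_i/\lat_{i-1}) \leq \KLCstable \sqrt{d_i}/\alpha_i = \KLCstable \sqrt{d_i}\cdot \det(\lat_i/\lat_{i-1})^{1/d_i}$. Therefore
\[
\mu(\lat)^2 \leq \KLCstable^2 \sum_{i=1}^k d_i \cdot a_i
\; , \qquad a_i := \det(\lat_i/\lat_{i-1})^{2/d_i}
\; .
\]
By Item~\ref{item:increasing_slopes} of Proposition~\ref{prop:stable_properties} the $a_i$ are strictly increasing (they are just the squared ``slopes'' of the canonical polygon), so Lemma~\ref{lem:sorted_rev_AM-GM} applies with these $a_i$ and $d_i$, giving
\[
\sum_{i=1}^k d_i a_i \leq 2e \ceil{\log(2n)} \cdot \max_j m_j \Big(\prod_{i\geq j} a_i^{d_i}\Big)^{1/m_j}
\; ,
\]
where $m_j := \sum_{i\geq j} d_i$ and we used $m_1 = n$.

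Finally I would identify each term $m_j (\prod_{i\geq j} a_i^{d_i})^{1/m_j}$ with a quantity bounded by $\mu_{\det}(\lat)^2$. The point is that $\lat_{j-1}$ is a primitive sublattice, $W := \spn(\lat_{j-1})$ is a lattice subspace, and $\pi_{W^\perp}(\lat) = \lat/\lat_{j-1}$ has rank $m_j$ and determinant $\det(\lat/\lat_{j-1}) = \det(\lat)/\det(\lat_{j-1}) = \prod_{i\geq j}\det(\lat_i/\lat_{i-1})$, so $\prod_{i\geq j} a_i^{d_i} = \det(\lat/\lat_{j-1})^{2/1}$... more precisely $(\prod_{i\geq j}a_i^{d_i})^{1/m_j} = \det(\lat/\lat_{j-1})^{2/m_j}$. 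Hence $m_j(\prod_{i\geq j}a_i^{d_i})^{1/m_j} = m_j \det(\pi_{W^\perp}(\lat))^{2/m_j} = \big(\sqrt{m_j}\det(\pi_{W^\perp}(\lat))^{1/m_j}\big)^2 \leq \mu_{\det}(\lat)^2$ by definition of $\mu_{\det}$. Combining the three displays yields $\mu(\lat)^2 \leq 2e\ceil{\log(2n)} \cdot \KLCstable^2 \cdot \mu_{\det}(\lat)^2$, and taking square roots gives the claim.

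The main obstacle I anticipate is the first step — justifying $\mu(\lat)^2 \leq \sum_i \mu(\lat_i/\lat_{i-1})^2$ cleanly. One must be careful that "lifting" a closest vector from the quotient $\lat/\lat_{j}$ back into $\lat$ and peeling off layers is legitimate: this works because the $\lat_i$ are primitive (so the quotients really are lattices, per Proposition~\ref{prop:stable_properties} and the preliminaries), and the rounding errors in the mutually orthogonal complements $\spn(\lat_i)\cap\spn(\lat_{i-1})^\perp$ are orthogonal, so the Pythagorean bookkeeping goes through. Everything else is bookkeeping with determinants and the already-proved Lemma~\ref{lem:sorted_rev_AM-GM}.
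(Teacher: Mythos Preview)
Your proposal is correct and follows essentially the same route as the paper: canonical filtration, bound $\mu(\lat)^2$ by $\sum_i \mu(\lat_i/\lat_{i-1})^2$, invoke stability of the rescaled quotients to introduce $\KLCstable$, apply Lemma~\ref{lem:sorted_rev_AM-GM} to the increasing sequence $a_i = \det(\lat_i/\lat_{i-1})^{2/d_i}$, and identify the resulting maximum with $\mu_{\det}(\lat)^2$. The only cosmetic difference is in your ``main obstacle'': the paper justifies $\mu(\lat)^2 \leq \sum_i \mu(\lat_i/\lat_{i-1})^2$ via Claim~\ref{clm:fundamental_product} and Lemma~\ref{lem:Voronoi_wins} (the product of Voronoi cells is a fundamental body, hence has circumradius at least $\mu(\lat)$), whereas you give the equivalent direct greedy-rounding argument---both are standard and yield the same inequality.
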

\begin{proof}
	Let $\{ \vec0\} = \lat_0 \subset \lat_1 \subset  \cdots \subset \lat_k = \lat$ be the canonical filtration of some lattice $\lat\subset \R^n$. 
	Let $d_i := \rank(\lat_i/\lat_{i-1})$.
	Note that $\lat_i/\lat_{i-1}$ is a scaling of a stable lattice, i.e., $\det(\lat_i/\lat_{i-1})^{-1/d_i} \cdot (\lat_i/\lat_{i-1})$ is stable. (See Item~\ref{item:stable_quotient} of Proposition~\ref{prop:stable_properties}.)
		 We therefore have by Claim~\ref{clm:fundamental_product} and Lemma~\ref{lem:Voronoi_wins} that
	\begin{align}
	\mu(\lat)^2 &\leq \mu\Big(\bigoplus_i \lat_i/\lat_{i-1}\Big)^2\nonumber \\
	&=\sum_{i} \mu(\lat_i/\lat_{i-1})^2 \nonumber \\
	&\leq \KLCstable^2 \cdot \sum_i d_i \det(\lat_i/\lat_{i-1})^{2/d_i} \label{eq:boundoncoveringamgm} \; .
	\end{align}
Next, we recall from Item~\ref{item:increasing_slopes} of Proposition~\ref{prop:stable_properties} that $a_i := \det(\lat_i/\lat_{i-1})^{2/d_i}$ is an increasing sequence, and we note that $\sum_{i \ge j} d_i = \rank(\lat/\lat_{j-1})$. We may therefore use Lemma~\ref{lem:sorted_rev_AM-GM} to bound Eq.~\eqref{eq:boundoncoveringamgm} from above by
\begin{align*}
&2e \ceil{\log(2n)}\cdot \KLCstable^2 \cdot   \max_{i} \rank(\lat/\lat_{i}) \cdot \det(\lat/\lat_{i})^{\frac{2}{\rank(\lat/\lat_{i})}}  \\
&\quad \leq 
2e \ceil{\log(2n)}\cdot \KLCstable^2 \max_{W \subset \R^n} \dim(W^\perp) \cdot \det(\pi_{W^\perp}(\lat))^{\frac{2}{\dim(W^\perp)}}
\; , 
\end{align*}
as needed.
\end{proof}

Theorem~\ref{thm:KL} now follows as an immediate corollary of the above results. In particular, we have $\KLCstable \leq 4(\log n + 10)$ and therefore $\sqrt{2e \ceil{\log(2n)}} \cdot \KLCstable \leq 10 (\log n + 10)^{3/2} $. The result then follows from Proposition~\ref{prop:KL_stable_to_unstable}.

\subsection{Connection with the Slicing Conjecture}
\label{sec:KL_slicing}

In this section, we prove Theorem~\ref{thm:KL_slicing}. 
The structure of the proof is based on the one suggested in~\cite{SW16},
as was the case for the proof of our main theorem in Section~\ref{sec:voronoi_mass}.

As in Section~\ref{sec:voronoi_mass}, we are unable to work with the lattice parameter $\mu(\lat)$ that interests us directly.\footnote{
	While~\cite{DSV12} give a characterization of lattices corresponding to local maxima of $\mu$, we are unable to obtain a sufficiently strong bound on the covering radius of these lattices. See~\cite{SW16} for more about this question.} Instead, 
we work with the lattice parameter
\[
\mubar(\lat) := \sqrt{ \frac{1}{\det(\lat)} \int_{\V(\lat) }\|\vec{x}\|^2 {\rm d}\vec{x}}
\; ,
\]
which gives a good approximation to $\mu$. The following tight result due to Magazinov~\cite{Magazinov} (and conjectured in~\cite{HLR09}) makes this precise. (See~\cite[Claim 3.1]{HLR09} for a slightly weaker result with a simple proof. See, e.g.,~\cite{ZF96,ConwaySloaneBook98,GMR05,HLR09,Magazinov} for more about $\mubar$.)

\begin{theorem}[\cite{Magazinov}]
	\label{thm:mu_mubar}
	For any lattice $\lat \subset \R^n$,
	\[
	\mubar(\lat)  \leq \mu(\lat) \leq \sqrt{3} \mubar(\lat)
	\; .
	\]
\end{theorem}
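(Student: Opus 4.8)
The inequality $\mubar(\lat)\le\mu(\lat)$ is immediate: every $\vec x\in\V(\lat)$ satisfies $\|\vec x\|=\dist(\vec x,\lat)\le\mu(\lat)$, so $\V(\lat)\subseteq\mu(\lat)B_2^n$ and $\mubar(\lat)^2=\frac1{\det(\lat)}\int_{\V(\lat)}\|\vec x\|^2\,d\vec x\le\mu(\lat)^2$. The substance is the upper bound. Since $\mu(\lat)^2=\max_{\vec x\in\V(\lat)}\|\vec x\|^2$ and $\mubar(\lat)^2$ is the average of $\|\vec x\|^2$ over $\V(\lat)$, we must show that on a Voronoi cell the maximum of $\|\cdot\|^2$ exceeds its average by at most a factor of $3$. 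A generic symmetric convex body only obeys the much weaker bound $\max\|\cdot\|^2\le(n+2)\cdot\mathrm{avg}\|\cdot\|^2$ (any symmetric body lies inside $\sqrt{n+2}$ times its inertia ellipsoid, with equality for the ball), so the proof must genuinely use that Voronoi cells tile space. The bound is tight at $\lat=\Z^n$: there $\V=[-\tfrac{1}{2},\tfrac{1}{2}]^n$, $\mu^2=n/4$, $\mubar^2=n/12$, with ratio exactly $3$ because along each coordinate axis the cell has uniform one-dimensional slices and the deepest hole $(\tfrac{1}{2},\dots,\tfrac{1}{2})$ is extremal in all $n$ coordinates simultaneously.

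For the upper bound the plan is to reduce to a single deepest hole. Fix $\vec v$ with $\dist(\vec v,\lat)=\mu:=\mu(\lat)$ and translate so that $\vec 0$ is a closest lattice point; then $\vec v\in\V(\lat)$, $\|\vec v\|=\mu$, and — because $\vec v$ is a local maximum of $\dist(\cdot,\lat)$ — it lies in the convex hull of $N:=\{\vec y\in\lat:\|\vec v-\vec y\|=\mu\}$, the set of lattice points whose Voronoi cells contain $\vec v$. The key identity is that, since $\int_{\V(\lat)}\vec x\,d\vec x=\vec 0$ by central symmetry, for every $\vec y\in N$
\[
\int_{\V(\lat)+\vec y}\|\vec x-\vec v\|^2\,d\vec x=\int_{\V(\lat)}\|\vec x+(\vec y-\vec v)\|^2\,d\vec x=\bigl(\mubar(\lat)^2+\|\vec y-\vec v\|^2\bigr)\det(\lat)=\bigl(\mubar(\lat)^2+\mu^2\bigr)\det(\lat).
\]
Setting $U:=\bigcup_{\vec y\in N}(\V(\lat)+\vec y)$, so that $\vol(U)=|N|\det(\lat)$, and summing, $\int_U\|\vec x-\vec v\|^2\,d\vec x=|N|\bigl(\mubar^2+\mu^2\bigr)\det(\lat)$. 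Hence the theorem's upper bound $\mu^2\le 3\mubar^2$ is \emph{equivalent} to
\[
\frac1{\vol(U)}\int_U\|\vec x-\vec v\|^2\,d\vec x\ \ge\ \tfrac{4}{3}\,\mu^2,
\]
i.e.\ to the statement that the mean squared distance from the deepest hole to a uniformly random point of the ``star'' $U$ of Voronoi cells around it is at least $\tfrac{4}{3}\mu^2$.

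It remains to prove this mean-square bound, which is the crux. Arguing cell by cell fails: each $\V(\lat)+\vec y$ ($\vec y\in N$) is centrally symmetric about $\vec y$ with $\|\vec y-\vec v\|=\mu$, and the parallelogram law applied to the reflection $\vec x\mapsto 2\vec y-\vec x$ only recovers the useless bound $\int_{\V(\lat)+\vec y}\|\vec x-\vec v\|^2\ge\mu^2\det(\lat)$, which gives back $\mubar^2+\mu^2\ge\mu^2$. One must instead exploit how the cells fit together: their tangent cones at $\vec v$ tile $\R^n$, while each cell contains the segment from $\vec v$ through $\vec y$ to the point $2\vec y-\vec v$ at distance $2\mu$ — so $U$ is a body containing $\vec v$ in its interior that reaches out to distance $2\mu$ in a balanced spread of directions (for $\Z^n$, $U$ is exactly the cube of side $2$ centered at $\vec v$, where the bound holds with equality). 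Converting this ``balanced spread'' into precisely the constant $\tfrac{4}{3}$ — ruling out stars $U$ that are too spiky near $\vec v$, using only that Voronoi cells are centrally symmetric polytopes that tile by translation and that their tangent cones around a deepest hole partition space — is the main obstacle, and is where Magazinov's argument is delicate. Granting it, chaining with the easy direction yields $\mubar(\lat)\le\mu(\lat)\le\sqrt3\,\mubar(\lat)$.
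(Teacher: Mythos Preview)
The paper does not give a proof of this theorem: it is stated with a citation to Magazinov and used as a black box. So there is no ``paper's own proof'' to compare against.

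As for your proposal itself, it is not a proof but a reformulation followed by an explicit appeal to the result you are meant to prove. The lower bound is fine. For the upper bound, your computation
\[
\frac{1}{\vol(U)}\int_U \|\vec x-\vec v\|^2\,d\vec x \;=\; \mubar(\lat)^2+\mu(\lat)^2
\]
is just the parallel-axis theorem applied cell by cell (using $\int_{\V(\lat)}\vec x\,d\vec x=\vec 0$), so the ``equivalent'' inequality $\frac{1}{\vol(U)}\int_U\|\vec x-\vec v\|^2\,d\vec x\ge \tfrac{4}{3}\mu^2$ is literally the same statement $\mubar^2\ge\tfrac13\mu^2$ rewritten. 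No reduction has taken place. You then say ``Granting it\ldots'' and stop, which is precisely the gap: the entire content of Magazinov's theorem is the step you grant.

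The geometric picture you sketch (the star $U$ of cells around a deepest hole, the segments of length $2\mu$ through each $\vec y\in N$, the tangent cones at $\vec v$ tiling space) is a reasonable setup, and your observation that a naive per-cell argument only recovers $\mubar^2\ge 0$ is correct. But turning ``$U$ reaches distance $2\mu$ in a balanced family of directions'' into the sharp constant $\tfrac43$ is exactly the hard part, and nothing here addresses it. As written this is an outline of what would need to be shown, not a proof.
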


We now observe that Theorem~\ref{thm:same_grad_VL} is applicable to the function $\mubar(\lat)^2$. Recall that a symmetric convex body $K \subset \R^n$ is said to be \emph{isotropic} if $\int_K \vec{x} \vec{x}^T {\rm d}\vec{x} = \alpha \cdot I_n$ for some scalar $\alpha > 0$.

\begin{proposition}
\label{prop:mubar_local_max}
For any lattice $\lat \subset \R^n$, 
\[
\grad_A \mubar(A \lat)^2 |_{A = I_n} = \frac{2}{\det(\lat)} \int_{\V(\lat)}  \vec{x}\vec{x}^T {\rm d}\vec{x}
\; ,
\]
where $A \in \R^{n \times n}$ is a non-singular matrix.
In particular, if $\lat$ corresponds to a local maximum (or local minimum) of $\mubar(\lat)$ over the set of determinant-one lattices, then $\V(\lat)$ is isotropic.
\end{proposition}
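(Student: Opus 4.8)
The plan is to read off both claims from Theorem~\ref{thm:same_grad_VL} applied with the (continuously differentiable, linear) function $f(x) = x$, so that $f(\|\vec{x}\|^2) = \|\vec{x}\|^2$ and $f' \equiv 1$. First I would rewrite $\mubar(A^T\lat)^2$ in the form appearing in that theorem. Since $\det(A^T\lat) = |\det(A)|\,\det(\lat)$ for every non-singular $A$, we have
\[
\mubar(A^T\lat)^2 = \frac{1}{|\det(A)|\,\det(\lat)}\int_{\V(A^T\lat)}\|\vec{x}\|^2\,{\rm d}\vec{x} = \frac{g(A)}{\det(\lat)}\,,
\]
where $g$ is exactly the function from Theorem~\ref{thm:same_grad_VL} with this choice of $f$. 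That theorem gives differentiability of $g$ at $A=I_n$ and $\grad_A g(A)|_{A=I_n} = 2\int_{\V(\lat)} f'(\|\vec{x}\|^2)\,\vec{x}\vec{x}^T\,{\rm d}\vec{x} = 2\int_{\V(\lat)}\vec{x}\vec{x}^T\,{\rm d}\vec{x}$; dividing through by the constant $\det(\lat)$ yields the stated formula for $\grad_A\mubar(A^T\lat)^2|_{A=I_n}$.

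For the second assertion, I would argue as in the proof of Theorem~\ref{thm:minima_have_mass}. Note $\mubar(\lat) > 0$ since $\V(\lat)$ is a full-dimensional body, so $\mubar(\lat)$ and $\mubar(\lat)^2$ have the same local extrema over the set of determinant-one lattices. Using the description of that space from Section~\ref{sec:prelims} (every nearby determinant-one lattice is $A^T\lat$ for some $A \in \mathrm{SL}_n(\R)$ close to $I_n$, the map $A \mapsto A^T\lat$ being a local homeomorphism onto its image), a local extremum at $\lat$ is a local extremum of $A \mapsto \mubar(A^T\lat)^2$ restricted to $\mathrm{SL}_n(\R)$ at $A=I_n$. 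By the standard fact already used in Fact~\ref{fact:gaussian_gradient} and Theorem~\ref{thm:minima_have_mass} --- a differentiable scalar function on $\mathrm{GL}_n(\R)$ restricted to the determinant-one matrices has a critical point at $I_n$ iff its gradient there is a scalar multiple of $I_n$ --- the matrix $\frac{2}{\det(\lat)}\int_{\V(\lat)}\vec{x}\vec{x}^T\,{\rm d}\vec{x}$ must be proportional to $I_n$. Finally, this matrix is positive definite, since $\vec{u}^T\big(\int_{\V(\lat)}\vec{x}\vec{x}^T\,{\rm d}\vec{x}\big)\vec{u} = \int_{\V(\lat)}\inner{\vec{u},\vec{x}}^2\,{\rm d}\vec{x} > 0$ for every unit vector $\vec{u}$ because $\V(\lat)$ has nonempty interior; hence $\int_{\V(\lat)}\vec{x}\vec{x}^T\,{\rm d}\vec{x} = \alpha I_n$ with $\alpha > 0$, i.e., $\V(\lat)$ is isotropic.

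There is no genuinely hard step here: all of the analytic content is packaged in Theorem~\ref{thm:same_grad_VL}. The only things that need care are the bookkeeping of the scalar factors $|\det(A)|$ and $\det(\lat)$ so that the final constant comes out right, and the (routine) translation between ``local extremum over the space of determinant-one lattices'' and ``critical point of a function on $\mathrm{SL}_n(\R)$ at $I_n$'', which is immediate from the quotient-topology picture recalled in the preliminaries.
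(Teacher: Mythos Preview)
Your proposal is correct and follows essentially the same approach as the paper: apply Theorem~\ref{thm:same_grad_VL} with $f(x)=x$, unwind the constants to identify $\mubar(A^T\lat)^2$ with $g(A)/\det(\lat)$, and then invoke the standard fact that a critical point on $\mathrm{SL}_n(\R)$ at $I_n$ forces the gradient to be a scalar multiple of the identity. Your additional remarks (that $\mubar$ and $\mubar^2$ share local extrema, and that the resulting scalar $\alpha$ is strictly positive) are more detail than the paper gives, but they are correct and harmless.
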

\begin{proof}
To compute the gradient, we simply apply Theorem~\ref{thm:same_grad_VL} with $f(x) := x$, and recall that 
\[
\mubar(A \lat)^2 = \frac{1}{\det(\lat)} \cdot  \frac{1}{|\det(A)|}\int_{\V(A \lat)} f(\|\vec{x}\|^2){\rm d}\vec{x}
\; .
\]
The ``in particular'' follows from the fact that a differentiable function $g(A)$ restricted to the set of determinant-one matrices has a critical point at $A = I_n$ if and only if $\grad_A g(A) |_{A = I_n}$ is a scalar multiple of the identity.
\end{proof}

We define the (symmetric) isotropic constant  
\[
L_n^2 := \max_{d\leq n} \frac{1}{d} \cdot \sup_K  \int_{K}\|\vec{x}\|^2{\rm d}\vec{x}
\; ,
\]
where the supremum is taken over all isotropic symmetric convex bodies $K \subset \R^d$ of volume one.
 It is known to satisfy $1/(2\sqrt{3}) \leq L_n \leq n^{o(1)}$, and the Slicing Conjecture implies that $L_n$ is bounded by a universal constant~\cite{Bourgain91-slicing,Klartag06,Chen21,KlartagLehec2022}. (The lower bound is due to the hypercube, $[-1/2,1/2]^n$.) We note in passing that we are only concerned with the isotropic constant for Voronoi cells, which could conceivably be easier to bound than the isotropic constant for arbitrary convex bodies.

\begin{theorem}
\label{thm:KL_stable_slicing}
For any stable lattice $\lat \subset \R^n$, 
\[
\mu(\lat) \leq \sqrt{3}\mubar(\lat) \leq \sqrt{3n} L_n
\; .
\]
\end{theorem}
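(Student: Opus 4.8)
The first inequality, $\mu(\lat) \le \sqrt{3}\,\mubar(\lat)$, is immediate from Theorem~\ref{thm:mu_mubar}, so the plan is to prove that $\mubar(\lat)^2 \le n L_n^2$ for every stable lattice $\lat \subset \R^n$, by induction on $n$, following the ``handle the stable local extrema separately'' strategy used in the proof of Proposition~\ref{prop:RM_stable}. The base case $n = 1$ is a one-line computation: the only stable lattice is $\Z$, for which $\mubar(\Z)^2 = \int_{-1/2}^{1/2} x^2 \,{\rm d}x = 1/12 \le L_1^2$, the lower bound on $L_1$ coming from the interval $[-1/2,1/2]$ itself.

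For the inductive step, I would first invoke compactness of the set of rank-$n$ stable lattices (Item~\ref{item:compact} of Proposition~\ref{prop:stable_properties}) and continuity of $\mubar$ to fix a stable lattice $\lat \subset \R^n$ attaining the maximum of $\mubar$ over this set; bounding $\mubar(\lat)$ then bounds $\mubar$ on all rank-$n$ stable lattices. Now I split into two cases exactly as in Proposition~\ref{prop:RM_stable}. If $\lat$ is a local maximum of $\mubar$ over \emph{all} determinant-one lattices --- which in particular happens whenever $\lat$ is interior to the stable set --- then Proposition~\ref{prop:mubar_local_max} says that $\V(\lat)$ is isotropic; since $\lat$ is stable, $\vol(\V(\lat)) = \det(\lat) = 1$, so $\V(\lat)$ is an isotropic symmetric convex body of volume one in $\R^n$, and the definition of $L_n$ yields $\mubar(\lat)^2 = \int_{\V(\lat)} \|\vec{x}\|^2 \,{\rm d}\vec{x} \le n L_n^2$.

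Otherwise $\lat$ lies on the boundary of the stable set, and by Item~\ref{item:stable_boundary} of Proposition~\ref{prop:stable_properties} there is a primitive sublattice $\lat' \subset \lat$ with $0 < d := \rank(\lat') < n$ such that $\lat'$ and $\lat/\lat'$ are both stable. Here I would ``split the lattice'': by Claim~\ref{clm:fundamental_product}, $\V(\lat') \times \V(\lat/\lat')$ is a fundamental body of $\lat$, so Corollary~\ref{cor:monotone_Voronoi_wins} applied with the non-decreasing function $t \mapsto t^2$ gives $\int_{\V(\lat)} \|\vec{x}\|^2 \,{\rm d}\vec{x} \le \int_{\V(\lat') \times \V(\lat/\lat')} \|\vec{x}\|^2 \,{\rm d}\vec{x}$. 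Since $\|\vec{x}\|^2$ splits as $\|\pi_{\spn(\lat')}(\vec{x})\|^2 + \|\pi_{\spn(\lat')^\perp}(\vec{x})\|^2$ on the product and both factor Voronoi cells have volume one (their lattices being stable), the right-hand integral equals $\mubar(\lat')^2 + \mubar(\lat/\lat')^2$, which by the induction hypothesis together with $L_m \le L_n$ for $m \le n$ is at most $d L_n^2 + (n-d) L_n^2 = n L_n^2$ (using $\det(\lat) = 1$ once more). In both cases $\mubar(\lat)^2 \le n L_n^2$, and combining with Theorem~\ref{thm:mu_mubar} gives the claim.

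I do not expect a real obstacle here: the argument is essentially a transcription of the proof of Proposition~\ref{prop:RM_stable}, with $\gamma_{1/t}(\V(\cdot))$ replaced by $\mubar(\cdot)^2$ and with the input ``local extremum $\Rightarrow$ isotropic Gaussian position'' (Theorem~\ref{thm:minima_have_mass}) replaced by ``local extremum $\Rightarrow$ isotropic'' (Proposition~\ref{prop:mubar_local_max}). The only points needing a little care are verifying that all the Voronoi cells in sight have volume exactly one (this is precisely where stability enters), confirming that Corollary~\ref{cor:monotone_Voronoi_wins} applies with $f(t) = t^2$, and the bookkeeping of the interior/boundary dichotomy that passes from the maximizer to an arbitrary stable lattice.
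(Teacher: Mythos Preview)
Your proposal is correct and follows essentially the same approach as the paper's own proof: induction on $n$, compactness to pass to a maximizer, the interior/boundary dichotomy with Proposition~\ref{prop:mubar_local_max} handling the local-maximum case and Claim~\ref{clm:fundamental_product} plus Corollary~\ref{cor:monotone_Voronoi_wins} handling the boundary case. The paper's writeup is slightly terser but the argument is the same.
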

\begin{proof}
By Theorem~\ref{thm:mu_mubar}, it suffices to prove that $\mubar(\lat) \leq \sqrt{n} L_n$. 
Note that this is trivially true for $n = 1$.
	We assume for induction that $\mubar(\lat') \leq \sqrt{d} L_d \leq \sqrt{d} L_n$ for all stable lattices $\lat'$ of rank $d < n$. 
	Recall that the set of stable lattices is compact (Item~\ref{item:compact} of Proposition~\ref{prop:stable_properties}), so that we may assume without loss of generality that $\lat$ corresponds to a global maximum of the function $\mubar$ over this set. If this is also a \emph{local} maximum over the set of determinant-one lattices, then by Proposition~\ref{prop:mubar_local_max}, the Voronoi cell is isotropic, and we 
	have $\mubar(\lat) \leq \sqrt{n} L_n$ by the definition of $\mubar$ and $L_n$. Otherwise, $\lat$ must lie on the boundary of the set of stable lattices. 
	I.e., there is some primitive sublattice $\lat' \subset \lat$ of rank $0 < d < n$ such that $\lat'$ and $\lat/\lat'$ are both stable. (See Item~\ref{item:stable_boundary} of Proposition~\ref{prop:stable_properties}.) Applying the induction hypothesis and Corollary~\ref{cor:monotone_Voronoi_wins} (together with Claim~\ref{clm:fundamental_product}), we have
	\[
	\mubar(\lat)^2 \leq \mubar(\lat' \oplus \lat/\lat')^2 = \mubar(\lat')^2 + \mubar(\lat/\lat')^2 \leq dL_n^2 + (n-d)L_{n}^2 = nL_n^2
	\; ,
	\]
	as needed.
\end{proof}

As far as we know, it is entirely possible that $L_n = 1/(2\sqrt{3})$, i.e., that the hypercube $[-1/2,1/2]^n$ is the worst symmetric
body for the Slicing Conjecture. 
If this is true, then we get that for any stable lattice $\lat \subset \R^n$, $\mu(\lat) \leq \sqrt{n}/2$, which is tight for $\Z^n$.
Apart from being an interesting statement in its own right, 
it was shown by Shapira and Weiss~\cite{SW16} that such a result would imply 
the so-called Minkowski conjecture (see there for more information). 

We can now use Proposition~\ref{prop:KL_stable_to_unstable} to extend Theorem~\ref{thm:KL_stable_slicing} to all lattices $\lat \subset \R^n$.

\begin{theorem}
\label{thm:KL_slicing}
For any lattice $\lat \subset \R^n$, 
\[
\frac{1}{\sqrt{2\pi e}} \cdot \mu_{\det}(\lat) 
\leq 
\mu(\lat) 
\leq 
5 \sqrt{\log n + 1} \cdot L_n \cdot \mu_{\det}(\lat)
\; .
\]
\end{theorem}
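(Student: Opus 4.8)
The plan is to obtain Theorem~\ref{thm:KL_slicing} as a short corollary of Theorem~\ref{thm:KL_stable_slicing} and Proposition~\ref{prop:KL_stable_to_unstable}, in exact analogy with the way Theorem~\ref{thm:KL} was deduced from Theorem~\ref{thm:KL_stable} and Proposition~\ref{prop:KL_stable_to_unstable}. Recall that Proposition~\ref{prop:KL_stable_to_unstable} reduces the covering radius of an arbitrary lattice to the quantity $\KLCstable = \max_{d \le n} \sup \mu(\lat)/\sqrt{d}$, where the supremum runs over stable lattices $\lat \subset \R^d$. So the first step is simply to bound $\KLCstable$ using the stable case. By Theorem~\ref{thm:KL_stable_slicing}, every stable lattice $\lat \subset \R^d$ satisfies $\mu(\lat) \le \sqrt{3d}\, L_d$, and since $L_d \le L_n$ for all $d \le n$ (immediate from the definition of $L_n$ as a maximum over dimensions $d \le n$), we get $\mu(\lat)/\sqrt{d} \le \sqrt{3}\, L_n$. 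Taking the supremum over all such $\lat$ and the maximum over $d \le n$ gives $\KLCstable \le \sqrt{3}\, L_n$.

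The second step is to plug this into Proposition~\ref{prop:KL_stable_to_unstable}, which yields, for any lattice $\lat \subset \R^n$,
\[
\mu(\lat) \le \sqrt{2e \ceil{\log(2n)}} \cdot \KLCstable \cdot \mu_{\det}(\lat) \le \sqrt{6e \ceil{\log(2n)}} \cdot L_n \cdot \mu_{\det}(\lat) \; .
\]
It then remains only to verify the numerical inequality $\sqrt{6e\ceil{\log(2n)}} \le 5\sqrt{\log n + 1}$, i.e., $6e\ceil{\log(2n)} \le 25(\log n + 1)$. For $n \ge 2$ one bounds $\ceil{\log(2n)} \le \log(2n) + 1 = \log n + \log 2 + 1$ and checks that $6e(\log n + \log 2 + 1) \le 25(\log n + 1)$: the difference of the two sides is an increasing function of $\log n$ (since $25 > 6e$) that is already positive at $n = 2$. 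The case $n = 1$, where $\ceil{\log 2} = 1$ and the right-hand side equals $5$, is immediate since $\sqrt{6e} < 5$.

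I do not expect any substantive obstacle here: all the real work has already been carried out in Theorem~\ref{thm:KL_stable_slicing} (the induction over the canonical filtration together with the characterization of local maxima of $\mubar$ via isotropic Voronoi cells, relying on Proposition~\ref{prop:mubar_local_max} and Corollary~\ref{cor:monotone_Voronoi_wins}) and in Proposition~\ref{prop:KL_stable_to_unstable} (the reverse AM--GM reduction from general lattices to stable ones). The only points requiring care are the monotonicity $L_d \le L_n$ and the bookkeeping of the absolute constant, in particular confirming that the logarithmic factor coming from Proposition~\ref{prop:KL_stable_to_unstable} can be absorbed into $5\sqrt{\log n + 1}$ uniformly in $n$, including the small values of $n$.
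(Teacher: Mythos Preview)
Your proposal is correct and follows exactly the approach the paper intends: the paper simply states that Theorem~\ref{thm:KL_slicing} is obtained by using Proposition~\ref{prop:KL_stable_to_unstable} to extend Theorem~\ref{thm:KL_stable_slicing} to general lattices, without even writing out the details. Your bound $\KLCstable \le \sqrt{3}\,L_n$ via Theorem~\ref{thm:KL_stable_slicing} and the monotonicity of $L_d$, followed by the numerical check that $\sqrt{6e\ceil{\log(2n)}} \le 5\sqrt{\log n + 1}$, is precisely what is needed.
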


As we observed in Footnote~\ref{foot:harmonic_cov_rad}, there are lattices
with $\mu(\lat) \geq C\sqrt{\log n} \cdot \mu_{\det}(\lat)$.
So, up to a constant factor, Theorem~\ref{thm:KL_slicing} gives the strongest possible upper bound on $\mu(\lat)$ in terms of $\mu_{\det}(\lat)$, assuming the Slicing Conjecture. 
We note that Dadush recently proved a variant of Theorem~\ref{thm:KL_slicing}~\cite[Theorem 2.5]{DadApproximatingCovering19}. He avoids the $\sqrt{\log n}$ factor loss by replacing $\mu_{\det}$ with a parameter that depends on 
determinants of multiple projections of $\lat$ simultaneously, rather than just one as in the definition of $\mu_{\det}$.
In particular, assuming the Slicing Conjecture, his result gives a characterization of the covering radius up to a constant factor in terms of determinants of projections.

\section{An optimal bound for extreme parameters}
\label{sec:all_params}
We now prove Theorem~\ref{thm:extreme_parameters}, which says that $\Z^n$ has maximal Gaussian mass amongst all lattices $\lat$ with $\det(\lat') \geq 1$ for all sublattices $\lat' \subseteq \lat$, for very small parameters $s \leq \sqrt{2\pi/(n+2)}$ \emph{and} for very large parameters $s \geq \sqrt{(n+2)/(2\pi)}$. 
The proof is similar to that of Theorem~\ref{thm:RM}, except here we work directly with $\rho_s(\lat)$ (instead of the proxy $\gamma_s(\V(\lat))$). Moreover, we show that $\rho_s(\lat)$ has 
\emph{no local maxima} for those values of $s$, which leads to a simpler proof and the clearly tight result. 
In order to show that local maxima do not exist, we will show that the Laplacian of $\rho_s(\lat)$ is always positive when $\lat$ is stable. 

In more detail, for a lattice $\lat$ and $s>0$ let $f_{\lat,s}:X \to \R$ be given by 
\[
f_{\lat, s}(A) := \rho_s(e^{A/2}\lat) = \sum_{\vec{y} \in \lat} e^{-\pi \vec{y}^T e^A \vec{y}/s^2}
\; ,
\]
where $X \subset \R^{n \times n}$ is the linear space of all symmetric matrices with zero trace.
Notice that as $A$ ranges over $X$,
$e^{A/2} := I_n + \sum_{i=1}^\infty (A/2)^i/i!$ ranges over all determinant-one 
positive-definite matrices. In particular, 
$e^{A/2}\lat$ ranges over all lattices of fixed determinant, up to orthogonal transformations.
(To see this, notice that any $\lat'$ of the same determinant as $\lat$ can be written as $\lat' = T \lat$ for some matrix $T$ of determinant one. Then, up to an orthogonal transformation, $\lat'$ equals $V^T D V \lat$ where $T = UDV$ is the singular value decomposition of $T$. Finally, notice that $V^T D V$ is a determinant-one positive-definite matrix.)
See~\cite[Section 1.1.3]{TerrasBook} for a more in-depth treatment of the space of determinant-one matrices.

Recall that the \emph{Laplacian} of a twice differentiable function $g:X \to \R$ is
given by
\[
\Delta_X g(A) := \sum_{i} \frac{\partial^2}{\partial E_i^2} g(A)
\; ,
\]
where the $E_i$ form an orthonormal basis of $X$
(under the inner product $\langle A,B \rangle = \Tr(A^T B)$), and 
\[
\frac{\partial^2}{\partial M^2} g(A) := \frac{\partial^2}{\partial r^2} g(A + rM) |_{r = 0}
\] 
is the directional second derivative of $g$ in the $M$ direction. 
One can show that the Laplacian does not depend on the choice of basis. 
Clearly, if the Laplacian is positive at $A$, then $A$ cannot correspond to a local maximum of $g$, since there must be at least one direction in which the second derivative is positive. 

The Laplacian of $f_{\lat ,s}$ is straightforward to calculate. 
It can be found, e.g., in the work by Sarnak and Str{\"o}mbergsson~\cite{SS06}
who used it to study local minima of $\rho_s(\lat)$.

\begin{claim}[{\cite[Eq. (46)]{SS06}}]
	\label{clm:Laplacian}
	Let $X \subset \R^{n \times n}$ be the space of trace-zero symmetric matrices. Then, for any lattice $\lat \subset \R^n$ and any parameter $s > 0$,
	\[
	\Delta_X f_{\lat,s}(0) = \frac{\pi}{s^2} \cdot \frac{n-1}{n} \cdot \sum_{\vec{y} \in \lat} \rho_s(\vec{y})  \|\vec{y}\|^2 \Big( \frac{\pi}{s^2} \cdot \|\vec{y}\|^2 -  \frac{n+2}{2} \Big)
	\; .
	\]
\end{claim}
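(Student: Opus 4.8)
This is a direct computation (essentially the one carried out in~\cite{SS06}); I would organize it as follows. First fix a single lattice vector $\vec{y} \in \lat$ and a trace-zero symmetric matrix $M$, and Taylor-expand $e^{rM} = I_n + rM + \tfrac{r^2}{2}M^2 + O(r^3)$. Then the scalar $u(r) := \vec{y}^T e^{rM}\vec{y}$ satisfies $u(0) = \|\vec{y}\|^2$, $u'(0) = \vec{y}^T M \vec{y}$, and $u''(0) = \vec{y}^T M^2 \vec{y} = \|M\vec{y}\|^2$, where the last equality uses $M = M^T$. Differentiating $e^{-\pi u(r)/s^2}$ twice in $r$ and setting $r = 0$ gives the per-term identity
\[
\frac{\partial^2}{\partial M^2}\, e^{-\pi \vec{y}^T e^A \vec{y}/s^2}\Big|_{A=0} = \rho_s(\vec{y})\left( \frac{\pi^2}{s^4}\,(\vec{y}^T M \vec{y})^2 - \frac{\pi}{s^2}\,\|M\vec{y}\|^2 \right)
\; .
\]

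Next I would sum this over an orthonormal basis $E_1,\dots,E_{\dim X}$ of $X$ (with inner product $\langle A,B\rangle = \Tr(A^T B)$), which reduces everything to the two trace identities
\[
\sum_i (\vec{y}^T E_i \vec{y})^2 = \frac{n-1}{n}\|\vec{y}\|^4 \qquad\text{and}\qquad \sum_i \|E_i \vec{y}\|^2 = \frac{(n-1)(n+2)}{2n}\|\vec{y}\|^2
\; .
\]
To prove these, extend $\{E_i\}$ to an orthonormal basis of the full space of symmetric $n\times n$ matrices by adjoining $E_0 := I_n/\sqrt{n}$. For the first identity, write $\vec{y}^T E \vec{y} = \langle E, \vec{y}\vec{y}^T\rangle$ and use that $\vec{y}\vec{y}^T$ is symmetric, so the sum over all $E_i$ together with $E_0$ equals $\|\vec{y}\vec{y}^T\|_F^2 = \|\vec{y}\|^4$; subtracting the $E_0$ contribution $\langle E_0,\vec{y}\vec{y}^T\rangle^2 = \|\vec{y}\|^4/n$ gives the stated value. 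For the second, a short calculation with the explicit basis $\{e_a e_a^T\}_a \cup \{\tfrac{1}{\sqrt{2}}(e_a e_b^T + e_b e_a^T)\}_{a<b}$ of the symmetric matrices gives $\sum_{i\ge 0}\|E_i\vec{y}\|^2 = \tfrac{n+1}{2}\|\vec{y}\|^2$, and subtracting $\|E_0\vec{y}\|^2 = \|\vec{y}\|^2/n$ yields the claim. Substituting both identities into the per-term formula, the total contribution of $\vec{y}$ to $\Delta_X f_{\lat,s}(0)$ is exactly $\frac{\pi}{s^2}\cdot\frac{n-1}{n}\,\rho_s(\vec{y})\,\|\vec{y}\|^2\big(\frac{\pi}{s^2}\|\vec{y}\|^2 - \frac{n+2}{2}\big)$.

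Finally I would sum over all $\vec{y} \in \lat$ to obtain the stated formula. The only point that is not pure algebra is justifying that $\Delta_X$ commutes with the lattice sum: each summand together with its first and second directional derivatives is bounded, uniformly for $A$ in a neighborhood of $0$, by a fixed polynomial in $\|\vec{y}\|$ times a Gaussian $e^{-c\|\vec{y}\|^2}$, so the differentiated series converges uniformly near $A = 0$ and term-by-term differentiation is valid. I expect this convergence bookkeeping to be the only mild obstacle; everything else is the elementary matrix-calculus expansion and the orthonormal-basis trace computation sketched above.
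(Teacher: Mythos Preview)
Your computation is correct, including the two trace identities and the per-term second derivative; the convergence justification you outline is also adequate. Note that the paper does not actually prove this claim---it simply cites \cite[Eq.~(46)]{SS06}---so there is nothing to compare against beyond the fact that your direct calculation is the standard one.
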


\begin{proposition}
	\label{prop:no_local_maxima}
	For any lattice $\lat \subset \R^n$ and 
	\[
	0 < s \leq \sqrt{\frac{2\pi}{n+2}} \cdot \lambda_1(\lat)
	\; ,
	\] 
	$\lat$ cannot correspond to a local maximum of $\rho_s(\lat)$ over the set of determinant-one lattices.
	In particular, since stable lattices have $\lambda_1(\lat) \geq 1$, a stable lattice cannot correspond to a local maximum for $s \leq \sqrt{2\pi/(n+2)}$.
\end{proposition}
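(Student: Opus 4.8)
The plan is to rule out a local maximum by showing that the Laplacian $\Delta_X f_{\lat,s}(0)$ is \emph{strictly} positive, which forces some direction to have positive second derivative. First I would set up the identification between $\rho_s$ over the space of determinant-one lattices and the function $f_{\lat,s}$ on $X$. Every determinant-one lattice near $\lat$ is $B\lat$ with $B \in \mathrm{SL}_n(\R)$ near $I_n$; writing $B = OP$ by polar decomposition, with $O$ orthogonal near $I_n$ and $P = e^{A/2}$ positive-definite for some trace-zero symmetric $A$ near $0$, and using that $\rho_s$ is invariant under orthogonal transformations, we get $\rho_s(B\lat) = \rho_s(e^{A/2}\lat) = f_{\lat,s}(A)$. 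Hence, if $\lat$ corresponds to a local maximum of $\rho_s$ over the determinant-one lattices, then $A = 0$ is a local maximum of $f_{\lat,s} : X \to \R$, and in particular each directional second derivative $\frac{\partial^2}{\partial r^2} f_{\lat,s}(rM)|_{r=0}$ is nonpositive, so summing over an orthonormal basis of $X$ gives $\Delta_X f_{\lat,s}(0) \le 0$.

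Next I would invoke Claim~\ref{clm:Laplacian}, which gives
\[
\Delta_X f_{\lat,s}(0) = \frac{\pi}{s^2} \cdot \frac{n-1}{n} \cdot \sum_{\vec{y} \in \lat} \rho_s(\vec{y}) \|\vec{y}\|^2 \Big( \frac{\pi}{s^2} \|\vec{y}\|^2 - \frac{n+2}{2} \Big)
\; ,
\]
and analyze the sign of the summands. The $\vec{y} = \vec0$ term vanishes, and for $\vec{y} \ne \vec0$ we have $\|\vec{y}\| \ge \lambda_1(\lat)$; the hypothesis $s \le \sqrt{2\pi/(n+2)}\,\lambda_1(\lat)$ is precisely $\frac{\pi}{s^2}\lambda_1(\lat)^2 \ge \frac{n+2}{2}$, so $\frac{\pi}{s^2}\|\vec{y}\|^2 - \frac{n+2}{2} \ge 0$ and every summand is nonnegative. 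To upgrade this to strict positivity I would use the extra lattice vector $2\vec{y}_0$, where $\vec{y}_0 \in \lat$ achieves $\|\vec{y}_0\| = \lambda_1(\lat)$: then $\|2\vec{y}_0\|^2 = 4\lambda_1(\lat)^2$, so $\frac{\pi}{s^2}\|2\vec{y}_0\|^2 - \frac{n+2}{2} \ge \frac{3\pi}{s^2}\lambda_1(\lat)^2 > 0$, and the corresponding summand is strictly positive. Since the prefactor $\frac{\pi}{s^2}\cdot\frac{n-1}{n}$ is positive for $n \ge 2$ (the case $n=1$ being degenerate), we get $\Delta_X f_{\lat,s}(0) > 0$, contradicting $\Delta_X f_{\lat,s}(0) \le 0$; hence $\lat$ is not a local maximum. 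For the ``in particular'' statement I would note that a stable lattice satisfies $\lambda_1(\lat) \ge 1$, since the rank-one sublattice $\Z\vec{y}_0$ generated by a shortest nonzero vector has $\det(\Z\vec{y}_0) = \lambda_1(\lat) \ge 1$; thus $\sqrt{2\pi/(n+2)} \le \sqrt{2\pi/(n+2)}\,\lambda_1(\lat)$ and the first part applies.

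There is no genuinely hard step once Claim~\ref{clm:Laplacian} is in hand; the only real ``idea'' is the observation that including the vector $2\vec{y}_0$ turns nonnegativity of the Laplacian into strict positivity. The routine points deserving a line of justification are: (i) that $f_{\lat,s}$ is twice continuously differentiable and that the term-by-term differentiation yielding the formula in Claim~\ref{clm:Laplacian} is legitimate, which follows from the super-exponential decay of the Gaussian terms $\rho_s(\vec{y})$; and (ii) the local identification of the space of determinant-one lattices near $\lat$ with a neighborhood of $0$ in $X$ modulo orthogonal transformations, which requires a little care but presents no real difficulty.
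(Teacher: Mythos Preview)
Your proposal is correct and takes essentially the same approach as the paper: show that the Laplacian from Claim~\ref{clm:Laplacian} is strictly positive under the hypothesis on $s$, which rules out a local maximum. The only cosmetic difference is that you exhibit a strictly positive summand explicitly via the vector $2\vec{y}_0$, whereas the paper simply notes that the lattice contains vectors of arbitrarily large length, so some summand must be strictly positive.
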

\begin{proof}
	It suffices to show that the Laplacian given in Claim~\ref{clm:Laplacian} is positive for such $\lat$. Indeed, the summand is zero for $\vec{y} = \vec0$, and since 
	\[
	\frac{\pi}{s^2} \cdot \lambda_1(\lat)^2 \geq \frac{n+2}{2}
	\; , 
	\]
	the summand is non-negative for all non-zero $\vec{y} \in \lat$. Finally, since any lattice contains vectors of arbitrarily large length, there must be some strictly positive terms in the sum. Therefore, the full sum is strictly positive, as needed.
\end{proof}

From this, we derive our main result for the special case of stable lattices.  

\begin{proposition}
	\label{prop:stable_small_s}
	For any $0 < s \leq \sqrt{2\pi/(n+2)}$ and stable lattice $\lat \subset \R^n$, $\rho_s(\lat) \leq \rho_s(\Z^n)$.
\end{proposition}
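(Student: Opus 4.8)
The plan is to run exactly the argument used for Proposition~\ref{prop:RM_stable}, but working directly with $\rho_s$ rather than with the Voronoi-cell proxy, and exploiting the fact that Proposition~\ref{prop:no_local_maxima} \emph{outright rules out} stable local maxima of $\rho_s$ in this parameter range. So I would induct on $n$. The base case $n=1$ is immediate: a rank-one stable lattice is $\Z\vec{v}$ with $\|\vec{v}\| = \det(\Z\vec{v}) = 1$, hence $\rho_s(\Z\vec{v}) = \rho_s(\Z)$.

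For the inductive step, fix $n \ge 2$ and assume the statement in all dimensions $d < n$. Note that the induction hypothesis is applicable at the \emph{same} parameter $s$, since $d < n$ forces $\sqrt{2\pi/(d+2)} > \sqrt{2\pi/(n+2)} \ge s$. Because the set of stable lattices in $\R^n$ is compact (Item~\ref{item:compact} of Proposition~\ref{prop:stable_properties}) and $\lat \mapsto \rho_s(\lat)$ is continuous, it suffices to bound $\rho_s(\lat)$ when $\lat$ is a global maximizer of $\rho_s$ over the set of stable lattices. If this $\lat$ lay in the interior of the set of stable lattices, it would in particular be a local maximum of $\rho_s$ over the ambient space of all determinant-one lattices; but $\lambda_1(\lat) \ge 1$ for a stable lattice, so for $s \le \sqrt{2\pi/(n+2)}$ this contradicts Proposition~\ref{prop:no_local_maxima}. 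Hence $\lat$ must lie on the boundary of the set of stable lattices.

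By Item~\ref{item:stable_boundary} of Proposition~\ref{prop:stable_properties}, there is then a primitive sublattice $\lat' \subset \lat$ with $0 < d := \rank(\lat') < n$ such that both $\lat'$ and $\lat/\lat'$ are stable. Lemma~\ref{lem:direct_sum_rho} gives
\[
\rho_s(\lat) \le \rho_s(\lat' \oplus \lat/\lat') = \rho_s(\lat')\,\rho_s(\lat/\lat')
\; ,
\]
and the induction hypothesis applied to $\lat'$ (of rank $d$) and to $\lat/\lat'$ (of rank $n-d$) yields $\rho_s(\lat') \le \rho_s(\Z^d)$ and $\rho_s(\lat/\lat') \le \rho_s(\Z^{n-d})$. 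Since $\rho_s$ is multiplicative over direct sums and $\Z^n = \Z^d \oplus \Z^{n-d}$, I conclude $\rho_s(\lat) \le \rho_s(\Z^d)\rho_s(\Z^{n-d}) = \rho_s(\Z^n)$, completing the induction.

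The only genuinely delicate point is the dichotomy step: one must check that a global maximizer of $\rho_s$ over the compact set of stable lattices which happens to lie in the \emph{interior} of that set is indeed a local maximizer over the whole (open) space of determinant-one lattices, so that Proposition~\ref{prop:no_local_maxima} can be invoked. Everything else is a direct assembly of results already established; in particular, no estimate on the Gaussian mass of the Voronoi cell is required here, which is why this argument is cleaner than that of Theorem~\ref{thm:RM} and yields the tight constant.
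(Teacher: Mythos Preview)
Your proof is correct and follows essentially the same route as the paper: induction on $n$, compactness of the stable lattices, Proposition~\ref{prop:no_local_maxima} to force the global maximizer to the boundary, Item~\ref{item:stable_boundary} of Proposition~\ref{prop:stable_properties} to split, and Lemma~\ref{lem:direct_sum_rho} together with the induction hypothesis (applicable because $s \le \sqrt{2\pi/(n+2)} \le \sqrt{2\pi/(d+2)}$) to finish. Your remark on the dichotomy step is apt and is handled exactly as in the paper, namely by the fact that an interior point of the set of stable lattices has a full neighborhood in the space of determinant-one lattices consisting of stable lattices.
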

\begin{proof}
	Note that the result is trivial for $n = 1$. We assume for induction that the result holds for all dimensions less than $n$. Since the set of stable lattices is compact and $\rho_s(\lat)$ is a continuous function, we may assume that $\lat$ corresponds to a global maximum of $\rho_s(\lat)$ over the set of stable lattices. By Proposition~\ref{prop:no_local_maxima}, this cannot be a local maximum over the set of determinant-one lattices. So, $\lat$ must be on the boundary of the set of stable lattices. I.e., there is a non-trivial primitive sublattice $\lat' \subset \lat$ with $d := \rank(\lat')$ such that $\lat'$ and $\lat/\lat'$ are themselves stable lattices of rank strictly less than $n$. (See Item~\ref{item:stable_boundary} of Proposition~\ref{prop:stable_properties}.) Applying the induction hypothesis, we have by Lemma~\ref{lem:direct_sum_rho}
	that
	\[
	\rho_s(\lat) \leq \rho_s(\lat') \cdot \rho_s(\lat/\lat') \leq \rho_s(\Z^{d}) \cdot \rho_s(\Z^{n-d}) = \rho_s(\Z^n)
	\; ,
	\]
	where we have used the fact that $s \leq \sqrt{2\pi/(n+2)} \leq \min\{ \sqrt{2\pi/(d+2)},\ \sqrt{2\pi/(n-d+2)}\}$ in order to apply the induction hypothesis.
\end{proof}

We now ``invert the parameter'' using duality.

\begin{corollary}
	\label{cor:stable_big_s}
	For any $s \geq \sqrt{(n+2)/(2\pi)}$ and stable lattice $\lat \subset \R^n$, $\rho_s(\lat) \leq \rho_s(\Z^n)$.
\end{corollary}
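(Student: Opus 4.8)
The plan is to deduce this statement from Proposition~\ref{prop:stable_small_s} by ``inverting the parameter'' via the Poisson Summation Formula. First I would recall Eq.~\eqref{eq:PSF}: for any full-rank lattice $\lat \subset \R^n$ and $s > 0$, $\rho_s(\lat) = (s^n/\det(\lat)) \cdot \rho_{1/s}(\lat^*)$. Since $\lat$ is stable, $\det(\lat) = 1$, so this reads $\rho_s(\lat) = s^n \rho_{1/s}(\lat^*)$. Applying the same identity to $\Z^n$, and using that $\det(\Z^n) = 1$ and $(\Z^n)^* = \Z^n$, we get $\rho_s(\Z^n) = s^n \rho_{1/s}(\Z^n)$.

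Next, I would use that the dual of a stable lattice is stable (Item~\ref{item:dual_stable} of Proposition~\ref{prop:stable_properties}), so $\lat^*$ is a stable lattice in $\R^n$. The hypothesis $s \geq \sqrt{(n+2)/(2\pi)}$ is equivalent to $1/s \leq \sqrt{2\pi/(n+2)}$, which is exactly the range in which Proposition~\ref{prop:stable_small_s} applies. Hence $\rho_{1/s}(\lat^*) \leq \rho_{1/s}(\Z^n)$. Multiplying this inequality by $s^n$ and substituting the two Poisson identities above yields $\rho_s(\lat) = s^n \rho_{1/s}(\lat^*) \leq s^n \rho_{1/s}(\Z^n) = \rho_s(\Z^n)$, as needed. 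There is essentially no obstacle here: the only points one must be careful about are that stable lattices have determinant exactly one (so that the $1/\det(\lat)$ factor in Eq.~\eqref{eq:PSF} is trivial) and that $\Z^n$ is self-dual, both of which are immediate from the definitions.
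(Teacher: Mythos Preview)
Your proof is correct and follows essentially the same approach as the paper: apply the Poisson Summation Formula to both $\lat$ and $\Z^n$, use that $\lat^*$ is stable, and invoke Proposition~\ref{prop:stable_small_s} at parameter $1/s$. The paper's version is slightly terser but the logical content is identical.
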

\begin{proof}
	Recall that the dual $\lat^*$ of a stable lattice is itself stable. (See Item~\ref{item:dual_stable} of Proposition~\ref{prop:stable_properties}.) Furthermore, by the Poisson Summation Formula for the discrete Gaussian (Eq.~\eqref{eq:PSF}),
	\[
	\rho_s(\lat) = \frac{s^n}{\det(\lat)} \cdot \rho_{1/s}(\lat^*) \leq \frac{s^n}{\det(\lat)} \cdot \rho_{1/s}(\Z^n) = \rho_s(\Z^n)
	\; ,
	\]
	as needed,
	where the inequality follows from Proposition~\ref{prop:stable_small_s}, and the last equality follows from the Poisson Summation Formula applied to $\Z^n$.
\end{proof}

We can now prove Theorem~\ref{thm:extreme_parameters}.

\begin{proof}[Proof of Theorem~\ref{thm:extreme_parameters}]
	Let $\{\vec0\} = \lat_0 \subset \lat_1 \subset \cdots \subset \lat_k = \lat$ be the canonical filtration of $\lat$, and let $d_i := \rank(\lat_i/\lat_{i-1}) \leq n$. Then, by Lemma~\ref{lem:direct_sum_rho}, we have
	\[
	\rho_s(\lat) \leq \prod_i \rho_s(\lat_i/\lat_{i-1})  
	\;.
	\]
	Note that, if $s \leq \sqrt{2\pi/(n+2)}$, then we also have $s \leq \sqrt{2\pi/(d_i+2)}$ for all $i$. 
	And, $\alpha_i \cdot (\lat_i/\lat_{i-1})$ is a stable lattice for some $\alpha_i  \leq 1$.
	(See Items~\ref{item:stable_quotient} and~\ref{item:increasing_slopes} of Proposition~\ref{prop:stable_properties}.) So, in this case we may apply Proposition~\ref{prop:stable_small_s} to obtain
	\[
	\rho_s(\lat)  
	\leq \prod_i \rho_s(\alpha_i \cdot (\lat_i/\lat_{i-1})) \leq	  
	\prod_i \rho_s(\Z^{d_i}) = \rho_s(\Z^n)
	\; .
	\]
	If, on the other hand, $s \geq \sqrt{(n+2)/(2\pi)}$, then $s \geq \sqrt{(d_i+2)/(2\pi)}$ for all $i$, so we may similarly apply Corollary~\ref{cor:stable_big_s} to obtain the same result.
\end{proof}

\begin{remark}
It is possible to show that, in the setting of Theorem~\ref{thm:extreme_parameters}, $\rho_{s}(\lat) = \rho_{s}(\Z^n)$
if and only if $\lat$ is an orthogonal transformation of $\Z^n$. 
To see this, first notice that in order to get equality, 
all the $\alpha_i$ in the proof above must be one, i.e., $\lat$ must be stable. 
Next, we follow the induction argument in the proof of Proposition~\ref{prop:stable_small_s},
and recall the case of equality in Lemma~\ref{lem:direct_sum_rho}.
\end{remark}

\section{Tightness of our bounds}
\label{sec:examples}
In this section, we discuss the tightness of our bounds by considering some classes of lattices $\lat \subset \R^n$.

\subsection{Tightness of Item~\ref{item:RM_dual} of Theorem~\ref{thm:RM_all_parameters} for stable lattices}

It is an immediate consequence of the Poisson Summation Formula (Eq.~\eqref{eq:PSF}) that $\rho_s(\lat) \geq s^n/\det(\lat)$ for any $s > 0$ and $\lat \subset \R^n$. Combining this with Item~\ref{item:RM_dual} of Theorem~\ref{thm:RM_all_parameters}, we see that
\[
s^n \leq \rho_s(\lat) \leq 2s^n
\; 
\]
for any \emph{stable} lattice $\lat \subset \R^n$ and any $s \geq 10 (\log n + 2)$.
I.e., Item~\ref{item:RM_dual} of Theorem~\ref{thm:RM_all_parameters} is tight for all stable lattices up to a factor of two in the mass.

\subsection{The integer lattice \texorpdfstring{$\Z^n$}{}}

We first prove bounds on the Gaussian mass of $\Z^n$. In particular, the lower bound in Eq.~\eqref{eq:Zn_mass1} below shows that  $\rho_{\sqrt{\pi/\log n}}(\Z^n) \geq 3/2$, so that Theorem~\ref{thm:RM} is tight for $\Z^n$ up to a factor of $C\sqrt{\log n}$ in $t$. Similar bounds hold for Items~\ref{item:RM} and~\ref{item:RM_convex} of Theorem~\ref{thm:RM_all_parameters}.

\begin{claim}
	For any $n \geq 1$ and parameter $s > 0$,  
	\begin{equation}
		\label{eq:Zn_mass1}
	\big(1+ 2e^{-\pi /s^2} \big)^n \leq \rho_s(\Z^n) \leq  \big( 1 + (2+s) e^{-\pi/s^2}\big)^n
	\; ,
	\end{equation}
	and
	\begin{equation}
		\label{eq:Zn_mass2}
	s^n \cdot \big(1 + 2e^{-\pi s^2}\big)^n \leq \rho_s(\Z^n) \leq s^n \cdot \big(1 + (2+1/s) e^{-\pi s^2}  \big)^n
	\; .
	\end{equation}
\end{claim}
\begin{proof}
	Note that $\rho_s(\Z^n) = \rho_s(\Z)^n$. So, it suffices to bound $\rho_s(\Z)$.
	Furthermore, Eq.~\eqref{eq:Zn_mass2} follows from Eq.~\eqref{eq:Zn_mass1} and the Poisson Summation Formula (Eq.~\eqref{eq:PSF}). So, it suffices to prove Eq.~\eqref{eq:Zn_mass1} for the case $n = 1$.
	For the lower bound, we have
	\[
	\rho_s(\Z) = 1 + 2 \sum_{z = 1}^\infty e^{-\pi z^2/s^2} \geq 1 + 2 e^{-\pi /s^2}
	\; .
	\]
	For the upper bound, we write
	\[
	\rho_s(\Z) = 1 + 2e^{-\pi/s^2} + 2\sum_{z=2}^\infty e^{-\pi z^2/s^2} \leq 1 + 2e^{-\pi/s^2} + 2\int_{1}^\infty e^{-\pi x^2/s^2} {\rm d} x \leq 1 + (2+s) e^{-\pi/s^2}
	\; ,
	\]
	where we have used~\cite[Eq.~7.1.13]{AbramowitzS64} to bound the error function. 
\end{proof}

We now bound $|\Z^n \cap r B_2^n|$. 
Note that the lower bound in the next claim, which shows that $|\Z^n \cap r B_2^n| \geq e^{C r^2 \log(n/r^2)}$, is relatively close to the upper bound $ |\Z^n \cap r B_2^n| \leq e^{C' r^2 \log^2 n}$ given by Item~\ref{item:RM_bound} of Corollary~\ref{cor:counting}.
(We include a better upper bound on $|\Z^n \cap rB_2^n|$ below for completeness. See~\cite{NSDThesis} for a slightly tighter bound via a more careful application of the same proof and~\cite{MO90} for tighter bounds for $r = C\sqrt{n}$.)

\begin{claim}
	For any $n \geq 1$ and any radius $1 \leq r \leq \sqrt{n}$, 
	\[
	(2n/\floor{r^2})^{\floor{r^2}} \leq |\Z^n \cap r B_2^n| \leq (2e^3 n/\floor{r^2})^{\floor{r^2}}
	\; .
	\]
\end{claim}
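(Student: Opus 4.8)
The plan is to estimate $|\Z^n \cap rB_2^n|$ by counting integer points according to the number of nonzero coordinates. Set $k := \floor{r^2}$; note $1 \le k \le n$. For the lower bound, I would simply exhibit many lattice points of norm at most $r$: choose a subset $S \subseteq \{1,\dots,n\}$ of size $k$ and put $\pm 1$ in each coordinate of $S$ and $0$ elsewhere. Every such vector has squared norm exactly $k \le r^2$, so it lies in $rB_2^n$, and the number of these vectors is $\binom{n}{k} 2^k \ge (n/k)^k 2^k = (2n/k)^k = (2n/\floor{r^2})^{\floor{r^2}}$, using the standard bound $\binom{n}{k} \ge (n/k)^k$. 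This gives the claimed lower bound.

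For the upper bound, I would bound the number of points $\vec{x} \in \Z^n$ with $\|\vec{x}\|^2 \le r^2$ by first observing that such a point has at most $k = \floor{r^2}$ nonzero coordinates (since each nonzero integer coordinate contributes at least $1$ to the squared norm). Fixing the support $S$ with $|S| = j \le k$, the restriction $\vec{x}|_S$ is a point in $\Z^j$ with all coordinates nonzero and squared norm at most $r^2$; the number of such points is at most the number of integer points in a ball of radius $r$ in $\R^j$ with no zero coordinate. I would bound this crudely: the number of nonzero-coordinate integer points of squared norm $\le r^2$ in $\R^j$ is at most $|\Z^j \cap rB_2^j|$, and one can bound the latter by a volume argument (each lattice point owns a unit cube, all contained in $(r+\sqrt{j}/2)B_2^j \subseteq (r + \sqrt{k}/2) B_2^j$, and $\sqrt k \le r$, so this is contained in $\tfrac32 r B_2^j$), giving at most $\vol(\tfrac32 r B_2^j) = (3r/2)^j \vol(B_2^j) \le (3r/2)^j (2\pi e/j)^{j/2}$. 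Summing $\binom{n}{j}$ times this over $j = 0,\dots,k$, bounding $\binom{n}{j} \le (en/j)^j$ and using $r^2 \le k+1 \le 2k$ and that the largest term dominates (up to a factor $k+1$), I expect to land at something of the form $(2e^3 n/k)^k$ after absorbing constants; the monotonicity in $j$ of the summand $(en/j)^j (3r/2)^j (2\pi e/j)^{j/2}$ for $j \le k$ needs a quick check, or one can just bound the sum by $(k+1)$ times its maximum and check the maximum occurs near $j = k$.

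The main obstacle is pinning down the constant $e^3$ in the upper bound cleanly: the naive chain of inequalities (Stirling-type bounds on $\binom{n}{j}$, the volume of $B_2^j$, the radius inflation $r \mapsto \tfrac32 r$) produces several competing constants, and one must organize them so that the final per-coordinate factor is at most $2e^3 n/\floor{r^2}$ rather than something larger. I would handle this by working with the single ``worst'' term $j = k$ from the start (arguing monotonicity of the summand), writing the bound as $(k+1) \cdot \binom{n}{k} \cdot |\Z^k \cap rB_2^k|_{\text{nonzero}}$, and then using $\binom{n}{k} \le (en/k)^k$ together with the volume bound $|\Z^k \cap \tfrac32 rB_2^k| \le (3er/\sqrt{2\pi k})^k \cdot \sqrt{2\pi}\cdot(\text{lower order})$ and $r \le \sqrt{2k}$ — the factors $e \cdot 3 \cdot \sqrt{2} / \sqrt{2\pi} = 3\sqrt{2}e/\sqrt{2\pi}$ per coordinate, times the $e$ from $\binom{n}{k}$, gives a constant comfortably below $2e^3$, and the leftover polynomial factor $(k+1)\sqrt{2\pi}$ is absorbed since $2e^3/(3\sqrt2 e^2/\sqrt{2\pi})^{\phantom{k}} > 1$ leaves exponential slack. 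This is routine once the bookkeeping is fixed, so I do not expect a genuine difficulty, only the need to be careful with constants.
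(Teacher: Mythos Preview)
Your lower bound is exactly the paper's argument: count the $\{-1,0,+1\}$ vectors with $\floor{r^2}$ nonzero coordinates and use $\binom{n}{k} \ge (n/k)^k$.

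Your upper bound, however, takes a genuinely different route. The paper does not decompose by support size or use any volume packing; instead it applies the one-line Chernoff-type inequality $|\Z^n \cap rB_2^n| \le e^{\pi r^2/s^2}\rho_s(\Z^n)$, plugs in $s = \sqrt{\pi/\log(2n/r^2)}$, and invokes the bound $\rho_s(\Z) \le 1 + (2+s)e^{-\pi/s^2}$ from the preceding claim on $\rho_s(\Z^n)$. Four lines of arithmetic then give the constant $2e^3$ directly, with no case analysis, no sum over $j$, and no monotonicity check. Your combinatorial approach is more elementary and self-contained (it needs nothing about Gaussian mass), but as you already noticed, the bookkeeping is real: you must control the summand in $j$, handle the Stirling losses in both $\binom{n}{j}$ and $\vol(B_2^j)$, and then absorb the leftover factor of $k+1$---all of which is doable (your $j=k$ per-coordinate constant comes to roughly $\tfrac{3e}{2}\sqrt{2\pi e}\approx 17 < 2e^3 \approx 40$, leaving enough slack), but it is several paragraphs of estimates where the paper's proof is a single display. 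The Gaussian approach also meshes with the rest of the paper, which is organized around $\rho_s$ throughout.
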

\begin{proof}
	Since all points in $\Z^n$ have integer squared norm, we may assume without loss of generality that $r^2$ is an integer. For the lower bound, we note that the number of vectors of length $r$ whose coordinates lie in the set $\{-1,0,+1\}$ is
	\[
	2^{r^2} \binom{n}{r^2} \geq (2n/r^2)^{r^2}
	\; ,
	\] 
	as needed.
	
	For the upper bound, using Eq.~\eqref{eq:Zn_mass1} with $s := \sqrt{\pi/\log(2n/r^2)} < 4$,
	\begin{align*}
	|\Z^n \cap r B_2^n| 
	&\leq e^{\pi r^2/s^2} \rho_s(\Z^n) \\
	&\leq (2n/r^2)^{r^2} \cdot \Big(1+\frac{r^2(2+s)}{2n}\Big)^n \\
	&\leq  (2n/r^2)^{r^2} \cdot (1+3r^2/n)^n \\
	&\leq (2e^3n/r^2)^{r^2}
	\; ,
	\end{align*}
	as needed.
\end{proof}

\subsection{Random lattices}

There exists a unique probability measure $\mathscr{L}_n$ over the set of determinant-one lattices in $\R^n$ that is invariant under $\mathrm{SL}_n(\R)$~\cite{Siegel45}. (See, e.g.,~\cite{TerrasBook} or~\cite[Chapter 3]{Lekkerkerker_book}.) We call a random variable $\lat$ sampled from $\mathscr{L}_n$ a \emph{random lattice}, and we write this as $\lat \sim \mathscr{L}_n$. The purpose of this section is to prove the following result.

\begin{proposition}\label{prop:randomtight}
	For any sufficiently large $n$ and any $r \geq \sqrt{n} \log n$, 
	\[
	\Pr_{\lat \sim \mathscr{L}_n} \Big[ \text{$\lat$ is stable and } |\lat \cap r B_2^n| \geq \vol(rB_2^n)/2  \Big] \geq 1 - (Cn/r^2)^{n/2} - (C/n)^{n/2}
	\; ,
	\]
	where $C > 0$ is some universal constant.
	In particular, there exists a stable lattice $\lat$ satisfying 
	\begin{align}\label{eq:boundrandompoints}
	|\lat \cap r B_2^n| \geq \vol(rB_2^n)/2 
	= (4 \pi n)^{-1/2} (2\pi e r^2/n)^{n/2}(1+o(1))
	\;  ,
	\end{align}
	where the $o(1)$ term approaches zero as $n$ approaches $\infty$. 
\end{proposition}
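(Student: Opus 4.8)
The plan is to combine first- and second-moment estimates over $\mathscr{L}_n$ with a union bound over the ways stability can fail. Write $V := \vol(rB_2^n)$ and $N(\lat) := |\lat \cap rB_2^n \setminus \{\vec0\}|$, so $|\lat \cap rB_2^n| = N(\lat) + 1$. By Siegel's mean value theorem~\cite{Siegel45}, $\mathbb{E}_{\lat \sim \mathscr{L}_n}[N(\lat)] = V$, and by Rogers' second moment formula $\mathbb{E}[N(\lat)^2] = V^2 + O(V)$ for $n$ large: beyond the ``linearly independent pairs'' term $V^2$ and the diagonal term $\mathbb{E}[N] = V$, the only other non-negligible contribution is from the antipodal pairs $\{\vec v, -\vec v\}$ (another $V$, since $rB_2^n$ is symmetric), while collinear pairs $(a\vec v, b\vec v)$ with $\max(|a|,|b|) \ge 2$ together contribute only $O(2^{-n} V)$. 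Hence $\operatorname{Var}(N) \le C V$. Since $N(\lat) < V/2$ whenever $|\lat \cap rB_2^n| < V/2$, Chebyshev's inequality gives
\[
\Pr\big[\,|\lat \cap rB_2^n| < V/2\,\big] \;\le\; \Pr\big[\,|N(\lat) - V| \ge V/2\,\big] \;\le\; \frac{4\operatorname{Var}(N)}{V^2} \;\le\; \frac{4C}{V}\,.
\]
Since $V = \vol(rB_2^n) = (\pi n)^{-1/2}(2\pi e r^2/n)^{n/2}(1+o(1))$ and $r \ge \sqrt n\log n$, the polynomial factor is absorbed by the exponential and $4C/V \le (Cn/r^2)^{n/2}$.

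It remains to bound $\Pr[\lat \text{ not stable}]$. The lattice $\lat$ fails to be stable precisely when it has a primitive sublattice $\lat'$ with $\det(\lat') < 1$ and $1 \le \rank(\lat') \le n-1$, and we union-bound over $k := \rank(\lat')$. For $k = 1$ this is the event $\lambda_1(\lat) < 1$, so $\Pr \le \mathbb{E}[\,|\lat \cap B_2^n \setminus\{\vec0\}|\,] = \vol(B_2^n) \le (C/n)^{n/2}$ by Siegel's theorem with $B_2^n$ in place of $rB_2^n$. For $k = n-1$, primitive rank-$(n-1)$ sublattices $\lat'$ of $\lat$ are in det-preserving bijection with primitive rank-$1$ sublattices $\lat^* \cap \spn(\lat')^\perp$ of $\lat^*$ (using $(\lat/\lat')^* = \lat^* \cap \spn(\lat')^\perp$ and $\det\lat = 1$); since $\mathscr{L}_n$ is invariant under $\lat \mapsto \lat^*$, this case reduces to $\Pr[\lambda_1(\lat^*) < 1] = \Pr[\lambda_1(\lat) < 1] \le (C/n)^{n/2}$. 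For the intermediate ranks $2 \le k \le n-2$ (and by the same duality we may take $k \le n/2$), we invoke a Siegel--Rogers-type mean value formula for primitive sublattices to bound the expected number of primitive rank-$k$ sublattices of $\lat$ of determinant $< 1$; this bound decays geometrically as $k$ moves away from $1$, so the sum over all intermediate $k$ is again $\le (C/n)^{n/2}$. Altogether $\Pr[\lat \text{ not stable}] \le (C/n)^{n/2}$.

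Combining the two displays by a union bound gives the claimed probability lower bound, which for $n$ large and $r \ge \sqrt n\log n$ is positive; hence a stable lattice $\lat$ with $|\lat \cap rB_2^n| \ge \vol(rB_2^n)/2$ exists, and the asymptotic expansion of $\vol(rB_2^n)$ yields~\eqref{eq:boundrandompoints}. The main obstacle is the intermediate-rank case of the stability estimate: unlike $k = 1$ and $k = n-1$, a pointwise Minkowski-type inequality only forces $\lambda_1(\lat) \lesssim 2\sqrt{n/(2\pi e)}$, which is far too weak (the corresponding probability is not even $o(1)$), so one genuinely needs an averaging statement over the space of rank-$k$ primitive sublattices for each such $k$. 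A secondary point requiring care is pinning down the exact form of Rogers' second moment formula and checking that the collinear-pair corrections stay lower order for the ball $rB_2^n$, whose volume grows with $n$.
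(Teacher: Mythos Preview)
Your treatment of the point-counting half is exactly the paper's: Siegel's first moment plus Rogers' variance bound, then Chebyshev to get $\Pr[|\lat \cap rB_2^n| < V/2] \le C/V \le (Cn/r^2)^{n/2}$, followed by a union bound with the stability event. The paper states the Rogers bound as a black box (Theorem~\ref{thm:rogers}) without the term-by-term analysis you give, but the argument is the same.

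The difference is in the stability estimate. The paper does not prove $\Pr[\lat\text{ not stable}] \le (C/n)^{n/2}$; it simply quotes this as Theorem~\ref{thm:random_stable}, a result of Shapira and Weiss~\cite{ShapiraW14}. Your attempt to derive it by a union bound over the rank $k$ of a destabilizing sublattice is in the right spirit (and your $k=1$ and $k=n-1$ cases are clean), but as you correctly flag, the intermediate ranks require a Siegel-type integration formula over the space of primitive rank-$k$ sublattices, and you have not actually carried this out or verified the claimed geometric decay. That computation is precisely the substance of~\cite{ShapiraW14}, so what you have written is not yet a proof of that piece---it is a pointer to one. If you are willing to cite~\cite{ShapiraW14}, your argument is complete and matches the paper's; if not, the intermediate-$k$ step remains a genuine gap.
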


Note that the lower bound in Eq.~\eqref{eq:boundrandompoints} is within a factor of $C \sqrt{n}$ of the upper bound
in Item~\ref{item:dual_packing_bound} of Corollary~\ref{cor:counting}, which applies to stable lattices. 

We will need the following three results.

\begin{theorem}[\cite{Siegel45}]
	\label{thm:siegal}
	For any $n \geq 2$ and any measurable set $S \subset \R^n$,
	\[
	\expect_{\lat \sim \mathscr{L}_n}[|(\lat \setminus \{\vec0\}) \cap S|] = \vol(S)
	\; .
	\]
\end{theorem}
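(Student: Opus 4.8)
The plan is to combine Theorem~\ref{thm:siegal} with the two further facts about random lattices stated above --- a second-moment (variance) bound for the number of lattice points in a ball, and the fact that a random lattice fails to be stable only with probability at most $(C/n)^{n/2}$ --- via a union bound. Throughout, write $V := \vol(rB_2^n)$ and $N := |(\lat \setminus \{\vec 0\}) \cap rB_2^n|$, so that $|\lat \cap rB_2^n| = N+1 \geq N$.

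First I would control the point count by the second-moment method. By Theorem~\ref{thm:siegal}, $\expect_{\lat \sim \mathscr{L}_n}[N] = V$, and by Rogers' second-moment formula (the second result cited above) one obtains a bound of the shape $\mathrm{Var}(N) \leq V + \delta_n V^2$, where $\delta_n$ is a correction coming from pairs of linearly dependent lattice vectors and decays superexponentially in $n$ (in particular $\delta_n \leq (C/n)^{n/2}$). Chebyshev's inequality then gives
\[
\Pr\big[\, |\lat \cap rB_2^n| < V/2 \,\big] \;\leq\; \Pr\big[\, |N - V| \geq V/2 \,\big] \;\leq\; \frac{4\,\mathrm{Var}(N)}{V^2} \;\leq\; \frac{4}{V} + 4\delta_n \; .
\]
Recalling the volume asymptotics $V = (\pi n)^{-1/2}(2\pi e r^2/n)^{n/2}(1+o(1))$ from the introduction, we get $1/V \leq (Cn/r^2)^{n/2}$ for a suitable universal constant $C$ and all large $n$; combined with the bound on $\delta_n$, this bounds the failure probability of the point count by $(Cn/r^2)^{n/2} + (C/n)^{n/2}$ (after possibly enlarging $C$).

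Second I would invoke the third cited result, $\Pr_{\lat \sim \mathscr{L}_n}[\lat \text{ not stable}] \leq (C/n)^{n/2}$, and take a union bound over the (at most) two bad events. Enlarging $C$ once more so that a single constant works throughout, this yields
\[
\Pr_{\lat \sim \mathscr{L}_n}\big[\, \lat \text{ is stable and } |\lat \cap rB_2^n| \geq V/2 \,\big] \;\geq\; 1 - (Cn/r^2)^{n/2} - (C/n)^{n/2} \; ,
\]
which is the first assertion. For the ``in particular'' part: when $r \geq \sqrt n \log n$ we have $n/r^2 \leq 1/\log^2 n$, so both error terms tend to $0$ as $n \to \infty$; hence for all sufficiently large $n$ the probability above is strictly positive and a stable lattice with $|\lat \cap rB_2^n| \geq V/2$ exists. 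The displayed identity is immediate from the volume formula: $V/2 = \tfrac12 (\pi n)^{-1/2}(2\pi e r^2/n)^{n/2}(1+o(1)) = (4\pi n)^{-1/2}(2\pi e r^2/n)^{n/2}(1+o(1))$.

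The main obstacle is the variance estimate, namely establishing a bound of the form $\mathrm{Var}(N) \leq V + \delta_n V^2$ with $\delta_n$ superexponentially small. This requires Rogers' second-moment formula together with a careful accounting of the contributions of non-primitive (linearly dependent) pairs of lattice vectors, which is delicate precisely because $V$ is enormous (of order $(\log n)^n$) in the regime $r \geq \sqrt n \log n$, so a correction term of size $2^{-\Omega(n)} V^2$ would already be far too large. Everything else is a routine combination of Chebyshev's inequality, a union bound, and the standard volume asymptotics of Euclidean balls.
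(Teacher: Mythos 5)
Your proposal does not prove the statement in question. The statement is Siegel's mean value theorem itself: that $\expect_{\lat \sim \mathscr{L}_n}[|(\lat \setminus \{\vec0\}) \cap S|] = \vol(S)$ for every measurable $S \subset \R^n$. Your first sentence announces a plan ``to combine Theorem~\ref{thm:siegal} with the two further facts about random lattices,'' i.e., you take the statement to be proved as a given hypothesis and then go on to prove a different result --- namely Proposition~\ref{prop:randomtight}, the application that combines Siegel's theorem with Rogers' second-moment bound (Theorem~\ref{thm:rogers}) and the Shapira--Weiss stability estimate (Theorem~\ref{thm:random_stable}) via Chebyshev and a union bound. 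That argument is essentially the paper's proof of Proposition~\ref{prop:randomtight} (the paper simply invokes Theorem~\ref{thm:rogers} as a black box rather than re-deriving the variance bound from Rogers' formula), but it contains nothing that establishes the expectation identity itself.

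For the record, the paper does not prove Theorem~\ref{thm:siegal} either; it is cited from Siegel's 1945 work. An actual proof runs along entirely different lines: one checks that $S \mapsto \expect_{\lat \sim \mathscr{L}_n}[|(\lat \setminus \{\vec0\}) \cap S|]$ defines a Borel measure on $\R^n \setminus \{\vec0\}$ that is invariant under $\mathrm{SL}_n(\R)$ (because $\mathscr{L}_n$ is the unique $\mathrm{SL}_n(\R)$-invariant probability measure on $\mathrm{SL}_n(\R)/\mathrm{SL}_n(\Z)$), and since $\mathrm{SL}_n(\R)$ acts transitively on $\R^n \setminus \{\vec0\}$ for $n \geq 2$ with no invariant measure other than multiples of Lebesgue measure, the expectation must equal $c_n \cdot \vol(S)$; the substantive step is then showing $c_n = 1$, which is where the normalization of Haar measure (or a Minkowski--Hlawka-type averaging computation) enters. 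None of this appears in your writeup, so as a proof of the stated theorem the proposal is vacuous, however reasonable it is as a proof of Proposition~\ref{prop:randomtight}.
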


\begin{theorem}[{\cite{Rogers55,schmidt60}; see \cite[Theorem 24.3]{Gruber_book}}]
	\label{thm:rogers}
	For $n \geq 3$ and any Borel set $S \subset \R^n$,
	\[
	\expect_{\lat \sim \mathscr{L}_n}\big[\big(|(\lat \setminus \{\vec0\}) \cap S| - \vol(S)\big)^2 \big] \leq C \vol(S)
	\; ,
	\]
	where $C > 0$ is some universal constant.
\end{theorem}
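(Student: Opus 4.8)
The plan is to reduce the variance bound to an estimate on the second moment $\expect_{\lat\sim\mathscr L_n}[N^2]$, and to evaluate that moment via Rogers' mean‑value formula. Write $V := \vol(S)$; we may assume $V < \infty$ (otherwise there is nothing to prove) and, after modifying $S$ on a null set, that $\vec 0 \notin S$. Let $N(\lat) := |(\lat \setminus \{\vec 0\}) \cap S| = \sum_{\vec y \in \lat \setminus \{\vec 0\}} \mathbf{1}_S(\vec y)$ be the Siegel transform of $\mathbf{1}_S$. By Siegel's mean‑value theorem (Theorem~\ref{thm:siegal}), $\expect_{\lat\sim\mathscr L_n}[N] = V$, so $\expect_{\lat\sim\mathscr L_n}[(N-V)^2] = \expect_{\lat\sim\mathscr L_n}[N^2] - V^2$, and it suffices to show $\expect_{\lat\sim\mathscr L_n}[N^2] \le V^2 + CV$.

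To compute the second moment I would invoke Rogers' formula for the square of the Siegel transform \cite{Rogers55,schmidt60} (see \cite[Theorem~24.3]{Gruber_book}). Expanding $N^2 = \sum_{\vec y_1, \vec y_2 \in \lat \setminus \{\vec 0\}} \mathbf{1}_S(\vec y_1)\mathbf{1}_S(\vec y_2)$ and splitting the sum according to whether $\vec y_1, \vec y_2$ are $\R$‑linearly independent, Rogers' identity states that for $n \ge 3$
\[
\expect_{\lat\sim\mathscr L_n}[N^2] \;=\; V^2 \;+\; \frac{1}{2\zeta(n)} \sum_{a,b \in \Z \setminus \{0\}} \int_{\R^n} \mathbf{1}_S(a\vec x)\,\mathbf{1}_S(b\vec x)\, {\rm d}\vec x \;,
\]
where $V^2$ is the contribution of pairs spanning a rank‑$2$ sublattice — this is the crucial "generic" part of Rogers' identity, valid precisely because $2 < n$ — and the sum is the contribution of collinear pairs $\vec y_1 = a\vec v,\ \vec y_2 = b\vec v$ with $\vec v$ a primitive lattice vector, evaluated from the primitive‑vector analogue of Theorem~\ref{thm:siegal} (which contributes the factor $1/\zeta(n)$). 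For unbounded $S$ this follows from the bounded case by monotone convergence; as a sanity check, the $a=b$ terms contribute exactly $\tfrac{1}{2\zeta(n)}\sum_{a \ne 0} V/|a|^n = V$, matching $\expect_{\lat\sim\mathscr L_n}[N] = V$.

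It then remains to bound the correction term, which I expect to be routine. For $a,b \in \Z\setminus\{0\}$ with $|b| = \max(|a|,|b|)$, the substitution $\vec u = b\vec x$ gives $\int_{\R^n} \mathbf{1}_S(a\vec x)\mathbf{1}_S(b\vec x)\,{\rm d}\vec x = |b|^{-n}\int_{\R^n}\mathbf{1}_S((a/b)\vec u)\mathbf{1}_S(\vec u)\,{\rm d}\vec u \le V/\max(|a|,|b|)^n$, and symmetrically when $|a|$ is the maximum. Grouping the pairs $(a,b)$ by $m := \max(|a|,|b|)$ — of which there are at most $8m$ — yields
\[
\frac{1}{2\zeta(n)} \sum_{a,b \in \Z\setminus\{0\}} \int_{\R^n} \mathbf{1}_S(a\vec x)\mathbf{1}_S(b\vec x)\,{\rm d}\vec x \;\le\; \frac{V}{2\zeta(n)} \sum_{m=1}^\infty \frac{8m}{m^n} \;=\; \frac{4\zeta(n-1)}{\zeta(n)}\,V \;\le\; 4\zeta(2)\,V\;,
\]
since $n \ge 3$ forces $n-1 \ge 2$, so $\zeta(n-1) \le \zeta(2) = \pi^2/6$. (This is the second place the hypothesis $n\ge 3$ enters: for $n = 2$ the series $\sum_m m^{1-n}$ diverges, which is consistent with the failure of the theorem in dimension $2$.) Hence $\expect_{\lat\sim\mathscr L_n}[N^2] \le V^2 + 4\zeta(2)V$, which gives the claim with $C = 2\pi^2/3$.

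The main obstacle is the second paragraph: setting up Rogers' integration formula on $\mathrm{SL}_n(\R)/\mathrm{SL}_n(\Z)$ correctly and extracting the exact shape of the non‑generic part — in particular verifying that the rank‑$2$ contribution is \emph{exactly} $V^2$ (not merely $O(V^2)$, which would be useless here) and that all implicit constants are absolute, independent of $n$. This is the substantive input; once it is in hand, the remaining estimate is elementary. A secondary point to pin down is the primitive‑vector mean‑value formula $\expect_{\lat\sim\mathscr L_n}[\#\{\text{primitive }\vec y \in \lat \cap T\}] = \vol(T)/\zeta(n)$ used to evaluate the collinear contribution, which is standard but is not literally Theorem~\ref{thm:siegal}.
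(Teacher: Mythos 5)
The paper does not prove this statement at all: Theorem~\ref{thm:rogers} is quoted as a black box from Rogers, Schmidt, and Gruber's book, so there is no internal proof to compare against. Your derivation is, as far as I can check, correct and is essentially the standard route by which the cited result is obtained. The reduction $\expect[(N-V)^2]=\expect[N^2]-V^2$ via Siegel's theorem is fine; the decomposition of pairs of nonzero lattice vectors into linearly independent pairs (contributing exactly $V^2$ when $2\le n-1$, i.e.\ $n\ge 3$) and collinear pairs $q\vec w, r\vec w$ with $\vec w$ primitive (contributing $\frac{1}{2\zeta(n)}\sum_{q,r\ne 0}\int \mathbf{1}_S(q\vec x)\mathbf{1}_S(r\vec x)\,{\rm d}\vec x$, the factor $\tfrac12$ accounting for $(\vec w,q,r)\sim(-\vec w,-q,-r)$) is the correct form of Rogers' second-moment identity, and your two sanity checks (the diagonal terms summing to $V$, and the formula reproducing the known variance $\approx 2V$ for symmetric $S$) are both consistent with it. The tail estimate $\int \mathbf{1}_S(q\vec x)\mathbf{1}_S(r\vec x)\,{\rm d}\vec x\le V/\max(|q|,|r|)^n$ together with the count of at most $8m$ pairs at each level $m=\max(|q|,|r|)$ gives $4\zeta(n-1)/\zeta(n)\cdot V\le 4\zeta(2)V$, so the constant is universal; this is also exactly where $n\ge3$ is needed a second time. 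You are right that the entire nontrivial content is the exact shape of Rogers' identity (in particular that the rank-2 part is exactly $V^2$ and that the primitive-vector Siegel formula carries the $1/\zeta(n)$); that identity is precisely what the references cited in the theorem statement supply, so your proposal amounts to a correct, self-aware unpacking of the citation rather than a gap.
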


\begin{theorem}[\cite{ShapiraW14}]
	\label{thm:random_stable}
	For any sufficiently large $n$, an $n$-dimensional random lattice is stable with probability at least $1-(C/n)^{n/2}$, where $C > 0$ is some universal constant.
\end{theorem}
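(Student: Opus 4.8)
\textbf{Proof proposal for Theorem~\ref{thm:random_stable}.}

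The plan is to show that a random determinant-one lattice $\lat \sim \mathscr{L}_n$ is stable with overwhelming probability by a union bound over all possible ranks of a ``witness'' sublattice $\lat' \subseteq \lat$ with $\det(\lat') < 1$. By Item~\ref{item:dual_stable} of Proposition~\ref{prop:stable_properties}, $\lat$ is stable if and only if both $\lat$ and $\lat^*$ have no nonzero vector of length less than $1$ \emph{and} no low-determinant sublattice; but more directly, $\lat$ fails to be stable precisely when there is a primitive sublattice $\lat' \subseteq \lat$ of some rank $d$ with $1 \leq d \leq n-1$ and $\det(\lat') < 1$ (the case $d = n$ is ruled out since $\det(\lat) = 1$). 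For each $d$ I would bound the probability that such a sublattice exists. First I would reduce to counting primitive sublattices of bounded determinant: a $d$-dimensional primitive sublattice $\lat'$ of determinant at most $T$ is generated by $d$ lattice vectors spanning a $d$-space, and having small determinant forces these generators (after choosing a reduced basis of $\lat'$) to be short—specifically, by Minkowski's theorem applied inside $\spn(\lat')$, $\lambda_1(\lat') \leq \sqrt{d}\det(\lat')^{1/d} \leq \sqrt{d}$, and iterating, all successive minima are bounded, so $\lat'$ contains $d$ linearly independent vectors each of length $O(\sqrt{d} \cdot \det(\lat')^{1/d}) = O(\sqrt{d})$.

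The key estimate is then a first-moment (Markov) bound on the expected number of such short-basis configurations. I would use the Siegel–Rogers machinery: the $k$-th moment formulas for $\sum_{\vec{y}_1,\ldots,\vec{y}_k \in \lat} F(\vec{y}_1,\ldots,\vec{y}_k)$ under $\mathscr{L}_n$ (Theorems~\ref{thm:siegal} and~\ref{thm:rogers} are the $k=1$ and $k=2$ cases for a single vector; the general Rogers formula handles $d$-tuples). Concretely, the expected number of $d$-tuples of primitive lattice vectors $(\vec{y}_1,\ldots,\vec{y}_d)$ in $\lat$ that are linearly independent, each of length at most $R := C\sqrt{d}$, and spanning a sublattice of determinant less than $1$, is at most $\int F\,d\vec{y}_1\cdots d\vec{y}_d$ plus lower-order correction terms, where $F$ is the indicator of that event. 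The volume of the set of such $d$-tuples in $(\R^n)^d$ is roughly $\vol(R B_2^n)^d$ times the fraction of configurations whose Gram determinant is less than $1$; since $\vol(R B_2^n) = (2\pi e R^2/n)^{n/2}(1+o(1))^n$ and $R^2 = C^2 d \leq C^2 n$, each factor is $(C'd/n)^{n/2}$-ish, so the $d$-fold product is at most $(C''/n)^{dn/2}$ or similar—in any case super-exponentially small, and summing the geometric-type series over $1 \leq d \leq n-1$ is dominated by $d = 1$, giving $(C/n)^{n/2}$. By Markov, the probability that any such tuple exists, hence that $\lat$ is unstable, is at most this quantity.

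The main obstacle is making the geometric bookkeeping rigorous: (i) translating ``$\lat$ has a sublattice of determinant $< 1$'' into ``$\lat$ has $d$ short, nearly-orthogonal generating vectors'' with explicit control on the implied constants (one must be careful that a long thin sublattice could have small determinant with only \emph{one} short vector, so one should really argue rank by rank, peeling off the shortest vector, projecting, and inducting—or equivalently invoking the canonical filtration of Proposition~\ref{prop:stable_properties} and Grayson's theory to note that instability is witnessed by the first nontrivial step $\lat_1$ of the filtration, whose determinant per rank is $< 1$); and (ii) controlling the error terms in the higher Rogers moment formula so that the first-moment estimate genuinely dominates. A cleaner route, which I would pursue first, is to bound $\Pr[\lat_1 \text{ exists with } \det(\lat_1)^{1/\rank(\lat_1)} < 1]$ by summing over $d = \rank(\lat_1)$ the expected number of primitive rank-$d$ sublattices of determinant $< 1$; this expectation has a known closed form (Rogers, or Schmidt's formula for the mean number of sublattices) of the shape $\zeta(n)\zeta(n-1)\cdots\zeta(n-d+1)^{-1}$ times $\int_{\det < 1}$ over the space of $d$-dimensional sublattices, which one checks is $(C/n)^{n/2}$ when summed over $d$. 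Once the first-moment bound is in hand, the theorem follows immediately by Markov's inequality. No further input beyond the Siegel/Rogers results already quoted in the excerpt and Minkowski's theorem is needed.
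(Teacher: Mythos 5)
First, a point of comparison that matters: this paper does not prove Theorem~\ref{thm:random_stable} at all --- it is imported as a black box from Shapira and Weiss~\cite{ShapiraW14}, so there is no internal proof to measure yours against. Your outline is nonetheless the right strategy, and it is essentially the one in~\cite{ShapiraW14}: non-stability of a determinant-one lattice is witnessed by a primitive sublattice of some rank $1 \le d \le n-1$ with determinant below one, one bounds the expected number of such witnesses for each $d$ by a Siegel--Rogers-type mean value formula for \emph{sublattices} (Schmidt/Thunder), and one concludes by Markov's inequality and a union bound over $d$.

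As written, though, the proposal has two genuine gaps. The first is the reduction to short vectors: Minkowski's second theorem only bounds the \emph{product} $\prod_i \lambda_i(\lat')$, not each successive minimum individually, so ``iterating'' does not give $d$ vectors of length $O(\sqrt{d})$ --- a rank-two sublattice generated by vectors of lengths $\eps$ and $1/(2\eps)$ has determinant at most $1/2$ and arbitrarily large $\lambda_2$. Your patch via the first step $\lat_1$ of the canonical filtration does repair this ($\lat_1$ is a scaled-down stable lattice, its dual is stable by Item~\ref{item:dual_stable} of Proposition~\ref{prop:stable_properties}, so $\lambda_d(\lat_1) \le \mathrm{poly}(d)$ by transference), but then the generators have length $\mathrm{poly}(d)$ rather than $O(\sqrt{d})$, and the crude bound $\vol(R B_2^n)^d$ is nowhere near $(C/n)^{n/2}$ for large $d$; the constraint that the Gram determinant be below one must carry the estimate, and you never compute the measure of that set. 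The second, and central, gap is exactly that computation: the assertion that the expected number of primitive rank-$d$ sublattices of determinant less than one, summed over $1 \le d \le n-1$, is at most $(C/n)^{n/2}$ is stated (``which one checks is\dots'') but not checked, and estimating the explicit constants $a_{n,d}$ in the mean-value formula $a_{n,d}T^n$ is the entire content of~\cite{ShapiraW14} --- everything else in your write-up is framing. Two remarks that would help you close it: by Item~\ref{item:dual_stable} of Proposition~\ref{prop:stable_properties} and the self-duality of $\mathscr{L}_n$, the ranks $d$ and $n-d$ contribute equally, so it suffices to treat $d \le n/2$, and the $d=1$ term, $\vol(B_2^n)/\zeta(n) = (C/n)^{n/2}$, is where the stated bound comes from; and your worry about error terms in higher Rogers moment formulas is a red herring, since a pure first-moment argument needs only the exact mean-value formula for the sublattice count, not any second-moment control.
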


\begin{proof}[Proof of Proposition~\ref{prop:randomtight}]
	By Chebyshev's inequality, Theorem~\ref{thm:siegal}, and Theorem~\ref{thm:rogers}, there is some universal constant $C > 0$ such that
	\[
	\Pr_{\lat \sim \mathscr{L}_{n}}\big[|\lat \cap rB_2^n| < \vol(r B_2^n)/2 \big] \leq \frac{C}{\vol(r B_2^n)} \leq (C'n/r^2)^{n/2}
	\; .
	\]
	The result then follows by Theorem~\ref{thm:random_stable} and union bound.
\end{proof}

\newcommand{\etalchar}[1]{$^{#1}$}
\def\cprime{$'$}

\end{document}